\newcommand{\normm}[1]{{\left\vert\kern-0.25ex\left\vert\kern-0.25ex\left\vert #1 
    \right\vert\kern-0.25ex\right\vert\kern-0.25ex\right\vert}}
\newcommand{\EE}{{\mathbb E}}
\newcommand{\NN}{{\mathbb N}}
\newcommand{\RR}{{\mathbb R}}
\newcommand{\ZZ}{{\mathbb Z}}
\newcommand{\abs}[1]{ \left| #1 \right|}
\newcommand{\cadlag}{c\`{a}dl\`{a}g}
\newcommand{\dif}{\mathrm d}
\newcommand{\dist}{\on{dist}}
\newcommand{\eps}{\varepsilon}
\newcommand{\norm}[1]{ \left\| #1 \right\| }
\newcommand{\nor}[2]{\left\|#1\right\|_{#2}}
\newcommand{\oline}[1]{\overline{#1}}
\newcommand{\oo}{\infty}
\newcommand{\I}{\mathrm{I}}
\newcommand{\II}{\mathrm{II}}
\newcommand{\III}{\mathrm{III}}
\DeclareMathOperator*{\esssup}{ess\,sup}
\newcommand{\mcl}{\mathcal}
\newcommand{\mbb}{\mathbb}
\newcommand{\mbf}{\mathbf}
\newcommand{\on}{\operatorname}
\newtheorem{lemma}{Lemma}
\newtheorem{proposition}{Proposition}
\newtheorem{theorem}{Theorem}
\newtheorem{corollary}{Corollary}
\newtheorem{definition}{Definition}
\newtheorem{remark}{Remark}
\numberwithin{equation}{section}
\numberwithin{lemma}{section}
\numberwithin{proposition}{section}
\numberwithin{theorem}{section}
\numberwithin{corollary}{section}
\numberwithin{definition}{section}
\numberwithin{remark}{section}
\begin{document}

\title{Besov rough path analysis} %sewing lemma and applications}
\author{Peter K. Friz}
\affil{TU Berlin and WIAS Berlin\\
\href{mailto:friz@math.tu-berlin.de}{\nolinkurl{friz@math.tu-berlin.de}}}
\author{Benjamin Seeger
%%\thanks{Partially supported by the National Science Foundation Mathematical Sciences Postdoctoral Research Fellowship under Grant Number DMS-1902658}
}
\affil{Universit\'e Paris-Dauphine and Coll\`ege de France \\ 
%Place du Mar\'echal de Lattre de Tassigny \\ 75016 Paris, France \\
\href{mailto:seeger@ceremade.dauphine.fr}{\nolinkurl{seeger@ceremade.dauphine.fr}}}

\author{With an appendix by Pavel Zorin-Kranich}
\affil{University of Bonn\\
\href{mailto:pzorin@uni-bonn.de}{\nolinkurl{pzorin@uni-bonn.de}} }
%\subjclass[2010]{60H15, 35B65, 35G20}
%\keywords{Hamilton-Jacobi equations, stochastic PDEs, H\"older regularity}

\maketitle

\begin{abstract}
Rough path analysis is developed in the full Besov scale. This extends, and essentially concludes, an investigation started by {\em [Pr\"omel--Trabs,  Rough differential equations driven by signals in {B}esov spaces. J. Diff. Equ. 2016]}, further studied in a series of papers by Liu, Pr\"omel and Teichmann. A new Besov sewing lemma, a real-analysis result of interest in its own right, plays a key role, and the flexibility in the choice of Besov parameters allows for the treatment of equations not available in the H\"older or variation settings. Important classes of stochastic processes fit in the present framework.

\end{abstract}

\tableofcontents

\section{Introduction}

Rough path theory gives meaning to differential equations of the form
\begin{equation}
    dY_t = f_0 (Y_t) d t + \sum_{i=1}^n f_i(Y_t) d X^i_t,   \label{equ:intro1}
\end{equation}
where $X$ is an $n$-dimensional path of regularity $\alpha$. When $\alpha > 1/2$, the equation can be understood as a Young integral equation, which covers both the $\alpha$-H\"older and $p$-variation setting, where $p = 1/\alpha$. When $\alpha \le 1/2$, or $p\ge 2$, it was understood by T. Lyons \cite{lyonsrough98} that $X$ needs to be enhanced with additional information to restore well-posedness of the problem. The resulting rough path interpretation reads
\begin{equation}
    dY_t = f_0 (Y_t) d t +f (Y_t) d {\mbf X},  \label{equ:intro2}
\end{equation}
where the object ${\mbf X}$ should be thought of as the original path $(X^1,\ldots,X^n)$ enhanced with sufficient extra information, typically interpreted as iterated integrals, to regain analytic well-posedness. 
The H\"older case with $\alpha > 1/3$ is found e.g. in \cite{friz2020course}, the general ``geometric'' case (that is, the enhanced object $\mbf X$ satisfies a first-order calculus) with $\alpha > 0$, both in the H\"older and variation cases, is found in \cite{friz2010multidimensional}, and previous works by Lyons and coworkers focused on the continuous $p$-variation setting.\footnote{The case of discontinuous $p$-variation rough paths, as required for stochastic processes with jumps, is more recent \cite{friz2018differential}.} The non-geometric setting can be analyzed using branched structures \cite{gubinelli2010ramification}, but see also \cite{hairer2015geometric} for a reduction to the geometric case.

This work is devoted to Besov rough path analysis, that is, we consider driving signals belonging to the Besov space $B^s_{pq}$ with $0 < s < 1$, $1/s < p \le \oo$, and $0 < q \le \oo$.
At least morally, the H\"older and variation settings appear as end-points in a Besov-scale of rough path spaces, as indicated in Figure \ref{fig:Tao}, in which regularity $s \in [0,1]$ is plotted against inverse integrability $1/p \in [0,1]$.
When $p = q = \oo$, $B^s_{\oo,\oo} = C^s$, the space of $s$-H\"older continuous paths. On the other hand, we have the embeddings $B^{1/p}_{p,1} \subset V^p \subset B^{1/p}_{p,\oo}$ (see Proposition \ref{P:variationembedding} for more precise statements). For $1/p < s \le \oo$, $B^s_{pq}$ embeds continuously into $C^{s-1/p}$, and we prove a generalization of this result to paths taking values in a general metric space (Proposition \ref{P:Besovembedding}). Rough analysis begins to come into play when $s \le 1/2$, and requires an enhanced state space $\mbf B^s_{pq}$, depending on the level of roughness $[1/s]$ (see Definition \ref{D:BesovRP}).

In view of these embeddings, the sheer task of solving \eqref{equ:intro2} driven by Besov rough paths can be accomplished by embedding rough Besov spaces into rough H\"older or variation rough path spaces (see \cite{friz2006embedding,friz2018rough}, also Section \ref{sec:HoelderEmb}.) 
%ppendix \ref{SS:variationembedding}) spaces. 
However, in the first case, due to the loss of H\"older regularity for finite $p$, this requires a strong regularity requirement (a large lower bound for $s$), while the second approach (which, in some sense, is taken in \cite{friz2018rough,liu2020sobolev}) provides no estimates with Besov (rough-path) metrics. 

One of the main contributions of this paper is the well-posedness of RDEs driven by Besov rough paths, with local Lipschitz estimates of the solution (a.k.a. Lyons-It\^o) map in the correct Besov spaces (see Theorems \ref{T:Youngeq}, \ref{T:YoungItoMap}, \ref{T:RDE}, \ref{T:IL}, and \ref{T:RDElevelN}):

\begin{theorem} \label{thm:mainIntro} If $0 < \alpha < 1$, $1/\alpha < p \le \oo$, and $0 < q \le \oo$, then, under natural regularity assumptions on the vector fields, there is a unique solution flow to \eqref{equ:intro2} such that the Lyons-It\^o map ${\mbf X} \mapsto Y$ is locally Lipschitz continuous in the full $B^\alpha_{p,q}$-Besov scale.
\end{theorem}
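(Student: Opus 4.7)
The plan is to carry the Gubinelli--Lyons architecture, developed in the H\"older and variation settings, over to the full Besov scale $B^\alpha_{p,q}$. The argument naturally splits according to the level $N := [1/\alpha]$: $N=1$ is the Young regime (Theorems \ref{T:Youngeq}--\ref{T:YoungItoMap}), $N=2$ the classical rough case (Theorems \ref{T:RDE}--\ref{T:IL}), and general $N\geq 3$ the full Lyons extension (Theorem \ref{T:RDElevelN}). Three ingredients drive the whole proof: (i) a Besov sewing lemma, producing a well-defined ``rough integral'' from a germ with sufficiently small sub-additivity defect; (ii) a Besov composition / chain rule $(Y,Y',\ldots)\mapsto (f(Y),f'(Y)Y',\ldots)$ on Gubinelli-controlled paths; and (iii) a Banach-type contraction on a space of Besov-controlled paths.

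\textbf{Sewing, integration, fixed point.} First I would establish the Besov sewing lemma: given a two-index germ $A_{s,t}$ whose defect $\delta A_{s,u,t}:=A_{s,t}-A_{s,u}-A_{u,t}$ lies in a two-parameter Besov-type space of supercritical scaling, there is a unique primitive $\mcl I A$ whose increments reproduce $A$ modulo a $B^\alpha_{p,q}$-remainder, with indices of defect and target matched so that $\mcl I A\in B^\alpha_{p,q}$ itself. With this in hand, the Young integral is defined by sewing $A_{s,t}:=Y_s(X_t-X_s)$; for $\mbf X\in \mbf B^\alpha_{p,q}$ and a level-$N$ controlled path $(Y,Y',\ldots,Y^{(N-1)})$, the rough integral comes from sewing $A_{s,t}:=\sum_{k=0}^{N-1} Y^{(k)}_s\,\mbf X^{(k+1)}_{s,t}$, and the defect estimate reduces to Besov-product bounds between Gubinelli derivatives and iterated increments of $\mbf X$. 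A one-dimensional chain rule (available here because $\alpha p>1$ places $B^\alpha_{p,q}$ inside $C^{\alpha-1/p}$ via Proposition~\ref{P:Besovembedding}, permitting pointwise evaluation and Taylor expansion) provides composition with $f\in C^{N+\gamma}$ in the controlled-Besov category. Local well-posedness of \eqref{equ:intro2} on a short interval $[0,T_\ast]$ then follows from a standard contraction on the controlled path space, and the locally Lipschitz dependence on $\mbf X$ — the Lyons--It\^o map — is obtained by running the same contraction on the difference equation, the driver entering linearly up to lower-order cross terms. Concatenation extends the flow to any compact interval.

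\textbf{Main obstacle.} Two points should carry essentially all of the real-analytic weight. First, the Besov sewing lemma must be stated and proved so that the sewing map lands in \emph{exactly} the input space $B^\alpha_{p,q}$; the interplay of $(\alpha,p,q)$ through the sub-additivity defect is far more delicate than in the H\"older case, where everything collapses to a pointwise supremum, and the technical depth of this step is signalled by the paper's dedicated appendix. Second, unlike H\"older seminorms, Besov seminorms do not in general tend to zero as the interval shrinks, so the classical device of making the contraction constant small by shortening time is unavailable; one must instead exploit absolute continuity of the Besov seminorm as a set function on subintervals — a consequence of the defining Littlewood--Paley / modulus-of-continuity series — or work with a localized seminorm adapted to $\mbf X$. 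These two issues, together with the bookkeeping of higher Gubinelli derivatives at level $N\geq 3$, are where I expect the actual work to live; once they are resolved, the passage from local to global solutions and from existence to continuity of the It\^o--Lyons map is essentially routine.
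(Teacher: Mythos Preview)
Your overall architecture matches the paper's: Besov sewing (Theorems \ref{T:Besovsewing}--\ref{T:Besovsewingcontinuous}), stability of controlled paths under composition (Proposition \ref{P:composecontrolled}), and a Picard contraction on $\mathscr B^\alpha_{pq,\mbf X}$, iterated over subintervals. So the strategy is right.

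Two points deserve correction. First, your diagnosis of the ``second obstacle'' is off. The paper does obtain smallness from shrinking the time interval, and in the most classical way: the sewing remainder $\mathscr R A$ lives in a space of strictly higher regularity (e.g.\ $\mbb B^{3\alpha}_{p/3,q/3}$ in the level-$2$ case), and the crucial step is to re-embed it into $\mbb B^{\alpha}_{p,q}$. This is done not by any ``absolute continuity'' of the Besov seminorm, but by combining the H\"older embedding (Proposition \ref{P:BBesovembedding}, which needs $\alpha>1/p$ and a pointwise control on $\delta A$) with the elementary interpolation of Lemma \ref{L:interpolate}. That interpolation produces an explicit factor $T^{\gamma-1/p_2-\alpha+1/p_1}>0$ (see \eqref{Iboundalt}, \eqref{controlledintegralpathremainder}, and the proofs of Theorems \ref{T:Youngeq}, \ref{T:RDE}), which is exactly what drives the contraction. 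The real difficulty you should flag is the one the paper itself highlights: the germ $A$ is a product, so its integrability index $p_2$ is worse than $p_1=p$; closing the loop requires recovering the lost integrability via the H\"older embedding, and this is the content of Theorem \ref{T:Besovsewingcontinuous}. Second, the appendix is not about sewing at all; it supplies the martingale BDG inequalities in the Besov scale used for the stochastic examples in Section \ref{sec:roughpaths}.
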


Moreover, we succeed in solving \eqref{equ:intro2} for Besov driving signals for which the Besov-variation embedding is too crude to allow for the use of the variation techniques, even in the Young regime. For example, we may treat $X \in B^{1/2}_{pq}$, $q \le 2 < p$, with Young integration (Theorem \ref{T:Youngeq}). On the other hand, the variation embedding \cite[Proposition 4.1(3)]{chong2020characterization} gives $X \in \mcl V^2$, which falls outside of the Young regime in the variation setting, and indeed, there exists $X \in B^{1/2}_{pq}$ with $q \le 2 < p$ such that $X$ does not belong to $\mcl V^r$ for any $r \in [1,2)$ (see Proposition \ref{P:strictvariation}).

%again the H\"older and variation cases are classical and discussed in detailed e.g in \cite{friz2010multidimensional}, in \cite{friz2020course} th
%(The latter also makes sense in presence of jumps in which case one needs to distinguish between a forward and geometric interpretation of (\ref{equ:intro1}); this is now rather well understood,  cf. \cite{williams2001path, friz2018differential,chevyrev2019general}, and unrelated to this work.)

%lead to a modelling question concerning the meaning of (\ref{})  leading to  unrelated to the results of this work.) 
%
%which leads to different interpretations (forward vs. geometric); a phenomena now well-understood,
%which requires one to distinguish between a forward and geometric interpretation, also in the Young case and even for $\alpha =1$, as e.g. pointed out in \cite{williams2001path}.
%
%
%We note that such differential equations with jumps have been treated in 

\begin{figure}
\centering
\includegraphics[scale=0.7]{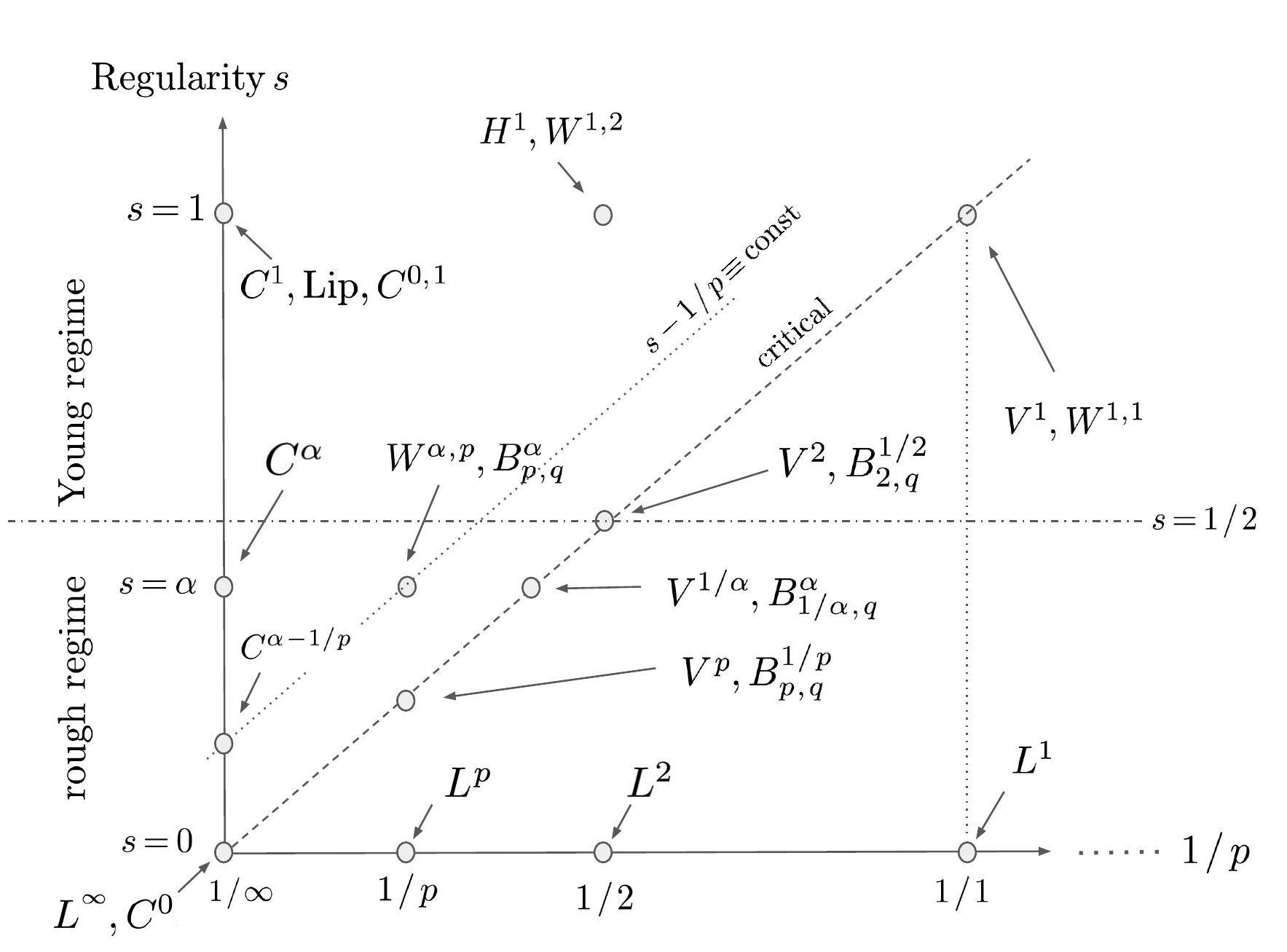}
\caption{{\em Type diagram for rough path spaces.} Note monotonicity with respect to both the regularity and - in view of the compact domain $[0,T]$ - integrability parameters. The dotted line helps to visualize the {\em Besov--H\"older embedding}; the dashed line hints at a close connection between variation and {\em critical} Besov spaces (cf. Section \ref{sec:criticalBesov}).}
%Regularity $s \le 1$ is plotted against inverse integrability $1/p > 0$. Rough analysis come into play when $s \le 1/2$ and requires an enhanced state space, depending on the level of roughness $[1/s]$. The resulting H\"older - and variation (rough path) spaces, $C^\alpha$ and $V^p$, are classical in the theory. Spaces are monoton with respect with respect to the regularity and - in view of compact domain $[0,T]$ - the integrability parameter. The dotted line helps to visualize {\em Besov-H\"older embedding}; the dashed line hints on a close connection between variation and {\em critical Besov spaces}.
%}
\label{fig:Tao}
\end{figure}
%%% https://docs.google.com/presentation/d/1Nsir3M5C3eeW6GoacPV0PKr2CMwb-_jmCSk1tygTGEQ/edit?usp=sharing

%The case of H\"older and $1/\alpha$-variation regularity are classical, cf, \cite{friz2020course} and the reference therein, closely related to the 
%Sobolev scale $X\in W^{\alpha,p}$. One has $W^{\alpha,p} \subset V^{1/\alpha}, p \in (1/\alpha, \infty]$, see \cite{friz2006embedding}, valid also in a non-linear setting that covers rough path spaces; the extension to Besov spaces $B^\alpha_{pq}$ was given in  \cite{chong2020characterization}. \todo{Check}

Rough differential equations in the Besov scale with $\alpha > 1/3$, $p > 3$, and $q \ge 1$ were studied via paracontrolled distributions in \cite{proemel2016rough}. (The authors comment in detail on the difficulties of this approach for general $\alpha > 0$).
In \cite{friz2018rough}, results are obtained for RDEs in a Besov--Nikolskii type scale, although the interpolation of non-linear rough paths spaces of H\"older and variation used therein fails to yield the precise estimates in the Nikolski scale $N^{\alpha,p} \equiv B^\alpha_{p,\infty}$. Further progress on RDEs in the Besov--Sobolev scale $B^\alpha_{p,p} = W^{\alpha,p}$, notably existence and uniqueness, is made in \cite{liu2020sobolev}, but estimates of the solution map in terms of the correct Besov-Sobolev norm appeared as beyond reach of that paper's approach \cite[Remark 5.3]{liu2020sobolev}.
Theorem \ref{thm:mainIntro} essentially completes this line of investigation. 

\subsection{Sewing in the Besov scale} Our method differs from all the aforementioned works, in that we incorporate the Besov scale from the very beginning of the analysis, at the level of {\em sewing}.
We recall that sewing \cite{feyel2006curvilinear, gubinelli2004controlling,feyel2008noncommutative} is a real analysis method that not only underlies rough integration, but has become a most versatile tool in the field of rough analysis. It yields a generalized integration map
\[
	\left( A_{st} \right)_{0 \le s \le t \le T} \mapsto \left( \int_s^t A_{r,r+dr} \right)_{0 \le s \le t \le T}
\]
for appropriate two-parameter maps $A$, and also provides a precise estimate for the error
\[
	\int_s^t A_{r,r+dr} - A_{st}.
\]
A central contribution of this paper (Theorems \ref{T:Besovsewing}, \ref{T:Besovsewingendpoint}, and \ref{T:Besovsewingcontinuous}) is to generalize the sewing result to allow for two-parameter maps $A$ measured with Besov type regularity (see Definition \ref{D:Besovtwoparameter}): the map $A$ is taken to satisfy
\begin{equation}\label{introBesovA}
	\left[ \int_0^T \left(\frac{ \sup_{0 \le h \le \tau} \sup_{0 \le \theta \le 1} \nor{A_{\cdot,\cdot + h} - A_{\cdot,\cdot + \theta h} - A_{\cdot + \theta h, \cdot + h} }{L^p([0,T-h]} }{\tau^\gamma} \right)^q \frac{d \tau}{\tau} \right]^{1/q} < \oo
\end{equation}
for sufficient parameters $\gamma,p,q > 0$ (with the obvious modification if $q = \oo$).

When $p = q = \oo$, the requirement \eqref{introBesovA} reads as the standard H\"older type condition
\[
	\sup_{0 \le s < u < t \le T} \frac{ |A_{st} - A_{su} - A_{ut} |}{|t-s|^\gamma} < \oo,
\]
and the sewing procedure can be carried out as long as $\gamma > 1$. In the Besov setting, we similarly require that $\gamma > 1$, in addition to $\gamma > \frac{1}{p}$. This secondary condition is more than just technical, and it turns out, in the context of solving \eqref{equ:intro2}, to rule out exactly those regimes of Besov regularity that allow for \emph{jump discontinuities}. Indeed, this is to be expected, in view of the analysis in \cite{friz2018differential}, which explains that the presence of jumps requires additional augmented information.

We note, however, that more flexibility is allowed in the regularity parameter $\gamma$ if the secondary integration parameter $q$ is tuned sufficiently small (thus strengthening the condition \eqref{introBesovA}). In particular, if $0 < q \le 1 \wedge p$, then the sewing map can be constructed even if $\gamma = 1 \vee \frac{1}{p}$. One interesting application is the refinement in interpreting multiplication in Besov spaces. In particular, in Theorem \ref{T:YoungBesov} below, the Young integral
\[
	(f,g) \mapsto \int_0^\cdot f_r dg_r
\]
may be defined for $f$ and $g$ belonging to critical Besov spaces containing possibly discontinuous functions, for example, $f, g \in B^{1/2}_{2,2}$ (see Remark \ref{R:Youngintegrals}).

A first major difficulty faced in adapting sewing to the Besov setting is the a priori impossibility of point evaluation, while sewing, which is based on the Riemann-sum type limit
\begin{equation}\label{pointwiseRiemann}
	\int_s^t A_{r,r+dr} := \lim_{\norm{P} \to 0} \sum_{[u,v] \in P} A_{uv}, \quad P \text{ a partition of } [s,t],
\end{equation}
has an inherently pointwise character. We work around this challenge by retooling the viewpoint of Riemann sum: for a fixed partition $P$ of the unit interval $[0,1]$, we instead consider the map
\[
	(s,t) \mapsto \sum_{[u,v] \in P} A_{s + u(t-s), s + v(t-s)}
\]
whose limit in the generalized Besov metric, as $\norm{P} \to 0$, may be more easily analyzed.

Another obstacle to overcome is that Besov spaces do not (a priori) form an algebra, which complicates the task of using the Besov sewing result to construct a unique fixed point to solve RDEs. More precisely, the technique for forming solutions of \eqref{equ:intro2} is to apply the Besov sewing procedure to maps of the form
\[
	A_{st} := f_0(Y_s) (t-s) + \sum_{i=1}^n f_i(Y_s) (X^i_t - X^i_s) + \text{ (higher order terms) },
\]
yet, in the multiplication of $f(Y)$ with $X$, integrability may be lost. A fine interplay with embeddings is required to ``close the loop''. This requires not only classical embeddings as in Proposition \ref{P:Besovembedding}, but also a generalization to two parameter maps belonging to $\mbb B^\alpha_{pq}$ (Proposition \ref{P:BBesovembedding}).

Besov sewing is likely to prove interesting in its own right. Applications of sewing range from RDEs, to rough partial differential equations \cite{GT10, andris2019hoermander}, mean-field RDEs  \cite{bailleul2020solving}, to the analysis of level sets in the Heisenberg group \cite{trevisan2018rough}, and even as a
most effective replacement of It\^o's lemma when put in a martingale context: L\^e's {\em stochastic sewing} \cite{le2020stochastic}. Besov sewing, and possible ramifications thereof along the said extension, is thus likely to be of interest beyond the precise application presented in this work. We finally note that the works \cite{hairer2017reconstruction, singh2018elementary} in the context of regularity structures suggest higher-dimensional generalization of sewing in the spirit of \cite{caravenna2020hairer}, but such investigations are not the purpose of this paper.
%higher dimensional extensions of Besov sewing are not the purpose of this work, though suggest to possibily of we suspect further developments are possible in the spirit of. (See also for related works on Besov construction in the context regularity structures.) %best seen at the level of Besov rough integration.

\subsection{Stochastic processes as Besov rough paths}

As always, (multdimensional) Brownian motion, with iterated integrals in It\^o or Stratonovich sense, is the prototypical example of a (level-$2$) rough path, a.k.a.\ Brownian rough path. Its precise ``$(1/2, \infty-, \infty)$'' regularity improves the standard (rough path) H\"older regularity $(1/2-,\infty,\infty)$. To the best of our knowledge, and despite several works on Besov rough paths, this result (Theorem \ref{thm:BMbesovrough}), despite its fundamental character,  appears to be new. (Our only explanation for this is that the original proof of the Brownian motion case \cite{MR1277166} is based on Haar wavelets and it does not easily extend to rough paths.) The argument also extends to fractional Brownian motion, as we briefly point out. (Although this is not pursued here, the works \cite{friz2016jain, kerkyacharian2018regularity} make us confident that a Gaussian rough path theory in the Besov scale is possible, which somewhat interpolates between the H\"older and variational theory introduced in \cite{friz2010differential}.)
%We have refraining from pushing the theory of Gaussian rough path \cite{} into the Besov regime.)

\medskip

Another important class of Besov rough paths arises from semimartingales %rough paths 
\cite{coutin2005semi,friz2008burkholder,liu2018examples,MR3909973}. The key issue here is to handle the local martingale part, which can be done in quantitative way employing ideas from harmonic analysis, developed in a series of papers \cite{friz2008burkholder,MR3909973,MR4003122,arxiv:2008.08897} with focus on $p$-variation (rough path) metrics. We formulate as Theorem \ref{thm:Martbesovrough} the corresponding statements in the Besov scale, relying crucially on the material of Appendix \ref{appendix}, kindly supplied to us by Pavel Zorin--Kranich.

Semimartingales (and more generally Dirichlet processes) are obtained from a local martingale $M$ by adding some path $V \in B^{\alpha'}_{p',q'} \subset V^{1/\alpha'}$, provided $\alpha' - 1/p' > 0$.
This requires understanding the integrals $\int \delta M \otimes d V, \int \delta V \otimes d M, \int \delta V \otimes d V$ in the $2$-parameter Besov scale, which is possible using the analytic estimate given in Section \ref{sec:YI}.
 
 \medskip
 
 For the sake of completeness we note that Besov regularity of (Feller) of Markov processes has been studied by several authors, e.g. \cite{herren1997levy, schilling1997feller}, also \cite{fageot2017besov} for a recent contribution. Studying the Besov rough path regularity of
 such processes is not the purpose of this paper. Note however that many Feller diffusions can be constructed as It\^o SDE solutions, which coincide with the solution provided by rough path theory, with It\^o Brownian rough driver, whenever both theories apply (see e.g. \cite[Sec. 9.1]{friz2020course}). It is then clear that Besov regularity results for RDE solutions, as supplied in this paper, have immediate implications for the Besov regularity of so obtained Markov processes.

\subsection{Notation}

Throughout, fix a finite time horizon $T > 0$ and set $\Delta_d(s,t) = \left\{ (r_1,r_2,\ldots, r_d) \in [s,t]^d : r_1 \le r_2 \le \cdots \le r_d \right\}$, 
given $s < t$ in $[0,T]$ and some integer $d$.  %= 1,2,\ldots$, we 
 Given $f: [0,T] \to \RR^m$, we define $\delta f: \Delta_2(0,T) \to \RR^m$ by $\delta f_{st} := f_t - f_s$, $(s,t) \in \Delta_2(0,T)$, and, if $A: \Delta_2(0,T) \to \RR^m$, we define $\delta A: \Delta_3(0,T) \to \RR^m$ by $\delta A_{sut} := A_{st} - A_{su} - A_{ut}$ for $(s,u,t) \in \Delta_3(0,T)$. Note that, for $f: [0,T] \to \RR^m$, $\delta(\delta f) \equiv 0)$, and, for $A: \Delta_2(0,T) \to \RR^m$, if $\delta A \equiv 0$, then $A = \delta f$ for some path $f$ (see Lemma \ref{L:additive} below for an ``almost-everywhere'' version of this statement).

With slight abuse of notation, a map $f: [0,T] \to \RR^m$ is also understood as a function on $\Delta_2(0,T)$ by writing $f_{st} = f_s$ for $(s,t) \in \Delta_2(0,T)$. Then, for instance, if $f,g: [0,T] \to \RR^m$, the notation $f \delta g$ means $(f\delta g)_{st} = f_s \delta g_{st}$.

\subsection*{Acknowledgments}
%\noindent{\bf  .} 
PKF has received funding from the European Research Council (ERC) under
the European Union's Horizon 2020 research and innovation program (grant agreement No. 683164)
and the DFG Research Unit FOR 2402. BS acknowledges funding from the National Science Foundation Mathematical Sciences Postdoctoral Research Fellowship under Grant Number DMS-1902658 as well as a Dirichlet Visiting Fellowship from the Berlin Mathematical School.
PZK was supported in part by DFG (German Research Foundation) -- EXC-2047/1 -- 390685813 (Hausdorff Center for Mathematics).

%The following relation, whose proof is a simple induction argument, is often useful.
%
%\begin{lemma}\label{L:Ndelta}
%	Fix $(s,t) \in \Delta_2(0,T)$ and $s = r_0 < r_1 < r_2 < \cdots < r_N = t$. Then
%	\[
%		A_{st} - \sum_{i=1}^N A_{r_{i-1}, r_i} = \sum_{i=1}^{N-1} \delta A_{r_{i-1}, r_i,t}.
%	\]
%\end{lemma}

%A function $\omega: [0,\oo) \to [0,\oo)$ is called a Dini modulus if
%\begin{equation}\label{Dini}
%	\omega \text{ is non-increasing, } \lim_{\tau \to 0^+} \omega(\tau) = 0, \text{ and }
%	\int_0^T \omega(\tau) \frac{d\tau}{\tau} < \oo \quad \text{for all }T > 0.
%\end{equation}
%We occasionally require that such moduli satisfy, in addition,
%\begin{equation}\label{compatibleDini}
%	\int_0^r \omega(\tau) \frac{d\tau}{\tau} \le K \omega(r) \quad \text{for some } K > 0 \text{ and all } r > 0.
%\end{equation}
%Note that, for any $0 < \lambda < 1$, the integral \eqref{compatibleDini} is equivalent, up to a proportionally constant depending on $\lambda$, to $\sum_{n=1}^\oo \omega(\lambda^n r)$.
%
%For example, \eqref{Dini} and \eqref{compatibleDini} are both satisfied if, for small $\tau > 0$,
%\[
%	\omega(\tau) = \tau^\alpha |\log \tau|^\beta \quad \text{and } \alpha, \beta \ge 0,
%\]
%with $K$ depending only on $\alpha$ and $\beta$.

\section{Spaces of Besov type}

\subsection{Metric space valued-functions}

Given a metric space $(E,d)$, measurable $f: [0,T] \to E$, $p \in (0,\oo]$, and $\tau \in [0,T]$, we set, with the usual modification when $p= \infty$, 
\begin{equation}\label{Lpmodulus}
	\omega_p(f,\tau) := \sup_{0 \le h \le \tau} \left[ \int_0^{T-h}  d(f_s,f_{s+h})^p ds \right]^{1/p}.
\end{equation}

\begin{definition}\label{D:Besov}
	Let $\alpha \in (0,1)$ and $p,q \in (0,\oo]$. We say $f \in B^\alpha_{pq}([0,T];E)$ if, for some (and therefore every) $x_0 \in E$, $ d(f,x_0) \in L^p([0,T])$, and 
	\[
		[f]_{B^\alpha_{pq}([0,T])} := \left[ \int_0^T \left( \frac{\omega_p(f,\tau)}{\tau^\alpha}\right)^q \frac{d\tau}{\tau} \right]^{1/q} < \oo.
	\]
	Given a non-decreasing function $\omega: [0,\oo) \to [0,\oo)$ that satisfies $\lim_{r\to0^+} \omega(r) =0$, we say $f \in B^\omega_{pq}([0,T];E)$ if $d(f,x_0) \in L^p([0,T])$ and
	\[
		[f]_{B^\omega_{pq}([0,T])} := \left[ \int_0^T \left( \frac{\omega_p(f,\tau)}{\omega(\tau)}\right)^q \frac{d\tau}{\tau} \right]^{1/q} < \oo;
	\]
	in both cases, the usual modification is made for $q=\infty$.
\end{definition}

\begin{lemma}\label{L:gammalarger1}
	Assume $f: [0,T] \to E$ is measurable and, for some $0 < p \le \oo$,
	\[
		\lim_{\tau \to 0}  \frac{\omega_p(f,\tau)}{\tau^{1\vee 1/p}} =0.
	\]
	Then $f$ has a version which is constant on $[0,T]$.
\end{lemma}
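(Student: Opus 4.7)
The plan is to show, for every $L \in (0, T)$, that $f_{s+L} = f_s$ for almost every $s \in [0, T-L]$, and then derive the constancy of $f$ via Fubini's theorem. The key estimate is a telescoping inequality: for any $N \in \NN$ and $s \in [0, T - L]$, the triangle inequality in $(E,d)$ gives
\[
d(f_s, f_{s+L}) \le \sum_{i=0}^{N-1} d(f_{s + iL/N}, f_{s + (i+1)L/N}),
\]
and, by translation invariance of Lebesgue measure, the $L^p([0, T-L])$ (quasi-)norm of each summand is bounded by $\omega_p(f, L/N)$.

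For $p \in [1, \infty]$, Minkowski's inequality yields $\norm{d(f(\cdot), f(\cdot + L))}_{L^p} \le N \, \omega_p(f, L/N)$, while for $p \in (0, 1)$, the subadditivity of $t \mapsto t^p$ gives the analogous $p$-th power version $\norm{d(f(\cdot), f(\cdot + L))}_{L^p}^p \le N \, \omega_p(f, L/N)^p$. The hypothesis supplies, for any $\varepsilon > 0$, some $\tau_0 > 0$ with $\omega_p(f, \tau) \le \varepsilon \tau^{1 \vee 1/p}$ for $\tau \le \tau_0$; choosing $N$ so that $L/N \le \tau_0$, the right-hand side is bounded by $\varepsilon L$ when $p \ge 1$ and by $\varepsilon^p L$ when $p < 1$. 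Since $\varepsilon > 0$ is arbitrary, $d(f_s, f_{s+L}) = 0$ for a.e.\ $s \in [0, T-L]$.

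By Tonelli's theorem, the set $\{(s, t) \in [0, T]^2 : d(f_s, f_t) > 0\}$ has two-dimensional Lebesgue measure zero — each lag slice $\{s : d(f_s, f_{s+L}) > 0\}$ is null for $L \in (0, T)$, and the $L < 0$ case follows by symmetry. Consequently there exists $t_0 \in [0, T]$ with $d(f_s, f_{t_0}) = 0$ for a.e.\ $s$, and the constant function $s \mapsto f_{t_0}$ is the desired version of $f$. The main point to track is the matching of the exponent $1 \vee 1/p$ with the $L^p$-scaling of the telescoped sum — precisely the threshold at which $N \omega_p(f, L/N) \to 0$ as $N \to \infty$ (with the analogous statement for $p < 1$); the metric-space setting introduces no extra difficulty beyond applying the triangle inequality inside the $L^p$ (quasi-)norm.
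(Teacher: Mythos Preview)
The proposal is correct and takes essentially the same approach as the paper: both telescope the increment $d(f_s,f_{s+L})$ into $N$ pieces of step $L/N$, apply Minkowski (for $p\ge 1$) or $p$-th power subadditivity (for $0<p<1$) to bound the $L^p$-norm by $N\,\omega_p(f,L/N)$ or $N\,\omega_p(f,L/N)^p$, use the hypothesis to send this to zero as $N\to\infty$, and then conclude via Fubini/Tonelli that some fixed point agrees with $f$ almost everywhere. The only cosmetic difference is that the paper phrases the limit via the function $\rho(\tau)=\omega_p(f,\tau)\tau^{-(1\vee 1/p)}$ while you work with an $\varepsilon$--$\tau_0$ formulation.
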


\begin{remark}\label{R:gammalarger1}
	The function $f$ satisfies the assumptions of Lemma \ref{L:gammalarger1} if, for some $0 < q \le \oo$ and $\gamma > 1 \vee \frac{1}{p}$,
	\[
		\int_0^T \left( \frac{\omega_p(f,\tau)}{\tau^{\gamma}}\right)^q \frac{d\tau}{\tau} < \oo.
	\]
	Thus, Definition \ref{D:Besov} is vacuous once $\alpha > 1 \vee \frac{1}{p}$.
\end{remark}

\begin{proof}[Proof of Lemma \ref{L:gammalarger1}]
	For $\tau \in [0,T]$, define $\rho(\tau) := \omega_p(f,\tau) \tau^{- \left( 1 \vee \frac{1}{p} \right) }$. Then, for all $h \in [0,T]$, $\int_0^{T-h} d(f_t,f_{t+h})^p dt \lesssim_p \rho(h)^p h^{1 \vee p}$. Given $t \in [0,T]$ and $h \in [0,T-t]$, we may write
	\[
		d(f_t,f_{t+h}) \le \sum_{k=1}^n d\left( f_{t + (k-1)h/n}, f_{t+kh/n} \right).
	\]
	If $p \ge 1$, we then have
	\[
		\left[\int_0^{T-h} d(f_t,f_{t+h})^p dt \right]^{1/p} \le \sum_{k=1}^n \left[\int_0^{T-h} d\left( f_{t + (k-1)h/n}, f_{t+kh/n} \right)^p dt \right]^{1/p} \lesssim_p \rho\left( \frac{h}{n} \right) h \xrightarrow{n \to \oo} 0,
	\]
	while if $0 < p < 1$,
	\[
		\int_0^{T-h} d(f_t,f_{t+h})^p dt \le \sum_{k=1}^n\int_0^{T-h} d\left( f_{t + (k-1)h/n}, f_{t+kh/n} \right)^p dt \lesssim n \rho\left( \frac{h}{n} \right)^p h \xrightarrow{n \to \oo} 0.
	\]
	In either case, we conclude that, for all $h \in [0,T]$, $d(f_t,f_{t+h}) = 0$ for Lebesgue almost-every $t \in [0,T-h]$. By Fubini's theorem, this implies that there exists $s \in [0,T]$ such that $d(f_s,f_t) = 0$ for Lebesgue almost every $t \in [0,T]$, and we conclude.
\end{proof}

%A Heaviside step function, centered at any interior point of $[0,T]$, belongs to $B^{1/p}_{p\oo}$ for all $p \in (0,\oo)$. Fractional Brownian motion has sample path a.s. in $B^H_{p,\infty}([0,T])$, any $p < \oo$, any Hurst paremeter $H \in (0,1)$, with $H = 1/2$ for Brownian motion.
%Levy processes of $\alpha$-stable type, $\alpha \in [1,2)$ are know to have a.s $B^{1/\alpha}_{p,\infty}([0,T])$ regularity for any $p \in [1,\alpha)$. Many extensions
%to more general Gaussian and Markov processes have been discussed in the literature. 
%\todo[inline]{Including \url{https://link.springer.com/article/10.1023/A:1017944015052} ... needs some googling
%}

\begin{remark}\label{R:Bequivalance}
	The characterization of Besov spaces given in Definition \ref{D:Besov} is useful for our purposes, since it reflects that Besov regularity is a generalization of H\"older regularity, wherein continuity is measured with the $L^p$-modulus $\omega_p$. Strictly speaking, and similar to $L^p = \mathcal{L}^p/\sim$ in $L^p$-theory with $f \sim g$ iff $f=g$ a.e. on $[0,T]$, the above defines a space of measurable functions $\mathcal{B}^\alpha_{pq} \subset \mathcal{L}^p$, the same quotienting procedure then yields $B^\alpha_{pq} \subset L^p$. We will not make this distinction explicit in what follows and in any case frequently can work with continuous representatives (though there is no a priori assumption in this regard!). In particular, for $0 < \alpha < 1$ we have
%	Indeed, note that $B^\alpha_{\oo,\oo} = C^{0,\alpha}$ for $0 < \alpha < 1$. 
$B^\alpha_{\oo,\oo} = C^{0,\alpha}$, the classical H\"older space of exponent $\alpha$.
	
	We often write $B^\alpha_{pq}([0,T]) = B^\alpha_{pq}([0,T];E)$ when it does not confusion. A common example for $E$ is $\RR^n$ with the usual Euclidean metric; other choices, such as certain Lie groups, also arise naturally in the context of geometric and higher order rough paths (see Section \ref{sec:roughpaths}). In the Euclidean case, Definition \ref{D:Besov} is equivalent to the standard one, in terms of Fourier analysis and Littlewood-Paley blocks, exactly when $0 < \alpha < 1$, $\frac{1}{1+\alpha} < p \le \oo$, and $0 < q \le \oo$ (see Triebels \cite{T}). However, in what follows, nothing is lost by considering the space from Definition \ref{D:Besov} even when $p \le \frac{1}{1+\alpha}$.
\end{remark}

Suppose $E$ is a Banach space. If $1 \le p,q \le \oo$, then $B^\alpha_{pq}([0,T];E)$ is itself a Banach space. Otherwise, $\nor{\cdot}{B^\alpha_{pq}}$ is only a quasi-norm, but $B^\alpha_{pq}([0,T])$ is still a complete metric space, with metrics defined as follows:

\begin{definition}\label{D:Besovspacemetric}
Assume $E$ is a Banach space, $\omega: [0,\oo) \to [0,\oo)$ is non-decreasing, $\lim_{r \to 0^+} \omega(r) = 0$, and $0 < p,q \le \oo$. Then $B^\omega_{pq}([0,T];E)$ is made into a complete metric space with the metric given by
\[
	\mathrm d_{B^\omega_{pq}([0,T];E)}(f,g) :=
	\begin{dcases}
		\nor{f-g}{L^p([0,T];E)}^p + \int_0^T \left( \frac{\omega_p(f-g,\tau)}{\omega(\tau)} \right)^q \frac{d\tau}{\tau} & \text{if } 0 < q \le p < 1, \\
		\nor{f-g}{L^p([0,T];E)} + \int_0^T \left( \frac{\omega_p(f-g,\tau)}{\omega(\tau)} \right)^q \frac{d\tau}{\tau} & \text{if } 0 < q < 1 \le p, \\
		\nor{f-g}{L^p([0,T];E)}^p + \left[ \int_0^T \left( \frac{\omega_p(f-g,\tau)}{\omega(\tau)} \right)^q \frac{d\tau}{\tau} \right]^{p/q} & \text{if } 0 < p < 1 \text{ and } q > p.
	\end{dcases}
\]
\end{definition}

We refer also to the work \cite{chong2020characterization} on nonlinear Besov spaces, in which a further, discrete characterization is given when $\alpha > 1/p$. This is precisely when every $f \in B^\alpha_{pq}([0,T];E)$ has a H\"older-continuous version (see Proposition \ref{P:Besovembedding} below). The definition given in \cite{chong2020characterization} differs from Definition \ref{D:Besov} in that the $L^p$-modulus of continuity $\omega_p(f,\tau)$ in \eqref{Lpmodulus} is replaced with $\int_0^{T-\tau} d(f_t,f_{t+\tau})^pdt$. As the next result shows, these two definitions are equivalent.

\begin{lemma}\label{L:discreteBesovnorm}
	Assume $(E,d)$ is a complete metric space, and let $0 < \alpha < 1$ and $0 < p,q \le \oo$. Then, for all $f \in B^\alpha_{pq}([0,T],E)$,
	\[
		[f]_{B^\alpha_{pq}([0,T],E)} \lesssim_{\alpha,p,q}
		\left[ \sum_{n=1}^\oo \left( (2^n/T)^\alpha \nor{ d(f_\cdot, f_{\cdot + 2^{-n}T}) }{L^p([0,T(1-2^{-n})])} \right)^q \right]^{1/q}
		\lesssim_{\alpha,p,q} C_2 [f]_{B^\alpha_{pq}([0,T],E)}.
	\]
\end{lemma}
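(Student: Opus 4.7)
The plan is to combine a dyadic decomposition in the scale parameter $\tau$ with a telescoping estimate for the $L^p$-modulus of continuity. Set $\Omega_p(h) := \nor{d(f_\cdot, f_{\cdot + h})}{L^p([0,T-h])}$ and $a_n := (2^n/T)^\alpha \Omega_p(2^{-n}T)$. Since $\omega_p(f,\cdot)$ is non-decreasing and $\tau^{-\alpha}$ is essentially constant on dyadic intervals, I first observe the two-sided equivalence
\[
	[f]_{B^\alpha_{pq}([0,T],E)}^q \sim \sum_{n \ge 0} \bigl((2^n/T)^\alpha \omega_p(f, 2^{-n}T)\bigr)^q,
\]
obtained by splitting $[0,T]$ into the intervals $[2^{-n-1}T, 2^{-n}T]$ (with an obvious adjustment of the index range at the endpoint, which is absorbed by the Hardy step below). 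Since $\omega_p(f, 2^{-n}T) \ge \Omega_p(2^{-n}T)$, the right-hand inequality of the lemma follows immediately; the content of the lemma is the reverse direction.

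For the harder direction, I need to bound $\omega_p(f, 2^{-n}T) = \sup_{0 \le h \le 2^{-n}T} \Omega_p(h)$ in terms of the dyadic increments $(\Omega_p(2^{-k}T))_{k > n}$. Fix $h \in (0, 2^{-n}T]$ and expand it in binary as $h = \sum_{k > n} \epsilon_k 2^{-k}T$, $\epsilon_k \in \{0,1\}$, with partial sums $h^{(K)} := \sum_{k=n+1}^K \epsilon_k 2^{-k}T$. The key step is the finite telescoping
\[
	d(f_s, f_{s + h^{(K)}}) \le \sum_{k=n+1}^K \epsilon_k \, d(f_{s + h^{(k-1)}}, f_{s + h^{(k)}}) \quad \text{for a.e. } s \in [0, T-h^{(K)}].
\]
Taking $L^p$-norms in $s$ via Minkowski (if $p \ge 1$) or via $(x+y)^p \le x^p + y^p$ (if $0 < p \le 1$), and using that $\nor{d(f_{\cdot + h^{(k-1)}}, f_{\cdot + h^{(k)}})}{L^p([0, T - h^{(K)}])} \le \Omega_p(2^{-k}T)$ whenever $\epsilon_k = 1$ (by translation invariance of Lebesgue measure), yields
\[
	\Omega_p(h^{(K)}) \le \sum_{k=n+1}^K \Omega_p(2^{-k}T) \;\; (p \ge 1), \qquad \Omega_p(h^{(K)})^p \le \sum_{k=n+1}^K \Omega_p(2^{-k}T)^p \;\; (0 < p \le 1).
\]
Passing to the limit $K \to \oo$ using $L^p$-continuity of translation (which holds whenever $d(f,x_0) \in L^p$) transfers the estimate to $\Omega_p(h)$, and then to $\omega_p(f, 2^{-n}T)$ upon taking the supremum in $h$.

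The final step is a Hardy-type $\ell^q$ estimate. In the case $p \ge 1$, the bound reads $(2^n/T)^\alpha \omega_p(f, 2^{-n}T) \lesssim \sum_{j \ge 1} 2^{-\alpha j} a_{n+j}$, i.e., a convolution of $a$ with the summable geometric kernel $(2^{-\alpha j})_{j \ge 1}$; Minkowski's inequality handles $q \ge 1$, and the subadditivity $(\sum x_i)^q \le \sum x_i^q$ handles $0 < q < 1$. When $0 < p < 1$, I will argue the same way on the $p$th-power analogue, splitting further according to whether $q/p \ge 1$ or $q/p < 1$. The main nuisance, modest but unavoidable, is precisely this four-fold case analysis forced by the absence of Minkowski's inequality in the quasi-Banach regimes $p < 1$ and $q < 1$; aside from this, the argument is bookkeeping.
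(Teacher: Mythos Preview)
Your proof is correct and follows essentially the same route as the paper: dyadic discretization of $\tau$, binary expansion of $h$ combined with the triangle inequality and $L^p$-(quasi)subadditivity, and then a Hardy-type $\ell^q$ estimate (stated in the paper as a separate lemma attributed to K\"onig); your organization is in fact a bit more direct, since the paper first selects auxiliary points $h_k$ where the supremum in $\omega_p$ is attained before expanding those. One small caveat: justifying the passage $K\to\infty$ via ``$L^p$-continuity of translation whenever $d(f,x_0)\in L^p$'' is not automatic for general metric-space targets, but the standing hypothesis $f\in B^\alpha_{pq}$ already forces $\omega_p(f,\tau)\to 0$, which is all that step needs (the paper's version has the mirror-image wrinkle of assuming the supremum is attained).
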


To prove Lemma \ref{L:discreteBesovnorm}, we require the following standard lemma. It follows immediately from subadditivity if $0 < q < 1$, and the case $q \ge 1$ is \cite[Lemma 1.d.20]{Konig}.

\begin{lemma}\label{L:konigseq}
	Let $\lambda > 1$ and $0 < q \le \oo$. Then, for all nonnegative $a_0, a_1, a_2, \ldots$,
	\[
		\sum_{n=1}^\oo \left(\lambda^n \sum_{j=n-1}^\oo a_j\right)^q \lesssim_{\lambda,q} \sum_{n=0}^\oo (\lambda^n a_n)^q.
	\]
\end{lemma}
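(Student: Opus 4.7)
The plan is to split into three regimes: the easy endpoint $q=\oo$, the range $0<q<1$ where subadditivity of $t\mapsto t^q$ does everything, and the range $1\le q<\oo$, where the result is (as the authors remark) essentially König's discrete Hardy inequality \cite[Lemma 1.d.20]{Konig}. In all three cases the common mechanism is the same: pass the outer weight $\lambda^n$ inside the tail sum, interchange the order of summation (Fubini for counting measure), and then evaluate a geometric sum in $n$, using $\lambda^{q\varepsilon}>1$ for some exponent $\varepsilon>0$ that depends on the regime.

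For $q=\oo$, writing $M=\sup_n \lambda^n a_n$, one has $a_j\le M\lambda^{-j}$, so
\[
\lambda^n\sum_{j=n-1}^{\oo}a_j\le M\lambda^n\sum_{j=n-1}^{\oo}\lambda^{-j}=\frac{M\lambda}{\lambda-1},
\]
which gives the claim. For $0<q<1$, the subadditivity $\bigl(\sum_j x_j\bigr)^q\le\sum_j x_j^q$ of the concave power yields
\[
\sum_{n=1}^{\oo}\Bigl(\lambda^n\sum_{j=n-1}^{\oo}a_j\Bigr)^{q}\le\sum_{n=1}^{\oo}\lambda^{nq}\sum_{j=n-1}^{\oo}a_j^q=\sum_{j=0}^{\oo}a_j^q\sum_{n=1}^{j+1}\lambda^{nq},
\]
and since $\lambda^q>1$ the inner geometric sum is bounded by $C_{\lambda,q}\lambda^{(j+1)q}\le C_{\lambda,q}'\lambda^{jq}$. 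One thus obtains the right-hand side $C_{\lambda,q}'\sum_j(\lambda^j a_j)^q$.

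For $1\le q<\oo$, I would either quote König directly or give the following self-contained argument. Fix $\varepsilon\in(0,1)$, let $q'=q/(q-1)$ (interpret $q'=\oo$ if $q=1$), and split $\lambda^{-j}=\lambda^{-j(1-\varepsilon)}\lambda^{-j\varepsilon}$. Hölder's inequality, together with the geometric estimate $\sum_{j\ge n-1}\lambda^{-j(1-\varepsilon)q'}\lesssim_{\lambda,q,\varepsilon}\lambda^{-n(1-\varepsilon)q'}$, gives
\[
\Bigl(\sum_{j=n-1}^{\oo}a_j\Bigr)^{q}=\Bigl(\sum_{j=n-1}^{\oo}\lambda^{-j(1-\varepsilon)}\cdot\lambda^{j(1-\varepsilon)}a_j\Bigr)^{q}\lesssim_{\lambda,q,\varepsilon}\lambda^{-nq(1-\varepsilon)}\sum_{j=n-1}^{\oo}\lambda^{jq(1-\varepsilon)}a_j^q.
\]
Multiplying by $\lambda^{nq}$, summing over $n\ge1$, and swapping the order of summation replaces the $n$-sum by $\sum_{n=1}^{j+1}\lambda^{nq\varepsilon}\lesssim_{\lambda,q,\varepsilon}\lambda^{jq\varepsilon}$, the geometric-series step being legal because $\lambda^{q\varepsilon}>1$. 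The surviving factor is $\lambda^{jq(1-\varepsilon)}\cdot\lambda^{jq\varepsilon}=\lambda^{jq}$, yielding $\sum_j(\lambda^j a_j)^q$ on the right.

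There is no real obstacle here; the only delicate point is bookkeeping with exponents, in particular the need to introduce a genuine positive gap $\varepsilon$ (in the $q\ge 1$ case) or to exploit $\lambda^q>1$ directly (in the $q<1$ case) so that the geometric sum in $n$ converges with the correct power of $\lambda^j$ left over. The constant produced depends only on $\lambda$ and $q$, as claimed.
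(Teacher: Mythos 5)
Your argument is correct. For $0<q<1$ it is exactly what the paper has in mind (subadditivity of $t\mapsto t^q$ plus Fubini and a geometric sum), and the $q=\infty$ case is the immediate tail estimate, as you note. The genuine difference is at $q\ge 1$: the paper simply cites \cite[Lemma 1.d.20]{Konig}, whereas you reprove that case from scratch via the classical Hölder trick — insert a splitting $1=\lambda^{-j(1-\eps)}\lambda^{j(1-\eps)}$, apply Hölder with exponents $(q,q')$, absorb the resulting $\lambda^{-nq(1-\eps)}$ against the outer $\lambda^{nq}$ to leave a surviving factor $\lambda^{nq\eps}$, and then swap the order of summation so that the geometric sum $\sum_{n=1}^{j+1}\lambda^{nq\eps}\lesssim_{\lambda,q,\eps}\lambda^{jq\eps}$ recombines with $\lambda^{jq(1-\eps)}$ to give $\lambda^{jq}$. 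This is essentially the standard proof of the discrete Hardy-type inequality underlying König's lemma, so nothing new is proved, but it does make the lemma self-contained, which is a mild gain. One small remark on presentation: for $q=1$ the ``Hölder with $q'=\infty$'' step is just the bound $\sum_j b_jc_j\le(\sup_j b_j)\sum_j c_j$, which works, but it is cleaner to observe that $q=1$ requires no Hölder at all — directly swapping the sums gives $\sum_{j\ge 0}a_j\sum_{n=1}^{j+1}\lambda^n\lesssim_\lambda\sum_j\lambda^j a_j$. With that cosmetic caveat, the proof is correct, the constants depend only on $\lambda$ and $q$ (via the auxiliary $\eps$, which can be fixed, say $\eps=1/2$), and the approach is a reasonable self-contained alternative to the citation.
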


\begin{proof}[Proof of Lemma \ref{L:discreteBesovnorm}]
	The inequality is invariant under scaling in $T$, so it suffices to consider $T = 1$. Given $f \in B^\alpha_{pq}([0,1],E)$, the right-hand inequality is established with the chain of inequalities
	\begin{align*}
		\left[ \sum_{n=1}^\oo \left(2^{n\alpha} \nor{ d(f_\cdot, f_{\cdot + 2^{-n}}) }{L^p([0,1-2^{-n}])} \right)^q \right]^{1/q}
		&\le \left[ \sum_{n=1}^\oo \left(2^{n\alpha} \omega_p(f,2^{-n}) \right)^q \right]^{1/q}\\
%		&= \frac{1}{(\log 2)^{1/q}} \left[ \sum_{n=1}^\oo \int_{2^{-n-1}}^{2^{-n}} \frac{d\tau}{\tau}\left(2^{n\alpha}\omega_p(f,2^{-n}) \right)^q \right]^{1/q}\\
		&\le \frac{1}{(\log 2)^{1/q}} \left[ \sum_{n=1}^\oo  \int_{2^{-n-1}}^{2^{-n}} \left(\frac {\omega_p(f,2\tau) }{\tau^\alpha} \right)^q \frac{d\tau}{\tau}\right]^{1/q}\\
		&= \frac{2^{\alpha}}{(\log 2)^{1/q}} \left[ \sum_{n=1}^\oo  \int_{2^{-n}}^{2^{-n+1}} \left(\frac {\omega_p(f,\tau) }{\tau^\alpha} \right)^q \frac{d\tau}{\tau}\right]^{1/q}\\
		&= \frac{2^{\alpha}}{(\log 2)^{1/q}} [f]_{B^\alpha_{pq}([0,1],E)}.
	\end{align*}
	
	To obtain the left-hand inequality, we first write
	\[
		 [f]_{B^\alpha_{pq}([0,T],E)}
		 \le 2^{\alpha}(\log 2)^{1/q} \left[ \sum_{n=1}^\oo \left( 2^{n\alpha} \omega_p(f,2^{-n}) \right)^q \right]^{1/q}.
	\]
	Set $m_0 := 0$, choose $h_1 \in [0,1/2]$ such that $\omega_p(f,1/2) = \nor{ d(f_\cdot, f_\cdot + h_1)}{L^p([0,1-h_1])}$, and let $m_1 = 2,3,\ldots$ be such that $2^{-m_1} < s \le 2^{-(m_1-1)}$. Then, for all $n = 1,2,\ldots, m_1 - 1$,
	\[
		\nor{d(f_\cdot,f_{\cdot + h_1})}{L^p([0,1-h_1])} = \omega_p(f,2^{-n}).
	\]
	Continuing inductively, we define sequences $1/2 \ge h_1 > h_2 > h_3 > \cdots \to 0$ and $2 \le m_1 < m_2 < \cdots \to \oo$ such that, for $k = 1,2,\ldots$, we have $2^{-m_k} < s_k \le 2^{-(m_k - 1)}$ and, for all $n = m_{k-1}, m_{k-1} + 2, \ldots, m_k - 1$,
	\[
		\nor{ d(f_\cdot,f_{\cdot + h_k})}{L^p([0,1-h_k])} = \omega_p(f,2^{-n}).
	\]
	We thus write
	\begin{align*}
		\sum_{n=1}^\oo \left( 2^{n\alpha} \omega_p(f,2^{-n})\right)^q 
		&=  \sum_{k=1}^\oo \sum_{n = m_{k-1}}^{m_k - 1} \left( 2^{n\alpha} \omega_p(f,2^{-n}) \right)^q\\
		&\le \frac{1}{2^{\alpha q} - 1} \sum_{k=1}^\oo \left( 2^{m_k \alpha} \nor{ d(f_\cdot, f_{\cdot + h_k} ) }{L^p([0,1-h_k])} \right)^q.
	\end{align*}
	We next take the dyadic expansion of $h_k$, expressed as
	\[
		h_k = \sum_{j = m_k - 1}^\oo \eps_j^k 2^j \quad \text{for } \eps_j^k \in \{0,1\},
	\]
	so that the triangle inequality gives
	\[
		 \nor{ d(f_\cdot, f_{\cdot + h_k} ) }{L^p([0,1-h_k])}
		 \le 
		 \begin{dcases}
		 \sum_{j= m_k - 1}^\oo \nor{ d(f_\cdot, f_{\cdot + 2^{-j}} ) }{L^p([0,1-2^{-j}])} &\text{for } 1 \le p \le \oo \text{ and}\\
		 \left(\sum_{j= m_k - 1}^\oo \nor{ d(f_\cdot, f_{\cdot + 2^{-j}} ) }{L^p([0,1-2^{-j}])}^p\right)^{1/p} & \text{for } 0 < p < 1.
		 \end{dcases}
	\]
	The proof is then finished with the use of Lemma \ref{L:konigseq}: if $p \ge 1$, then
	\begin{align*}
		 \sum_{k=1}^\oo \left( 2^{m_k \alpha} \nor{ d(f_\cdot, f_{\cdot + h_k} ) }{L^p([0,1-h_k])} \right)^q
		 &\le \sum_{k=1}^\oo \left( 2^{m_k \alpha}  \sum_{j= m_k - 1}^\oo \nor{ d(f_\cdot, f_{\cdot + 2^{-j}} ) }{L^p([0,1-2^{-j}])} \right)^q\\
		 &\le \sum_{n=1}^\oo \left( 2^{n\alpha} \sum_{j = n-1}^\oo \nor{ d(f_\cdot, f_{\cdot + 2^{-j}} ) }{L^p([0,1-2^{-j}])} \right)^q\\
		 &\lesssim_{\alpha,q } \sum_{n=1}^\oo \left( 2^{n\alpha} \nor{ d(f_\cdot, f_{\cdot + 2^{-n}} ) }{L^p([0,1-2^{-n}])} \right)^q,
	\end{align*}
	and, if $0 < p < 1$, 
	\begin{align*}
		 \sum_{k=1}^\oo \left( 2^{m_k \alpha} \nor{  d(f_\cdot, f_{\cdot + h_k} ) }{L^p([0,1-h_k])} \right)^q
		 &\le \sum_{k=1}^\oo \left( 2^{m_k \alpha p}  \sum_{j= m_k - 1}^\oo \nor{ d(f_\cdot, f_{\cdot + 2^{-j}} ) }{L^p([0,1-2^{-j}])}^p \right)^{q/p}\\
		 &\le \sum_{n=1}^\oo \left( 2^{n\alpha p} \sum_{j = n-1}^\oo \nor{ d(f_\cdot, f_{\cdot + 2^{-j}} ) }{L^p([0,1-2^{-j}])}^p \right)^{q/p}\\
		 &\lesssim_{\alpha,p,q}  \sum_{n=1}^\oo \left( 2^{n\alpha} \nor{ d(f_\cdot, f_{\cdot + 2^{-n}} ) }{L^p([0,1-2^{-n}])} \right)^q.
	\end{align*}
\end{proof}

We often work in the regime where $\alpha > 1/p$ (the extreme case being $p = \oo$), in which case every $f \in B^\alpha_{pq}([0,T];E)$ admits a H\"older-continuous version. 

\begin{proposition}\label{P:Besovembedding}
	Assume $(E,d)$ is a complete metric space, $0 < \alpha < 1$, $1/\alpha < p \le \oo$, and $0 < q \le \oo$. Then, for all $Y \in B^\alpha_{pq}([0,T],E)$,
	\[
		[Y]_{C^{\alpha-1/p}([0,T])} \lesssim_{\alpha,p,q} [Y]_{B^\alpha_{pq}([0,T])}.
	\]
\end{proposition}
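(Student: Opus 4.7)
The plan is to prove the embedding by a chaining argument, preceded by a reduction to the endpoint case $q = \infty$.

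\textbf{Reduction to $q = \infty$.} Since $\omega_p(Y, \cdot)$ is non-decreasing, for any $\tau_0 \in (0, T/2]$,
\[
\omega_p(Y, \tau_0)^q \int_{\tau_0}^{2\tau_0} \tau^{-\alpha q - 1}\,d\tau \le \int_{\tau_0}^{2\tau_0}\left(\frac{\omega_p(Y,\tau)}{\tau^\alpha}\right)^q \frac{d\tau}{\tau} \le [Y]_{B^\alpha_{pq}}^q.
\]
Evaluating the left integral yields $\omega_p(Y, \tau_0) \lesssim_{\alpha,q} \tau_0^\alpha\, [Y]_{B^\alpha_{pq}}$; the range $\tau_0 \in (T/2, T]$ is handled similarly, and for $q=\infty$ the bound is immediate. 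Hence it suffices to show: if $\omega_p(Y, \tau) \le M\tau^\alpha$ for all $\tau \in (0,T]$, then a version of $Y$ obeys $d(Y_s, Y_t) \lesssim_{\alpha,p} M|t-s|^{\alpha - 1/p}$.

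\textbf{Chaining.} Fix $s < t$ in $[0, T]$ (away from the right endpoint), let $h := t - s$ and $h_n := 2^{-n-1} h$, so that $\sum_{n \ge 0} h_n = h$. For a constant $K > 0$ to be chosen, Markov's inequality applied to $r \mapsto d(Y_r, Y_{r + h_n})$ gives
\[
|G_n^c| \le h_n/K^p, \qquad G_n := \{r \in [0, T - h_n] : d(Y_r, Y_{r + h_n}) \le K M h_n^{\alpha - 1/p}\}.
\]
Writing $H_n := \sum_{k < n} h_k$ and setting
\[
E := \bigcap_{n \ge 0} (G_n - H_n),
\]
a union bound produces $|E^c| \le \sum_n h_n/K^p = h/K^p$, so choosing $K^p > 4$ makes $E \cap [s, s + h/4]$ of positive measure. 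For any $r_* \in E$, the partial chain $r_*, r_* + H_1, r_* + H_2, \ldots$ telescopes:
\[
d(Y_{r_*}, Y_{r_* + H_N}) \le \sum_{n < N} K M h_n^{\alpha - 1/p} \lesssim M h^{\alpha - 1/p},
\]
uniformly in $N$, thanks to $\alpha > 1/p$ making the series converge.

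\textbf{Main obstacle.} The chaining only yields the Hölder bound for specific ``good'' pairs $(r_*, r_* + H_N)$, which must be extended to arbitrary $(s,t)$. Applying the argument to every sub-interval shows the bound holds on a dense subset of $\Delta_2(0, T)$, on which $Y$ is uniformly Hölder. One then defines a continuous version $\tilde Y_s := \lim_{r_n \to s,\, r_n \text{ good}} Y_{r_n}$; the Hölder control on the dense set guarantees the limit exists and is independent of the approximating sequence, $\tilde Y = Y$ almost everywhere, and $\tilde Y$ satisfies $d(\tilde Y_s, \tilde Y_t) \lesssim M |t-s|^{\alpha - 1/p}$ for all $s, t \in [0, T]$. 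The boundary cases where the chain would exit $[0, T]$ are handled by reversing the direction of the chain.
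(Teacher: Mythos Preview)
Your argument is correct and takes a genuinely different route from the paper. After the same reduction to $q=\infty$, the paper bounds the Campanato double average $\frac{1}{(2r)^2}\iint_{[t_0-r,t_0+r]^2} d(Y_s,Y_t)\,ds\,dt \lesssim [Y]_{B^\alpha_{pq}}\, r^{\alpha-1/p}$ via Fubini and H\"older, and then invokes a metric-space Campanato characterization of H\"older continuity (Proposition~\ref{P:generalCampanato}, itself a specialization of the two-parameter Proposition~\ref{P:twoparameterCampanato}, whose proof uses Lebesgue differentiation of the scalar maps $t\mapsto d(e,Y_t)$ for $e$ in a countable dense subset of $E$). Your Markov--chaining argument is more direct and elementary; the paper's route has the structural advantage that the Campanato machinery is developed anyway to obtain the two-parameter embedding Proposition~\ref{P:BBesovembedding} needed for Besov sewing, so Proposition~\ref{P:Besovembedding} then drops out as a corollary. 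One caveat: your ``Main obstacle'' paragraph is the thinnest step, since the good base point $r_*$ depends on the sub-interval and the controlled pairs $(r_*,r_*+H_N)$ do not a priori form $X\times X$ for a single set $X$, which is what is needed to define $\tilde Y$ unambiguously and check $\tilde Y=Y$ a.e.; in a general metric space (no averaging available) passing from your dense-pairs estimate to a full-measure $X$ on which $Y$ is uniformly H\"older requires an explicit argument---for instance, a matching backward chain from a good $r_*'$ near $t$, together with Borel--Cantelli over dyadic scales to produce a single null exceptional set---and this is precisely the place where the paper's approach leans on Lebesgue differentiation instead.
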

This result is classical when $E = \RR^m$; see for instance \cite{Simon,T}. The proof we give, which is based on the Campanato characterization of H\"older continuity, is delegated to subsection \ref{subsec:Besov2parameter}, where it is seen to follow from an analogous result for two-parameter spaces (see Proposition \ref{P:BBesovembedding}).

\begin{remark}\label{R:Besovsup}
	If $E$ is a Banach space, then, as a consequence of Proposition \ref{P:Besovembedding}, whenever $\alpha p > 1$, the (quasi-)norm $\nor{\cdot}{B^\alpha_{pq}}$ is equivalent to both
	\[
		Y \mapsto \nor{Y}{L^\oo([0,T])} + [Y]_{B^\alpha_{pq}([0,T])} \quad \text{and} \quad Y \mapsto |Y(0)| + [Y]_{B^\alpha_{pq}([0,T])},
	\]
	with proportionally constants depending on $T$ in addition to $\alpha$, $p$, and $q$. In particular, for fixed $y \in E$, the affine subspace
	\[
		B^\alpha_{pq}([0,T];E, y) := \left\{ Y \in B^\alpha_{pq}([0,T];E) : Y(0) = y \right\}
	\]
	is a complete metric space with the metric 
	\[
		(X,Y) \mapsto 
		\begin{dcases}
			[X - Y]_{B^\alpha_{pq}} & \text{if } q \ge 1 \text{ and}\\
			[X - Y]_{B^\alpha_{pq}}^q & \text{if } 0 < q < 1.
		\end{dcases}
	\]
	If $y = 0$ and $q \ge 1$, then $B^\alpha_{pq}([0,T];E,0)$ is a Banach space with the norm $[\cdot]_{B^\alpha_{pq}}$.
\end{remark}

When solving fixed-point problems for differential equations driven by Besov signals, we need the following result on composing regular functions with Besov paths.

\begin{lemma}\label{L:Besovcomposition}
	Assume $\delta \in (0,1]$, $\alpha \in (0,1)$, $p,q \in (0,\oo]$, $f \in C^\delta(\RR^m)$, and $Y \in B^\alpha_{pq}([0,T],\RR^m)$. Then
	\begin{equation}\label{fofBesov}
		[f(Y)]_{B^{\delta \alpha}_{p,q/\delta}} \le [f]_{C^\delta}T^{\frac{1-\delta}{p}} [Y]_{B^\alpha_{pq}}^\delta.
	\end{equation}
	If $\alpha > 1/p$, $f \in C^{1,\delta}$, and $\tilde Y \in B^\alpha_{pq}([0,T])$, then
	\begin{equation}\label{fofBesovdifference}
		\begin{split}
		[f(Y) - f(\tilde Y)]_{B^{\delta \alpha}_{p,q/\delta}} &\lesssim_{\alpha,p,q,T}  [Df]_{C^\delta} \left( [Y]_{B^\alpha_{pq}}^\delta + [\tilde Y]_{B^\alpha_{pq}}^\delta \right) |Y_0 - \tilde Y_0| \\
		&+ \nor{Df}{C^{1,\delta}}\left( 1 + [Y]_{B^\alpha_{pq}}^\delta + [\tilde Y]_{B^\alpha_{pq}}^\delta \right) [Y - \tilde Y]_{B^\alpha_{pq}}.
		\end{split}
	\end{equation}
\end{lemma}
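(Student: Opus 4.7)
The plan is to reduce both parts to pointwise Hölder estimates combined with an integral manipulation. For \eqref{fofBesov}, start from $|f(Y_{s+h}) - f(Y_s)| \le [f]_{C^\delta}|Y_{s+h} - Y_s|^\delta$ and apply Hölder's inequality in $s$ with conjugate exponents $1/\delta$ and $1/(1-\delta)$:
\[
\int_0^{T-h}|Y_{s+h} - Y_s|^{\delta p}\,ds \le (T-h)^{1-\delta}\left(\int_0^{T-h}|Y_{s+h} - Y_s|^p\,ds\right)^{\delta},
\]
so that $\omega_p(f(Y), h) \le [f]_{C^\delta}T^{(1-\delta)/p}\omega_p(Y,h)^\delta$. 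Raising both sides to the $q/\delta$-th power, dividing by $\tau^{\delta\alpha \cdot q/\delta} = \tau^{\alpha q}$, and integrating in $d\tau/\tau$ reduces the left-hand side to the definition of $[Y]_{B^\alpha_{pq}}^q$, yielding \eqref{fofBesov}.

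For \eqref{fofBesovdifference}, set $Z := Y - \tilde Y$ and use the fundamental theorem of calculus to write $f(Y_s) - f(\tilde Y_s) = g(s) Z_s$, $g(s) := \int_0^1 Df(\tilde Y_s + \theta Z_s)\,d\theta$. Decompose the increment as
\[
(f(Y) - f(\tilde Y))_{s+h} - (f(Y) - f(\tilde Y))_s = \underbrace{(g_{s+h} - g_s)\,Z_{s+h}}_{I_{s,h}} + \underbrace{g_s\,(Z_{s+h} - Z_s)}_{II_{s,h}},
\]
thereby separating the two contributions carried by $|Z_0|$ and $[Z]_{B^\alpha_{pq}}$ in the statement.

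For $I_{s,h}$, the $C^\delta$-regularity of $Df$ gives $|g_{s+h} - g_s| \lesssim [Df]_{C^\delta}(|Y_{s+h} - Y_s|^\delta + |\tilde Y_{s+h} - \tilde Y_s|^\delta)$, while Proposition \ref{P:Besovembedding} (since $\alpha > 1/p$) bounds $|Z_{s+h}| \le |Z_0| + \|Z - Z_0\|_\infty \lesssim_T |Z_0| + [Z]_{B^\alpha_{pq}}$. Repeating the Hölder manipulation from the first part then yields
\[
[I]_{B^{\delta\alpha}_{p,q/\delta}} \lesssim_T [Df]_{C^\delta}\bigl(|Z_0| + [Z]_{B^\alpha_{pq}}\bigr)\bigl([Y]_{B^\alpha_{pq}}^\delta + [\tilde Y]_{B^\alpha_{pq}}^\delta\bigr),
\]
which decomposes cleanly into the first term of \eqref{fofBesovdifference} and a piece of the second term.

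For $II_{s,h}$, the crude bound $|g_s| \le \|Df\|_\infty \le \|Df\|_{C^{1,\delta}}$ gives $[II]_{B^{\delta\alpha}_{p,q/\delta}} \le \|Df\|_{C^{1,\delta}}[Z]_{B^{\delta\alpha}_{p,q/\delta}}$. The main obstacle is then to establish the linear embedding $[Z]_{B^{\delta\alpha}_{p,q/\delta}} \lesssim_T [Z]_{B^\alpha_{pq}}$, which is not covered by the classical monotone Besov embedding since the integrability parameter strengthens from $q$ to $q/\delta$; a naive argument would introduce a spurious $|Z_0|$ dependence. Exploiting monotonicity of $\omega_p(Z, \cdot)$, I would bound $\omega_p(Z,\tau)^{q/\delta} \le \omega_p(Z,T)^{q(1-\delta)/\delta}\,\omega_p(Z,\tau)^q$, so that $[Z]_{B^{\delta\alpha}_{p,q/\delta}} \le \omega_p(Z,T)^{1-\delta}[Z]_{B^\alpha_{pq}}^\delta$. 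Crucially, $\omega_p(Z,T)$ depends only on the increments of $Z$, so the estimate $\omega_p(Z,T) \le T^{1/p}\sup_{s,t}|Z_t - Z_s| \lesssim_T [Z]_{B^\alpha_{pq}}$ (via Proposition \ref{P:Besovembedding}) holds with no $|Z_0|$ contribution, which is what matches the form of the claim. Combining the estimates for $I$ and $II$ yields \eqref{fofBesovdifference}.
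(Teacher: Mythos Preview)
Your proof is correct and follows essentially the same route as the paper: the same H\"older step for \eqref{fofBesov}, the same fundamental-theorem-of-calculus decomposition $(f(Y)-f(\tilde Y))_{t+h}-(f(Y)-f(\tilde Y))_t = I + II$ for \eqref{fofBesovdifference}, and the same use of $\|Y-\tilde Y\|_\infty \lesssim |Y_0-\tilde Y_0| + [Y-\tilde Y]_{B^\alpha_{pq}}$ via Proposition~\ref{P:Besovembedding}. One minor remark: your concern about the embedding $[Z]_{B^{\delta\alpha}_{p,q/\delta}} \lesssim_T [Z]_{B^\alpha_{pq}}$ is unfounded --- since $q/\delta \ge q$, the secondary parameter \emph{weakens} rather than strengthens, and the embedding follows directly from the chain $B^\alpha_{pq} \hookrightarrow B^{\delta\alpha}_{pq} \hookrightarrow B^{\delta\alpha}_{p,q/\delta}$ (the first by $\tau^{(1-\delta)\alpha q}\le T^{(1-\delta)\alpha q}$, the second by standard monotonicity in $q$); your interpolation argument via $\omega_p(Z,T)$ is nonetheless a perfectly valid alternative.
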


\begin{proof}
	The bound \eqref{fofBesov} is a consequence of H\"older's inequality. To prove \eqref{fofBesovdifference}, we first write, for $t \in [0,T]$,
	\[
		f(Y_t) - f(\tilde Y_t) = \int_0^1 Df\left( \tau Y_t + (1-\tau) \tilde Y_t\right) d\tau \left( Y_t - \tilde Y_t\right).
	\]
	Then, for $t \in [0,T-h]$,
	\begin{align*}
		\abs{ \left( f(Y) - f(\tilde Y)\right)_{t+h} - \left( f(Y) - f(\tilde Y)\right)_t }
		&\le \nor{Df}{\oo} \abs{ (Y-\tilde Y)_{t+h} - (Y - \tilde Y)_t} \\
		&+ \nor{Y - \tilde Y}{\oo} [Df]_{C^\delta} \left( |Y_{t+h} - Y_t|^\delta + |\tilde Y_{t+h} - \tilde Y_t|^\delta\right),
	\end{align*}
	and so
	\begin{align*}
		&\omega_p(f(Y) - f(\tilde Y),\tau)
		\le \nor{Df}{\oo} \omega_p(Y - \tilde Y,\tau) + \nor{Y - \tilde Y}{\oo} [Df]_{C^\delta} \left( \omega_{\delta p}(Y,\tau) + \omega_{\delta p}(\tilde Y,\tau)\right)\\
		&\lesssim_{\alpha,p,q} \nor{Df}{\oo} \omega_p(Y - \tilde Y,\tau) + \left( |Y_0 - \tilde Y_0| + [Y - \tilde Y]_{B^\alpha_{pq}}T^{\alpha - 1/p} \right) [Df]_{C^\delta} \left( \omega_{\delta p}(Y,\tau) + \omega_{\delta p}(\tilde Y,\tau)\right)\\
		&\le \nor{Df}{\oo} \omega_p(Y - \tilde Y,\tau) + \left( |Y_0 - \tilde Y_0| + [Y - \tilde Y]_{B^\alpha_{pq}}T^{\alpha - 1/p} \right) [Df]_{C^\delta} \left( \omega_{p}(Y,\tau)^\delta + \omega_{p}(\tilde Y,\tau)^\delta\right)T^{\frac{1-\delta}{p}}.
	\end{align*}
	We conclude that
	\begin{align*}
		&[f(Y) - f(\tilde Y)]_{B^{\delta\alpha}_{p,q/\delta}}
		\lesssim_{\alpha,p,q} \nor{Df}{\oo} [Y - \tilde Y]_{B^\alpha_{pq}}T^{(1-\delta)\alpha} \\
		&+ \left( |Y_0 - \tilde Y_0| + [Y - \tilde Y]_{B^\alpha_{pq}} T^{\alpha - 1/p}\right) [Df]_{C^\delta} \left( [Y]_{B^\alpha_{pq}}^\delta + [\tilde Y]_{B^\alpha_{pq}}^\delta \right) T^{\frac{1-\delta}{p}}\\
		&\lesssim_T [Df]_{C^\delta} \left( [Y]_{B^\alpha_{pq}}^\delta + [\tilde Y]_{B^\alpha_{pq}}^\delta \right)|Y_0 - \tilde Y_0| + \nor{Df}{C^{1,\delta}}\left( 1 + [Y]_{B^\alpha_{pq}}^\delta + [\tilde Y]_{B^\alpha_{pq}}^\delta \right)[Y - \tilde Y]_{B^\alpha_{pq}}.
	\end{align*}
\end{proof}

The regime $\alpha > 1/p$ also allows for embeddings into variation spaces.  For $1 \le p < \oo$, define
\[
	\mathcal{V}^p([0,T],E) := \left\{ f: [0,T] \to E : [f]_{\mathcal{V}^p} := \sup_{P} \left( \sum_{i=1}^N d(f_{t_{i-1}}, f_{t_i})^p\right)^{1/p} < \oo \right\},
\]
where the supremum is taken over partitions $P := \{ 0 = t_0 < t_1 < \cdots < t_N = T\}$ of $[0,T]$. %(note that, when $p = 1$, we have $V^1 = W^{1,1}$, and  the semi-norm of which gives the length of a path). 
For any $f \in \mathcal{V}^p([0,T])$, write $\mbf f \in V^p = \mathcal{V}^p / \sim$ for the equivalence class of functions equal to $f$ up to a set of Lebesgue measure zero. (This is similar to the cassical construction of the Lebesgue spaces $L^p = \mathcal{L}^p / \sim$.) Note that ${V}^p \subset L^\infty$. Similar to e.g. \cite{bourdaud2006superposition} we set 
\begin{equation} \label{equ:Vpseminorm}
     [\mbf f]_{{V}^p([0,T];E)} :=  \inf_{f \in \mbf f} [f]_{\mathcal{V}^p([0,T];E)} .
\end{equation}
Note that ${V}^p([0,T];E)$ is Banach if $E$ is, seminormed with \eqref{equ:Vpseminorm}, or normed under $ [\mbf f]_{{L}^\infty} + [\mbf f]_{{V}^p}$.
%and, modulo constants, a Banach space,  We also define
%\[
%	cV^p([0,T],E) := C([0,T],E) \cap \mathcal{V}^p([0,T],E).
%5\]

In \cite{friz2006embedding, chong2020characterization} it is shown that $B^\alpha_{pq} \subset \mcl V^r$, where $r = 1/\alpha$ if $q \le p$ and $r = 1/\alpha + \eps$ for some $\eps > 0$ if $q > p$. For the purposes of later discussion, we verify the sharpness of the embedding when $q \le p$.

\begin{proposition}\label{P:strictvariation}
	Assume $1 \le q \le p \le \oo$ and $\alpha > 1/p$. Then there exists $f \in B^\alpha_{pq}([0,1])$ such that $f \notin \mcl V^r([0,1])$ for any $1 \le r < 1/\alpha$.
\end{proposition}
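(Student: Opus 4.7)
Plan. I want to build an explicit counterexample $f\in B^\alpha_{pq}([0,1])$ which nonetheless fails to be in any $\mcl V^r$ with $r<1/\alpha$. The strategy is to place rapidly oscillating ``single-scale'' bumps in a sequence of disjoint, shrinking sub-intervals $I_m\subset [0,1]$, with the number $N_m$ of bumps in $I_m$ growing very quickly and the amplitude $A_m$ tuned so that the Besov contribution of each block $f_m$ decays geometrically in $m$, while the $r$-variation of that single block already diverges as $m\to\infty$ for every $r<1/\alpha$.

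Construction. Fix any $\epsilon>0$ and a nontrivial smooth bump $\phi\in C^\infty_c((0,1))$. Choose disjoint intervals $I_m\subset[0,1]$ with $|I_m|=2^{-m}$ separated by positive gaps $g_m$ (e.g.\ $g_m=2^{-2m}$), and set
\[
N_m:=2^{m^2},\qquad L_m:=|I_m|/N_m=2^{-m-m^2},\qquad A_m:=2^{-\alpha m^2}\,2^{-(\alpha-1/p+\epsilon)m}.
\]
Let $f_m$ be the sum of $N_m$ translates of $\phi$ rescaled to width $L_m$ and amplitude $A_m$, equi-spaced inside $I_m$, and define $f:=\sum_{m\ge 1}f_m$.

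Per-block estimates. A direct computation of $\omega_p(f_m,\tau)$ by considering $\tau\le L_m$ (Lipschitz regime) and $\tau\ge L_m$ (disjoint-translate regime) gives $\omega_p(f_m,\tau)\sim A_m |I_m|^{1/p}\min(\tau/L_m,1)$. Splitting the Besov integral at $\tau=L_m$ yields
\[
[f_m]_{B^\alpha_{pq}}\sim A_m|I_m|^{1/p-\alpha}N_m^\alpha = 2^{-\epsilon m},\qquad [f_m]_{V^r}^r\sim A_m^r N_m = 2^{\,m^2(1-\alpha r)-rm(\alpha-1/p+\epsilon)}.
\]
The choice of $A_m$ is arranged exactly to cancel the $m^2$ contribution in the Besov quantity. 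For the variation quantity, for any $r<1/\alpha$ the leading $m^2(1-\alpha r)$ is strictly positive and dominates the linear term, so $[f_m]_{V^r}\to\infty$.

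Assembly and main obstacle. On the variation side, disjointness of the $I_m$ gives the trivial lower bound $[f]_{V^r([0,1])}\ge [f_m]_{V^r(I_m)}\to\infty$, so $f\notin\mcl V^r$ for any $r<1/\alpha$. For the Besov bound one uses the positive gaps: for $\tau$ smaller than the relevant gap size the shifted difference $f(\cdot+h)-f(\cdot)$ splits as a disjoint sum over $m$, giving $\omega_p(f,\tau)^p\le\sum_m\omega_p(f_m,\tau)^p$. Combined with the hypothesis $q\le p$ (hence $q/p\le 1$) and the elementary inequality $(\sum x_m)^{q/p}\le\sum x_m^{q/p}$, one obtains
\[
[f]_{B^\alpha_{pq}}^q\lesssim\sum_m [f_m]_{B^\alpha_{pq}}^q = \sum_m 2^{-\epsilon q m}<\infty,
\]
the residual range of $\tau$ exceeding the smallest gap being controlled by the crude bound $\omega_p(f,\tau)\le 2\|f\|_{L^p}$, which is finite since $\|f\|_{L^p}^p\le\sum_m\|f_m\|_{L^p}^p$. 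The one delicate step is precisely this Besov localization; it is where the hypothesis $q\le p$ enters essentially, and it is the only place real care is needed. Everything else reduces to matching exponents, driven by the positivity of $\alpha-1/p$.
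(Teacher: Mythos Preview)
Your proof is correct and follows essentially the same blueprint as the paper's: single-scale oscillating blocks placed on disjoint sub-intervals, with frequencies growing super-exponentially and amplitudes tuned so that each block's Besov seminorm decays geometrically while its $r$-variation diverges for every $r<1/\alpha$. The paper uses rescaled triangle waves $t\mapsto 2^{-\alpha n}\chi(2^n t)$ and assembles the Besov estimate via Minkowski's inequality (invoking the hypothesis $q\ge 1$), whereas you use smooth bumps and the disjoint-support bound $\omega_p(f,\tau)^p\lesssim\sum_m\omega_p(f_m,\tau)^p$ followed by subadditivity of $x\mapsto x^{q/p}$ (invoking $q\le p$); both routes are legitimate under $1\le q\le p$, and it is worth noting that the two halves of that hypothesis can each carry the assembly step.

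Two cosmetic points. First, your intervals as specified do not fit into $[0,1]$, since $\sum_{m\ge 1}(2^{-m}+2^{-2m})>1$; this is trivially repaired by halving $|I_m|$ or starting the sum at $m=2$. Second, the ``residual range of $\tau$'' discussion is unnecessary: disjoint supports alone already give $\omega_p(f,\tau)^p\le 2^{p-1}\sum_m\omega_p(f_m,\tau)^p$ for \emph{all} $\tau$, because at any point $s$ at most two of the increments $f_m(s+h)-f_m(s)$ are nonzero (one from the block containing $s$, one from the block containing $s+h$). So positive gaps between the $I_m$ are not actually needed.
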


\begin{proof}
	Define $\chi: \RR \to \RR$ to be $1$-periodic such that
	\[
		\chi_t := 
		\begin{dcases}
			t & \text{if } t \in [0,1/2] \text{ and}\\
			1-t & \text{if } t \in [1/2,1],
		\end{dcases}
	\]
	and, for $n \in \NN$ and $t \in [0,1]$, define $f^n_t = 2^{-\alpha n} \chi_{2^n t}$. Then, for $m \in \NN$, $\omega_p(f^n,2^{-m}) \le 2^{-\alpha n} \left( 1 \wedge 2^{-m} \right)$, so that
	\[
		\sum_{m=1}^\oo \left( 2^{-m\alpha} \omega_p(f^n,2^{-m}) \right)^q
		\le \sum_{m=1}^{n-1} 2^{(m-n)\alpha q} + \sum_{m=n}^\oo 2^{-(1-\alpha)(m-n)q}
		\lesssim_{\alpha,q} 1,
	\]
	which yields
	\[
		\sup_{n \in \NN} [f^n]_{B^\alpha_{pq}[0,1]} \lesssim_{\alpha,p,q} 1.
	\]
	Moreover, extending $f^n$ to the rest of $\RR$ to be $0$ gives also
	\[
		\sup_{n \in \NN} [f^n]_{B^\alpha_{pq}([a,b])} \lesssim_{\alpha,p,q} 1
	\]
	for any interval $[0,1] \subset [a,b] \subset \RR$. Meanwhile,
	\[
		[f^n]_{\mcl V^r([0,1])} \ge 2^{n\left( \frac{1}{r} - \alpha\right)}.
	\]
	
	For some sequences $(a_k)_{k \in \NN} \subset [0,\oo)$ and $(n_k)_{k \in \NN} \subset \NN$ and $t \in [0,1]$, we set
	\[
		f_t = \sum_{k=1}^\oo a_k f^{n_k}_{2^k(t - 2^{-k})}.
	\]
	Then
	\[
		[f]_{B^\alpha_{pq}([0,1])} \le \sum_{k=1}^\oo a_k \left[f^{n_k}_{2^k(\cdot - 2^{-k})} \right]_{B^\alpha_{pq}([0,1])} = \sum_{k=1}^\oo a_k 2^{k(\alpha - 1/p)} \left[ f^{n_k} \right]_{B^\alpha_{pq}([-1, 2^k - 1])} \lesssim_{\alpha,p,q} \sum_{k=1}^\oo a_k 2^{k(\alpha - 1/p)},
	\]
	while the superadditivity of $[\cdot]_{\mcl V^r}^r$ yields
	\[
		[f]_{\mcl V^r([0,1])}^r \ge \sum_{k=1}^\oo a_k^r \left[f^{n_k}_{2^k(\cdot - 2^{-k})} \right]^r_{\mcl V^r([2^{-k}, 2^{1-k} ])} \ge \sum_{k=1}^\oo a_k^r 2^{n_k(1-\alpha r)}.
	\]
	We conclude upon choosing $a_k$ and $n_k$ so that the first series converges and the second diverges for any $r < 1/\alpha$ (for instance, we may take $a_k = 4^{-k(\alpha - 1/p)}$ and $n_k = k^2$ for $k \in \NN$).
\end{proof}

\subsection{On scale invariant Besov spaces} \label{sec:criticalBesov} 

We make several remarks in the case that $\alpha = 1/p$. For any $q \in (0,\oo]$, $B^{1/p}_{p,q}$ does not embed continuously into a space of H\"older continuous functions, which is related to the fact that the $B^{1/p}_{p,q}$ norm is invariant under time reparametrization. For $1 \le p \le \oo$ and $q = 1$, and for $E = \RR^m$, we have the standard embeddings (see \cite{T}) $B^{1/p}_{p,1} \subset B^0_{\oo,1} \subset C$. In fact, more is true, and we can relate the reparametrization-invariant Besov spaces $B^{1/p}_{p,q}$ to the spaces $V^p$ introduced in the previous subsection as follows:

\begin{proposition}\label{P:variationembedding}
	Assume $(E,d)$ is a complete metric space and $1 < p < \oo$. Then we have the continuous embeddings
	\[
		B^{1/p}_{p,1}([0,T],E) \subset c{V}^p([0,T],E) \subset {V}^p([0,T],E) \subset B^{1/p}_{p,\oo}([0,T],E ),
	\]
	where we write $c{V}^p$ for elements in ${V}^p$ with continuous representative, in $C \cap \mathcal{V}^p$.
\end{proposition}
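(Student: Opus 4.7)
Plan. The middle inclusion $cV^p \subset V^p$ is immediate from the definitions, so the task reduces to establishing the two endpoint inclusions.

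\emph{Inclusion $V^p \subset B^{1/p}_{p,\oo}$.} Fix a representative $f$ of $\mbf f \in V^p$, and for $h \in (0,T]$ parametrize $s = a + kh$ with $a \in [0,h)$ and $k \in \NN$. Fubini then gives
\begin{equation*}
\int_0^{T-h} d(f_s, f_{s+h})^p \,ds = \int_0^h \sum_{k=0}^{K(a)} d(f_{a+kh}, f_{a+(k+1)h})^p \,da, \qquad K(a) := \left\lfloor \frac{T-h-a}{h} \right\rfloor.
\end{equation*}
For each fixed $a$, the inner sum is the $p$-th power variation of $f$ along the uniform grid $\{a + kh\}$, a partition of a subinterval of $[0,T]$, and is therefore bounded by $[f]_{\mcl V^p}^p$. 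Integrating in $a$ yields $\nor{d(f_\cdot, f_{\cdot + h})}{L^p}^p \le h [f]_{\mcl V^p}^p$, so $\omega_p(f, \tau) \le \tau^{1/p} [f]_{\mcl V^p}$ for all $\tau \in (0,T]$, which upon taking the supremum over $\tau$ and the infimum over representatives gives $[\mbf f]_{B^{1/p}_{p,\oo}} \le [\mbf f]_{V^p}$.

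\emph{Inclusion $B^{1/p}_{p,1} \subset cV^p$.} This is the harder direction. Normalize $T = 1$ and set $S_n := \nor{d(f_\cdot, f_{\cdot + 2^{-n}})}{L^p([0,1 - 2^{-n}])}$, so that Lemma \ref{L:discreteBesovnorm} supplies $\sum_{n \ge 1} 2^{n/p} S_n \lesssim [f]_{B^{1/p}_{p,1}}$. The argument proceeds in two stages. First, the same Fubini identity gives $\int_0^{2^{-n}} \sum_{k=0}^{2^n - 2} d(f_{a + k2^{-n}}, f_{a + (k+1)2^{-n}})^p \,da = S_n^p$, so Chebyshev yields, for each $n$, a shift $a_n \in [0, 2^{-n}]$ along which the dyadic partial variation is at most $2 \cdot 2^n S_n^p$; a standard diagonalization using the summability $\sum 2^{n/p} S_n < \oo$ then extracts a uniformly Cauchy family of shifted dyadic samples, producing a continuous representative $\tilde f$ of $f$. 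Second, for an arbitrary partition $P = \{0 = t_0 < \cdots < t_N = 1\}$, to each $i$ assign the natural dyadic scale $n_i$ with $2^{-n_i - 1} < t_i - t_{i-1} \le 2^{-n_i}$, and let $\pi_n(t)$ denote a nearest dyadic rational of depth $n$. A telescoping triangle inequality gives
\begin{equation*}
d(\tilde f_{t_{i-1}}, \tilde f_{t_i}) \le d(\tilde f_{\pi_{n_i}(t_{i-1})}, \tilde f_{\pi_{n_i}(t_i)}) + \sum_{n \ge n_i} \big[d(\tilde f_{\pi_n(t_{i-1})}, \tilde f_{\pi_{n+1}(t_{i-1})}) + d(\tilde f_{\pi_n(t_i)}, \tilde f_{\pi_{n+1}(t_i)})\big],
\end{equation*}
and raising to the $p$-th power, summing in $i$, and applying Minkowski's inequality in $\ell^p$ (valid since $p \ge 1$), the contribution of each dyadic scale $n$ is controlled by $2^{n/p} S_n$, yielding $[\tilde f]_{\mcl V^p} \lesssim \sum_n 2^{n/p} S_n \lesssim [f]_{B^{1/p}_{p,1}}$.

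The main obstacle is the bookkeeping in the second stage of the chaining: an arbitrary partition $P$ must be matched against the dyadic scaffolding of the Besov norm, and one must ensure that the contribution from each dyadic scale $n$ is controlled by $2^{n/p} S_n$ uniformly in $P$. This relies on assigning each partition interval a unique natural scale $n_i$ so that the total number of summands at scale $n$ is $O(2^n)$, matching the discrete $L^p$ structure of $S_n$, and on using either the Fubini averaging identity or the continuous representative from the first step to connect the pointwise dyadic approximations to $S_n$.
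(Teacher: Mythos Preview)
Your Fubini argument for $V^p \subset B^{1/p}_{p,\infty}$ is correct and is exactly the computation the paper defers to. For the hard inclusion $B^{1/p}_{p,1} \subset cV^p$, your direct chaining strategy is genuinely different from the paper's route, which proceeds by real interpolation: the paper first uses the Kuratowski embedding to reduce to a Banach target, then invokes $B^{1/p}_{p,1} = (B^1_{1,1}, B^0_{\infty,1})_{1-1/p,p}$ together with the endpoint embeddings $B^1_{1,1} \subset cV^1$ and $B^0_{\infty,1} \subset C = cV^\infty$, and finally applies a Bergh--Peetre-type lemma (Lemma~\ref{L:variationinterpolation}) showing $(\mcl V^{p_0}, \mcl V^{p_1})_{\theta,p} \subset \mcl V^p$.

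Your chaining sketch, however, has a genuine gap at precisely the step you flag as ``the main obstacle,'' and the two mechanisms you offer for it do not close it. The Chebyshev step gives, for each scale $n$, a \emph{shift} $a_n$ along which the discrete grid variation is $\lesssim 2^n S_n^p$; but the shifted grids $\{a_n + k 2^{-n}\}_k$ are not nested across $n$, so the telescoping from $\pi_n(t)$ to $\pi_{n+1}(t)$ jumps between incompatible grids and the cross-scale increments are uncontrolled. If instead you chain along the unshifted dyadic grid so that nesting holds, the difficulty reverses: you need $\sum_k d(\tilde f_{k 2^{-n}}, \tilde f_{(k+1)2^{-n}})^p \lesssim 2^n S_n^p$, but $S_n^p$ is an $L^p$-\emph{integral} and affords no control on the $O(2^n)$ specific dyadic point evaluations appearing in the sum. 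In a Banach-space target one can salvage the chaining by telescoping through local averages $2^n \int_t^{t+2^{-n}} f_s\,ds$ rather than point samples (which does tie the sums back to $S_n$), but for a metric-space target averaging is unavailable without first invoking Kuratowski --- and once that embedding is on the table, the paper's interpolation argument is shorter.
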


The proof of the first embedding in \eqref{P:variationembedding} uses real interpolation methods, much in the same way as in \cite{BerghPeetreVp}, where analogous inclusions are established %for slightly more general variation spaces and 
for homogenous Besov spaces on the whole real line. We note that the right-most inclusion is a slight strengthening of \cite[Proposition 4.3]{chong2020characterization}, which states in our notation that $c{V}^p([0,T]) \subset B^{1/p}_{p,\oo}([0,T])$ (the proof follows essentially the same argument). 
Also note that this result completes the picture of embedding Besov spaces into variation ones, studied, for instance, in \cite{friz2006embedding,chong2020characterization}, where embeddings of the form $B^{1/p}_{r,q} \subset {V}^p$ are proved for $r > p$.

We first note the following useful equivalent characterization of the $\mcl V^p$ seminorm:

\begin{lemma}\label{L:variationnorm} Let $ 1 \le p < \infty$ and assume $E$ is a Banach space.
 Then $f \in \mathcal{V}^p ([0, T],  E)$ if and only if
  \[ [f]_{\bar{\mathcal{V}}^p ([0, T])} := \sup_P \left( \sum_{[s,t] \in P} \inf_c \| f -
     c \|^p_{\infty ; [s, t]} \right)^{1 / p} < \infty \]
  with $\| f \|_{\infty ; I} = \sup_{t \in I} | f_t |$. Moreover,
  \[ \frac{1}{2} [f]_{\mathcal{V}^p ([0, T])} \le
     [f]_{\bar{\mathcal{V}}^p ([0, T])} \le [f]_{\mathcal{V}^p ([0,
     T])}^{} . \]
\end{lemma}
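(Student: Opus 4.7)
\medskip

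\noindent\textbf{Plan of proof.} The two inequalities together give the characterization: the right-hand one shows $\mcl V^p \subset \bar{\mcl V}^p$, while the left-hand one shows that a finite $\bar{\mcl V}^p$ seminorm yields a finite $\mcl V^p$ seminorm (and hence membership in $\mcl V^p([0,T],E)$). So the heart of the matter is the two-sided comparison, and both directions follow from very simple manipulations with suprema and the triangle inequality.

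For the easy direction $[f]_{\mcl V^p} \le 2 [f]_{\bar{\mcl V}^p}$, fix a partition $P=\{0=t_0<\cdots<t_N=T\}$. For any constant $c \in E$ the triangle inequality gives
\[
|f_{t_i} - f_{t_{i-1}}| \le |f_{t_i} - c| + |f_{t_{i-1}} - c| \le 2 \,\|f-c\|_{\infty;[t_{i-1},t_i]},
\]
and taking the infimum over $c$ yields $|f_{t_i}-f_{t_{i-1}}|^p \le 2^p \inf_c \|f-c\|_{\infty;[t_{i-1},t_i]}^p$. Summing over $i$ and then taking the sup over $P$ gives the bound.

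For the other direction $[f]_{\bar{\mcl V}^p} \le [f]_{\mcl V^p}$, the strategy is to pick the constant $c = f_s$ on each sub-interval $[s,t]$ of a partition $P$, so that
\[
\inf_c \|f - c\|_{\infty;[s,t]}^p \le \|f - f_s\|_{\infty;[s,t]}^p = \sup_{u\in[s,t]} |f_u - f_s|^p.
\]
Given $\eps>0$, for each $[s,t]\in P$ choose $u^*_{[s,t]} \in [s,t]$ with $|f_{u^*_{[s,t]}} - f_s|^p \ge \|f-f_s\|_{\infty;[s,t]}^p - \eps/\#P$, and let $P' := P \cup \{u^*_{[s,t]} : [s,t]\in P\}$, a refinement of $P$. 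Then each term $|f_{u^*_{[s,t]}} - f_s|^p$ appears in the sum defining the $p$-variation over $P'$, so
\[
\sum_{[s,t]\in P} \inf_c\|f-c\|_{\infty;[s,t]}^p \le \sum_{[u,v]\in P'} |f_v - f_u|^p + \eps \le [f]_{\mcl V^p}^p + \eps.
\]
Sending $\eps \to 0$ and taking the sup over $P$ finishes the proof.

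The only mild subtlety is that the supremum defining $\|f-f_s\|_{\infty;[s,t]}$ need not be attained (since $f$ need not be continuous), which is handled by the standard $\eps$-approximation above and the freedom to refine the partition; no essential supremum is involved since $\mcl V^p$ is defined pointwise. Everything else is elementary, and the constant $2$ in the left inequality is exactly the constant appearing in the triangle inequality step.
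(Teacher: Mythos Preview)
Your proof is correct and follows essentially the same approach as the paper. The only cosmetic difference is in the upper bound: the paper observes that $\|f-f_s\|_{\infty;[s,t]}^p \le [f]_{\mathcal V^p([s,t])}^p$ and then invokes the super-additivity of $[s,t]\mapsto [f]_{\mathcal V^p([s,t])}^p$, whereas you unfold that super-additivity explicitly via the $\eps$-refinement $P'$; these are the same argument in different packaging.
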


\begin{proof}
  Since $| f_t - f_s | \le \inf_c (| f_t - c | + | f_s - c |) \le
  2 \inf_c \| f - c \|_{\infty ; [s, t]}$ it is clear that $\frac{1}{2} [f]_{\mathcal{V}^p ([0, T])} \le
     [f]_{\bar{\mathcal{V}}^p ([0, T])}$.
  Conversely,
  \[ \inf_c \| f - c \|^p_{\infty ; [s, t]} \le \| f - f_s \|^p_{\infty
     ; [s, t]} \le [f]_{\mathcal{V}^p ([s, t])}^p \]
  and by super-additivity of the right-hand side we see that $\frac{1}{2} [f]_{\mathcal{V}^p ([0, T])} \le
     [f]_{\bar{\mathcal{V}}^p ([0, T])} \le [f]_{\mathcal{V}^p ([0,
     T])}^{} .$
\end{proof}

%\begin{remark}\label{R:Vpmetric}
%	The above argument works identically with $\RR^m$ replaced by any metric
%space. When $m = 1$, one can see that the first inequality is in fact an equality.
%\end{remark}
Lemma \ref{L:variationnorm} suggests a way to define $\mcl V^p$ for $p = \oo$: we say $f \in \mcl V^\oo$ if
$$
 [f]_{{\mathcal{V}}^\infty ([0, T])} := \sup_P \left( \max_{[s,t] \in P}  
  \inf_c \| f -    c \|_{\infty ; [s, t]} \right) < \oo.
$$
Of course, the supremum is attained for the trivial partition $P = \{0,T\}$, and so
\[
	[f]_{\mcl V^\oo([0,T])} = \frac{1}{2} \sup_{(s,t) \in \Delta_2(0,T)} |f_t - f_s|,
\]
that is, $\mathcal{V}^\oo([0,T],E) = \mcl L^\oo([0,T],E)$, modulo constants.

We next relate the $p$-variation spaces with different powers using real interpolation. We recall (see for instance \cite{berghlofstrom}) that, for two compatible normed spaces $X_0$ and $X_1$ (that is, both $X_0$ and $X_1$ belong to a common Hausdorff topological space), we define the $K$-functional, for $t > 0$ and $f \in X_0 + X_1$, by
\begin{equation}\label{Kfunctional}
	K(t,f,X_0,X_1) = \inf \left\{ \nor{f_0}{X_0} + t \nor{f_1}{X_1} : f = f_0 + f_1, \; f_i \in X_i \right\},
\end{equation}
and, for $\theta \in (0,1)$ and $p \in [1,\oo]$, we define the real interpolation space
\begin{equation}\label{interpolationspace}
	(X_0,X_1)_{\theta,p} := \left\{ f \in X_0 + X_1 : \nor{f}{(X_0,X_1)_{\theta,p}} := \left[ \int_0^\oo \left( \frac{K(t,f,X_0,X_1)}{t^\theta} \right)^p \frac{dt}{t} \right]^{1/p} < \oo \right\}.
\end{equation}

The following result is a variant of one appearing in \cite{BerghPeetreVp}, as Lemma 2.1. 

\begin{lemma}\label{L:variationinterpolation}
	Let $1 \le p_0 < p_1 \le \oo$, fix $\theta \in (0,1)$, and assume $E$ is a Banach space. Then we have the continuous embedding
	\[
		(\mcl V^{p_0}([0,T],E), \mcl V^{p_1}([0,T],E))_{\theta,p} \subset \mcl V^p([0,T],E), \qquad \text{where }
		\frac{1}{p} = \frac{1-\theta}{p_0} + \frac{\theta}{p_1}.
	\]
\end{lemma}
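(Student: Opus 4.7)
The plan is to work with the equivalent seminorm $[\cdot]_{\bar{\mathcal{V}}^p}$ from Lemma \ref{L:variationnorm} and reduce the statement to real interpolation of classical sequence spaces via a uniform family of sublinear operators indexed by partitions.

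For each partition $P = \{0 = t_0 < t_1 < \cdots < t_N = T\}$ of $[0,T]$, I would introduce the map
\[
	I_P : \mathcal{V}^{p_1}([0,T],E) \to \ell^{p_1}(\{1,\ldots,N\}), \qquad (I_Pf)_i := \inf_{c \in E} \nor{f - c}{\infty;[t_{i-1},t_i]}.
\]
This is positively homogeneous and, by the triangle inequality applied to $c = c_1 + c_2$, sublinear in the sense that $I_P(f+g) \le I_P(f) + I_P(g)$ componentwise. By the very definition of $[\cdot]_{\bar{\mathcal{V}}^q}$, for $q \in \{p_0, p_1\}$,
\[
	\nor{I_P f}{\ell^q} \le [f]_{\bar{\mathcal V}^q([0,T])} \le [f]_{\mathcal V^q([0,T])},
\]
so $I_P$ is a bounded sublinear operator from $\mathcal{V}^{p_j}$ to $\ell^{p_j}$ of norm at most $1$, \emph{uniformly} in $P$.

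Since $\mathcal{V}^{p_0} \hookrightarrow \mathcal{V}^{p_1}$ (so the couple is compatible), standard real interpolation for sublinear operators (see for instance \cite{berghlofstrom}, Section~3.11) gives that $I_P$ extends to a bounded sublinear map $(\mathcal{V}^{p_0},\mathcal{V}^{p_1})_{\theta,p} \to (\ell^{p_0},\ell^{p_1})_{\theta,p}$ with norm $\le 1$. Combined with the Lions--Peetre identification $(\ell^{p_0},\ell^{p_1})_{\theta,p} = \ell^p$, valid with constants depending only on $\theta, p_0, p_1$ and \emph{not} on the cardinality of the index set, this yields
\[
	\nor{I_P f}{\ell^p} \le C_{\theta,p_0,p_1} [f]_{(\mathcal{V}^{p_0},\mathcal{V}^{p_1})_{\theta,p}}.
\]
Taking the supremum over partitions $P$ gives $[f]_{\bar{\mathcal V}^p} \le C [f]_{(\mathcal{V}^{p_0},\mathcal{V}^{p_1})_{\theta,p}}$, and one final application of Lemma \ref{L:variationnorm} completes the proof with $[f]_{\mathcal V^p} \le 2C [f]_{(\mathcal{V}^{p_0},\mathcal{V}^{p_1})_{\theta,p}}$.

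The main point to verify carefully is that the interpolation constant is uniform in $P$; this rests on the fact that the Lions--Peetre equivalence $(\ell^{p_0},\ell^{p_1})_{\theta,p} = \ell^p$ holds with dimension-independent constants, a well-known feature of real interpolation of sequence spaces. An entirely direct alternative, avoiding the appeal to abstract interpolation machinery, would be to fix an optimal decomposition $f = f_0^t + f_1^t$ realizing the $K$-functional and then run a Holmstedt-type rearrangement argument on the sequence $(\inf_c\nor{f - c}{\infty;[t_{i-1},t_i]})_i$, but the operator-theoretic route above is noticeably cleaner.
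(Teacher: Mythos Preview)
Your proof is correct and takes a genuinely different route from the paper's. The paper works directly with the $K$-functional: fixing a partition $P$ and a decomposition $f = f_0 + f_1$, it passes to the non-increasing rearrangements $\phi,\phi_0,\phi_1$ of the sequences $(\inf_c \nor{f-c}{\infty;[t_{i-1},t_i]})_i$ and $(\inf_c \nor{f_j-c}{\infty;[t_{i-1},t_i]})_i$, establishes the pointwise bound $\phi(s) \le \phi_0(s/2) + \phi_1(s/2)$, and integrates against the appropriate power weight to recover the interpolation norm --- precisely the Holmstedt-type rearrangement argument you mention as an alternative at the end. Your approach instead packages the same content by observing that each $I_P$ is sublinear and uniformly bounded $\mcl V^{p_j} \to \ell^{p_j}$, then appeals to interpolation of sublinear operators together with the Lions--Peetre identity $(\ell^{p_0},\ell^{p_1})_{\theta,p} = \ell^p$ with dimension-free constants. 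Both arguments rest on the same analytic core; yours is cleaner and more conceptual, while the paper's is fully self-contained and makes the underlying rearrangement estimate explicit.
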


\begin{proof}
	Fix $f \in (\mcl V^{p_0}([0,T],E), \mcl V^{p_1}([0,T],E))_{\theta,p}$, a partition $P = \{0 = t_0 < t_1 < \cdots < t_N = T\}$ of $[0,T]$, and $f_0 \in \mcl V^{p_0}([0,T],E)$ and $f_1 \in \mcl V^{p_1}([0,T],E)$ such that $f = f_0 + f_1$. Let $\phi,\phi_0,\phi_1: [0,\oo) \to [0,\oo)$ be the non-increasing, right-continuous rearrangements of respectively the sequences
	\begin{align*}
		&\left( \inf_{c \in E} \sup_{t \in [t_{i-1}, t_i]} \nor{f(t) - c}{E} \right)_{i=1}^N, \quad
		\left( \inf_{c \in E} \sup_{t \in [t_{i-1}, t_i]} \nor{f_0(t) - c}{E} \right)_{i=1}^N, \\
		&\text{and} \quad
		\left( \inf_{c \in E} \sup_{t \in [t_{i-1}, t_i]} \nor{f_1(t) - c}{E} \right)_{i=1}^N;
	\end{align*}
	that is, for $i = 0,1,\emptyset$ and $\lambda \ge 0$, $\phi_i$ satisfies
	\[
		| \left\{ t: \phi_i(t) > \lambda \right\} | = \# \left\{ n \in \{1,2,\ldots, N\} : \inf_{c \in E} \sup_{t \in [t_{i-1}, t_i]} \nor{f_i(t) - c}{E}  > \lambda \right\}.
	\]
	We have 
	\[
		\inf_{c \in E}  \sup_{t \in [t_{i-1}, t_i]} \nor{f(t) - c}{E} \le \inf_{c \in E}  \sup_{t \in [t_{i-1}, t_i]} \nor{f_0(t) - c}{E} + \inf_{c \in E}  \sup_{t \in [t_{i-1}, t_i]} \nor{f_1(t) - c}{E},
	\]
	and elementary computations then give, for all $s > 0$, $\phi(s) \le \phi_0(s/2) + \phi_1(s/2)$. Then, by H\"older's inequality, for $\tau > 0$,
	\begin{align*}
		\tau^{1/p_0} \phi(\tau) &\le \left( \int_0^T \phi(s)^{p_0} ds \right)^{1/p_0}\\
		&\le \left( \int_0^\tau \phi_0(s/2)^{p_0} ds \right)^{1/p_0} + \left( \int_0^\tau \phi_1(s/2)^{p_0}ds \right)^{1/p_0}
		\lesssim \nor{\phi_0}{L^{p_0}([0,\oo))} + \tau^{\frac{1}{p_0} - \frac{1}{p_1}} \nor{\phi_1}{L^{p_1}([0,\oo))} \\
		&= \left( \sum_{i=1}^N \inf_{c \in E}  \sup_{t \in [t_{i-1}, t_i]} \nor{f_0(t) - c}{E}^{p_0} \right)^{1/p_0} +  \tau^{\frac{1}{p_0} - \frac{1}{p_1}}\left( \sum_{i=1}^N \inf_{c \in E}  \sup_{t \in [t_{i-1}, t_i]} \nor{f_1(t) - c}{E}^{p_1}\right)^{1/p_1}\\
		&\le [f_0]_{{\mcl V}^{p_0}} + \tau^{\frac{1}{p_0} - \frac{1}{p_1}} [f_1]_{{\mcl V}^{p_1}}.
	\end{align*}
	With the relation $t = \tau(t)^{\frac{1}{p_0} - \frac{1}{p_1}}$, upon taking the infimum over all such $f_0$ and $f_1$, we find from Lemma \ref{L:variationnorm} that
	\[
		\tau(t)^{1/p_0} \phi(\tau(t)) \lesssim K(t,f, \mcl V^{p_0}, \mcl V^{p_1}).
	\]
	Invoking the definition \eqref{interpolationspace}, as well as the relationship between $p$, $\theta$, $p_0$, and $p_1$, we conclude that
	\begin{align*}
		\nor{f}{(V^{p_0}, V^{p_1})} &\gtrsim \left[ \int_0^\oo \tau(t)^{p/p_0} \phi(\tau(t))^p t^{-\theta p} \frac{dt}{t} \right]^{1/p} \asymp \left[ \int_0^\oo \phi(\tau)^p d\tau \right]^{1/p}\\
		& = \left( \sum_{i=1}^N \inf_{c \in E}  \sup_{t \in [t_{i-1}, t_i]} \nor{f(t) - c}{E}^p \right)^{1/p}.
	\end{align*}
	We conclude upon taking the supremum over all partitions $P$.
\end{proof}

\begin{proof}[Proof of Proposition \ref{P:variationembedding}]
	Only the first embedding is proved here. As we have noted, the second inclusion follows exactly as in \cite[Proposition 4.3]{chong2020characterization}.

	Recall the {\em Kuratowski embedding}: every metric space $E$ can be isometrically embedded in a Banach space. Thus, we assume without loss of generality that $E$ is a (possibly non-separable) Banach space.

	We next note that we have
	\begin{equation}\label{Besovinterp}
		B^{1/p}_{p,1}([0,T],E) = \left( B^1_{1,1}([0,T],E), B^0_{\oo,1}([0,T],E) \right)_{1 - \frac{1}{p},p}.
	\end{equation}
	To see this, note first that
	\[
		B^{1/p}_{p,1}(\RR,E) = \left( B^1_{1,1}(\RR,E), B^0_{\oo,1}(\RR,E) \right)_{1 - \frac{1}{p},p},
	\]
	which is a generalization of \cite[Theorem 6.4.5 (3)]{berghlofstrom} to $E$-valued function spaces, and follows from the method of retracts \cite[Theorem 6.4.2]{berghlofstrom} and general results on interpolation of weighted vector-valued Lebesgue spaces \cite[Theorem 5.6.2]{berghlofstrom}.
	
	We then have common continuous extension and restriction maps (see \cite{T})
	\[
		\left\{ B^{1/p}_{p,1}([0,T]), B^1_{1,1}([0,T]), B^0_{\oo,1}([0,T]) \right\} \leftrightharpoons \left\{ B^{1/p}_{p,1}(\RR), B^1_{1,1}(\RR), B^0_{\oo,1}(\RR) \right\},
	\]
	and so \eqref{Besovinterp} is again a consequence of the method of retracts.
	
	From the standard embeddings $B^1_{1,1}(\RR) \subset W^{1,1}(\RR)$ and $B^0_{\oo,1}(\RR) \subset C(\RR)$, upon restricting to the interval $[0,T]$, we have the embeddings
	\[
		B^1_{1,1}([0,T],E) \subset W^{1,1}([0,T],E) \subset  c V^1([0,T],E)\quad \text{and} \quad B^0_{\oo,1}([0,T],E) \subset C([0,T],E) = c V^\oo([0,T],E).
	\]
%	\todo{Check $ \tilde V_\oo([0,T])$ is $\L^\infty$.} 
	We conclude from Lemma \ref{L:variationinterpolation} that
	\begin{align*}
		B^{1/p}_{p,1}([0,T],E) &= \left( B^1_{1,1}([0,T],E), B^0_{0,\oo}([0,T],E) \right)_{1 - \frac{1}{p},p}\\
		&\subset C([0,T],E) \cap \left( \mcl V^1([0,T],E), \mcl V^\oo([0,T],E) \right)_{1-\frac{1}{p}, p} \subset c \mcl V^p([0,T],E).
	\end{align*}
	The fact that the proportionality constant is independent of $T$ follows from the scale-invariance of the inequality.
%	Thanks to Proposition??? we see that elements in $B^{1/p}_{p,1}$ always have $p$-variation, hence regulated, hence cadlag, version. Although the endpoints ($p=1,\infty$) can be taken as spaces of continuous paths, this may not be true for other $1<p<\infty$. (Example?)
%		} 
\end{proof}

%Similar to $L^p$-theory on $[0, T]$, identify $f_1 \sim f_2$ whenever $f_1 =
%f_2$ a.e. For clarity, equivalence classes of a.e. identical functions are
%denoted by bold letters in what follows. See \cite{bourdaud2006superposition} for a somewhat similar discussion.

In what follows $1 \le p < \infty$ and $(E,d)$ a general metric space, unless further specified. Recall $V^p ([0, T], E) = \mathcal{V}^p ([0, T],  E) / \sim$ defined in terms of $[\cdot ]_{{V}^p([0,T])}$ given in \eqref{equ:Vpseminorm}. We are grateful to Pavel Zorin-Kranich for removing an unnecessary use of Helly's selection principle in an earlier version of this paper.
\begin{proposition}
(i) The inclusion 
%For every $\mathbf{f} \in L^{\infty} ([0, T],  E)$, we hae 
%$$        \sup \left(
 %    \sum \inf_c \| \mathbf{f} - c \|^p_{L^{\infty} ; [s, t]} \right)^{1 /
%     p} < \infty \right\} =: [\mathbf{f}]_{\hat{V}^p ([0, T])} 
%$$ is well-defined an
\[ V^p ([0, T], E) \subset  \hat{V}^p ([0, T],  E) := 
  \left\{ \mathbf{f} \in L^{\infty}
     ([0, T],  E) : [\mathbf{f}]_{\hat{V}^p ([0, T])} \equiv \sup \left(
     \sum \inf_c \| d(\mathbf{f}_\cdot, c) \|^p_{L^{\infty} ; [s, t]} \right)^{1 /
     p} < \infty \right\} 
     \]
holds and is continuous in the sense that $[\mathbf{f}]_{\hat{V}^p ([0, T])} \le [\mathbf{f}]_{{V}^p ([0,T])}$ for all $\mathbf{f} \in V^p$.

(ii)
Suppose that $E$ is a separable complete metric space.
Then, every equivalence class $\mathbf{f} \in \hat{V}^p$ has a representative $\tilde{\mathbf{f}}$ of finite $p$-variation with $\norm{\tilde{\mathbf{f}}}_{\mathcal{V}^p} \lesssim \norm{\mathbf{f}}_{\hat{V}^{p}}$.
\end{proposition}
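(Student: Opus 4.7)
A direct seminorm comparison suffices. For any representative $f \in \mathbf{f}$ and any $(s,t) \in \Delta_2(0,T)$, taking $c = f_s$ in the infimum and bounding essential sup by sup gives
$$
\inf_c \|d(\mathbf{f}_\cdot, c)\|_{L^\infty;[s,t]} \le \sup_{r \in [s,t]} d(f_r, f_s) \le [f]_{\mcl V^p([s,t])},
$$
via the two-point partition $\{s, r\} \subset [s,t]$. Summing $p$-th powers over any partition $P$ of $[0,T]$ and using superadditivity of $[\cdot]_{\mcl V^p}^p$, then taking the infimum over $f \in \mathbf{f}$, yields $[\mathbf{f}]_{\hat{V}^p} \le [\mathbf{f}]_{V^p}$.

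\textbf{Part (ii), construction.} The plan is to build $\tilde f$ via essential right-limits along rationals. For each rational pair $(a, b)$, $0 \le a < b \le T$, pick $c_{a,b} \in E$ and a null set $N_{a,b}$ with $d(f_s, c_{a,b}) \le 2 M_{a,b}$ on $[a,b] \setminus N_{a,b}$, where $M_{a,b} := \inf_c \|d(\mathbf{f}_\cdot, c)\|_{L^\infty;[a,b]}$; set $N := \bigcup N_{a,b}$ (still null). For $t \in [0,T)$, using dyadic rationals $a_n := 2^{-n} \lfloor 2^n t \rfloor$ and $b_n := a_n + 2^{-n}$, triangulation through any $s \in [a_m, b_m] \setminus N$ with $m \ge n$ yields $d(c_{a_n, b_n}, c_{a_m, b_m}) \le 2(M_{a_n, b_n} + M_{a_m, b_m})$. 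Hence if $\ell(t) := \lim_n M_{a_n, b_n}$ vanishes, the sequence $(c_{a_n, b_n})$ is Cauchy in the complete space $E$ and defines $\tilde f_t := \lim_n c_{a_n, b_n}$. The set $S := \{t : \ell(t) > 0\}$ is at most countable: for each $\delta > 0$, disjoint small dyadic neighborhoods around $k$ such points would produce a partition sum exceeding $k \delta^p$, forcing $k < \infty$. On $S$ (and at $t = T$) set $\tilde f_t := c_{a_{n_0}, b_{n_0}}$ for some $n_0$ with $M_{a_{n_0}, b_{n_0}} \le 2 \ell(t)$. Then $\tilde f$ is measurable (pointwise limit of piecewise-constant functions) and equals $f$ almost everywhere, since $d(f_t, c_{a_n, b_n}) \le 2 M_{a_n, b_n} \to 0$ for $t \in [0,T] \setminus (N \cup S)$.

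\textbf{Part (ii), the $p$-variation bound.} The main step, and the main obstacle, is to show $[\tilde f]_{\mcl V^p} \lesssim [\mathbf{f}]_{\hat{V}^p}$. Given a partition $0 = t_0 < \cdots < t_N = T$ and $\eta > 0$, select points $r_i \in (t_i, t_{i+1}) \setminus (N \cup S)$ close enough to $t_i^+$ that $d(\tilde f_{t_i}, f_{r_i}) \le 4\ell(t_i) + \eta$; this is achievable because $\tilde f_{t_i}$ is a limit (or a near-best choice, on $S$) of centers $c_{a_n, b_n}$, and $d(c_{a_n, b_n}, f_{r_i}) \le 2 M_{a_n, b_n}$ whenever $r_i \in [a_n, b_n] \setminus N$. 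Combined with $d(f_{r_{i-1}}, f_{r_i}) \le 4 M_{r_{i-1}, r_i}$ (obtained by shrinking rational intervals $[a,b] \supset [r_{i-1}, r_i]$ and noting $M_{a,b} \to M_{r_{i-1}, r_i}$, valid because $\ell = 0$ at the endpoints), triangle inequalities yield
$$
d(\tilde f_{t_{i-1}}, \tilde f_{t_i}) \lesssim \ell(t_{i-1}) + M_{r_{i-1}, r_i} + \ell(t_i) + \eta.
$$
Summing $p$-th powers, $\sum_i M_{r_{i-1}, r_i}^p$ is a genuine partition sum $\le [\mathbf{f}]_{\hat{V}^p}^p$, and $\sum_i \ell(t_i)^p \le [\mathbf{f}]_{\hat{V}^p}^p$ by the same disjoint-neighborhood argument used to establish countability of $S$. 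Letting $\eta \to 0$ yields the desired bound. The delicate points are the joint selection of $r_i$'s that respects both $N$ and the ordering constraints, and the continuous monotone-approximation $M_{a,b} \to M_{r_{i-1}, r_i}$, both resolved by nullity of $N$ and countability of $S$.
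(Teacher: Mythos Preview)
Your Part (i) is correct and essentially coincides with the paper's argument (the paper routes through Lemma \ref{L:variationnorm}, but the content is the same seminorm comparison you wrote).

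Your Part (ii) is correct in substance but follows a genuinely different route. The paper first proves that $f$ has an \emph{essential one-sided Cauchy property at every point} (via a contradiction that builds an infinite staircase of oscillations), from which essential left- and right-limits $f_{t\pm}$ exist everywhere; it then uses the Lebesgue differentiation theorem, applied to the countable family $t \mapsto d(e,f_t)$ with $e$ ranging over a dense subset of $E$, to show $f_t = f_{t-}$ on a full-measure set, and sets $\tilde f_t := f_{t-}$. This yields a canonical left-continuous representative, after which the paper declares the $\mathcal V^p$-bound ``easy to see'' without details. Your approach instead constructs $\tilde f$ as a limit of centers $c_{a_n,b_n}$ along shrinking dyadic cells, replaces the everywhere-Cauchy statement by the weaker (but sufficient) fact that the exceptional set $S=\{\ell>0\}$ is countable, and then gives an explicit partition-sum argument for the variation bound. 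Your argument is more elementary (no Lebesgue differentiation), and you actually supply the step the paper omits; the paper's version gives a cleaner description of the representative.

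Two small points to tidy. First, the endpoint $t_N=T$ has no right neighbourhood in your scheme; handle it by using left-sided dyadic cells at $T$ (or by a reflection), so that an $r_N$ to the left of $T$ is available. Second, your claim $M_{a,b}\to M_{r_{i-1},r_i}$ as rational $[a,b]\downarrow [r_{i-1},r_i]$ is true but deserves one more line: since $r_{i-1},r_i\notin N\cup S$, the essential oscillation of $f$ on the small two-sided dyadic cells about each endpoint tends to zero, which forces $d(f_{r_j},c)\le M_{[r_{i-1},r_i]}+\eps$ for the near-optimal $c$ on $[r_{i-1},r_i]$; from this, $\operatorname{ess\,sup}_{[a,b]}d(f,c)\le M_{[r_{i-1},r_i]}+O(\eps)$ for $[a,b]$ close enough. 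Alternatively, you can bypass this limit entirely by fattening the intervals $[r_{i-1},r_i]$ to disjoint rational intervals separately over odd and even $i$, which costs only a factor $2$ in the partition sum.
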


\begin{proof} (i) Consider $\mathbf{f} \in V^p$ with representative $f \in \mathcal{V}^p$.
% ``${\subset}$'': Let $f \in \mathcal{V}^p$ and call $\mathbf{f} \in
% s \mathcal{V}^p / \sim$ its equivalence class.
Then, using Lemma \ref{L:variationnorm},
\[ [\mathbf{f}]_{\hat{V}^p ([0, T])} =
\sup \left( \sum \inf_c  \underset{u \in [s, t]}{\mathrm{ess\, sup}}_{} | f (u) - c | \right)^{1 / p} \le
[f]_{\bar{\mathcal{V}}^p ([0, T])} \le [f]_{\mathcal{V}^p ([0,T])}^{}  \]
(ii) Let $f \in \hat{V}^{p}([0,T],E)$.
We show that $f$ has an essential Cauchy property at every point, that is, for every $t \in (0,T]$, we have
\begin{equation}
\label{eq:essential-left-Cauchy}
(\forall \epsilon>0) (\exists \delta>0) \esssup_{s,s' \in (t-\delta,t)} d(f_{s},f_{s'}) < \epsilon.
\end{equation}
Indeed, suppose that \eqref{eq:essential-left-Cauchy} fails for some $t \in (0,T]$ and $\epsilon > 0$. Then, we can construct a sequence of intervals $(a_{j},b_{j})$ and points $e_{j}\in E$ with the following properties:
\[
b_{j} < a_{j+1} < t,
\quad
d(e_{j},e_{j+1}) > 3\epsilon/4,
\quad
| \{ s\in (a_{j},b_{j}) | d(s,e_{j}) < \epsilon/4 \} | > 0.
\]
It is then routine to derive a contradiction to the finiteness of the $\hat{V}^{p}$ norm.
In order to construct such a sequence, let $\tilde{E} \subseteq E$ be a countable dense subset.
We start with $a_{0}=0$, $b_{0}=t/2$, say, and any $e_{0} \in \tilde{E}$ such that the required positive measure property holds.
Given $b_{j},e_{j}$, from the failure of \eqref{eq:essential-left-Cauchy}, we conclude that
\[
\esssup_{s \in (b_{j},t)} d(f_{s},e_{j}) \geq \epsilon.
\]
Hence, we can choose $e_{j+1} \in \tilde{E}$ such that
\begin{equation}
\label{eq:1}
| \{ s \in (b_{j},t) \;  | \; d(f_{s},e_{j}) \geq \epsilon, \; d(f_{s},e_{j+1}) < \epsilon/4 \} | > 0.
\end{equation}
The non-emptyness of the set in \eqref{eq:1} guarantees that $d(e_{j},e_{j+1}) > 3\epsilon/4$.
It remains to choose an interval $[a_{j+1},b_{j+1}] \subset (b_{j},t)$ whose intersection with the set in \eqref{eq:1} has positive measure.

The essential Cauchy condition \eqref{eq:essential-left-Cauchy} implies the existence of an essential left limit $f_{t-} \in E$, which is the unique point such that
\begin{equation}
\label{eq:essential-left-limit}
(\forall \epsilon>0) (\exists \delta>0) \esssup_{s \in (t-\delta,t)} d(f_{s},f_{t-}) < \epsilon.
\end{equation}
Analogously, one can construct the essential right limits $f_{t+}$.
Applying the Lebesgue differentiation theorem to the functions $t \mapsto d(e,f_{t})$ for each $e\in \tilde{E}$ (local integrability of these functions follows e.g.\ from the existence of left and right essential limits), we obtain a full measure subset $X \subseteq [0,T]$ such that, for every $t \in X$ and $e \in \tilde{E}$, we have
\[
\lim_{\delta\to 0} \delta^{-1} \int_{t-\delta}^{t} |d(e,f_{t}) - d(e,f_{s})| ds = 0.
\]
For any $e\in\tilde{E}$ and $t\in X$, by the definition of $f_{t-}$ and the above identity, it follows that
\begin{align*}
d(f_{t},f_{t-})
&\leq
d(f_{t},e) + \limsup_{\delta\to 0} \delta^{-1} \int_{s=t-\delta}^{t} d(e,f_{s}) + d(f_{s},f_{t-}) ds
\\ &=
d(f_{t},e) + \limsup_{\delta\to 0} \delta^{-1} \int_{s=t-\delta}^{t} d(e,f_{s}) ds
\\ &=
d(f_{t},e) + \limsup_{\delta\to 0} \delta^{-1} \int_{s=t-\delta}^{t} d(e,f_{t}) ds
\\ &=
2 d(f_{t},e).
\end{align*}
Since $e \in \tilde{E}$ was arbitrary, it follows that $f_{t}=f_{t-}$.
Let
\[
\tilde{f}_{t} :=
\begin{cases}
f_{0+}, & t=0,\\
f_{t-}, & t>0.
\end{cases}
\]
This function coincides with $f$ almost everywhere, and we have $\tilde{f}_{0+}=\tilde{f}_{0}$ and $\tilde{f}_{t-}=\tilde{f}_{t}$ for all $t>0$.
It is easy to see that $\norm{\tilde{f}}_{\mathcal{V}^p} \lesssim \norm{f}_{\hat{V}^{p}}$.
\end{proof}

We finish this sub-section with a discussion of the finer regularity properties of paths in scale-invariant Besov spaces. A representative example is the Heaviside function
\[
	H_t =
	\begin{cases}
		0 & \text{if } 0 \le t < 1/2,\\
		1 & \text{if } 1/2 \le t \le 1,
	\end{cases}
\]
which satisfies $[H]_{B^{1/p}_{p,\oo}([0,1])} = 1$ for all $0 < p < \oo$, and we see that $B^{1/p}_{p,\oo} \not\subset C$. As it turns out, such jump discontinuities are not permissible for functions in $B^{1/p}_{p,q}$ as soon as $q$ is finite.

\begin{proposition}\label{P:nojumps}
	Let $1 < p < \oo$ and $1\le q < \oo$, $f: [0,T] \to \RR^m$, and $t_0 \in (0,T)$. Assume that $f$ has a left and right limit at $t_0$ that differ. Then $[f]_{B^{1/p}_{p,q}([0,T])} = \oo$.
\end{proposition}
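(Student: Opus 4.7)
Let $a := \lim_{t \uparrow t_0} f_t$ and $b := \lim_{t \downarrow t_0} f_t$, and set $\kappa := |b-a| > 0$. The idea is to show that a genuine jump forces the $L^p$-modulus $\omega_p(f,\tau)$ to behave like $\tau^{1/p}$ for small $\tau$, making the integrand $(\omega_p(f,\tau)/\tau^{1/p})^q$ bounded below by a positive constant near $\tau = 0$; since the measure $d\tau/\tau$ diverges at the origin, the Besov seminorm must be infinite.

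\textbf{Key steps.} First, choose $\delta \in (0, t_0 \wedge (T-t_0))$ small enough that $|f_t - a| < \kappa/4$ for all $t \in (t_0 - \delta, t_0)$ and $|f_t - b| < \kappa/4$ for all $t \in (t_0, t_0 + \delta)$; this is possible by the definition of one-sided limits. Next, for any $h \in (0, \delta)$ and any $t \in (t_0 - h, t_0)$, observe that $t + h \in (t_0, t_0 + h) \subset (t_0, t_0 + \delta)$, so by the reverse triangle inequality
\[
    |f_{t+h} - f_t| \ge |b - a| - |f_{t+h} - b| - |f_t - a| > \kappa/2.
\]
Integrating over the interval $(t_0 - h, t_0) \subset (0, T-h)$ (which has length $h$) gives
\[
    \int_0^{T-h} |f_{t+h} - f_t|^p \, dt \ge (\kappa/2)^p \, h,
\]
and hence $\omega_p(f,h) \ge (\kappa/2) h^{1/p}$ for all $0 < h < \delta$. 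Equivalently, $\omega_p(f,\tau)/\tau^{1/p} \ge \kappa/2$ on $(0,\delta)$.

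\textbf{Conclusion.} Plugging this lower bound into the Besov seminorm yields
\[
    [f]_{B^{1/p}_{p,q}([0,T])}^q = \int_0^T \left( \frac{\omega_p(f,\tau)}{\tau^{1/p}} \right)^q \frac{d\tau}{\tau} \ge (\kappa/2)^q \int_0^\delta \frac{d\tau}{\tau} = \infty,
\]
which is the desired conclusion.

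\textbf{Remarks on difficulty.} There is no real obstacle here; the only step that requires a moment's thought is the choice to test $\omega_p$ on the \emph{specific} interval $(t_0 - h, t_0)$ of width $h$ rather than the full domain, so that the jump is felt at \emph{every} point in that window. The proof also illuminates the sharpness of the hypotheses: the argument fails for $q = \infty$ (one only obtains $\sup_\tau \omega_p(f,\tau)/\tau^{1/p} \ge \kappa/2 < \infty$, consistent with $H \in B^{1/p}_{p,\infty}$), and one sees that $p$ plays no essential role in the argument beyond ensuring $\omega_p$ is defined.
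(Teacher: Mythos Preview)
Your proof is correct and follows essentially the same approach as the paper: both arguments show that the jump forces $\omega_p(f,\tau) \gtrsim \tau^{1/p}$ for small $\tau$ by restricting the $L^p$-integral to the window $(t_0-\tau,t_0)$, and then use the divergence of $\int_0^\delta d\tau/\tau$. The only difference is cosmetic—you compute the constant $\kappa/2$ explicitly where the paper simply asserts the existence of some $\mu>0$.
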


\begin{proof}
	There exists $\mu > 0$ such that, for all sufficiently small $\tau > 0$, if $s \in (t_0 - \tau,t_0)$ and $t \in (t_0,t_0 + \tau)$, then $|f_s - f_t| > \mu$. Therefore, for some sufficiently small $\tau_0 \in (0,T)$ and all $0 < \tau < \tau_0$,
	\[
		\omega_p(f,\tau)^p \ge \int_{t_0 - \tau}^{t_0} |f_{t+\tau} - f_t|^p dt \ge \tau \mu^p,
	\]
	and so
	\[
		\int_0^T \left(\frac{\omega_p(f,\tau)}{\tau^{1/p}} \right)^q \frac{d\tau}{\tau} \ge \mu^q \int_0^{\tau_0} \frac{d\tau}{\tau} = \oo.
	\]
\end{proof}

Proposition \ref{P:nojumps} rules out jump discontinuities for $B^{1/p}_{p,q}$-functions with $1 < p < \oo$ and $1 \le q < \oo$. However, if $q > 1$, some discontinuities are still allowed, and functions may even be nowhere locally bounded.

\begin{proposition}\label{P:loglog}
	Let $\chi: \RR \to \RR$ be smooth, even, supported in $[-1/2,1/2]$, and equal to $1$ in $[-1/4,1/4]$, and define
	\[
		f_t = \chi_t \log |\log t| \quad \text{for } t \in [-1,1].
	\]
	Then $f \in B^{1/p}_{p,q}([-1,1])$ for any $1 < p < \oo$ and $1 < q \le \oo$.
\end{proposition}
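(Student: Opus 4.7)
Since $\chi$ is smooth and compactly supported, the function $f$ is smooth away from $t=0$, and the only delicate contribution to the $L^p$-modulus of continuity comes from the singularity at the origin. The core of the proof is to establish
\begin{equation*}
  \omega_p(f,h) \lesssim \frac{h^{1/p}}{|\log h|} \qquad \text{as } h\to 0^+,
\end{equation*}
which, by Definition \ref{D:Besov}, immediately gives
\begin{equation*}
  [f]_{B^{1/p}_{p,q}([-1,1])}^q \lesssim 1 + \int_0^{1/2}\frac{d\tau}{\tau |\log\tau|^q} = 1 + \int_{\log 2}^\oo \frac{dv}{v^q} < \oo
\end{equation*}
precisely when $q > 1$; the case $q = \oo$ follows from the uniform bound $\omega_p(f,\tau)/\tau^{1/p} \lesssim 1/|\log \tau|$.

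To prove the main estimate, I would fix $0 < \eta \le h$ and split $\int_{-1}^{1-\eta}|f_{t+\eta}-f_t|^p\,dt$ according to whether $|t|>2h$ or $|t|\le 2h$. On $\{|t|>2h\}$ a direct calculation gives $|f'(s)| \lesssim 1/(|s| \cdot |\log|s||)$ for $|s|$ small, so the mean value theorem together with $p>1$ yields
\begin{equation*}
  \int_{|t|>2h}|f_{t+\eta}-f_t|^p\,dt \lesssim \eta^p\int_{2h}^{1/2}\frac{dt}{t^p|\log t|^p} + \eta^p \lesssim \frac{h}{|\log h|^p}.
\end{equation*}

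The delicate part, which I expect to be the main obstacle, is the region $|t|\le 2h$: there $f$ itself is unbounded, and the crude bound $|f_{t+\eta}-f_t|\le|f_t|+|f_{t+\eta}|$ would only yield $O(h(\log|\log h|)^p)$, which is insufficient to produce a Besov integral convergent at the critical scale. To capture the cancellation I would exploit, for $t \in (0,h)$, the elementary inequality
\begin{equation*}
  \bigl|\log|\log t|-\log|\log(t+\eta)|\bigr| \le \frac{\log(1+\eta/t)}{|\log(t+\eta)|},
\end{equation*}
together with the pointwise lower bound $|\log(t+\eta)| \gtrsim |\log h|$. The change of variable $u = \eta/t$ then produces
\begin{equation*}
  \int_0^h |f_{t+\eta}-f_t|^p\,dt \lesssim \frac{\eta}{|\log h|^p}\int_{\eta/h}^\oo \frac{(\log(1+u))^p}{u^2}\,du \lesssim \frac{\eta}{|\log h|^p},
\end{equation*}
where the last integral is uniformly bounded because $\log(1+u) \sim u$ near $0$ and $p > 1$ make $u^{p-2}$ integrable at $u=0$. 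The remaining subregions $(-h,0)$ (where $t+\eta$ crosses zero), $(h,2h)$ and $(-2h,-h)$ are handled by the same device combined with the symmetry of $\chi$, each contributing $O(\eta/|\log h|^p)$. Taking the supremum over $\eta\in[0,h]$ completes the proof of the key estimate on $\omega_p(f,h)$.
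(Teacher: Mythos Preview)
Your proposal is correct and takes a genuinely different route from the paper's proof. The paper invokes the discrete characterization of Lemma~\ref{L:discreteBesovnorm} and then decomposes the integral $\int_0^{1-2^{-n}}|f_{t+2^{-n}}-f_t|^p\,dt$ dyadically in $t$ (into blocks $[2^{-m-1},2^{-m}]$), treating three ranges of $m$ separately and optimizing over a free parameter $k\asymp\log n$; the outcome is $2^{n/p}\|\delta f_{\cdot,\cdot+2^{-n}}\|_{L^p}\lesssim (\log n)/n$, which is $\ell^q$-summable for $q>1$. You instead work directly with the continuous modulus and obtain the sharper pointwise bound $\omega_p(f,h)\lesssim h^{1/p}/|\log h|$ (no $\log|\log h|$ factor), using the elementary inequality for $\log|\log\cdot|$ increments together with the change of variable $u=\eta/t$. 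Your argument is more elementary---it needs neither the discrete characterization nor the optimization step---and the claimed treatment of the subregions $(-\eta,0)$, $(h,2h)$, $(-2h,-h)$ does go through by the same device (for the sign-crossing region one uses that the constraint $|t|+|t+\eta|=\eta$ reduces $\int_{-\eta}^0$ to $\eta\int_0^1|\log(u/(1-u))|^p\,du<\infty$). The paper's dyadic approach, on the other hand, makes the interplay between the three scales ($t$ small, $t\asymp h$, $t$ large) more transparent and would generalize more readily to other slowly varying profiles.
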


\begin{proof}
	If $n = 3,4,\ldots$, then, because $f$ is even,
	\[
		\int_{-1}^{1-2^{-n}} |f_{t + 2^{-n}} - f_t|^p dt = 2\int_0^{1-2^{-n}} |f_{t+2^{-n}} - f_t|^p dt + 2\int_0^{2^{-n-1}} |f_{2^{-n} - t} - f_t|^pdt.
	\]
	We compute
	\[
		f'_t = \chi'_t \log |\log t| - \chi_t \frac{1}{t|\log t|} \quad \text{for } t \ne 0.
	\]
	Fix $m = 0,1,2,\ldots$, and $t \in [2^{-m-1}, 2^{-m}]$. If $m = 0,1,2,\ldots, n$, the mean value theorem gives
	\[
		|f_{t+2^{-n}} - f_t|^p \asymp_p \left(\log m + \frac{2^m}{m}\right)^p 2^{-np} \lesssim_p \frac{2^{(m-n)p}}{m^p},
	\]
	while for $k \in \NN$ and $m = n+k,n+k+1,\ldots$,
	\[
		|f_{t+2^{-n}} - f_t|^p + |f_{2^{-n} - t} - f_t|^p \lesssim_p |f_t|^p \lesssim_p (\log m)^p.
	\]
	Finally, if $t \in [2^{-n-k}, 2^{-n-1}]$, then the mean value theorem again gives
	\[
		|f_{t+2^{-n}} - f_t| \le \log |\log 2^{-n-k})| - \log |\log (2^{-n-1} + 2^{-n}) | \le \log(n+k) - \log (n-1) \lesssim \frac{k}{n},
	\]
	and similarly $|f_{2^{-n} - t} - f_t|^p \lesssim \frac{k}{n}$. Combining all three estimates gives
	\begin{align*}
		\int_0^{1 -2^{-n}}& |f_{t+2^{-n}} - f_t|^pdt + \int_0^{2^{-n-1}} |f_{2^{-n} - t} - f_t|^p dt\\
		&= \sum_{m =0}^\oo \int_{2^{-m-1}}^{2^{-m}} |f_{t+2^{-n}} - f_t|^pdt 
		+ \sum_{m=n=1}^\oo  \int_{2^{-m-1}}^{2^{-m}} |f_{2^{-n} - t} - f_t|^p dt
		\\
		&\lesssim_p 2^{-np} \sum_{m=1}^n 2^{(p-1)m} m^{-p} + \sum_{m=n+k}^\oo 2^{-m} (\log m)^p + 2^{-n}\frac{k^p}{n^p}\\
		&\lesssim_p 2^{-n} n^{-p} + 2^{-n-k} \log (n+k)^p + 2^{-n} \frac{k^p}{n^p}.
	\end{align*}
	Choosing $k \asymp \log n$ yields
	\[
		\int_0^{1 -2^{-n}} |f_{t+2^{-n}} - f_t|^pdt + \int_0^{2^{-n-1}} |f_{2^{-n} - t} - f_t|^p dt \lesssim_p 2^{-n} \left( \frac{\log n}{n} \right)^p,
	\]
	and we conclude that
	\[
		2^{n/p} \left( \int_{-1}^{1-2^{-n}} |f_{t+2^{-n}} - f_t|^p dt \right)^{1/p} \lesssim_p \frac{\log n}{n}
	\]
	is $\ell^q$-summable in $n$ if $1 < q \le \oo$. The proof is finished in view of Lemma \ref{L:discreteBesovnorm}.
\end{proof}

By constructing a convergent series in the space $B^{1/p}_{p,q}$ of appropriate translations of the function $f$ from Proposition \ref{P:loglog}, we immediately have the following.

\begin{corollary}\label{C:loglog}
	If $1 < p < \oo$ and $1 < q \le \oo$, then $B^{1/p}_{p,q}([0,T])$ contains functions that are unbounded on every sub-interval of $[0,T]$.
\end{corollary}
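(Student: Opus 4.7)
The plan, telegraphed by the sentence preceding the corollary, is to superimpose translates of the function $f$ from Proposition \ref{P:loglog} at a dense set of centres, with coefficients small enough to sum to an element of $B^{1/p}_{p,q}$, and to exploit the one-sided nature of the log-log blowup to force unboundedness on every sub-interval.

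Concretely, without loss of generality take $T = 1$, extend $f$ by zero to $\RR$ (permissible because $f$ vanishes near $\pm 1$, so the Besov regularity of Proposition \ref{P:loglog} passes to $\RR$), enumerate $\QQ \cap [0,1] = \{q_n\}_{n \ge 1}$, pick positive coefficients $c_n$ with $\sum_n c_n < \infty$, and set
\[
	F(t) := \sum_{n=1}^\infty c_n\, f(t - q_n), \qquad t \in [0,1].
\]
The first step is to check $F \in B^{1/p}_{p,q}([0,1])$. Under the standing hypothesis $p > 1$ and $1 < q \le \infty$, the space $B^{1/p}_{p,q}([0,1])$ is a Banach space, so it suffices to bound $\sum_n c_n [f(\cdot - q_n)]_{B^{1/p}_{p,q}([0,1])}$. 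Translation invariance of the Besov seminorm on $\RR$, combined with the trivial restriction inequality $[g]_{B^{1/p}_{p,q}([0,1])} \le [g]_{B^{1/p}_{p,q}(\RR)}$, reduces this to $\sum_n c_n [f]_{B^{1/p}_{p,q}(\RR)}$, which is finite by Proposition \ref{P:loglog} and the summability of $(c_n)$.

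The second step is unboundedness on any sub-interval $I \subset [0,1]$. Since $f$ is continuous on $\RR \setminus \{0\}$, identically zero off $[-1/2,1/2]$, and satisfies $f_t \to +\infty$ as $t \to 0$, there is a finite constant $C$ with $f \ge -C$ globally. By density, pick $q_n \in I$ and a sequence $t_k \in I \setminus \{q_n\}$ with $t_k \to q_n$. Then the $n$-th term blows up, $c_n f(t_k - q_n) = c_n \log|\log|t_k - q_n|| \to +\infty$, while the remainder is bounded below by $-C \sum_{m \ne n} c_m > -\infty$; hence $F(t_k) \to +\infty$, showing $\sup_I F = +\infty$.

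The main verification that must be done carefully is the first step — series convergence in the Besov topology rather than merely pointwise. This hinges on $p,q > 1$ placing us firmly in the Banach regime, so that absolute summability of the sequence of seminorms upgrades to norm convergence of the series. Once this is in hand, the pointwise divergence at each rational $q_n$ follows immediately from the one-sided nature of the log-log singularity of $f$ together with its uniform lower bound, with no further delicate estimate required.
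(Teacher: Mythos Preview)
Your proposal is correct and fleshes out exactly the construction the paper sketches in one sentence: a Banach-space-convergent series of translates of the $\log\abs{\log}$ bump centred at a dense set, with the uniform lower bound $f \ge -C$ ruling out cancellation so that each centre produces genuine blow-up. One small point worth making explicit is that the blow-up near each $q_n$ occurs on a whole punctured interval $(q_n-\delta,q_n+\delta)\setminus\{q_n\}$, not merely along a sequence, so that after identifying the Besov limit with the a.e.-defined pointwise sum you indeed get $\esssup_I F = +\infty$ and not just unboundedness along a null set.
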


\subsection{Two-parameter Besov spaces}\label{subsec:Besov2parameter}

Let $A: \Delta_2(0,T) \to \RR^m$ be measurable and, for $p \in (0,\oo]$ and $0 < \tau \le T$, we define
\[
	\Omega_p(A,\tau) := \sup_{0 \le h \le \tau} \left( \int_0^{T-h} |A_{r,r +h}|^p \; dr\right)^{1/p}.
\]
For a three-parameter map $A: \Delta_3(0,T) \to \RR^m$, we define
\[
	\oline{\Omega}_p(A,\tau) := \sup_{0 \le \theta \le 1} \sup_{0 \le h \le \tau} \left( \int_0^{T-h} |A_{r,r + \theta h,r + h}|^p\;dr \right)^{1/p}.
\]
The dependence of $\Omega$ and $\oline{\Omega}$ on the interval $[0,T]$ is suppressed for notational convenience. Note that, for $f: [0,T] \to \RR^m$, we have $\Omega_p(\delta f,t) = \omega_p(f,t)$.

\begin{definition}\label{D:Besovtwoparameter}
Fix $\alpha > 0$ and $p,q \in (0,\oo]$. We define, for $A: \Delta_2(0,T) \to \RR^m$,
\[
	\mbb B^\alpha_{pq}([0,T]) := \left\{ A: \Delta_2(s,t) \to \RR^m : \nor{A}{\mbb B^\alpha_{pq}([0,T])} := \left[ \int_0^{T} \left( \frac{\Omega_p(A,\tau)}{\tau^\alpha}\right)^q \frac{d\tau}{\tau} \right]^{1/q} < \oo \right\}.
\]
We also set, for $A: \Delta_3(0,T) \to \RR^m$,
\[
	\nor{A}{\oline{\mbb B}^\alpha_{pq}([0,T])} := \left[ \int_0^{T} \left( \frac{\oline{\Omega}_p(A,\tau)}{\tau^\gamma}\right)^q \frac{d\tau}{\tau} \right]^{1/q}.
\]
Finally, for a non-decreasing function $\omega: [0,\oo) \to [0,\oo)$ satisfying $\lim_{r \to 0^+} \omega(r) = 0$, we set
\[
	\mbb B^\omega_{pq}([0,T]) := \left\{ A: \Delta_2(s,t) \to \RR^m : \nor{A}{\mbb B^\alpha_{pq}([0,T])} := \left[ \int_0^{T} \left( \frac{\Omega_p(A,\tau)}{\omega(\tau)}\right)^q \frac{d\tau}{\tau} \right]^{1/q} < \oo \right\}
\]
and, for $A: \Delta_3([0,T],\RR^m)$,
\[
	\nor{A}{\oline{\mbb B}^\omega_{pq}([0,T])} := \left[ \int_0^{T} \left( \frac{\oline{\Omega}_p(A,\tau)}{\omega(\tau)}\right)^q \frac{d\tau}{\tau} \right]^{1/q}.
\]
%and define
%\[
%	\pars{ \mbb B^{\alpha}_{pq} , \oline{\mbb B}^\gamma_{rs} }([0,T]) := \left\{ A: \Delta(0,T) \to \RR^m : \nor{A}{\pars{ \mbb B^{\alpha}_{pq} , \oline{\mbb B}^\gamma_{rs} }([0,T])} := \nor{A}{\mbb B^\alpha_{pq}([0,T])} + [A]_{\oline{\mbb B}^\gamma_{rs}([0,T])}< \oo \right\}.
%\]
\end{definition}

We note that, when $p = q = \oo$, $\nor{\cdot}{\mbb B^\alpha_{pq}}$ measures H\"older-type regularity, and we use the notation 
\[
	\mbb C^\alpha([0,T]) := \mbb B^\alpha_{\oo,\oo}([0,T]).
\]

As before, when $1 \le p,q \le \oo$, $\nor{\cdot}{\mbb B^\alpha_{pq}}$ is a norm and $\mbb B^\alpha_{pq}$ is a Banach space. In all cases, $\mbb B^\alpha_{pq}$ is a complete metric space:

\begin{definition}\label{D:BBesovspacemetric}
	Assume $\alpha > 0$ and $0 < p,q \le \oo$. Then $\mbb B^\alpha_{pq}([0,T],\RR^m)$ is a complete metric space with the metric
	\begin{equation}\label{BBmetric}
	\mathrm d_{\mbb B^\alpha_{pq}([0,T])}(A, \tilde A) :=
	\begin{dcases}
		\nor{A - \tilde A}{\mbb B^\alpha_{pq}([0,T])} & \text{if } 1 \le p,q \le \oo,\\
		\int_0^T \left( \frac{\Omega_p(A - \tilde A,\tau)}{\tau^\alpha} \right)^q \frac{d\tau}{\tau} & \text{if } 0 < q < 1 \text{ and } q \le p, \text{ and} \\
		\left[ \int_0^T \left( \frac{\Omega_p(A - \tilde A,\tau)}{\tau^\alpha} \right)^q \frac{d\tau}{\tau} \right]^{p/q} & \text{if } 0 < p < 1 \text{ and } q > p.
	\end{dcases}
	\end{equation}
	The same holds if $\tau \mapsto \tau^\alpha$ is replaced with an arbitrary modulus $\omega$.
\end{definition}

%\begin{remark}\label{R:monotonenorms}
%If $\tau \mapsto a(\tau)$ is nondecreasing, then the quantity
%\[
%	\left[ \int_0^T \pars{ \frac{a(\tau)}{\tau^\alpha}}^q \frac{d\tau}{\tau} \right]^{1/q},
%\]
%is equivalent to
%\[
%	\left[ \sum_{n = 1}^\oo \pars{ 2^{n\alpha} a(2^{-n})}^q \right]^{1/q}.
%\]
%In particular, we always have the continuous embeddings, for $q_1 < q_2$,
%\[
%	B^\alpha_{pq_1} \subset B^\alpha_{pq_2} \quad \text{and} \quad \mbb B^\alpha_{pq_1} \subset \mbb B^\alpha_{pq_2}.
%\]
%
%\end{remark}

We now present a useful Besov-H\"older embeddings in the two-parameter setting. The difference between $\mbb B^\omega_{pq}([0,T],\RR^m)$ and $B^\omega_{pq}([0,T],\RR^m)$ is that elements $A$ of $\mbb B^\omega_{pq}$ do not in general satisfy $\delta A = 0$, and therefore, in order to generalize Proposition \ref{P:Besovembedding}, some condition is needed on $\delta A$. The one we present here is a kind of mixed continuity condition. A more general condition can perhaps be devised; however, the continuity condition for $\delta A$ is in practice easy to check in our applications, and it usually reduces to the case of Proposition \ref{P:Besovembedding}. 

\begin{proposition}\label{P:BBesovembedding}
	Assume that $A: \Delta_2(0,T) \to \RR^m$ is measurable, $0 < p,q \le \oo$, $\omega: [0,\oo) \to [0,\oo)$ is non-decreasing, $\lim_{r \to 0^+} \omega(r) = 0$,
	\begin{equation}\label{omegacontrol}
		\omega(2\tau) \lesssim \omega(\tau) \quad \text{for all }\tau > 0,
	\end{equation}
	\begin{equation}\label{zetacompatible}
		\left\{
		\begin{split}
		&[0,\oo) \ni \tau \mapsto \zeta(\tau) := \left( \frac{\omega(\tau)}{\tau^{1/p}} \right)^{1 \wedge p} \text{ satisfies}\\
		&\int_0^T \zeta(h) \frac{dh}{h} \le K \zeta(T) \quad \text{for some } K > 0 \text{ and all } T > 0,
		\end{split}
		\right.
	\end{equation}
	$\nor{A}{\mbb B^\omega_{pq}([0,T])} < \oo$, and, for some $\theta \in (0,1/2]$ and $M > 0$,
	\begin{equation}\label{deltaAHolder}
		|\delta A_{sut}|  \le M \zeta\left( \left( (u-s) \wedge (t-s)\right)^{\theta} \left( (u-s) \vee (t-s) \right)^{1-\theta} \right)^{1 \vee \frac{1}{p}}.
	\end{equation}
	Then there exists a continuous version of $A$, denoted also by $A$, such that
	\begin{equation}\label{eq:2}
		\sup_{0 \le s < t \le T} \frac{|A_{st}|}{\omega(t-s) (t-s)^{-1/p} } \lesssim_{K,p,q,\theta} \nor{A}{\mbb B^\omega_{pq}([0,T])} + M.
	\end{equation}
\end{proposition}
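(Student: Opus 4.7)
The plan is to adapt the Campanato-type strategy used for the one-parameter Besov--H\"older embedding (Proposition \ref{P:Besovembedding}) to the two-parameter setting, using the hypothesis on $\delta A$ to transfer averaged $L^p$ information to pointwise bounds. For $s_0 < t_0$ in $[0,T]$ with $h_0 := t_0 - s_0$, and $\eta \le s_0 \wedge (T - t_0) \wedge h_0$, I would work with the double average
\[
\bar A_\eta(s_0,t_0) := \frac{1}{\eta^2} \int_{s_0 - \eta}^{s_0} \int_{t_0}^{t_0 + \eta} A_{s,t} \, dt\, ds ,
\]
with boundary cases handled by reflection or by adjusting $\eta$. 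H\"older's inequality together with the change of variables $\ell = t - s$ (for which the $u$-slice at fixed $\ell$ has length $\le \eta$) gives
\[
|\bar A_\eta(s_0,t_0)|^p \le \eta^{-2} \int_{h_0}^{h_0 + 2\eta} \Omega_p(A,\ell)^p \, d\ell ,
\]
and together with the doubling condition \eqref{omegacontrol} on $\omega$ and the Besov integrability one bounds this by $C\,(\omega(h_0) h_0^{-1/p})^p\,\|A\|_{\mbb B^\omega_{pq}}^p$ provided $\eta \asymp h_0$.

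Next, for $(s,t)$ in the averaging rectangle one has $s < s_0 < t_0 < t$, so two applications of the Chen-type identity give $A_{s,t} = A_{s,s_0} + A_{s_0,t_0} + A_{t_0,t} + \delta A_{s,s_0,t} + \delta A_{s_0,t_0,t}$. Averaging over the rectangle and rearranging,
\[
A_{s_0,t_0} = \bar A_\eta(s_0,t_0) - I_\eta - I'_\eta - R_\eta ,
\]
with the one-sided averages $I_\eta := \eta^{-1} \int_{s_0 - \eta}^{s_0} A_{s,s_0}\, ds$ and $I'_\eta := \eta^{-1}\int_{t_0}^{t_0+\eta} A_{t_0,t}\, dt$, and with $R_\eta$ a double average of the $\delta A$ terms controlled directly by \eqref{deltaAHolder} and \eqref{zetacompatible} (the Dini-type condition on $\zeta$ ensures that the corresponding integral against $d\eta/\eta$ is dominated by $M\,\zeta(h_0)^{1 \vee 1/p}$, i.e.\ by $M\,\omega(h_0) h_0^{-1/p}$).

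The main obstacle is the one-sided averages $I_\eta,\ I'_\eta$: the fixed endpoint $s_0$ (resp.\ $t_0$) prevents direct control by $\Omega_p$. I would close the estimate by an \emph{absorption} argument. Introduce
\[
N := \sup_{0 \le s < t \le T} \frac{|A_{s,t}|}{\omega(t-s)(t-s)^{-1/p}} ,
\]
initially defined over a truncated range to ensure finiteness (a standard regularization of $A$, or just the restriction to intervals of length $\ge \rho$, which yields a uniform estimate as $\rho \downarrow 0$). Then $|A_{s,s_0}| \le N\,\omega(s_0 - s)(s_0 - s)^{-1/p}$, and the Dini-type condition \eqref{zetacompatible} yields
\[
|I_\eta| \le N \cdot \eta^{-1} \int_0^\eta \omega(\sigma)\sigma^{-1/p}\, d\sigma \;\lesssim_K\; N\,\omega(\eta)\eta^{-1/p} .
\]
Choosing $\eta = c\, h_0$ with $c$ sufficiently small, \eqref{zetacompatible} forces the ratio $\omega(c h_0)(c h_0)^{-1/p}\,/\,(\omega(h_0) h_0^{-1/p})$ to be strictly less than $1/2$ (for $\omega(\tau) = \tau^\alpha$ this ratio is simply $c^{\alpha - 1/p}$). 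Plugging the three bounds into the displayed equality and taking the supremum over $(s_0, t_0)$, the $N$-term on the right is absorbed, yielding $N \lesssim \|A\|_{\mbb B^\omega_{pq}} + M$, which is \eqref{eq:2}.

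Continuity of the resulting representative follows because, for each fixed $\eta > 0$ with $A \in L^p_{\rm loc}$ on the relevant domain, $\bar A_\eta$ is continuous in $(s_0,t_0)$, and the uniform (in $\eta$, on compact sub-simplices) estimate above implies $\bar A_\eta \to A$ uniformly on such compacts, giving a continuous version of $A$. The behaviour near the diagonal $\{s = t\}$ comes for free from the displayed pointwise bound, since $\omega(t-s)(t-s)^{-1/p} \to 0$ as $t - s \downarrow 0$ under our hypotheses on $\omega$.
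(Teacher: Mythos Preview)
Your decomposition $A_{s_0,t_0} = \bar A_\eta - I_\eta - I'_\eta - R_\eta$ and the bounds on $\bar A_\eta$ and $R_\eta$ are correct, but the absorption step has a genuine gap. The one-sided averages $I_\eta, I'_\eta$ involve $A_{s,s_0}$ with $s_0 - s \in (0, c h_0]$, that is, at scales strictly \emph{smaller} than the scale $h_0$ you are trying to bound. Restricting $N$ to pairs with $t-s \ge \rho$ therefore does not close: when $h_0$ is near $\rho$ the $I_\eta$ terms live at scales $\le c\rho < \rho$, outside the truncated range. Nor is a truncated $N$ finite a priori, since the Besov hypothesis gives only $L^p$ control of $A_{\cdot,\cdot+h}$, not $L^\infty$, and a mollification of $A$ does not obviously preserve the pointwise bound \eqref{deltaAHolder} on $\delta A$. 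So there is no clean starting point from which to absorb.

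The paper's route (Proposition~\ref{P:twoparameterCampanato}) breaks this circularity by a dyadic telescoping instead of absorption. After extending $A$ antisymmetrically to $[0,T]^2$, one checks the square-average bound $r^{-2}\int_{t_0-r}^{t_0+r}\int_{t_0-r}^{t_0+r} |A_{st}|^{1\wedge p}\,ds\,dt \lesssim \zeta(r)$ directly from the $L^p$ data. Then, setting $\alpha_n(t_0) := \fint_{|s-t_0|<2^{-n}r}\fint_{|t-t_0|<r} |A_{st}|^{1\wedge p}$, the approximate additivity \eqref{deltaAHolder} yields $\alpha_n - \alpha_{n-1} \lesssim \zeta(2^{-n\theta} r)$, and the Dini condition \eqref{zetacompatible} makes the telescoping sum converge. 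Sending $n\to\infty$ via Lebesgue differentiation gives the \emph{one-sided} average bound $\fint_{|t-t_0|<r} |A_{t_0,t}|^{1\wedge p}\,dt \lesssim \zeta(r)$ on a full-measure set of $t_0$, which is exactly the control you were hoping to get on $I_\eta$, now obtained with no reference to $N$. The pointwise bound and the existence of a continuous version then follow by a second, non-circular use of \eqref{deltaAHolder}.
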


\begin{remark}\label{R:BBesovembeddingmodulus}
	The modulus $\omega$ satisfies the assumptions of Proposition \ref{P:BBesovembedding} if, for instance,
	\[
		\omega(r) = r^\gamma |\log (r \wedge 1/2)|^\beta \quad \text{for some } \gamma > \frac{1}{p} \text{ and } \beta \ge 0.
	\]
\end{remark}

Proposition \ref{P:BBesovembedding} is proved by taking advantage of some Campanato-type characterizations of H\"older continuity, which, for the Besov-type spaces in question, are routine to verify. More precisely, if $E$ is a Banach space, $f: [0,T] \to E$, and $\beta > 0$, then the $\beta$-H\"older semi-norm of $f$ is equivalent to
\[
\sup_{t_0 \in [0,T]} \sup_{0 < r < t_0 \wedge T - t_0} \frac{1}{r^\beta} \frac{1}{2r} \int_{r_0 - r}^{t_0 + r} \nor{f_t - \frac{1}{2r} \int_{t_0 - r}^{t_0 + r} f_sds}{E}dt,
\]
a result which goes back at least to related work of Campanato \cite{Campanato}.
The analytic essence of this result is contained in the following estimate, which we already formulate in a way applicable to two-parameter functions (in the one-parameter case, the approximate triangle inequality \eqref{deltaAHolderCampanato} can be replaced by a genuine triangle inequality).

\begin{proposition}\label{P:twoparameterCampanato}
Let $\zeta: [0,\oo) \to [0,\oo)$ be a non-decreasing function such that
\begin{equation}\label{zetacontrol}
\zeta(2r) \lesssim \zeta(r) \quad \text{for all } r > 0,
\end{equation}
and
\begin{equation}\label{zetaDini}
\int_0^T \zeta(h) \frac{dh}{h} \le K \zeta(T) \quad \text{for some } K > 0 \text{ and all } T > 0.
\end{equation}
Let $\rho: [0,T]^2 \to [0,\oo)$ be a locally integrable function such that, for all $t_0 \in (0,T)$ and $r < t_0 \wedge T - t_0$,
\begin{equation}\label{ACampanato}
\frac{1}{r^2} \int_{t_0-r}^{t_0 + r} \int_{t_0 - r}^{t_0+r} \rho_{st} dtds \le \zeta(r)
\end{equation}
and, for some $\theta \in (0,1/2]$ and all $r,s,t \in [0,T]$,
\begin{equation}\label{deltaAHolderCampanato}
\rho_{rt} \le \rho_{rs} + \rho_{st} +
\zeta\left( (|s-r| \wedge |t-s|)^{\theta}(|s-r| \vee |t-s|)^{(1-\theta)} \right).
\end{equation}
Then, there exists a full measure subset $X\subseteq [0,T]$ such that
\[
\sup_{s,t \in X, s \ne t} \frac{\rho_{st}}{\zeta(|t-s|)} \lesssim_{\theta,K} 1.
\]
\end{proposition}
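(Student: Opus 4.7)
The strategy is to combine the Lebesgue differentiation theorem with an averaged dyadic chain decomposition that iteratively applies the approximate triangle inequality \eqref{deltaAHolderCampanato}. To set up I first note that the Dini condition \eqref{zetaDini} together with the doubling \eqref{zetacontrol} force $\zeta(r) \to 0$ as $r \to 0^+$ and yield a geometric Dini summability of the form $\sum_{k \ge 0} \zeta(\lambda^k r) \lesssim_{K, \lambda} \zeta(r)$ for any $\lambda \in (0, 1)$; this estimate will be invoked repeatedly to control accumulations of errors and of near-diagonal contributions.

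I would then define $X \subseteq [0,T]$ to be the set of points $s$ such that $(s,t)$ is a Lebesgue point of $\rho$, viewed as a locally integrable function on $[0,T]^2$, for almost every $t \in [0,T]$. By Fubini's theorem and Lebesgue differentiation, $X$ has full measure, and for $s \in X$ and any $t$ making $(s,t)$ a Lebesgue point we have $\rho_{st} = \lim_{\epsilon \to 0^+} \bar\rho^\epsilon_{st}$, where $\bar\rho^\epsilon_{st} := (2\epsilon)^{-2} \int_{B_\epsilon(s)\times B_\epsilon(t)} \rho_{uv}\, du\, dv$. It therefore suffices to establish the uniform bound $\bar\rho^\epsilon_{st} \lesssim_{\theta, K} \zeta(h)$ for $\epsilon \in (0, h/4)$, where $h := |t-s|$, and then take $\epsilon \to 0$.

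The key analytic step is to decompose $\rho_{u,v}$ for $(u,v) \in B_\epsilon(s) \times B_\epsilon(t)$ by iterating \eqref{deltaAHolderCampanato} along a two-sided dyadic chain of intermediate points with centers $z_j = s + h \cdot 2^{-(N+1-j)}$ for $j = 1, \ldots, N$ and $z_j = t - h \cdot 2^{-(j-N+1)}$ for $j = N+1, \ldots, 2N-1$, with each actual $y_j$ averaged over a small ball $B_{\delta_j}(z_j)$ of radius $\delta_j$ proportional to the local dyadic scale $h \cdot 2^{-k}$. By construction every resulting piece $\rho_{y_{j-1}, y_j}$ lies at a scale $\le h/4$ and is near-diagonal: the integration rectangle $B_{\delta_{j-1}}(z_{j-1}) \times B_{\delta_j}(z_j)$ is contained in a diagonal box of comparable size, so its average is controlled by $\zeta(h \cdot 2^{-k})$ via \eqref{ACampanato}. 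Since each dyadic scale appears a bounded number of times, the total near-diagonal contribution sums to $\lesssim_K \zeta(h)$ by Dini.

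The main technical obstacle will be the error analysis. Each triangle inequality split produces a term of the form $\zeta\bigl((|y_{j-1}-y_j|\wedge|y_j - v|)^\theta (|y_{j-1}-y_j|\vee|y_j - v|)^{1-\theta}\bigr)$, and the innermost splits (where one side is very small while the other is of size $\sim h$) give errors of the form $\zeta(h \cdot 2^{-\theta k})$; this is where the flexibility $\theta \in (0, 1/2]$ in \eqref{deltaAHolderCampanato} enters, since the corresponding geometric series again sums to $\lesssim_{\theta, K} \zeta(h)$ by Dini, but now with a constant depending on $\theta$ (consistent with the statement). Collecting everything yields $\bar\rho^\epsilon_{st} \lesssim_{\theta, K} \zeta(h)$ uniformly in $\epsilon$, and letting $\epsilon \to 0$ (together with a standard adjustment of $X$ near the endpoints $0$ and $T$ to handle boundary issues in \eqref{ACampanato}) gives the desired bound.
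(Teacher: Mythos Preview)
Your chain argument correctly yields the uniform bound $\bar\rho^\epsilon_{st}\lesssim_{\theta,K}\zeta(h)$, and the error accounting (near-diagonal pieces summing via the Dini condition at rate $2^{-k}$, split errors at rate $2^{-\theta k}$) is right. But there is a gap at the end: from the uniform bound on box averages, two-dimensional Lebesgue differentiation only gives $\rho_{st}\lesssim\zeta(|t-s|)$ for \emph{almost every} pair $(s,t)$; your set $X$ therefore satisfies ``for each $s\in X$ the bound holds for a.e.\ $t$'', which is strictly weaker than the statement's ``for all $s,t\in X$''. A null set in $[0,T]^2$ need not lie in the complement of a product $X\times X$. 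The fix is to use \eqref{deltaAHolderCampanato} once more: for $s,t$ each having the a.e.\ bound available in both arguments, insert a single point $u$ between them and observe that for a.e.\ such $u$ both $\rho_{su}$ and $\rho_{ut}$ are controlled.

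That missing step is in fact how the paper organizes the whole proof. Rather than shrinking both variables at once via a long chain, the paper shrinks only the first: with $\alpha_n(t_0)=\fint_{t_0-2^{-n}r}^{t_0+2^{-n}r}\fint_{t_0-r}^{t_0+r}\rho_{st}\,dt\,ds$ and a \emph{single} averaged intermediate point per step, one obtains $\alpha_n-\alpha_{n-1}\lesssim\zeta(2^{-n\theta}r)$, and one-dimensional Lebesgue differentiation then gives $\fint_{t_0-r}^{t_0+r}\rho_{t_0,t}\,dt\lesssim_{\theta,K}\zeta(r)$ for $t_0$ in a full-measure set $X$ and \emph{every} $r$ simultaneously. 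Because this holds for all radii, the pairwise bound for $s,t\in X$ away from the boundary reduces to exactly the one-intermediate-point argument above. The boundary (your ``standard adjustment'') is then handled explicitly by a short discrete chain inside $X$. In effect, your two-sided averaged chain gets replaced by a one-variable telescoping plus one final split; this is simpler to execute and, crucially, makes the passage from a.e.\ to pointwise-on-$X$ automatic.
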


\begin{proof}	
For brevity, we denote averages by $\fint_{a}^{b} := (b-a)^{-1} \int_{a}^{b}$.

{\it Step 1.} Fix $t_0 \in (0,T)$, $r < t_0 \wedge (T - t_0)$, and $n \ge 1$, and set
\begin{equation}\label{alphan}
\alpha_n(t_{0}) := \fint_{t_0 - 2^{-n}r}^{t_0 + 2^{-n}r} \fint_{t_0 - r}^{t_0 + r} \rho_{st} dtds.
\end{equation}
Using \eqref{deltaAHolderCampanato}, we write, for all $s,u,t$ with $|s-t_0| < 2^{-n}r$, $|t-t_0| < r$, $|u-t_{0}|<2^{-n+1}r$,
\[
\rho_{st} \le \rho_{su} + \rho_{ut} + \zeta(4 \cdot 2^{-n\theta} r).
\]
Averaging in $s,t,u$, we obtain
\begin{align*}
\alpha_{n}(t_{0})
&\le
\fint_{t_0 - 2^{-n+1}r}^{t_0 + 2^{-n+1}r} \fint_{t_0 - 2^{-n}r}^{t_0 + 2^{-n}r}\rho_{su} ds du
+ \fint_{t_0 - 2^{-n+1}r}^{t_0 + 2^{-n+1}r} \fint_{t_0 - r}^{t_0 + r} \rho_{ut} dt du
+ \zeta(4 \cdot 2^{-n\theta} r)
\\ &=
\I + \II + \III.
\end{align*}
From \eqref{ACampanato}, we obtain
\[
\I \le
2 \fint_{t_0 - 2^{-n+1}r}^{t_0 + 2^{-n+1}r} \fint_{t_0 - 2^{-n+1}r}^{t_0 + 2^{-n+1}r} \rho_{su} duds
\le 2\zeta(2^{-n+1} r)
\lesssim \zeta(2^{-n} r).
\]
We also have $\II = \alpha_{n-1}(t_{0})$, and, by \eqref{zetacontrol}, $\III \lesssim_{\theta} \zeta(2^{-n\theta} r)$.
We conclude that
\[
\alpha_n(t_{0}) - \alpha_{n-1}(t_{0})
\lesssim
\zeta(2^{-n}r) + \zeta(2^{-n\theta} r)
\lesssim
\zeta(2^{-n\theta} r).
\]
By hypothesis, $\alpha_0 \le \zeta(r)$, and so, taking $n \to \oo$ in \eqref{alphan} and using the Lebesgue differentiation theorem gives, for every $r>0$,
\begin{equation*}
\esssup_{t_{0} \in (r,1-r)}
\frac{1}{2r}  \int_{t_0 - r}^{t_0 + r} \rho_{t_0,t} dt
\lesssim
\zeta(r) + \sum_{n=1}^\oo \zeta(2^{-n\theta}r)
\lesssim_{\theta,K} \zeta(r),
\end{equation*}
where we used \eqref{zetaDini} in the last step.
Using the above inequality for a countable dense set of $r$'s and \eqref{zetacontrol}, we see that there exists a full measure subset $X \subseteq (0,T)$ such that, for every $t_{0} \in X$, and every $r < \min(t_{0},T-t_{0})$, we have
\begin{equation}\label{comparetoaverage}
\fint_{t_0 - r}^{t_0 + r} \rho_{t_0,t} dt
+
\fint_{t_0 - r}^{t_0 + r} \rho_{t,t_0} dt
\lesssim_{\theta,K} \zeta(r),
\end{equation}
the second term being bounded by employing a symmetric argument.

{\it Step 2.} Fix $s,t \in X$ such that
\[
r := |t-s| < \dist(\{s,t\},\{0,T\}).
\]
By \eqref{deltaAHolderCampanato}, for $u \in (s\wedge t,s\vee t)$, we have
\[
\rho_{st} \leq \rho_{su} + \rho_{ut} + \zeta(r).
\]
Averaging this inequality in $u$ and using \eqref{comparetoaverage}, we obtain $\rho_{st} \lesssim_{K,\theta} \zeta(|t-s|)$.

{\it Step 3.} Fix $0 < s < t \le T/2$ with $s,t \in X$ and $s \leq t/2$.
Choose $t=t_{0}>t_{1}>\dotsc>t_{k}=s$ such that $t_{j}/2 < t_{j+1} \leq t_{j}/\sqrt{2}$ and $t_{j} \in X$ for all $j = 0,\dotsc,k-1$.
By induction on $l$, using \eqref{deltaAHolderCampanato}, we obtain
\[
\rho_{st} \leq \rho_{s t_{l}} + \sum_{j=0}^{l-1} \rho_{t_{j+1} t_{j}} + \zeta(t_{j}-s).
\]
Using this inequality with $l=k-1$ and the result from Step 2, we obtain
\begin{align*}
\rho_{st}
\leq \sum_{j=1}^k \rho_{t_{j}, t_{j-1}}
+ \sum_{j=0}^{k-2} \zeta( t_{j} - s ) 
\lesssim_{\theta,K} \sum_{j=0}^{k-1} \zeta( t_{j} - s ) 
\lesssim_{K,\theta} \zeta(t-s).
\end{align*}
A symmetric argument gives similar estimates for $T/2 \le s < t < T$, as well as for $\rho_{ts}$.
It is then easy to conclude for all $s,t \in X$.
\end{proof}

Proposition~\ref{P:twoparameterCampanato} can be specialized to a Campanato-type characterization of H\"older continuity for functions with values in a metric space. We thank Pavel Zorin-Kranich for stream-lining an earlier proof of ours.

\begin{proposition}\label{P:generalCampanato} 
If $f: [0,T] \to E$ is measurable and $\beta \in (0,1)$, then
\begin{equation}\label{HandC}
\sup_{(s,t) \in \Delta_2(0,T)} \frac{d(f_s,f_t)}{(t-s)^\beta}
\asymp_\beta
\sup_{t_0 \in (0,T)} \sup_{0 < r < t_0 \wedge T - t_0} \frac{1}{r^\beta} \frac{1}{(2r)^{2}} \int_{t_0-r}^{t_0+r} \int_{t_0-r}^{t_0+r} d(f_{s},f_{t}) dt ds.
\end{equation}
More precisely, when the right-hand side is finite, there exists a version of $f$ which is $\beta$-H\"older continuous, and the equivalence of semi-norms holds.
\end{proposition}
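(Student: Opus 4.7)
The plan is to apply Proposition~\ref{P:twoparameterCampanato} with the choices $\rho_{st} := d(f_s, f_t)$ and $\zeta(r) := r^\beta$, then upgrade the resulting a.e.~H\"older bound to a genuine H\"older version.

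The easy direction $\lesssim$ in \eqref{HandC} will follow immediately: if $f$ is a $\beta$-H\"older representative with semi-norm $C$, then for any $s,t \in [t_0-r, t_0+r]$ we have $d(f_s,f_t) \le C (2r)^\beta$, so the double average is bounded by $C 2^\beta r^\beta$; dividing by $r^\beta$ gives the claimed bound.

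For the converse, let $M$ denote the (assumed finite) right-hand side of \eqref{HandC}. I will verify the hypotheses of Proposition~\ref{P:twoparameterCampanato} with $\rho$ and $M\zeta$. The doubling $\zeta(2r) \lesssim \zeta(r)$ is immediate from $\beta > 0$; the Dini condition \eqref{zetaDini} reduces to $\int_0^T h^{\beta-1}\,dh = T^\beta/\beta$, so $K = 1/\beta$. The Campanato bound \eqref{ACampanato} is precisely the assumption that the right-hand side of \eqref{HandC} equals $M$. Finally, the metric $d$ satisfies the genuine triangle inequality $\rho_{rt} \le \rho_{rs} + \rho_{st}$, so \eqref{deltaAHolderCampanato} holds even with the $\zeta$-correction dropped (which only strengthens the hypothesis; inspection of Steps~1--3 of the proof of Proposition~\ref{P:twoparameterCampanato} shows the additive term is used only to absorb a telescoping defect, which here vanishes). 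Proposition~\ref{P:twoparameterCampanato} will then produce a full-measure subset $X \subseteq [0,T]$ on which $d(f_s, f_t) \lesssim_\beta M\, |t-s|^\beta$.

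The last step will be to promote this into an everywhere $\beta$-H\"older representative. Since $X$ is dense and $f|_X$ is $\beta$-H\"older, hence uniformly continuous, it extends uniquely by continuity to a map $\tilde f : [0,T] \to E$ that agrees with $f$ almost everywhere and satisfies the same H\"older bound on all of $[0,T]$; this will yield both the $\gtrsim$ direction of \eqref{HandC} and the asserted existence of a H\"older version. The main technical obstacle here is that $E$ is a priori only a metric space, so the Cauchy sequences $(f(t_n))$ with $t_n \in X$, $t_n \to t$, need a target in which to converge; this is circumvented either by assuming $E$ complete (the setting of all applications in this paper) or by isometrically embedding $E$ into a Banach space via Kuratowski, which makes the extension automatic and produces the equivalence of seminorms with constants depending only on $\beta$.
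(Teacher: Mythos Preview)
Your proposal is correct and follows essentially the same route as the paper: apply Proposition~\ref{P:twoparameterCampanato} with $\rho_{st}=d(f_s,f_t)$ and $\zeta(r)=Mr^\beta$ to obtain the H\"older bound on a full-measure set $X$, then extend $f|_X$ to a H\"older version on $[0,T]$. One cosmetic slip: you have the direction labels swapped (your ``easy direction $\lesssim$'' argument actually establishes the $\gtrsim_\beta$ inequality in \eqref{HandC}, and vice versa for the hard direction), but the arguments themselves are in the right places.
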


%\begin{remark} \label{R:HandC}
%	The converse estimate is obvious. We thus see the equivalence of H\"older and Campanato norms in our setting of metric space valued paths. 
% \end{remark}  

\begin{proof}
The $\gtrsim_\beta$ estimate is obvious. To prove the $\lesssim_\beta$ direction, we invoke Proposition~\ref{P:twoparameterCampanato} with $\rho_{st}=d(f_{s},f_{t})$ and $\zeta(r)=Cr^{\beta}$ to conclude that there exists a full measure subset $X \subset [0,T]$ such that, for every $s,t \in X$,
\[
d(f_{s},f_{t}) \lesssim \abs{s-t}^{\beta}.
\]
It follows that $f|_{X}$ can be extended to a $\beta$-H\"older continuous function on $[0,T]$ that coincides with $f$ on $X$.
\end{proof}

As an immediate consequence, we present the proof of Proposition \ref{P:Besovembedding}.

\begin{proof}[Proof of Proposition \ref{P:Besovembedding}]
	For $f \in B^\alpha_{pq}([0,T];E)$, we have, for all $h \in (0,T)$,
	\[
		\left[ \int_0^{T-h} d(f_{t+h}, f_t)dt \right]^{1/p} \lesssim [f]_{B^\alpha_{pq}} h^\alpha.
	\]
For any $t_0 \in (0,T)$ and $0 < r < t_0 \wedge (T - t_0)$, we have
	\begin{align*}
        \MoveEqLeft
 \frac{1}{(2r)^2} \int_{t_0 -r}^{t_0 + r} \int_{t_0 - r}^{t_0 + r} d(f_t, f_s)dtds \\
		&= \frac{1}{(2r)^2} \int_{t_0 - r}^{t_0 +r} \int_{t_0 - r - s}^{t_0 + r - s}  d( f_{s+h}, f_s)dh ds\\
		&= \frac{1}{(2r)^2} \int_{-2r}^{2r} \int_{t_0 - r + h_-}^{t_0 + r - h_+} d( f_{s+h},  f_s)ds dh\\
		&\le \frac{1}{(2r)^2} \int_{-2r}^{2r} (2r - |h|)^{1 - 1/p} \left[ \int_{h_-}^{T-h_+}  d( f_{s+h}, f_s)^pds \right]^{1/p}dh \\
		&\lesssim \frac{[f]_{B^\alpha_{pq}}}{r^{1+1/p}} \int_0^{2r} h^\alpha dh \\
		&\lesssim [f]_{B^\alpha_{pq}} r^{\alpha - 1/p}.
	\end{align*}
	The result now follows from Proposition \ref{P:generalCampanato}.
        \end{proof}
        
\begin{remark}\label{R:2to1param}
	Proposition \ref{P:Besovembedding} can also be proved by immediately appealing to Proposition \ref{P:BBesovembedding}.
\end{remark}
        
We finally present the proof of the two-parameter Besov-H\"older embedding.
        
\begin{proof}[Proof of Proposition \ref{P:BBesovembedding}]
If $p = \oo$, then the result is trivial, so assume $p < \oo$.

The map $A$ is defined only for $(s,t)$ with $s \le t$.
To match the setting of Proposition \ref{P:twoparameterCampanato}, for $s > t$, we set $A_{st} = -A_{ts}$.
Then $|\delta A_{rst}|$ remains constant over all permutations of $r,s,t \in [0,T]$. Therefore, \eqref{deltaAHolder} continues to hold regardless of the order of $s$, $u$, and $t$, and, for $\tau \in [0,T]$,
\[
	\sup_{0 < h < \tau} \left( \int_h^T \abs{ A_{t,t-h}}^p dt\right)^{1/p} = \Omega_p(A,\tau).
\]
For all $0 < h < T$, we have
\[
	\left( \int_h^T \abs{ A_{t,t-h}}^p dt\right)^{1/p} + \left( \int_0^{T-h} \abs{A_{t,t+h}}^{p} dt\right)^{1/p} \lesssim_{p,q} \nor{A}{\mbb B^\omega_{pq}} \omega(h).
\]

Now fix $t_0 \in (0,T)$ and $r < t_0 \wedge T - t_0$. If  $1 \le p < \oo$, then
\begin{align*}
	\frac{1}{r^2} \int_{t_0 - r}^{t_0 + r} \int_{t_0 - r}^{t_0 + r} \abs{ A_{st}}dsdt
	&= \frac{1}{r^2} \int_{t_0 - r}^{t_0 + r} \int_{t_0 -r -s}^{t_0 + r-s} \abs{ A_{s,s+h}}dhds \\
	&= \frac{1}{r^2} \int_{-2r}^{2r} \int_{t_0 - r + h_-}^{t_0 + r - h_+} \abs{ A_{s,s+h}}ds dh\\
	&\le \frac{2^{1-1/p}}{r^{1 + 1/p}} \int_{-2r}^{2r} \left( \int_{h_-}^{T-h_+} \abs{ A_{s,s+h}}^{p}ds\right)^{1/p} dh \\
	&\lesssim_{p,q} \frac{1}{r^{1+1/p}}\nor{A}{\mbb B^\omega_{pq}} \int_{-2r}^{2r} \omega(|h|) dh \\
	&\lesssim \nor{A}{\mbb B^\omega_{pq}} \zeta(r).
\end{align*}
Thus, \eqref{ACampanato} and \eqref{deltaAHolderCampanato} are satisfied with $\rho_{st} = |A_{st}|$. It is easy to see that $\zeta$ satisfies the assumptions of Proposition \ref{P:twoparameterCampanato} (the implicit constants depend additionally on $p$).
Proposition~\ref{P:twoparameterCampanato} implies that \eqref{eq:2} holds for $s,t$ in a full measure subset $X \subset [0,T]$.
Using~\eqref{deltaAHolder}, we can extend $A|_{X\times X}$ to a continuous function on $[0,T]^{2}$ that still satisfies \eqref{eq:2}.

If $0 < p < 1$, then a similar computation yields
\[
	\frac{1}{r^2} \int_{t_0 - r}^{t_0 + r} \int_{t_0 - r}^{t_0 + r} \abs{ A_{st}}^p dsdt
	\lesssim_{p,q} \nor{A}{\mbb B^\omega_{pq}}^p \zeta(r),
\]
and so the result follows upon appealing to Proposition \ref{P:twoparameterCampanato} with $\rho_{st} = |A_{st}|^p$.
\end{proof}

Proposition \ref{P:BBesovembedding} will often be paired with the following interpolation estimate, which allows for a simultaneous loss of regularity and gain of integability.

\begin{lemma}\label{L:interpolate}
	Assume that $0 < \alpha < \gamma$, $0 < p < r \le \oo$, $0 < q \le \oo$, and, for some $\delta > 0$,
	\begin{equation}\label{A:interpolate}
		\alpha < \frac{\gamma p}{r} + \delta \left( 1 - \frac{p}{r} \right).
	\end{equation}
	Then, for all $A: \Delta_2(0,T) \to \RR^m$,
	\[
		\nor{A}{\mbb B^{\alpha}_{r,q}([0,T])} \lesssim_{\alpha, \gamma,p, r,q,\delta} T^{\delta\left(1 - \frac{p}{r}\right) + \frac{\gamma p}{r} - \alpha} \nor{A}{\mbb C^\delta([0,T])}^{1 - \frac{p}{r}}\nor{A}{\mbb B^{\gamma}_{p,q}([0,T])}^{\frac{p}{r}}.
	\]
\end{lemma}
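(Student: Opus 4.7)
The estimate factors cleanly into a pointwise $L^r$ interpolation and a scalar Hölder inequality on the Besov integrals. Denote $\beta := \delta(1 - p/r) + \gamma p/r - \alpha$, so that the hypothesis \eqref{A:interpolate} is exactly $\beta > 0$.

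\emph{Step 1: pointwise interpolation of $\Omega_r$.} For fixed $h \in [0,\tau]$ and $u \in [0,T-h]$, split $|A_{u,u+h}|^r = |A_{u,u+h}|^{r-p}\,|A_{u,u+h}|^p$ and bound the first factor by the $\mbb C^\delta$ norm,
\[
	|A_{u,u+h}|^{r-p} \le \nor{A}{\mbb C^\delta([0,T])}^{\,r-p}\,h^{\delta(r-p)}.
\]
Integrating in $u$, taking the $r$-th root, and then the supremum over $h \in [0,\tau]$, I obtain
\[
	\Omega_r(A,\tau) \le \nor{A}{\mbb C^\delta([0,T])}^{\,1 - p/r}\,\tau^{\delta(1-p/r)}\,\Omega_p(A,\tau)^{p/r}.
\]
(The case $r = \oo$ is simpler: just use $|A| \le \nor{A}{\mbb C^\delta} h^\delta$ directly.)

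\emph{Step 2: Hölder in the $d\tau/\tau$ integral.} Multiplying the display above by $\tau^{-\alpha}$ and raising to the $q$-th power gives, for every $\tau \in (0,T]$,
\[
	\left(\frac{\Omega_r(A,\tau)}{\tau^\alpha}\right)^q \le \nor{A}{\mbb C^\delta([0,T])}^{\,q(1-p/r)}\,\tau^{q\beta}\,\left(\frac{\Omega_p(A,\tau)}{\tau^\gamma}\right)^{qp/r}.
\]
For $q = \oo$, I just bound $\tau^\beta \le T^\beta$ and take the supremum in $\tau$ to conclude. For $q < \oo$, I integrate against $d\tau/\tau$ and apply Hölder's inequality with conjugate exponents $r/(r-p)$ and $r/p$ (both $\ge 1$ since $p < r$):
\[
	\int_0^T \tau^{q\beta}\left(\frac{\Omega_p(A,\tau)}{\tau^\gamma}\right)^{qp/r}\frac{d\tau}{\tau}
	\le \left(\int_0^T \tau^{q\beta r/(r-p)}\,\frac{d\tau}{\tau}\right)^{(r-p)/r}\,\nor{A}{\mbb B^\gamma_{p,q}([0,T])}^{\,qp/r}.
\]
The scalar integral converges at $0$ precisely because $\beta > 0$, and evaluates to a constant times $T^{q\beta r/(r-p)}$. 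Raising to the power $(r-p)/r$ puts the $T$-exponent back to $q\beta$, and taking $q$-th roots yields
\[
	\nor{A}{\mbb B^\alpha_{r,q}([0,T])} \lesssim_{\alpha,\gamma,p,r,q,\delta} T^\beta\,\nor{A}{\mbb C^\delta([0,T])}^{1-p/r}\,\nor{A}{\mbb B^\gamma_{p,q}([0,T])}^{p/r},
\]
which is the claimed bound.

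\emph{Remarks on the obstacle.} There is no real obstacle; the only things to check are (i) that Hölder's inequality applies irrespective of whether $p,q < 1$ (it does, since the exponents $r/p,\,r/(r-p)$ are $\ge 1$), and (ii) that the convergence of $\int_0^T \tau^{q\beta r/(r-p)}\,d\tau/\tau$ at the origin is equivalent to condition \eqref{A:interpolate}. The formula $\beta = \delta(1-p/r) + \gamma p/r - \alpha$ reveals the underlying picture: $\alpha$ must lie below the convex combination of the regularities $\delta$ and $\gamma$ with weights $1-p/r$ and $p/r$ matching the integrability interpolation between $\mbb C^\delta$ and $\mbb B^\gamma_{p,q}$, and any deficit is compensated by a power of $T$.
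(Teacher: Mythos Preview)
Your proof is correct and follows essentially the same route as the paper: interpolate $\Omega_r \le \Omega_\infty^{1-p/r}\Omega_p^{p/r}$ pointwise (you do this by splitting $|A|^r = |A|^{r-p}|A|^p$, which is the same thing), then apply H\"older with exponents $r/(r-p)$ and $r/p$ in the $L^q(d\tau/\tau)$ integral. Your treatment is slightly more explicit about the endpoint cases $q=\infty$ and $r=\infty$, but there is no substantive difference.
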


%\begin{remark}\label{R:specialinterpolate}
%	Lemma \ref{L:interpolate} is often used when $\alpha r = \gamma p$, which only requires that $\delta > 0$.
%\end{remark}

\begin{proof}
	We compute
	\[
		\Omega_{r}(A,\tau) \le \Omega_\oo(A,\tau)^{1 - \frac{p}{r}} \Omega_{p}(A,\tau)^{\frac{p}{r}} \le \nor{A}{\mbb C^\delta}^{1 - \frac{p}{r}} \tau^{\delta\left(1 - \frac{p}{r}\right)} \Omega_{p}(A,\tau)^{\frac{p}{r}}
	\]
	and so
	\[
		\frac{\Omega_{r}(A,\tau)}{\tau^{\alpha}}
		\le \nor{A}{\mbb C^\delta}^{1 - \frac{p}{r}} \tau^{\delta\left(1 - \frac{p}{r}\right) + \frac{\gamma p}{r} - \alpha} \left( \frac{\Omega_{p}(A,\tau)}{\tau^{\gamma}} \right)^{\frac{p}{r}}.
	\]
	We take the $L^q(d\tau/\tau)$ (quasi)-norm on both sides and conclude by H\"older's inequality that
	\begin{align*}
		\nor{A}{\mbb B^{\alpha}_{r,q}} 
		&\le \nor{A}{\mbb C^\delta}^{1 - \frac{p}{r}} \left[ \int_0^T \tau^{q\left[\delta\left(1 - \frac{p}{r}\right) + \frac{\gamma p}{r} - \alpha\right]}  \left( \frac{\Omega_{p}(A,\tau)}{\tau^{\gamma}} \right)^{\frac{pq}{r}} \frac{d\tau}{\tau} \right]^{1/q}\\
		&\lesssim T^{\delta\left(1 - \frac{p}{r}\right) + \frac{\gamma p}{r} - \alpha} \nor{A}{\mbb C^\delta}^{1 - \frac{p}{r}} \nor{A}{\mbb B^{\gamma}_{p,q}}^{\frac{p}{r}}.
	\end{align*}
\end{proof}

Lemma \ref{L:interpolate} can be extended to allow for more general moduli; the proof is almost identical, and thus we omit it.

\begin{lemma}\label{L:interpolatemodulus}
	Assume that $\alpha > 0$, $0 < p < r \le \oo$, $0 < q \le \oo$, $\omega,\rho:[0,\oo) \to [0,\oo)$ are non-decreasing and satisfy $\lim_{\tau\to0^+} \omega(\tau) = \lim_{\tau\to0^+} \rho(\tau) = 0$, and
	\[
		\sigma(T) := \left[\int_0^T \left(\frac{\rho(\tau) \omega(\tau)^{\frac{p}{r-p} } }{\tau^{\frac{\alpha r}{r-p} } } \right)^q \frac{d\tau}{\tau}\right]^{\frac{r-p}{rq}} < \oo.
	\]
	Then, for all $A: \Delta_2(0,T) \to \RR^m$,
	\[
		\nor{A}{\mbb B^{\alpha}_{r,q}([0,T])} \lesssim_{\alpha,p, r,q} \sigma(T) \nor{A}{\mbb C^\rho([0,T])}^{1 - \frac{p}{r}}\nor{A}{\mbb B^{\omega}_{p,q}([0,T])}^{\frac{p}{r}}.
	\]
\end{lemma}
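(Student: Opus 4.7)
The plan is to follow the same three-step strategy as in the proof of Lemma \ref{L:interpolate}, with the power-type moduli $\tau^\delta$ and $\tau^\gamma$ replaced by the general non-decreasing moduli $\rho$ and $\omega$, and with the balancing computation deferred from an inspection of exponents (as in the power case) to the definition of $\sigma(T)$ itself.

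First, I would pointwise interpolate the $L^r$-modulus of $A$ between the $L^\infty$- and $L^p$-moduli: for every $\tau \in (0,T]$,
\[
   \Omega_r(A,\tau) \le \Omega_\infty(A,\tau)^{1 - \frac{p}{r}} \Omega_p(A,\tau)^{\frac{p}{r}}
   \le \nor{A}{\mbb C^\rho}^{1 - \frac{p}{r}} \rho(\tau)^{1 - \frac{p}{r}} \Omega_p(A,\tau)^{\frac{p}{r}}.
\]
Dividing by $\tau^\alpha$ and regrouping, I would write
\[
   \frac{\Omega_r(A,\tau)}{\tau^\alpha} \le \nor{A}{\mbb C^\rho}^{1 - \frac{p}{r}} \cdot \frac{\rho(\tau)^{1-\frac{p}{r}} \omega(\tau)^{\frac{p}{r}}}{\tau^\alpha} \cdot \left(\frac{\Omega_p(A,\tau)}{\omega(\tau)}\right)^{\frac{p}{r}},
\]
a factorization designed so that the last factor, once raised to $q$ and integrated in $d\tau/\tau$, reproduces a power of $\nor{A}{\mbb B^\omega_{pq}}$.

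Next, I would raise the inequality to the power $q$, integrate against $d\tau/\tau$, and apply Hölder's inequality (or sub-additivity, when $q<1$) with the exponent pair $(r/(r-p), r/p)$, pairing the deterministic modulus factor with the stochastic modulus factor. This yields
\[
   \nor{A}{\mbb B^\alpha_{r,q}}^q \le \nor{A}{\mbb C^\rho}^{q(1-\frac{p}{r})} \left[\int_0^T \!\left(\frac{\rho(\tau)^{1-\frac{p}{r}} \omega(\tau)^{\frac{p}{r}}}{\tau^\alpha}\right)^{\!\frac{qr}{r-p}} \!\frac{d\tau}{\tau}\right]^{\frac{r-p}{r}} \!\nor{A}{\mbb B^\omega_{p,q}}^{\frac{qp}{r}}.
\]
Now observe that
\[
   \left(\frac{\rho(\tau)^{1-\frac{p}{r}}\omega(\tau)^{\frac{p}{r}}}{\tau^\alpha}\right)^{\frac{qr}{r-p}} = \left(\frac{\rho(\tau)\, \omega(\tau)^{\frac{p}{r-p}}}{\tau^{\frac{\alpha r}{r-p}}}\right)^q,
\]
so that the middle factor is exactly $\sigma(T)^q$ after taking the $(r-p)/r$-th power. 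Taking $q$-th roots then gives the claimed inequality.

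The only point requiring care is the application of Hölder's inequality when $0 < q < 1$: in that regime $L^q(d\tau/\tau)$ is only a quasi-Banach space, but the same pointwise factorization followed by the $q$-concavity argument (equivalently, raising both sides to the $q$-th power before integrating and treating the resulting integral directly) yields the same bound up to the implicit $q$-dependent constant absorbed into $\lesssim_{\alpha,p,r,q}$. The case $r = \infty$ is handled separately and trivially, since then the right-hand side of the interpolation inequality reduces to $\nor{A}{\mbb C^\rho}$, and the Hölder step is not needed. I do not expect any genuine obstacle — this is a bookkeeping variant of Lemma \ref{L:interpolate}, as the authors indicate.
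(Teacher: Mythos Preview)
Your proof is correct and follows exactly the approach the paper intends (the paper omits the proof, noting it is ``almost identical'' to that of Lemma~\ref{L:interpolate}). One small remark: your caveat about $q<1$ is unnecessary, since the H\"older step uses the conjugate exponents $r/(r-p)$ and $r/p$, both strictly greater than $1$, and applies to nonnegative integrands regardless of the value of $q$; no sub-additivity or quasi-norm argument is needed, and in fact the inequality holds with constant $1$.
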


We conclude with a result which is instrumental in overcoming the difficulty that functions in $\mbb B^\alpha_{pq}$ are a priori only defined up to sets of Lebesgue measure zero. 

\begin{lemma}\label{L:additive}
	Let $A: \Delta_2(0,T) \to \RR^m$ be measurable, and assume that, for some $0 < p \le \oo$, $\oline{\Omega}_p(\delta A,\tau) = 0$ for all $\tau \in (0,T]$. Then there exists a measurable function $F:[0,T] \to \RR^m$ and a version of $A$, denoted also by $A$, such that $A = \delta F$.
\end{lemma}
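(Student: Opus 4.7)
The plan is to pick a ``generic'' base point $s_0 \in [0,T]$ and set $F_t := A_{s_0,t}$ for $t \ge s_0$ and $F_t := -A_{t,s_0}$ for $t < s_0$; the identity $A = \delta F$ (up to a null set) will then follow from the vanishing of $\delta A$ along the three slices $\{s = s_0\}$, $\{u = s_0\}$, $\{t = s_0\}$ of $\Delta_3(0,T)$. The role of the hypothesis is to guarantee, via Fubini, that almost every $s_0$ is simultaneously a good slice.

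First, I translate the assumption $\oline{\Omega}_p(\delta A,\tau) = 0$ for all $\tau \in (0,T]$ into the statement that $\delta A = 0$ almost everywhere on $\Delta_3(0,T)$. Indeed, the hypothesis says that for every $(\theta,h) \in [0,1] \times [0,T]$, the function $r \mapsto \delta A_{r,r+\theta h, r+h}$ vanishes for Lebesgue-a.e.\ $r \in [0,T-h]$; hence, by Fubini, the set $\{(r,\theta,h) : \delta A_{r,r+\theta h, r+h} \ne 0\}$ has three-dimensional Lebesgue measure zero. The map $(r,\theta,h) \mapsto (r,r+\theta h, r+h) = (r,u,t)$ is a diffeomorphism on $\{h>0\}$ onto the interior of $\Delta_3(0,T)$ with Jacobian $h$, so the exceptional set in $(r,u,t)$-coordinates is also null.

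Next, extending $A$ by zero to a measurable function on $[0,T]^2$, two further applications of Fubini give the following: for a.e.\ $s_0 \in [0,T]$, the maps $t \mapsto A_{s_0,t}$ and $s \mapsto A_{s,s_0}$ are measurable, and moreover the three sections
\[
	\{(u,t) : \delta A_{s_0,u,t} \ne 0\}, \quad \{(r,t): \delta A_{r,s_0,t} \ne 0\}, \quad \{(r,u): \delta A_{r,u,s_0}\ne 0\}
\]
of the null set from the previous step are each two-dimensional null sets. Fix such an $s_0$, define $F$ as above (with $F_{s_0} := 0$), and note $F$ is measurable. Setting $\tilde A_{s,t} := \delta F_{s,t}$ on $\Delta_2(0,T)$, the identity $\tilde A_{s,t} = A_{s,t}$ reduces, according to the position of $s_0$ relative to $(s,t)$, to one of the three relations $\delta A_{s_0,s,t}=0$, $\delta A_{s,s_0,t}=0$, $\delta A_{s,t,s_0}=0$, each of which holds for a.e.\ $(s,t) \in \Delta_2(0,T)$ by the choice of $s_0$. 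Hence $\tilde A$ is a version of $A$ with $\tilde A = \delta F$.

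The only substantive issue is the Jacobian computation in the first step: one must verify that the parameterization used in the definition of $\oline{\Omega}_p$ genuinely exhausts $\Delta_3(0,T)$ up to a null set, so that a vanishing statement phrased ``for every $(\theta,h)$ and a.e.\ $r$'' upgrades to a genuine a.e.\ statement on $\Delta_3$. Once that is in place, the rest is a Fubini-and-slice argument.
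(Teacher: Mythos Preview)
Your proof is correct and follows essentially the same approach as the paper: use Fubini and the change of variables $(r,\theta,h)\mapsto(r,r+\theta h,r+h)$ to see that $\delta A=0$ a.e.\ on $\Delta_3(0,T)$, then pick a generic base point $s_0$ and set $F_t=A_{s_0,t}$. The only cosmetic difference is that the paper extends $A$ antisymmetrically to all of $[0,T]^2$ (so that $|\delta A|$ becomes permutation-invariant on $[0,T]^3$), which collapses your three cases into the single identity $\delta A_{s_0,s,t}=0$; your case split achieves the same thing by hand.
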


\begin{proof}	
	For $0 \le t < s \le T$, define $A_{s,t} := -A_{s,t}$, and then extend the definition of $\delta A_{r,s,t}$ to all $(r,s,t) \in [0,T]^3$. We have, for all $h \in (0,T]$ and $0 < \theta < 1$,
	\[
		\left[ \int_0^{T-h} |\delta A_{t,t+\theta h, t +h}|^p dt \right]^{1/p} = 0,
	\]
	with the obvious modfication if $p = \oo$, and, therefore, there exists $N_{\theta,h} \subset [0,T-h]$ such that $|N_{\theta,h}| = 0$ and
	\[
		\delta A_{t,t+\theta h, t+h} = 0 \quad \text{for all } t \in [0,T-h] \backslash N_{\theta,h}.
	\]
	By Fubini's theorem, there exists $N \subset [0,T]^3$ with $|N| = 0$ such that $\delta A = 0$ in $[0,T]^3 \backslash N$. Another application of Fubini's theorem implies that there exists $r \in (0,T)$ such that
	\[
		\delta A_{r,s,t} =0 \quad \text{for almost every } (s,t) \in [0,T]^2.
	\]
	We now define $F_t := A_{r,t}$ for $t \in [0,T]$. Then, for almost every $(s,t) \in [0,T]^2$,
	\[
		F_t - F_s = A_{st} + \delta A_{r,s,t} = A_{st}.
	\]
	\end{proof}

\section{The Besov ``sewing lemma''} \label{sec:BesovSew}

The goal of this section is to generalize the ``sewing lemma,'' which has been proved in H\"older and variation contexts \cite{feyel2006curvilinear, gubinelli2004controlling,feyel2008noncommutative}, to the Besov scale. Given $A: \Delta_2(0,T) \to \RR^m$ for which 
\[
	\nor{A}{\mbb B^\alpha_{p_1,q_1}([0,T])} + \nor{\delta A}{\oline{\mbb B}^\gamma_{p_2,q_2}([0,T])} < \oo
\]
for appropriate parameters $0 < \alpha < \gamma$ and $p_1,p_2,q_1,q_2 \in (0,\oo]$, we construct a generalized integral as a limit of Riemann sums. Due to the various cases involving different regimes of parameters, the result is split up into three theorems.  In Theorem \ref{T:Besovsewing}, the regularity satisfies a strict inequality $\gamma > 1 \vee \frac{1}{p_2}$, while in Theorem \ref{T:Besovsewingendpoint}, the case $\gamma = 1 \vee \frac{1}{p_2}$ can be treated as long as $0 < q_2 \le 1 \wedge p_2$. Finally, Theorem \ref{T:Besovsewingcontinuous} combines the previous theorems with the generalized H\"older embedding Proposition \ref{P:BBesovembedding} in order to regain integrability, especially in the case where $p_1 > p_2$.

\subsection{The statements}

Given $A: \Delta_2(0,T) \to \RR^m$ and a partition $P := \left\{ 0 = \tau_0 < \tau_1 < \cdots < \tau_N = 1 \right\}$ of the unit interval $[0,1]$, we define, for $(s,t) \in \Delta_2(0,T)$,
\[
	\mathscr I_P A_{st} := \sum_{i=1}^N A_{s + \tau_{i-1}(t-s), s + \tau_i (t-s)} \quad \text{and} \quad \mathscr R_P A := \mathscr I_PA - A.
\]

\begin{theorem}\label{T:Besovsewing}
	Assume that $0 < p_1,p_2,q_1,q_2 \le \oo$, $0 < \alpha < 1$, $\gamma > 1 \vee \frac{1}{p_2}$, and
	\[
		\nor{A}{\mbb B^\alpha_{p_1,q_1}([0,T])} + \nor{\delta A}{\oline{\mbb B}^\gamma_{p_2,q_2}([0,T])} < \oo.
	\]
	Then there exist $\mathscr IA \in B^\alpha_{p_1 \wedge p_2, q_1 \vee q_2}([0,T])$ and $\mathscr RA \in \mbb B^\gamma_{p_2,q_2}([0,T])$ such that
	\begin{equation}\label{generalconvergence}
		\lim_{\norm{P} \to 0} \nor{\mathscr I_{P} A  - \delta \mathscr IA}{\mbb B^\gamma_{p_2,r}} =
		\lim_{\norm{P} \to 0} \nor{\mathscr R_{P} A  - \mathscr RA}{\mbb B^\gamma_{p_2,r}} =
		 0 \quad \text{for all } r \ge q_2 \text{ and } r > \frac{1}{\gamma}.
	\end{equation}
%	\begin{equation}\label{convergence}
%		\lim_{n \to \oo} \nor{\mathscr I_{P_n} A  - \delta \mathscr IA}{\mbb B^\gamma_{p_2,q_2}} = 0.
%	\end{equation}
	Moreover, we have the bounds
	\begin{equation}\label{Rbound}
			\nor{\mathscr R A}{\mbb B^\gamma_{p_2,q_2}([0,T])} \lesssim_{p_2,q_2,\gamma} \nor{\delta A}{\oline{\mbb B}^\gamma_{p_2,q_2}([0,T])}
	\end{equation}
	and
	\begin{equation}\label{Ibound}
			[\mathscr IA]_{B^\alpha_{p_1 \wedge p_2,q_1 \vee q_2}([0,T])} \lesssim_{T,p_1,p_2,q_1,q_2,\gamma} \nor{A}{\mbb B^\alpha_{p_1,q_1}} + T^{\gamma - \alpha} \nor{\delta A}{\oline{\mbb B}^{\gamma}_{p_2,q_2}([0,T])}.
	\end{equation}
\end{theorem}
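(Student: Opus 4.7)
The plan is to realize $\mathscr I A$ as a Cauchy limit along the dyadic sequence of partitions $P_n := \{k 2^{-n}\}_{k=0}^{2^n}$, identify its two-parameter limit as an increment via Lemma~\ref{L:additive}, and then promote the convergence to arbitrary partitions. The starting point is the one-step cocycle identity
\[
\mathscr I_{P_{n+1}} A_{st} - \mathscr I_{P_n} A_{st} = - \sum_{k=1}^{2^n} \delta A_{s + (k-1)2^{-n}(t-s),\, s + (k-1/2) 2^{-n}(t-s),\, s + k 2^{-n}(t-s)},
\]
obtained by pairing each subinterval of $P_n$ with the two refining subintervals of $P_{n+1}$. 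Taking $P_0 := \{0,1\}$ makes $\mathscr I_{P_0} A = A$, so telescoping will express $\mathscr I_{P_n} A - A$ as a sum of these corrections.

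The core analytic estimate is
\[
\Omega_{p_2}(\mathscr I_{P_{n+1}} A - \mathscr I_{P_n} A,\, \tau) \lesssim 2^{n(1 \vee 1/p_2)}\, \oline{\Omega}_{p_2}(\delta A,\, 2^{-n}\tau),
\]
which I obtain by applying Minkowski's inequality (or the $p_2$-triangle inequality if $p_2 < 1$) to the $L^{p_2}$ norm of the sum and shifting the integration variable in each summand so that every term is dominated by $\oline{\Omega}_{p_2}(\delta A,\, 2^{-n}\tau)$. Substituting into $\nor{\cdot}{\mbb B^\gamma_{p_2,q_2}}$ and changing variables $\sigma = 2^{-n}\tau$ then yields the geometric decay
\[
\nor{\mathscr I_{P_{n+1}} A - \mathscr I_{P_n} A}{\mbb B^\gamma_{p_2,q_2}} \lesssim 2^{-n(\gamma - 1 \vee 1/p_2)}\, \nor{\delta A}{\oline{\mbb B}^\gamma_{p_2,q_2}},
\]
summable exactly under the hypothesis $\gamma > 1 \vee 1/p_2$. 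Telescoping makes $\{\mathscr I_{P_n} A\}$ Cauchy in $\mbb B^\gamma_{p_2,q_2}$ with a limit $J$ satisfying $\nor{J - A}{\mbb B^\gamma_{p_2,q_2}} \lesssim \nor{\delta A}{\oline{\mbb B}^\gamma_{p_2,q_2}}$, so setting $\mathscr R A := J - A$ immediately gives~\eqref{Rbound}.

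To identify $J$ as an increment, I would show $\oline{\Omega}_{p_2}(\delta J,\tau) = 0$ for every $\tau$ and then invoke Lemma~\ref{L:additive}. The key observation is that for any $s < u < t$, the sum $\mathscr I_{P_n} A_{su} + \mathscr I_{P_n} A_{ut}$ coincides with $\mathscr I_{Q_n} A_{st}$, where $Q_n \subset [0,1]$ is the partition formed by concatenating the two rescaled copies of $P_n$ on $[s,u]$ and $[u,t]$; since $\|Q_n\| \le \|P_n\| \to 0$, uniqueness of the limit forces $\mathscr I_{Q_n} A_{st} \to J_{st}$ as well, whence $\delta J = 0$ in the required almost-everywhere sense. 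Lemma~\ref{L:additive} then produces $\mathscr I A$ with $\delta \mathscr I A = J = A + \mathscr R A$. The bound~\eqref{Ibound} follows from the quasi-triangle inequality in the Besov seminorm, H\"older's inequality to drop integrability from $p_i$ down to $p_1 \wedge p_2$, and the elementary embedding $\mbb B^\gamma_{p_2,q_2} \hookrightarrow \mbb B^\alpha_{p_2, q_1 \vee q_2}$ on the compact interval $[0,T]$, which contributes the factor $T^{\gamma - \alpha}$ since $\gamma > \alpha$.

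The most delicate step I anticipate is establishing the convergence~\eqref{generalconvergence} along arbitrary partitions rather than just the dyadic sequence. I would compare $\mathscr I_P A$ to $\mathscr I_{P_n} A$ through the common refinement $P \cup P_n$, where $n$ is chosen so that $2^{-n} \asymp \|P\|$, and estimate the added cocycle terms by the same Minkowski argument as above; the bookkeeping is subtler because $P$ need not be evenly distributed, and I expect the combined conditions $r \ge q_2$ and $r > 1/\gamma$ to be exactly the threshold at which the discrete summations produced by the irregular refinement remain controlled (for $q_2 > 1/\gamma$ the dyadic estimate at $r = q_2$ already suffices, while the general refinement forces passage to the weaker secondary index $r > 1/\gamma$ when $q_2 \le 1/\gamma$).
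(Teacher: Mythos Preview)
Your additivity argument (Step 2) is circular. You write that $\mathscr I_{P_n}A_{su} + \mathscr I_{P_n}A_{ut} = \mathscr I_{Q_n}A_{st}$ for the concatenated partition $Q_n$, and then assert that since $\|Q_n\|\to 0$, ``uniqueness of the limit forces $\mathscr I_{Q_n}A_{st}\to J_{st}$.'' But at this point you have only established convergence along the dyadic sequence $(P_n)$; convergence along an arbitrary sequence with mesh tending to zero is exactly the content of your Step~3, which you have not yet proved (and whose proof in the paper actually \emph{relies} on additivity). The partition $Q_n$ is not dyadic unless $\theta=(u-s)/(t-s)$ is, and even then $Q_n$ is only a coarsening of $P_{n+N}$ (for $\theta=K/2^N$), not equal to it.

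The paper fills this gap by restricting to dyadic $\theta=K/2^N$ and explicitly computing $\mathscr I_{P_{n+N}}A_{s,s+h} - \mathscr I_{P_n}A_{s,s+\theta h} - \mathscr I_{P_n}A_{s+\theta h,s+h}$ as a finite sum of $\delta A$ terms; the resulting $\mbb B^\gamma_{p_2,q_2}$-norm is bounded by $2^N 2^{-n(\gamma-1\vee 1/p_2)}\nor{\delta A}{\oline{\mbb B}^\gamma_{p_2,q_2}}$, which vanishes as $n\to\infty$ for fixed $N$. This gives $\delta_\theta J=0$ for dyadic $\theta$, then continuity of $\theta\mapsto \nor{\delta_\theta J_{\cdot,\cdot+h}}{L^{p_2}}$ (for $p_2<\infty$) extends to all $\theta$, and Lemma~\ref{L:additive} applies.

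Your Step~3 plan via common refinements is also different from, and harder than, what the paper does. Once additivity is in hand, the paper writes $\delta\mathscr IA_{s,s+h} - \mathscr I_P A_{s,s+h} = \sum_i \mathscr RA_{s+\tau_{i-1}h,\,s+\tau_i h}$ directly, and the rest is a clean estimate using the already-proved bound on $\mathscr RA$ in $\mbb B^\gamma_{p_2,q_2}$: after Minkowski (or subadditivity of the $(1\wedge p_2)$-th power) and rescaling, one gets a factor $\sum_i \delta_i^{\gamma(r\wedge 1\wedge p_2)}\le \|P\|^{\gamma(r\wedge 1\wedge p_2)-1}$, which is where the threshold $r>1/\gamma$ enters. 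Your common-refinement route would have to control refinement corrections on irregularly placed intervals of $P$ without yet knowing additivity, which is considerably more bookkeeping.
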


\begin{remark}\label{R:Besovsewingpqless1}
	In Theorem \ref{T:Besovsewing}, if $p_2 \ge 1$, then the condition on $\gamma$ reads simply $\gamma > 1$, which is familiar from other versions of the sewing lemma. However, once $0 < p_2 < 1$, it is necessary to assume that $\gamma > 1/p_2$. This is more than just a technical difficulty. For instance, in the Young integration regime, this condition puts restrictions on the types of discontinuities allowed by the paths (see Remark \ref{R:Youngintegrals}).
	
	If $q_2 > 1/\gamma$, then the limit of $\mathscr I_P A$ along arbitrary partitions with mesh-size tending to $0$ may be taken in $\mbb B^\gamma_{p_2,q_2}$. Otherwise, if $q_2 \le 1/\gamma$, the convergence is weaker. We note that, in the course of the proof, it will be shown that the convergence along dyadic partitions always holds in $\mbb B^\gamma_{p_2,q_2}$.
\end{remark}

We next address an interesting endpoint case that does not arise in the H\"older or variation sewing frameworks. By tuning the secondary integration parameter $q_2$ lower, the regularity parameter $\gamma$ for $\delta A$ may be taken down to $\gamma = 1 \vee (1/p_2)$. The price paid is that the ``remainder map'' $\mathscr RA$ no longer has the same regularity modulus $\tau \mapsto \tau^{1 \vee \frac{1}{p_2}}$ as $\delta A$, and there is some at-most-logarithmic loss (compare \eqref{Rbound} to \eqref{Romegabound} below).

Throughout the rest of the paper, we fix $(\ell_r)_{0 < r \le \oo} : [0,T] \to [0,\oo)$ satisfying
\begin{equation}\label{loglike}
	\left\{
	\begin{split}
		&\text{for each $r \in (0,\oo]$, $\ell_r: [0,T] \to [0,\oo)$ is non-increasing,}\\
		&\int_0^T \left( \frac{1}{\ell_r(h)} \right)^r \frac{dh}{h} < \oo, \text{ and}\\
		&\int_0^\delta \ell_r(h)^\eta h^\zeta \frac{dh}{h} \lesssim_{r,\eta,\zeta} \ell_r(\delta)^\eta \delta^\zeta \quad \text{for all } \eta,\zeta,\delta > 0.
	\end{split}
	\right.
\end{equation}
For instance, we may take $\ell_r(h) = |\log (h \wedge 1/2)|^{1/r + \eps}$ for $\eps > 0$ if $r < \oo$, and $\ell_\oo \equiv 1$. We then set
\begin{equation}\label{omegar}
	\omega_r(h) := h^{1 \vee \frac{1}{p_2}} \ell_r(h) \quad \text{for } h \in [0,T] \text{ and } 0 < r \le \oo.
\end{equation}

\begin{theorem}\label{T:Besovsewingendpoint}
	Assume that $0 < p_1,q_1 \le \oo$, $0 < q_2 \le 1 \wedge p_2$, $0 < \alpha < 1$, and
	\[
		\nor{A}{\mbb B^\alpha_{p_1,q_1}([0,T])} + \nor{\delta A}{\oline{\mbb B}^{1 \vee \frac{1}{p_2}}_{p_2,q_2}([0,T])} < \oo.
	\]
	Then there exist $\mathscr IA \in B^\alpha_{p_1 \wedge p_2, q_1 \vee q_2}([0,T])$ and $\mathscr RA \in \bigcap_{r \in [q_2,\oo]} \mbb B^{\omega_r}_{p_2,r}([0,T])$  such that
	\begin{equation}\label{generalconvergence}
		\lim_{\norm{P} \to 0} \nor{\mathscr I_{P} A  - \delta \mathscr IA}{\mbb B^{1 \vee \frac{1}{p_2}}_{p_2,\oo}} = \lim_{\norm{P} \to 0} \nor{\mathscr R_{P} A  - \mathscr RA}{\mbb B^{1 \vee \frac{1}{p_2}}_{p_2,\oo}} = 0.
	\end{equation}
	Moreover, we have the bounds
	\begin{equation}\label{Romegabound}
			\nor{\mathscr R A}{\mbb B^{\omega_r}_{p_2,r}([0,T])} \lesssim_{p_2,r} \nor{\delta A}{\oline{\mbb B}^{1 \vee\frac{1}{p_2}}_{p_2,q_2}([0,T])} \quad \text{for all } r \in [q_2,\oo],
	\end{equation}
	and
	\begin{equation}\label{Iomegabound}
			[\mathscr IA]_{B^\alpha_{p_1 \wedge p_2,q_1 \vee q_2}([0,T])} \lesssim_{T,p_1,p_2,q_1,q_2,\kappa} \nor{A}{\mbb B^\alpha_{p_1,q_1}} + T^{\left(1 \vee \frac{1}{p_2} \right) - \alpha} \ell_{q_2}(T) \nor{\delta A}{\oline{\mbb B}^{1 \vee \frac{1}{p_2}}_{p_2,q_2}([0,T])}.
	\end{equation}
\end{theorem}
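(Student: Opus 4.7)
The approach adapts the proof of Theorem \ref{T:Besovsewing} to the endpoint regularity $\gamma = 1\vee \frac{1}{p_2}$, where geometric convergence of dyadic Riemann sums fails. Two structural ingredients save the argument: the subadditivity of $|\cdot|^{q_2}$ on the relevant quasi-norms, valid since $q_2 \le p_2 \wedge 1$; and the Dini-type logarithmic factor $\ell_r$ built into the modulus $\omega_r(h) = h^{1\vee 1/p_2}\ell_r(h)$, which through the integrability condition $\int_0^T \ell_r(h)^{-r}\,dh/h < \infty$ in \eqref{loglike} just barely renders the dyadic telescoping summable.

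Setting $P_n := \{k 2^{-n}: 0 \le k \le 2^n\}$ and $D_n := \mathscr R_{P_{n+1}} A - \mathscr R_{P_n} A$, the refinement identity
\begin{equation*}
D_n(s,t) = \sum_{i=1}^{2^n} \delta A_{s+(i-1)2^{-n}(t-s),\; s+(2i-1)2^{-n-1}(t-s),\; s+i\, 2^{-n}(t-s)}
\end{equation*}
combined with Minkowski/$p_2$-subadditivity in $L^{p_2}$ yields $\Omega_{p_2}(D_n,\tau) \lesssim 2^{n(1\vee 1/p_2)}\,\oline{\Omega}_{p_2}(\delta A, 2^{-n}\tau)$. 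Substituting into $\nor{D_n}{\mbb B^{\omega_{q_2}}_{p_2,q_2}}^{q_2}$ and changing variables $\sigma = 2^{-n}\tau$ cancels the explicit $2^{n(1\vee 1/p_2)}$ factor against the one hidden in $\omega_{q_2}(2^n \sigma)$, giving
\begin{equation*}
\nor{D_n}{\mbb B^{\omega_{q_2}}_{p_2,q_2}}^{q_2} \lesssim \int_0^{2^{-n}T} \left(\frac{\oline{\Omega}_{p_2}(\delta A,\sigma)}{\sigma^{1\vee 1/p_2}}\right)^{q_2} \ell_{q_2}(2^n\sigma)^{-q_2}\,\frac{d\sigma}{\sigma}.
\end{equation*}
Summing over $n$ via Fubini, the inner dyadic sum $\sum_{n\,:\,2^n\sigma \le T}\ell_{q_2}(2^n\sigma)^{-q_2}$ is a Riemann-type comparison for $\int_\sigma^T \ell_{q_2}(h)^{-q_2}\,dh/h$, hence bounded by \eqref{loglike}. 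Combined with the $q_2$-subadditivity of $\nor{\cdot}{\mbb B^{\omega_{q_2}}_{p_2,q_2}}$, this shows $(\mathscr R_{P_n}A)$ is Cauchy and produces a limit $\mathscr RA$ satisfying \eqref{Romegabound} at $r = q_2$; the extension to $r \in (q_2,\infty]$ follows from the same argument with the family $\ell_r$ and monotonicity in $r$.

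One then sets $\mathscr IA := A + \mathscr RA$. Because each $\mathscr I_{P_n}A$ is additive on dyadic triples compatible with $P_n$, the convergence in $\mbb B^{\omega_{q_2}}_{p_2,q_2}$ passes to $\oline{\Omega}_{p_2}(\delta \mathscr IA,\cdot)\equiv 0$, so Lemma \ref{L:additive} supplies a path $F$ with $\mathscr IA = \delta F$. The Besov bound \eqref{Iomegabound} comes from splitting $\Omega_{p_1\wedge p_2}(\mathscr IA,\tau) \lesssim \Omega_{p_1}(A,\tau) + \Omega_{p_2}(\mathscr RA,\tau)$ (with H\"older's inequality on $[0,T]$ controlling the integrability change): the first term contributes the $\nor{A}{\mbb B^\alpha_{p_1,q_1}}$ piece via the embedding $B^\alpha_{p_1,q_1}\subset B^\alpha_{p_1\wedge p_2,q_1\vee q_2}$, while the second, bounded via Proposition \ref{P:BBesovembedding}-style reasoning by $\omega_{q_2}(\tau)\nor{\mathscr RA}{\mbb B^{\omega_{q_2}}_{p_2,\infty}}$, produces the prefactor $T^{(1\vee 1/p_2)-\alpha}\ell_{q_2}(T)$ after integrating against $\tau^{-\alpha}\,d\tau/\tau$ and invoking the slack $(1\vee 1/p_2) - \alpha > 0$. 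Convergence along arbitrary partitions in the weaker norm \eqref{generalconvergence} follows by comparing $\mathscr I_P A$ with $\mathscr I_{P_n} A$ for $2^{-n}$ of order $\|P\|$.

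The main obstacle is the careful bookkeeping of the $\ell_r$ factor across dyadic scales — what in Theorem \ref{T:Besovsewing} is a routine geometric series becomes here a Dini-type sum whose finiteness must be extracted from \eqref{loglike}, with the powers, exponents, and domains of integration meshing exactly to produce summable dyadic increments. Without this built-in logarithmic slack, the endpoint regularity $\gamma = 1\vee 1/p_2$ would be genuinely unreachable, reproducing the strict inequality requirement of Theorem \ref{T:Besovsewing}.
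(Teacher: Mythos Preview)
Your proposal is essentially correct and follows the paper's strategy: dyadic Cauchy via $q_2$-subadditivity and the integrability of $\ell_r^{-r}$ from \eqref{loglike}, additivity via Lemma~\ref{L:additive}, then arbitrary partitions. Two places where your sketch diverges from the paper's cleaner route are worth noting. First, the paper treats all $r\in[q_2,\infty]$ uniformly in Step~1 by starting from the telescoped bound of Lemma~\ref{L:RnCauchy} (third case, valid since $q_2\le 1\wedge p_2$), dividing by $\omega_r(h)$, and applying Minkowski with exponent $r/q_2\ge 1$; your direct summation of $\|D_n\|^{q_2}$ is fine for $r=q_2$, but ``the same argument'' does not literally extend once $r>1$ since $\|\cdot\|^r$ is no longer subadditive. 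Second, and more substantively, for arbitrary partitions the paper does \emph{not} compare $\mathscr I_PA$ with a nearby dyadic $\mathscr I_{P_n}A$ --- that route runs into the number of partition points --- but instead uses the additivity established in Step~2 to write $\delta\mathscr IA_{s,s+h}-\mathscr I_PA_{s,s+h}=\sum_i\mathscr RA_{s+\tau_{i-1}h,\,s+\tau_ih}$, and then the $r=\infty$ case of \eqref{Romegabound} (more precisely \eqref{RomegaboundarbitraryT} with shrinking upper limit) gives the vanishing. Finally, the reference to Proposition~\ref{P:BBesovembedding} for \eqref{Iomegabound} is misplaced; that bound follows simply from $\Omega_{p_2}(\mathscr RA,\tau)\le\omega_{q_2}(\tau)\|\mathscr RA\|_{\mbb B^{\omega_{q_2}}_{p_2,\infty}}$ and integrating $\tau^{(1\vee 1/p_2)-\alpha}\ell_{q_2}(\tau)$ against $d\tau/\tau$.
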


In order to use the general Besov sewing machinery to solve fixed point problems arising in the study of rough differential equations, it is important that the ``integral path'' increments $\delta\mathscr IA$ belong to the same space as $A$, that is, for $A \in \mbb B^\alpha_{p_1,q_1}$, we should have $\mathscr IA \in B^\alpha_{p_1,q_1}$. As indicated by \eqref{Ibound} and \eqref{Iomegabound}, this is the case when $p_1 \le p_2$ and $q_1 \ge q_2$. However, we will need to consider the case $p_1 > p_2$, because $A$ usually arises as a (tensor) product of increments of paths, which leads to a loss in integrability (for the same reason, we usually will have $q_1 > q_2$, so this parameter does not present a problem). In order to deal with this issue, we take advantage of the Besov-H\"older embeddings for the space $\mbb B^\alpha_{pq}$ given by Proposition \ref{P:BBesovembedding}.

\begin{theorem}\label{T:Besovsewingcontinuous}
	Let $0 < p_1,p_2,q_1,q_2 \le \oo$ and $0 < \alpha < \gamma$ and $A: \Delta_2(0,T) \to \RR^m$.
	\begin{enumerate}[(a)]
	\item Under the same conditions as Theorem \ref{T:Besovsewing}, assume in addition that $0 < p_2 < p_1 \le \oo$, $\gamma - \frac{1}{p_2} > \alpha - \frac{1}{p_1}$, and, for some $\theta \in (0,1/2)$ and $M > 0$ and for all $(s,u,t) \in \Delta_3(0,T)$,
	\begin{equation}\label{sewingdeltaAHolder}
		|\delta A_{sut}| \le M \left( (u-s) \wedge (t-s)\right)^{\theta(\gamma -1/p_2)} \left( (u-s) \vee (t-s) \right)^{(1-\theta)(\gamma - 1/p_2)}. 
	\end{equation}
	Then 
	\begin{equation}\label{RHolder}
		\nor{\mathscr RA}{\mbb C^{\gamma - 1/p_2}([0,T])} \lesssim_{p_2,q_2,\gamma,\theta} \nor{\delta A}{\oline{\mbb B}^\gamma_{p_2,q_2}} + M,
	\end{equation}
	and
	\begin{equation}\label{Iboundalt}
			[\mathscr IA]_{B^\alpha_{p_1,q_1 \vee q_2}([0,T])} \lesssim_{p_1,p_2,q_1,q_2,\alpha,\gamma} \nor{A}{\mbb B^\alpha_{p_1,q_1}} + T^{\gamma - \frac{1}{p_2} - \alpha + \frac{1}{p_1}} \left( \nor{\delta A}{\oline{\mbb B}^\gamma_{p_2,q_2}} + M\right).
	\end{equation}
	\item Under the same conditions as Theorem \ref{T:Besovsewingendpoint}, assume in addition that $0 < q_2 \le 1 < p_2 < p_1 \le \oo$, $0 < q_1 \le \oo$, $0 < \alpha < 1$, $1 - \frac{1}{p_2} > \alpha - \frac{1}{p_1}$, and, for some $M > 0$ and $\theta \in (0,1/2)$ and all $(s,u,t) \in \Delta_3(0,T)$,
	\begin{equation}\label{sewingdeltaAHolderendpoint}
		|\delta A_{sut}| \le M \left( (u-s) \wedge (t-s)\right)^{\theta(1 -1/p_2)} \left( (u-s) \vee (t-s) \right)^{(1-\theta)(1 - 1/p_2)}. 
	\end{equation}
	Then
	\begin{equation}\label{Rctsendpoint}
		\sup_{0 \le s \le t \le T} \frac{ |\mathscr R A_{st} |}{\omega_{q_2}(t-s)(t-s)^{-1/p_2}} \lesssim_{K,p_2,q_2,\theta} \nor{\delta A}{\oline{\mbb B}^1_{p_2,q_2}} + M,
	\end{equation}
	and
	\begin{equation}\label{Iboundaltendpoint}
		[\mathscr IA]_{B^\alpha_{p_1,q_2 \vee q_2}} \lesssim_{p_1,p_2,q_1,q_2,\alpha,\kappa,\theta,K} \nor{A}{\mbb B^\alpha_{p_1,q_1}} + T^{1 - \frac{1}{p_2} - \alpha + \frac{1}{p_1} } \ell_{q_2}(T) \left( \nor{\delta A}{\oline{\mbb B}^1_{p_2,q_2} } + M \right).
	\end{equation}
	\end{enumerate}
\end{theorem}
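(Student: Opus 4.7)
The plan is to run the same bootstrap for both parts (a) and (b): first invoke the underlying sewing theorem and apply the two-parameter Besov--H\"older embedding (Proposition \ref{P:BBesovembedding}) to $\mathscr R A$ to promote it to a H\"older-type pointwise bound; second, interpolate this H\"older regularity against the $L^{p_2}$-based Besov regularity produced by the base sewing theorem to recover integrability at the larger exponent $p_1$, using Lemma \ref{L:interpolate} (resp. Lemma \ref{L:interpolatemodulus}). The link between the two steps is the identity $\delta \mathscr I A = A + \mathscr R A$, and consequently $\delta \mathscr R A = -\delta A$, which is exactly the object on which the hypotheses \eqref{sewingdeltaAHolder} and \eqref{sewingdeltaAHolderendpoint} are imposed; these thus feed directly into condition \eqref{deltaAHolder}. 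The final integrated estimate on $\mathscr I A$ then follows from $\nor{\delta \mathscr I A}{\mbb B^\alpha_{p_1, q_1 \vee q_2}} \lesssim \nor{A}{\mbb B^\alpha_{p_1,q_1}} + \nor{\mathscr R A}{\mbb B^\alpha_{p_1,q_2}}$, modulo quasi-norm exponents from Definition \ref{D:BBesovspacemetric}.

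For (a), I will invoke Theorem \ref{T:Besovsewing} to get $\mathscr R A \in \mbb B^\gamma_{p_2,q_2}$ and apply Proposition \ref{P:BBesovembedding} with $\omega(\tau) = \tau^\gamma$, so that $\zeta(\tau) = \tau^{(\gamma - 1/p_2)(1 \wedge p_2)}$. The Dini-type condition \eqref{zetacompatible} is immediate from $\gamma > 1/p_2$, while the elementary identity $(1 \wedge p_2)(1 \vee \tfrac{1}{p_2}) = 1$ shows that hypothesis \eqref{sewingdeltaAHolder} is precisely of the form \eqref{deltaAHolder}, yielding \eqref{RHolder}. I then apply Lemma \ref{L:interpolate} to $\mathscr R A$ with $p = p_2$, $r = p_1$, and $\delta = \gamma - 1/p_2$: a short algebraic check shows that \eqref{A:interpolate} reduces exactly to the standing hypothesis $\gamma - 1/p_2 > \alpha - 1/p_1$, and the $T$-exponent collapses to $\gamma - 1/p_2 + 1/p_1 - \alpha$, delivering \eqref{Iboundalt}.

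For (b), the same blueprint runs with $\omega(\tau) = \omega_{q_2}(\tau) = \tau \ell_{q_2}(\tau)$ (using $p_2 > 1$, so that $1 \vee 1/p_2 = 1$), and hence $\zeta(\tau) = \tau^{1 - 1/p_2} \ell_{q_2}(\tau)$. Condition \eqref{zetacompatible} is a direct consequence of the third property of $\ell_{q_2}$ in \eqref{loglike}, and hypothesis \eqref{sewingdeltaAHolderendpoint} is at least as strong as \eqref{deltaAHolder}, since $\ell_{q_2}$ is bounded below by $\ell_{q_2}(T) > 0$ on $[0,T]$; Proposition \ref{P:BBesovembedding} then yields \eqref{Rctsendpoint}. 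For the integrability boost, I use the modulus-valued interpolation Lemma \ref{L:interpolatemodulus} with $\omega = \omega_{q_2}$ and $\rho(\tau) = \tau^{1-1/p_2} \ell_{q_2}(\tau)$; a routine computation invoking \eqref{loglike} gives $\sigma(T) \lesssim T^{1 - 1/p_2 + 1/p_1 - \alpha} \ell_{q_2}(T)$, producing \eqref{Iboundaltendpoint}.

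The main obstacle I anticipate is the careful exponent bookkeeping in both steps, in particular verifying the identity $(1 \wedge p_2)(1 \vee \tfrac{1}{p_2}) = 1$ that harmonizes \eqref{sewingdeltaAHolder} with \eqref{deltaAHolder} across the regimes $p_2 \ge 1$ and $p_2 < 1$ in part (a), and correctly tracking the logarithmic weight $\ell_{q_2}$ through both the embedding and the modulus interpolation in part (b). The quasi-norm nature of $B^\alpha_{p,q}$ when $p$ or $q$ is below one necessitates minor additional bookkeeping in the final combination step, but only affects implicit proportionality constants.
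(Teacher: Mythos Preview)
Your proposal is correct and follows essentially the same approach as the paper: both combine the base sewing bound \eqref{Rbound} (resp.\ \eqref{Romegabound}) with Proposition \ref{P:BBesovembedding} applied to $\mathscr R A$ via the identity $\delta \mathscr R A = -\delta A$, and then interpolate using Lemma \ref{L:interpolate} (resp.\ Lemma \ref{L:interpolatemodulus}) with exactly the parameter choices you list. Your explicit verification of $(1 \wedge p_2)(1 \vee \tfrac{1}{p_2}) = 1$ and of the reduction of \eqref{A:interpolate} to $\gamma - 1/p_2 > \alpha - 1/p_1$ are precisely the bookkeeping steps the paper leaves implicit.
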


\begin{remark}\label{R:powerofT}
	The functions of $T$ in the various bounds \eqref{Ibound}, \eqref{Iboundalt}, and \eqref{Iboundaltendpoint} are used to establish the local Lipschitz continuity of the It\^o-Lyons map for the differential equations considered later. We note that the proof of the contractive property alone in the Picard iteration to construct solutions does not require this, as long as $q < \infty$, which can be seen as another (mild) advantage of the Besov setting.
\end{remark}

\subsection{The proofs of Theorems \ref{T:Besovsewing}, \ref{T:Besovsewingendpoint}, and \ref{T:Besovsewingcontinuous}}

An important first step is to establish convergence along the dyadic partitions 
\begin{equation}\label{dyadicpartitions}
	P_n := \left( \frac{k}{2^n}\right)_{k = 1,2,\ldots,2^n}, \quad n = 0,1,2,\ldots.
\end{equation}

\begin{lemma}\label{L:RnCauchy}
	Assume $0 < p_2,q_2 < \oo$ and $A:\Delta_2(0,T) \to \RR^m$. Then, for all $1 \le m < n \le \oo$ and $0 < h < T$,
	\[
		\Omega_{p_2}(\mathscr R_{P_n} A - \mathscr R_{P_m} A,h)
		\lesssim_{p_2,q_2}
		\begin{dcases}
			h \int_0^{h 2^{-m+1}} \frac{\oline{\Omega}_{p_2}(\delta A,\tau)}{\tau} \frac{d\tau}{\tau} & \text{if } 1 \le p_2,q_2 \le \oo,\\
			h^{1/p_2} \left[ \int_0^{h2^{-m+1}} \frac{\Omega_{p_2}(\delta A,\tau)^{p_2}}{\tau} \frac{d\tau}{\tau} \right]^{1/p_2} & \text{if } 0 < p_2 < 1 \text{ and } q_2 > p_2,\\
			h^{1 \vee \frac{1}{p_2}} \left[ \int_0^{h 2^{-m+1}} \left( \frac{\oline{\Omega}_{p_2}(\delta A,\tau)}{\tau^{1 \vee \frac{1}{p_2} }} \right)^{q_2} \frac{d\tau}{\tau} \right]^{1/q_2} & \text{if } 0 < q_2 < 1 \text{ and } q_2 \le p_2.
		\end{dcases}
	\]
\end{lemma}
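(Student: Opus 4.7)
The overall strategy is to exploit the telescoping structure $\mathscr R_{P_n}A - \mathscr R_{P_m}A = \mathscr I_{P_n}A - \mathscr I_{P_m}A = \sum_{j=m}^{n-1} (\mathscr I_{P_{j+1}}A - \mathscr I_{P_j}A)$ together with the fact that going from the dyadic partition $P_j$ to $P_{j+1}$ subdivides each subinterval at its midpoint, so that the one-step difference telescopes into $\delta A$ increments. More precisely, writing $h_j = 2^{-j}h'$ for a lag $0 < h' \le h$, a direct computation using $\delta A_{abc} = A_{ac} - A_{ab} - A_{bc}$ gives
\[
(\mathscr R_{P_{j+1}}A - \mathscr R_{P_j}A)_{s,s+h'} = -\sum_{k=1}^{2^j} \delta A_{s+(k-1)h_j,\; s+(k-1/2)h_j,\; s+kh_j} =: -F_j^{h'}(s).
\]
This reduces the problem to bounding $\|\sum_{j=m}^{n-1} F_j^{h'}\|_{L^{p_2}([0,T-h'])}$ and then taking the supremum over $h' \le h$.

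The per-level bound comes from Minkowski's inequality when $p_2 \ge 1$, or sub-additivity of $x \mapsto x^{p_2}$ when $p_2 < 1$, combined with the change of variables $u = s + (k-1)h_j$. For each $k$, this substitution converts the $k$-th term into an integral of $|\delta A_{u,u+h_j/2,u+h_j}|^{p_2}$ over a subinterval of $[0,T-h_j]$; summing these and invoking the definition of $\oline\Omega_{p_2}$ yields
\[
\Bigl[\int_0^{T-h'}|F_j^{h'}(s)|^{p_2} ds\Bigr]^{1/p_2} \le 2^{j(1\vee 1/p_2)}\,\oline\Omega_{p_2}(\delta A, h_j).
\]
I next combine the $2^j$ factor with the spatial decay of $\oline\Omega_{p_2}$: since $h_j = 2^{-j}h'$, this quantity equals $h'^{\,1\vee 1/p_2}\,\oline\Omega_{p_2}(\delta A,h_j)/h_j^{1\vee 1/p_2}$.

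For the outer sum over $j \in \{m,\ldots,n-1\}$, I would split into the three parameter regimes of the statement. In the first regime $p_2,q_2 \ge 1$, one more application of Minkowski gives $\sum_j 2^j \oline\Omega_{p_2}(\delta A, h_j)$; in the second $p_2 < 1 < q_2/p_2$ regime, the $p_2$-sub-additivity applied to the outer integral gives $\sum_j 2^j \oline\Omega_{p_2}(\delta A,h_j)^{p_2}$; in the third regime $q_2 \le p_2 \wedge 1$, one raises both sides to the $q_2$-th power and uses the sub-additivity $(a+b)^{q_2/p_2} \le a^{q_2/p_2} + b^{q_2/p_2}$ (valid since $q_2 \le p_2$ and $q_2 \le 1$) to obtain $\sum_j 2^{jq_2(1\vee 1/p_2)} \oline\Omega_{p_2}(\delta A,h_j)^{q_2}$. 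In each case, a routine dyadic-to-continuous comparison, using monotonicity of $\tau \mapsto \oline\Omega_{p_2}(\delta A,\tau)$ on each interval $[h' 2^{-j-1}, h' 2^{-j}]$, replaces the geometric series by the integral appearing in the statement (up to a constant depending on $p_2, q_2$). Finally, taking the supremum over $h'\in[0,h]$ is harmless since each of the three right-hand sides is non-decreasing in the outer lag.

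The only truly delicate point is the per-level change-of-variables estimate: one must verify that the translated intervals $[(k-1)h_j,\,T-h'+(k-1)h_j]$ all lie inside $[0,T-h_j]$ (which holds because $(k-1)h_j \le h - h_j$ for $k \le 2^j$), so that the bound $\oline\Omega_{p_2}(\delta A, h_j)$ is uniform in $k$. Everything else is careful book-keeping of which version of the (quasi-)triangle inequality is permitted in each regime.
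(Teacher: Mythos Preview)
Your proposal is correct and follows essentially the same approach as the paper: telescope over dyadic levels, express the one-step difference $\mathscr R_{P_{j+1}}A - \mathscr R_{P_j}A$ as a sum of $2^j$ terms $\delta A_{\cdot,\cdot+h_j/2,\cdot+h_j}$, bound the $L^{p_2}$-norm of each level via Minkowski (resp.\ $p_2$-subadditivity) and a change of variables, then sum over levels using the appropriate (quasi-)triangle inequality in each parameter regime before converting the dyadic sum to the stated integral. The paper splits your third regime into two subcases ($q_2 < 1 \le p_2$ and $q_2 \le p_2 < 1$), but your unified treatment via the exponent $q_2/(p_2 \wedge 1) \le 1$ is equivalent.
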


\begin{proof}
	We first compute, for $n = 0,1,2,\ldots$, $h \in [0,T]$, and $r \in [0, T- h]$,
	\begin{equation}\label{Rn+1A-RnA}
		\mathscr R_{P_{n+1}}A_{r,r+h} - \mathscr R_{P_n}A_{r,r+h} = \sum_{k=1}^{2^{n+1}} 
		\delta A_{r+ \frac{2k-2}{2^{n+1}}h, r + \frac{2k-1}{2^{n+1}} h, r + \frac{2k}{2^{n+1}} h}.
	\end{equation}
	
	{\it Case 1:} $1 \le p_2,q_2 \le \oo$. Taking the $L^{p_2}$ norm over $r \in [0,T-h]$ on both sides of \eqref{Rn+1A-RnA} yields
	\[
		\Omega_{p_2}\left( \mathscr R_{P_{n+1}}A - \mathscr R_{P_n}A, h\right) \le 2^{n+1} \oline{\Omega}_{p_2}(\delta A, h2^{-n}),
	\]
	and so
	\begin{align*}
		\Omega_{p_2}\left(\mathscr R_{P_n}A - \mathscr R_{P_m} A,h\right)
		&\le \sum_{\ell = m}^{n-1} \Omega_{p_2}\left(\mathscr R_{P_{\ell+1}} A - \mathscr R_{P_\ell} A,h\right)
		\le 2\sum_{\ell = m}^{n-1} 2^\ell \oline{\Omega}_{p_2}\left( \delta A, h2^{-\ell}\right) \\
		&= 2h \sum_{\ell=m}^{n-1}  \frac{ \oline{\Omega}_{p_2}\left( \delta A, h2^{-\ell} \right) }{h 2^{-\ell}}
%	&= \frac{2h}{\log 2} \sum_{\ell=m}^{n-1} \int_{h2^{-\ell}}^{h2^{-\ell+1}} \frac{d\tau}{\tau} \frac{ \oline{\Omega}_{p_2}\left( \delta A, h2^{-\ell} \right)}{h 2^{-\ell}}\\
%	&\le \frac{4 h}{\log 2} \sum_{\ell=m}^{n-1} \int_{h2^{-\ell}}^{h2^{-\ell+1}} \frac{\oline{\Omega}_{p_2}(\delta A,\tau)}{\tau} \frac{d\tau}{\tau}\\
	\lesssim h\int_{0}^{h2^{-m+1}} \frac{\oline{\Omega}_{p_2}(\delta A,\tau)}{\tau} \frac{d\tau}{\tau}.
	\end{align*}
	
	{\it Case 2:} $0 < p_2 < 1$ and $q_2 > p_2$. We have $\Omega_{p_2}^{p_2} \left( \mathscr R_{P_{n+1}}A - \mathscr R_{P_n}A, h\right) \le 2^{n+1} \oline{\Omega}_{p_2}^{p_2}(\delta A, h2^{-n})$, and so
\begin{equation}\label{p2lessonetriangle}
	\Omega_{p_2}^{p_2}(\mathscr R_{P_n} A - \mathscr R_{P_m} A, h) 
	\le 2\sum_{\ell=m}^{n-1} 2^\ell \oline{\Omega}_{p_2}^{p_2}(\delta A, h 2^{-\ell}).
\end{equation}
We then argue similarly as in Case 1.

{\it Case 3:} $0 < q_2 < 1 \le p_2$. Taking the $q_2$ power on both sides of the inequality $\Omega_{p_2}(\mathscr R_{P_n} A - \mathscr R_{P_m} A,h) \le 2 \sum_{\ell = m}^{n-1} 2^{\ell} \oline{\Omega}_{p_2}(\delta A, h2^{-\ell})$, using that $0 < q_2 < 1$, and arguing similarly as in Case 1, we obtain
\begin{align*}
	\Omega_{p_2}(\mathscr R_{P_n} A - \mathscr R_{P_m}A, h)^{q_2} 
	&\le 2^{q_2} \sum_{\ell=m}^{n-1} 2^{\ell q_2} \oline{\Omega}_{p_2}^{q_2} (\delta A, h2^{-\ell})
%	&= (2h)^{q_2} \sum_{\ell = m}^{n-1} \left( \frac{\oline{\Omega}_{p_2}(\delta A,h2^{-\ell})}{h2^{-\ell}} \right)^{q_2} 
	\lesssim_{q_2} h^{q_2} \int_0^{h 2^{-m+1}} \left( \frac{\oline{\Omega}_{p_2}(\delta A,\tau)}{\tau} \right)^{q_2}\frac{d\tau}{\tau}.
\end{align*}

{\it Case 4:} $0 < q_2 \le p_2 < 1$. We raise both sides of \eqref{p2lessonetriangle} to the power $q_2/p_2 \in (0,1]$, use sub-additivity, and proceed as in Case 3.
%\begin{align*}
%	\Omega_{p_2}^{q_2}(\mathscr R^n A - \mathscr R^m A, h) 
%	&\le 2^{q_2/p_2} \sum_{\ell=m}^{n-1} 2^{\ell q_2/p_2} \oline{\Omega}_{p_2}^{q_2}(\delta A, h2^{-\ell}) \\
%	&=(2h)^{q_2/p_2} \sum_{\ell=m}^{n-1} \frac{ \oline{\Omega}_{p_2}^{q_2}(\delta A, h2^{-\ell}) }{ (h2^{-\ell})^{q_2/p_2} } 
%	\lesssim_{p_2,q_2} h^{q_2/p_2} \int_0^{h2^{-m+1}} \left( \frac{\oline{\Omega}_{p_2}(\delta A,\tau) }{\tau^{1/p_2}} \right)^{q_2}  \frac{d\tau}{\tau}.
%\end{align*}
\end{proof}

\begin{proof}[Proof of Theorem \ref{T:Besovsewing}]
	{\it Step 1: convergence along dyadic partitions.} We begin by showing that the sequence $(\mathscr R_{P_n} A)_{n =0}^\oo$ is Cauchy in $\left( \mbb B^\gamma_{p_2,q_2}([0,T]), \mathrm d_{\mbb B^\gamma_{p_2,q_2}} \right)$, as defined in Definition \ref{D:BBesovspacemetric} (and therefore so is $(\mathscr I_{P_n} A)_{n = 0}^\oo$ in view of the identity $\mathscr I_{P_n} A - \mathscr I_{P_m} A = \mathscr R_{P_n} A - \mathscr R_{P_m} A$ for all $m,n \ge 0$). We do this by establishing the identity, for $0 \le m < n$,
	\begin{equation}\label{ineqinfourcases}
	\begin{split}
	&\left[ \int_0^{T} \left( \frac{ \Omega_{p_2}(\mathscr R_{P_n} A - \mathscr R_{P_m} A, h) }{h^\gamma} \right)^{q_2} \frac{d\tau}{\tau}\right]^{1/q_2}\\
	&\lesssim_{p_2,q_2,\gamma} 2^{-m(\gamma - (1 \vee 1/p_2))} \left[ \int_0^{2^{-m+1}T}  \left( \frac{\oline{\Omega}_{p_2}(\delta A,\tau) }{\tau^\gamma} \right)^{q_2} \frac{d\tau}{\tau}\right]^{1/q_2}.
	\end{split}
	\end{equation}
	
	{\it Case 1:} $1 \le p_2,q_2 \le \oo$. We divide both sides of the identity in Lemma \ref{L:RnCauchy} by $h^\gamma$ and take the $q_2$ power, whence, because $q_2 \ge 1$ and $\gamma > 1$, Jensen's inequality gives
\begin{align*}
	\int_0^{T} &\left( \frac{ \Omega_{p_2}\left( \mathscr R_{P_n}A - \mathscr R_{P_m} A,h\right)}{h^\gamma}\right)^{q_2} \frac{dh}{h}\\
	&\lesssim_{q_2}  2^{-mq_2(\gamma-1)} \int_0^{T} \left( \frac{2^{m(\gamma-1)}}{ h^{\gamma-1}} \int_0^{h2^{-m+1}} \tau^{\gamma-1} \frac{\oline{\Omega}_{p_2}(\delta A,\tau)}{\tau^{\gamma}} \frac{d\tau}{\tau} \right)^{q_2} \frac{dh}{h} \\
	&\lesssim_{q_2,\gamma}  2^{-m(q_2-1)(\gamma-1)} \int_0^{T} \frac{1}{h^{\gamma-1}} \int_0^{h2^{-m+1}} \tau^{\gamma-1}  \left( \frac{\oline{\Omega}_{p_2}(\delta A,\tau)}{\tau^{\gamma}}\right)^{q_2} \frac{d\tau}{\tau} \frac{dh}{h} \\
	&=  2^{-m(q_2-1)(\gamma-1)} \int_0^{2^{-m+1}T} \left[\tau^{\gamma-1} \int_{2^{m-1} \tau}^{T} \frac{1}{h^{\gamma-1}} \frac{dh}{h} \right] \left( \frac{\oline{\Omega}_{p_2}(\delta A,\tau)}{\tau^{\gamma}}\right)^{q_2} \frac{d\tau}{\tau}.
\end{align*}
The term in brackets satisfies
\[
	\tau^{\gamma-1} \int_{2^{m-1} \tau}^{T} \frac{1}{h^{\gamma-1}} \frac{dh}{h}
	= \frac{1}{\gamma - 1} \left( 2^{-(m-1)(\gamma-1)} - \tau^{\gamma-1} T^{\gamma - 1}\right)
	\le \frac{2^{\gamma-1}}{\gamma-1} 2^{-m(\gamma-1)},
\]
and so \eqref{ineqinfourcases} holds.

{\it Case 2:} $0 < p_2 < 1$, $q_2 \ge p_2$. We have $q_2/p_2 \ge 1$ and $\gamma > \frac{1}{p_2}$, and so, arguing as in Case 1 with Jensen's inequality and Fubini's theorem yields
\begin{align*}
	\int_0^{T} &\left(\frac{\Omega_{p_2}(\mathscr R_{P_n}A - \mathscr R_{P_m}A,h) }{h^\gamma}\right)^{q_2} \frac{dh}{h}\\
	&\lesssim_{q_2} 2^{-m(\gamma p_2 - 1)q_2/p_2} \int_0^{T} \left( \frac{1}{h^{\gamma p_2 - 1} 2^{-m(\gamma p_2 - 1)} } \int_0^{h 2^{-m+1}}\frac{ \oline{\Omega}_{p_2}^{p_2} (\delta A, \tau)}{\tau^{\gamma p_2} } \tau^{\gamma p_2 - 1} \frac{d\tau}{\tau }\right)^{q_2/p_2} \frac{dh}{h} \\
	&\lesssim_{p_2,q_2,\gamma} 2^{-m(\gamma p_2 - 1)(q_2-p_2)/p_2} \int_0^{T} \frac{1}{h^{\gamma p_2 -1}} \int_0^{h 2^{-m+1}} \left( \frac{ \oline{\Omega}_{p_2}(\delta A, \tau)}{\tau^{\gamma} }\right)^{q_2} \tau^{\gamma p_2 - 1} \frac{d\tau}{\tau} \frac{dh}{h} \\
	&\lesssim_{p_2,\gamma} 2^{-m(\gamma p_2 - 1) q_2/p_2} \int_0^{2^{-m+1}T} \left( \frac{ \oline{\Omega}_{p_2}(\delta A, \tau)}{\tau^{\gamma} }\right)^{q_2} \frac{d\tau}{\tau}.
\end{align*}

{\it Case 3:} $0 < q_2 < 1 \le p_2$. Using Lemma \ref{L:RnCauchy} and Fubini's theorem as in the previous cases,
\begin{align*}
	\int_0^{T}  \left( \frac{\Omega_{p_2}(\mathscr R_{P_n} A - \mathscr R_{P_m}A, h)}{h^\gamma} \right)^{q_2} \frac{dh}{h}
	&\lesssim_{q_2} \int_0^{T} \frac{1}{h^{q_2(\gamma - 1)}} \int_0^{h2^{-m+1}} \left( \frac{\oline{\Omega}_{p_2}(\delta A,\tau)}{\tau^\gamma} \right)^{q_2} \tau^{q_2(\gamma - 1)}\frac{d\tau}{\tau} \frac{dh}{h}\\
	&\lesssim_{q_2,\gamma} 2^{-m q_2(\gamma - 1)} \int_0^{2^{-m+1}T} \left( \frac{\oline{\Omega}_{p_2}(\delta A,\tau)}{\tau^\gamma} \right)^{q_2} \frac{d\tau}{\tau}.
\end{align*}

{\it Case 4:} $0 < q_2 \le p_2 < 1$. Arguing as in Case 3,
\begin{align*}
	\int_0^{T}& \left( \frac{ \Omega_{p_2}(\mathscr R_{P_n} A- \mathscr R_{P_m} A, h) }{h^\gamma} \right)^{q_2} \frac{d\tau}{\tau}\\
	&\lesssim_{p_2,q_2} \int_0^{T} \frac{1}{h^{q_2(\gamma - 1/p_2)} } \int_0^{h2^{-m+1}} \left( \frac{\oline{\Omega}_{p_2}(\delta A,\tau) }{\tau^\gamma} \right)^{q_2} \tau^{q_2(\gamma - 1/p_2)} \frac{d\tau}{\tau}\\
	&= \int_0^{2^{-m+1}T} \left[ \tau^{q_2(\gamma - 1/p_2)} \int_{2^{m-1} \tau}^{T} \frac{1}{h^{q_2(\gamma - 1/p_2)} } \frac{dh}{h} \right] \left( \frac{\oline{\Omega}_{p_2}(\delta A,\tau) }{\tau^\gamma} \right)^{q_2} \frac{d\tau}{\tau} \\
	&\lesssim_{p_2,q_2,\gamma} 2^{-mq_2(\gamma - 1/p_2)} \int_0^{2^{-m+1}T}  \left( \frac{\oline{\Omega}_{p_2}(\delta A,\tau) }{\tau^\gamma} \right)^{q_2} \frac{d\tau}{\tau}.
\end{align*}

We conclude in all cases that, as $n \to \oo$, $\mathscr R_{P_n} A$ has a limit $\mathscr RA \in \mbb B^\gamma_{p_2,q_2}([0,T])$. We deduce \eqref{Rbound} upon sending $n \to \oo$, letting $m = 1$, and noting the bound $\nor{\mathscr R^1 A}{\mbb B^\gamma_{p_2,q_2}} \le \nor{\delta A}{\oline{\mbb B}^\gamma_{p_2,q_2}}$.

{\it Step 2: additivity.} We next show that the two-parameter map $\tilde{\mathscr I} A := A + \mathscr R A = \lim_{n \to \oo} \mathscr I_{P_n} A$ is the increment of a path in $B^\alpha_{p_1 \wedge p_2, q_1 \vee q_2}$, that is, for some $\mathscr IA \in B^\alpha_{p_1 \wedge p_2, q_1 \vee q_2}$, $\tilde{\mathscr I}A = \delta \mathscr IA$.

For $\theta \in (0,1)$ and $A: \Delta_2(0,T) \to \RR^m$, define
\[
	\delta_\theta A_{st} := \delta A_{s, \theta t + (1-\theta) s, t} = A_{st} - A_{s,\theta t + (1-\theta) s} - A_{\theta t + (1-\theta ) s,t}.
\]
Fix $N = 1,2,\ldots$ and $K = 1,2,\ldots, 2^N - 1$, and set $\theta = \frac{K}{2^N}$ and $L := 2^N - K$. Then, for $h \in [0,T]$,
\begin{align*}
	\mathscr I_{P_n} A_{s,s+\theta h} &= \sum_{i=1}^{2^n} A_{s + \frac{(i-1)K}{2^{n+N}}h, s + \frac{iK}{2^{n+N}}h }, \quad  \mathscr I_{P_n} A_{s+\theta h,s+h} = \sum_{j=1}^{2^n} A_{s + \theta h + \frac{(j-1)L}{2^{n+N}}h, s + \theta h + \frac{jL}{2^{n+N}}h },
\quad \text{and}\\
	\mathscr I_{P_{n+N}} A_{s,s+h} &= \sum_{k=1}^{2^{n+N}} A_{s + \frac{k-1}{2^{n+N}}h, s + \frac{k}{2^{n+N}}h}.
\end{align*}
Therefore, $\mathscr I_{P_{n+N}} A_{s,s+h} - \mathscr I_{P_n} A_{s,s+\theta h} - \mathscr I_{P_n} A_{s+\theta h,s+h} = \I + \II$, where
\begin{align*}
	\I &:= -\sum_{i=1}^{2^n} \left( A_{s + \frac{(i-1)K}{2^{n+N}}h, s + \frac{iK}{2^{n+N}}h } - \sum_{\ell=1}^K A_{s+ \frac{iK - K + \ell-1}{2^{n+N}}h, s +\frac{iK - K + \ell}{2^{n+N}}h } \right)\\
	&= - \sum_{i=1}^{2^n} \sum_{\ell=1}^{K-1} \delta A_{s+ \frac{iK - K + \ell-1}{2^{n+N}}h, s +\frac{iK - K + \ell}{2^{n+N}}h , s + \frac{iK}{2^{n+N}}h }
\end{align*}
and
\begin{align*}
	\II &:= - \sum_{j=1}^{2^n} \left( A_{s + \theta h + \frac{(j-1)L}{2^{n+N}}h, s + \theta h + \frac{jL}{2^{n+N}}h } - \sum_{m=1}^{L} A_{s + \theta h + \frac{jL - L + m - 1}{2^{n+N}}h, s + \theta h +  \frac{jL -  + m}{2^{n+N}}h} \right)\\
	&= - \sum_{j=1}^{2^n} \sum_{m=1}^{L-1} \delta A_{s + \theta h + \frac{jL - L + m - 1}{2^{n+N}}h, s + \theta h + \frac{jL -  + m}{2^{n+N}}h, s + \theta h + \frac{jL}{2^{n+N}}h}.
\end{align*}
If $1 \le p_2 \le \oo$, then the triangle inequality gives
\begin{align*}
	&\nor{ \mathscr I_{P_{n+N}} A_{\cdot,\cdot+h} - \mathscr I_{P_n} A_{\cdot,\cdot+\theta h} - \mathscr I_{P_n} A_{\cdot+\theta h,\cdot+h}}{L^{p_2}([0,T-h])}\\
	&\le \sum_{i=1}^{2^n} \sum_{\ell=1}^{K-1} \nor{ \delta A_{\cdot+ \frac{iK - K + \ell-1}{2^{n+N}}h, \cdot +\frac{iK - K + \ell}{2^{n+N}}h , \cdot + \frac{iK}{2^{n+N}}h }}{L^{p_2}([0,T-h])} \\
	&+ \sum_{j=1}^{2^n} \sum_{m=1}^{L-1} \nor{\delta A_{\cdot + \theta h + \frac{jL - L + m - 1}{2^{n+N}}h, \cdot + \theta h + \frac{jL -  + m}{2^{n+N}}h, \cdot + \theta h + \frac{jL}{2^{n+N}}h} }{L^{p_2}([0,T-h])} \\
	&\le 2^{n+N} \oline{\Omega}_{p_2}(\delta A, 2^{-n} h),
\end{align*}
and we conclude that
\begin{equation}\label{additivebeforelimitpge1}
\begin{split}
	&\left[ \int_0^T \left( \frac{ \nor{ \mathscr I_{P_{n+N}} A_{\cdot,\cdot+h} - \mathscr I_{P_n} A_{\cdot,\cdot+\theta h} - \mathscr I_{P_n} A_{\cdot+\theta h,\cdot+h}}{L^{p_2}([0,T-h])}}{h^\gamma} \right)^{q_2} \frac{dh}{h} \right]^{1/q_2}\\
	&\le 2^{n+N} \left[ \int_0^T \left( \frac{ \oline{\Omega}_{p_2}(\delta A, 2^{-n} h)}{h^\gamma} \right)^{q_2} \frac{dh}{h} \right]^{1/q_2} = 2^N 2^{-n(\gamma-1)} \left[ \int_0^{2^{-n}T} \left( \frac{ \oline{\Omega}_{p_2}(\delta A, h)}{h^\gamma} \right)^{q_2}\frac{dh}{h} \right]^{1/q_2}.
\end{split}
\end{equation}
Otherwise, if $1/\gamma < p_2 < 1$, then the sub-additivity of $\nor{\cdot}{L^{p_2}([0,T-h])}^{p_2}$ gives
\[
	\nor{ \mathscr I_{P_{n+N}} A_{\cdot,\cdot+h} - \mathscr I_{P_n} A_{\cdot,\cdot+\theta h} - \mathscr I_{P_n} A_{\cdot+\theta h,\cdot+h}}{L^{p_2}([0,T-h])}^{p_2} 
	\le 2^{n+N} \oline{\Omega}_{p_2}^{p_2}(\delta A, 2^{-n} h),
\]
and thus
\begin{equation}\label{additivebeforelimitpless1}
\begin{split}
	&\left[ \int_0^T \left( \frac{ \nor{ \mathscr I_{P_{n+N}} A_{\cdot,\cdot+h} - \mathscr I_{P_n} A_{\cdot,\cdot+\theta h} - \mathscr I_{P_n} A_{\cdot+\theta h,\cdot+h}}{L^{p_2}([0,T-h])}}{h^\gamma} \right)^{q_2} \frac{dh}{h} \right]^{1/q_2}\\
	&\le 2^N 2^{-n(\gamma-1/p_2)} \left[ \int_0^{2^{-n}T} \left( \frac{ \oline{\Omega}_{p_2}(\delta A, h)}{h^\gamma} \right)^{q_2}\frac{dh}{h} \right]^{1/q_2}.
\end{split}
\end{equation}
In either case, taking $n \to \oo$ and invoking Step 1 gives $\nor{\delta_\theta (\tilde{\mathscr I}A)}{\mbb B^\gamma_{p_2,q_2}([0,T])} = 0$ for all dyadic $\theta$. 

If $p_2 < \oo$, then, for any $h$, $\theta \mapsto \nor{\delta_\theta (\tilde{\mathscr I}A)_{\cdot,\cdot +h}}{L^{p_2}([0,T-h]}$ is continuous, and therefore equal to $0$ for all $\theta \in (0,1)$. In particular, $[\tilde{\mathscr I}A]_{\oline{\mbb B}^\gamma_{p_2,q_2}} = 0$, and we conclude from Lemma \ref{L:additive} that there exists $\mathscr IA$ such that, for almost every $(s,t) \in \Delta_2(0,T)$, $\mathscr IA_t - \mathscr IA_s = \tilde{\mathscr I}A_{st}$. If $p _2 = \oo$, then we have $\nor{\delta_\theta (\tilde{\mathscr I}A)}{\mbb B^\gamma_{r,q_2}([0,T])} = 0$ for all dyadic $\theta$ and $r < p_2$, and we argue as above.

The bound \eqref{Ibound} now follows upon invoking \eqref{Rbound} and taking the $L^{q_1 \vee q_2}(d\tau/\tau)$-(quasi)-norm on both sides of
\[
	\frac{\omega_{p_1 \wedge p_2}(\mathscr IA,\tau)}{\tau^\alpha} \lesssim_{T,p_1,p_2} \frac{\Omega_{p_1}(A,\tau)}{\tau^\alpha} + \tau^{\gamma - \alpha} \frac{\Omega_{p_2}(\mathscr RA,\tau)}{\tau^\gamma}.
\]

{\it Step 3: convergence along arbitrary partitions.} We finish by proving \eqref{generalconvergence}. Below, we always take $r > 1/\gamma$ and $r \ge q_2$.

Let $P := \left\{ 0 = \tau_0 < \tau_1 < \cdots < \tau_N = 1 \right\}$ be an arbitrary partition of $[0,1]$, and, for $i = 1,2,\ldots, N$, define $\delta_i := \tau_i - \tau_{i-1}$. Fix $h \in [0,T]$ and $s \in [0,T-h]$. Then
\[
	\delta{\mathscr I}A_{s,s+h} - \mathscr I_P A_{s,s+h} = \sum_{i=1}^N \left[ \delta{\mathscr I}A_{s + \tau_{i-1}h,s + \tau_i h} - A_{s + \tau_{i-1}h, s + \tau_i h} \right] = \sum_{i=1}^N \mathscr R A_{s + \tau_{i-1}h, s + \tau_i h},
\]
and so
\begin{equation}\label{additivityuse}
	\Omega_{p_2}(\delta \mathscr IA - \mathscr I_P A, h)^{1 \wedge p_2} \le \sum_{i=1}^N \Omega_{p_2}(\mathscr RA, \delta_i h)^{1 \wedge p_2}.
\end{equation}

Assume first that $1 \le p_2 \le \oo$. If $r \ge 1$, then Minkowski's inequality yields
\begin{align*}
	\left[ \int_0^T \left( \frac{\Omega_{p_2}(\delta\mathscr I A - \mathscr I_P A, h)}{h^\gamma}\right)^{r} \frac{dh}{h} \right]^{1/r}
	&\le \sum_{i=1}^N \left[ \int_0^T \left( \frac{ \Omega_{p_2}(\mathscr RA, \delta_i h)}{h^\gamma}\right)^{r} \frac{dh}{h} \right]^{1/r}\\
	&\le \sum_{i=1}^N \delta_i^\gamma\left[ \int_0^{\delta_iT} \left( \frac{ \Omega_{p_2}(\mathscr RA, h)}{h^\gamma}\right)^{r} \frac{dh}{h} \right]^{1/r}\\
	&\lesssim_{p_1,q_1,p_2,q_2,r,\gamma} T\norm{P}^{\gamma-1} \nor{\delta A}{\oline{\mbb B}^\gamma_{p_2,q_2}}.
\end{align*}
and otherwise, if $1/\gamma < r < 1$, then
\begin{align*}
	\int_0^T \left( \frac{\Omega_{p_2}(\delta\mathscr I A - \mathscr I_P A, h)}{h^\gamma}\right)^{r} \frac{dh}{h}
	&\le \sum_{i=1}^N \int_0^T \left( \frac{ \Omega_{p_2}(\mathscr RA, \delta_i h)}{h^\gamma}\right)^{r} \frac{dh}{h}\\
	&\lesssim_{p_1,q_1,p_2,q_2,r,\gamma} T\norm{P}^{\gamma r -1} \nor{\delta A}{\oline{\mbb B}^\gamma_{p_2,q_2}}^{q_2}
	\end{align*}

Assume next that $1/\gamma < p_2 < 1$. If $1/\gamma < r \le p_2 < 1$, then
\begin{align*}
	\int_0^T \left( \frac{\Omega_{p_2}(\delta \mathscr I A - \mathscr I_P A, h)}{h^\gamma}\right)^{r} \frac{dh}{h}
	&\le \int_0^T \left( \frac{ \sum_{i=1}^N\Omega_{p_2}(\mathscr RA, \delta_i h)^{p_2}}{h^{\gamma p_2}}\right)^{r/p_2} \frac{dh}{h}\\
	&\le \sum_{i=1}^N \int_0^T \left( \frac{ \Omega_{p_2}(\mathscr RA, \delta_i h)^{p_2}}{h^{\gamma}}\right)^{r} \frac{dh}{h} \\
	&\lesssim_{p_1,q_1,p_2,q_2,r,\gamma} T\norm{P}^{\gamma r -1}\nor{\delta A}{\oline{\mbb B}^\gamma_{p_2,q_2}}^{q_2}.
\end{align*}
Otherwise, if $1/\gamma < p_2 < 1$ and $r \ge p_2$, then, by Minkowski's inequality with the exponent $r/p_2$,
\begin{align*}
	\left[\int_0^T \left( \frac{\Omega_{p_2}(\delta\mathscr I A - \mathscr I_P A, h)}{h^\gamma}\right)^{r} \frac{dh}{h} \right]^{p_2/r} 
	&\le \left[\int_0^T \left( \frac{\sum_{i=1}^N \Omega_{p_2}(\mathscr R A ,\delta_i h)^{p_2} }{h^{\gamma p_2}}\right)^{r/p_2} \frac{dh}{h} \right]^{p_2/r} \\
	&\le \sum_{i=1}^N \left[\int_0^T \left( \frac{ \Omega_{p_2}(\mathscr R A ,\delta_i h) }{h^\gamma}\right)^{r} \frac{dh}{h} \right]^{p_2/r} \\
	&\lesssim_{p_1,q_1,p_2,q_2,r,\gamma} T \norm{P}^{\gamma p_2 - 1}\nor{\delta A}{\oline{\mbb B}^\gamma_{p_2,q_2}}^{p_2}.
\end{align*}
In all cases, we conclude upon sending $\norm{P} \to 0$.
\end{proof}

\begin{proof}[Proof of Theorem \ref{T:Besovsewingendpoint}]
{\it Step 1: convergence along dyadic partitions.} Fix $r \in [q_2,\oo]$ and $1 \le m < n$. The third case of Lemma \ref{L:RnCauchy} and Minkowski's inequality yield, for all $T' \in [0,T]$,
\begin{align*}
	\left[\int_0^{T'} \left( \frac{\Omega_{p_2}(\mathscr R_{P_n}A - \mathscr R_{P_m }A ,h) }{\omega_r(h)} \right)^{r} \frac{dh}{h}\right]^{1/r}
	&\lesssim_{p_2,q_2} \left( \int_0^{T'} \left( \frac{1}{\ell_r(h)} \right)^r \left[ \int_0^{h2^{-m+1}} \left( \frac{\oline{\Omega}_{p_2}(\delta A,\tau)}{\tau^{1 \vee \frac{1}{p_2}} } \right)^{q_2} \frac{d\tau}{\tau}\right]^{r/q_2} \frac{dh}{h} \right)^{1/r}\\
	&\le \left( \int_0^{2^{-m+1} T'} \left[ \int_{2^{m-1} \tau}^{T'}  \left( \frac{1}{\ell_r(h)} \right)^r \frac{dh}{h} \right]^{q_2/r} \left( \frac{\oline{\Omega}_{p_2}(\delta A,\tau)}{\tau^{1 \vee \frac{1}{p_2}} } \right)^{q_2} \frac{d\tau}{\tau} \right)^{1/q_2}\\
	&\lesssim_{\ell_r} \left[\int_0^{2^{-m+1} T'} \left( \frac{\oline{\Omega}_{p_2}(\delta A,\tau)}{\tau^{1 \vee \frac{1}{p_2}} } \right)^{q_2} \frac{d\tau}{\tau}\right]^{1/q_2}.
\end{align*}
We conclude as in the proof of Theorem \ref{T:Besovsewing} that $(\mathscr R_{P_n} A)_{n =0}^\oo \subset \mbb B^{\omega_r}_{p_2,r}([0,T])$ is Cauchy, and thus has a limit $\mathscr R A \in \mbb B^{\omega_{q_2}}_{p_2,q_2}([0,T])$ that satisfies, for all $T' \in [0,T]$,
\begin{equation}\label{RomegaboundarbitraryT}
	\left[\int_0^{T'} \left( \frac{\Omega_{p_2}(\mathscr R A ,h) }{\omega_r(h)} \right)^{r} \frac{dh}{h}\right]^{1/r}
	\lesssim_{p_2,q_2,\ell_r}
	 \left[\int_0^{T'} \left( \frac{\oline{\Omega}_{p_2}(\delta A,\tau)}{\tau^{1 \vee \frac{1}{p_2}} } \right)^{q_2} \frac{d\tau}{\tau}\right]^{1/q_2},
\end{equation}
and thus in particular \eqref{Romegabound}. 

{\it Step 2: additivity.} We argue as in Step 2 of Theorem \ref{T:Besovsewing}: given $\theta = \frac{K}{2^N}$ for some $N = 1,2,\ldots$ and $K = 1,2,\ldots, 2^N-1$, we deduce as in \eqref{additivebeforelimitpge1} and \eqref{additivebeforelimitpless1} that
\begin{align*}
	&\left[ \int_0^T \left( \frac{ \nor{ \mathscr I_{P_{n+N}} A_{\cdot,\cdot+h} - \mathscr I_{P_n} A_{\cdot,\cdot+\theta h} - \mathscr I_{P_n} A_{\cdot+\theta h,\cdot+h}}{L^{p_2}([0,T-h])}}{h^{1 \vee \frac{1}{p_2}}} \right)^{q_2} \frac{dh}{h} \right]^{1/q_2}\\
	&\le 2^N  \left[ \int_0^{2^{-n}T} \left( \frac{ \oline{\Omega}_{p_2}(\delta A, h)}{h^{1 \vee \frac{1}{p_2}} } \right)^{q_2}\frac{dh}{h} \right]^{1/q_2}.
\end{align*}
Taking $n \to \oo$ gives $\Omega_{p_2}(\delta_\theta A,h) = 0$ for all $h \in [0,T]$ and dyadic $\theta \in (0,1)$. The conclusion that $\tilde{ \mathscr I}A = \delta \mathscr IA$ and the bound \eqref{Iomegabound} then follow exactly as in Theorem \ref{T:Besovsewing}.

{\it Step 3: convergence along arbitrary partitions.} As in the proof of Theorem \ref{T:Besovsewing}, let $P = \{0 = \tau_0 < \tau_1 < \cdots < \tau_N = 1\}$ be a partition of $[0,1]$ and write $\delta_i := \tau_i - \tau_{i-1}$ for $i = 1,2,\ldots, N$. Then \eqref{additivityuse} and \eqref{RomegaboundarbitraryT} with $r = \oo$ give
\begin{align*}
	\nor{\delta \mathscr I A - \mathscr I_P A }{\mbb B^{1 \vee \frac{1}{p_2}}_{p_2,\oo} }^{1 \wedge p_2}
	&\le \sum_{i=1}^N \sup_{0 \le h \le T} \frac{\Omega_{p_2}(\mathscr RA, \delta_i h)^{1 \wedge p_2}}{h} \\
	&\le T \sup_{0 \le h \le \norm{P} T} \frac{\Omega_{p_2}(\mathscr RA, h)^{1 \wedge p_2}}{h} 			\lesssim_{p_2,q_2} T \left[ \int_0^{ \norm{P} T} \left( \frac{ \oline{\Omega}_{p_2}(\delta A, h)}{ h^{1 \vee \frac{1}{p_2}}} \right)^{q_2} \frac{dh}{h} \right]^{\frac{1 \wedge p_2}{q_2}},
\end{align*}
and we conclude upon sending $\norm{P} \to 0$.
\end{proof}

\begin{remark}\label{R:RAsmallT}
	For $\gamma > 0$ and $0 < p \le \oo$, we introduce the following closed subspace of $\mbb B^\gamma_{p,\oo}$:
	\[
		\mbb B^\gamma_{p,\oo; \circ} ([0,T]) := \left\{ A \in \mbb B^\gamma_{p,\oo}([0,T]) : \lim_{\tau \to 0^+} \sup_{0 < h \le \tau} \frac{ \Omega_p(A,\tau)}{\tau^\gamma} = 0 \right\}.
	\]
	Then the bound \eqref{RomegaboundarbitraryT} implies that, with $r = \oo$ and $\ell_\oo \equiv 1$, in the setting of Theorem \ref{T:Besovsewingendpoint}, we in fact have $\mathscr RA \in \mbb B^{1 \vee \frac{1}{p_2}}_{p_2,\oo; \circ}([0,T])$. We will use this observation later to make sense of some rough differential equations in the Davie sense (see Proposition \ref{P:Davies}).
\end{remark}

\begin{proof}[Proof of Theorem \ref{T:Besovsewingcontinuous}]
In view of the relation $\delta \mathscr RA = \delta \left(\delta \mathscr I A - A\right) = - \delta A$, we see that \eqref{sewingdeltaAHolder} and \eqref{sewingdeltaAHolderendpoint} in respectively parts (a) and (b) are satisfied with $\mathscr RA$ in place of $A$.

In part (a), we have $\gamma > 1/p_2$. We may then apply Proposition \ref{P:BBesovembedding} with $\omega(r) = r^\gamma$, which, combined with \eqref{Rbound}, immediately yields \eqref{RHolder}. We now apply the interpolation estimate Lemma \ref{L:interpolate} with $p = p_2$, $r = p_1$, and $\delta = \gamma - 1/p_2$, for which the hypotheses are satisfied because of the condition on $\gamma$ and $\alpha$, to obtain
\begin{align*}
	\nor{\mathscr RA}{\mbb B^\alpha_{p_1,q_2}([0,T])} 
	&\lesssim_{\alpha,\gamma,p_1,p_2,q_2} T^{\gamma - \frac{1}{p_2} - \alpha + \frac{1}{p_1} } \nor{\mathscr RA}{\mbb C^{\gamma - \frac{1}{p_2}}([0,T])}^{1 - \frac{p_2}{p_1}}\nor{\mathscr RA}{\mbb B^\gamma_{p_2,q_2}([0,T])}^{\frac{p_2}{p_1}}\\
	&\lesssim_{p_2,q_2,\gamma,\theta} T^{\gamma - \frac{1}{p_2} - \alpha + \frac{1}{p_1}} \left( \nor{\delta A}{\oline{\mbb B}^\gamma_{p_2,q_2}([0,T])} + M \right)^{1 - \frac{p_2}{p_1}}\nor{\delta A}{\oline{\mbb B}^\gamma_{p_2,q_2}([0,T])}^{\frac{p_2}{p_1}}\\
	&\le T^{\gamma - \frac{1}{p_2} - \alpha + \frac{1}{p_1} } \left( \nor{\delta A}{\oline{\mbb B}^\gamma_{p_2,q_2}([0,T])} + M \right).
\end{align*}
This concludes part (a) in view of the estimate
\begin{equation}\label{IAbetterbound}
	[\mathscr IA]_{ B^\alpha_{p_1,q_1 \vee q_2}([0,T])} \lesssim_{q_1,q_2}
	\nor{A}{B^\alpha_{p_1,q_1}([0,T])} + \nor{\mathscr RA}{\mbb B^\alpha_{p_1,q_2}([0,T])}.
\end{equation}

In part (b), the estimate \eqref{Rctsendpoint} follows as in part (a) from Proposition \ref{P:BBesovembedding} and \eqref{Romegabound}. Setting
\[
	\omega = \omega_{q_2}, \quad \rho(\tau) = \frac{\omega_{q_2}(\tau)}{\tau^{1/p_2}}, \quad p = p_2, \quad r = p_1, \quad \text{and} \quad q = q_2
\]
gives, with the notation of Lemma \ref{L:interpolatemodulus},
\[
	\sigma(T) = \left[ \int_0^T \tau^{\frac{p_1q_2}{p_1 - p_2} \left( 1 - \frac{1}{p_2} - \alpha + \frac{1}{p_1} \right) } \ell_{q_2}(\tau)^{\frac{p_1q_2}{p_1 - p_2}} \frac{d\tau}{\tau} \right]^{\frac{p_1 - p_2}{p_1 q_2} }\lesssim_{p_1,p_2,q_1,q_2, \alpha} T^{1 - \frac{1}{p_2} - \alpha + \frac{1}{p_1} } \ell_{q_2}(T),
\]
where we have used the final condition in \eqref{loglike}. Applying Lemma \ref{L:interpolatemodulus} to $\mathscr RA$ yields
\begin{align*}
	\nor{\mathscr RA}{\mbb B^\alpha_{p_1,q_2}} 
	&\lesssim_{\alpha, p_1,p_2,q_2} T^{1 - \frac{1}{p_2} - \alpha + \frac{1}{p_1} } \ell_{q_2}(T)\left( \sup_{0 \le s < t \le T} \frac{|\mathscr RA_{st}|}{\omega_{q_2}(t-s)(t-s)^{-1/p_2} } \right)^{1 - \frac{p_2}{p_1}} \nor{\mathscr RA}{\mbb B^\omega_{p_2,q_2}}^{\frac{p_2}{p_1}}\\
	&\lesssim_{\alpha,p_2,q_2,K,\theta} T^{1 - \frac{1}{p_2} - \alpha + \frac{1}{p_1} } \ell_{q_2}(T)\left( \nor{\delta A}{\oline{B}^1_{p_2,q_2}} + M \right)^{1 - \frac{p_2}{p_1}}\nor{\delta A}{\oline{\mbb B}^1_{p_2,q_2}}^{\frac{p_2}{p_1}}\\
	&\le T^{1 - \frac{1}{p_2} - \alpha + \frac{1}{p_1} } \ell_{q_2}(T)\left( \nor{\delta A}{\oline{B}^1_{p_2,q_2}} + M \right).
\end{align*}
We conclude in conjunction with \eqref{IAbetterbound}.
\end{proof}

\section{The Young regime}

Theorems \ref{T:Besovsewing}, \ref{T:Besovsewingendpoint}, and \ref{T:Besovsewingcontinuous} are used to obtain results on integrating Besov-regular paths. This recovers some existing results and also provides some refinements. The results are related to the question of making sense of the product of two Besov distributions. These integration results and the precise estimates that accompany them are used to solve differential equations with Besov driving signals in the Young regime. 

\subsection{Young integration} \label{sec:YI}

Given two measurable real-valued (the finite-dimensional vector-valued setting can be recovered component by component) $f: [0,T] \to \RR$ and $g: [0,T] \to \RR$, we define, for $(s,t) \in \Delta_2(0,T)$ and a partition $P = \{0 = \tau_0 < \tau_1 < \cdots < \tau_N = 1\}$ of $[0,1]$,
\[
	\mathscr I^P_{st} = \sum_{k=1}^N f_{s + \tau_{k-1}(t-s)} \delta g_{s + \tau_{k-1} (t-s), s + \tau_k (t-s)}.
\]
We recall the definition of the families $(\ell_r)_{0 < r \le \oo}$ in \eqref{loglike} and $(\omega_r)_{0 < r \le \oo}$ in \eqref{omegar}.

%\todo{Peter, look up the old KC proof with Pavel ...} 

\begin{theorem}\label{T:YoungBesov}
	Let $\alpha_0,\alpha_1 \in (0,1)$ and $0 < p_0,p_1,q_0,q_1 \le \oo$, define
	\[
		\gamma = \alpha_0 + \alpha_1, \quad \frac{1}{p_2} = \frac{1}{p_0} + \frac{1}{p_1}, \quad \text{and} \quad \frac{1}{q_2} = \frac{1}{q_0} + \frac{1}{q_1},
	\]
	and assume that $f \in B^{\alpha_0}_{p_0,q_0}([0,T])$ and $g \in B^{\alpha_1}_{p_1,q_1}([0,T])$.
	
	\begin{enumerate}[(a)]
	\item If $\gamma > 1 \vee \frac{1}{p_2}$, then there exists $\mathscr I \in B^{\alpha_1}_{p_2,q_1}([0,T])$, which we write as
	\[
		\mathscr I_t := \int_0^t f_s dg_s \quad \text{for } t \in [0,T],
	\]
	such that, for all $r \in (0,\oo)$ such that $r > 1/\gamma$ and $r \ge q_2$ (note that if $q_2 > 1/\gamma$, taking $r = q_2$ is allowed),
	\begin{equation}\label{Younggeneralconvergence}
		\lim_{\norm{P} \to 0} \nor{\mathscr I^{P} - \delta\mathscr I}{\mbb B^\gamma_{p_2,r}([0,T])} = 0.
	\end{equation}
	Moreover,
	\begin{equation}\label{Youngerrorbound}
		\nor{\delta \mathscr I - f \delta g }{\mbb B^\gamma_{p_2,q_2}([0,T])} \lesssim_{\gamma,p_2,q_2} [f]_{B^{\alpha_0}_{p_0,q_0}} [g]_{B^{\alpha_1}_{p_1,q_1}},
	\end{equation}
	and
	\begin{equation}\label{Youngintegralbound}
		\left[\mathscr I \right]_{B^{\alpha_1}_{p_2,q_1}} \lesssim_{\alpha_0,\alpha_1,p_0,p_1,q_0,q_1} \left( \nor{f}{L^{p_0}([0,T])} + T^{\alpha_0} [f]_{B^{\alpha_0}_{p_0,q_0}} \right) [g]_{B^{\alpha_1}_{p_1,q_1}}.
	\end{equation}
	If, in addition, $\alpha_0 > \frac{1}{p_0}$ and $\alpha_1 > \frac{1}{p_1}$, then
	\begin{equation}\label{betterYoungintegralbound}
		\left[\mathscr I\right]_{B^{\alpha_1}_{p_1,q_1}} \lesssim_{\alpha_0,\alpha_1,p_0,p_1,q_0,q_1} \left( |f(0)| + T^{\alpha_0 - \frac{1}{p_0}} [f]_{B^{\alpha_0}_{p_0,q_0}} \right) [g]_{B^{\alpha_1}_{p_1,q_1}}.
	\end{equation}
	
	\item If $\gamma = 1 \vee \frac{1}{p_2} \le \frac{1}{q_2}$, then
	\begin{equation}\label{Youngendpointconvergence}
		\lim_{\norm{P} \to 0} \nor{\mathscr I^{P} - \delta\mathscr I}{\mbb B^{1 \vee \frac{1}{p_2} }_{p_2,\oo}([0,T])} = 0,
	\end{equation}
	\begin{equation}\label{Youngerrorboundendpoint}
		\nor{\delta \mathscr I - f \delta g }{\mbb B^{\omega_r}_{p_2,r}([0,T])} \lesssim_{\kappa,p_2,r} [f]_{B^{\alpha_0}_{p_0,q_0}} [g]_{B^{\alpha_1}_{p_1,q_1}} \quad \text{for all } r \in [q_2,\oo],
	\end{equation}
	and
	\begin{equation}\label{Youngintegralboundendpoint}
		\left[\mathscr I \right]_{B^{\alpha_1}_{p_2,q_1}} \lesssim_{\alpha_0,\alpha_1,p_0,p_1,q_0,q_1,\kappa} \left( \nor{f}{L^{p_0}([0,T])} + T^{\alpha_0} \ell_{q_2}(T)  [f]_{B^{\alpha_0}_{p_0,q_0}} \right) [g]_{B^{\alpha_1}_{p_1,q_1}}.
	\end{equation}
	If, in addition, $\alpha_0 > \frac{1}{p_0}$ and $\alpha_1 > \frac{1}{p_1}$ (note that this implies $\gamma = 1$), then
	\begin{equation}\label{betterYoungintegralboundendpoint}
		\left[\mathscr I\right]_{B^{\alpha_1}_{p_1,q_1}} \lesssim_{\alpha_0,\alpha_1,p_0,p_1,q_0,q_1} \left( |f(0)| + T^{\alpha_0 - \frac{1}{p_0} }(1 + \ell_{q_2}(T)) [f]_{B^{\alpha_0}_{p_0,q_0}} \right) [g]_{B^{\alpha_1}_{p_1,q_1}}.
	\end{equation}
	\end{enumerate}
\end{theorem}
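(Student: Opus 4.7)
The plan is to apply the Besov sewing lemmas of Section~\ref{sec:BesovSew} to the two-parameter map
\[
A_{st} := f_s \, \delta g_{st}, \qquad (s,t) \in \Delta_2(0,T).
\]
The elementary identity
\[
\delta A_{sut} = A_{st} - A_{su} - A_{ut} = -\delta f_{su} \cdot \delta g_{ut}
\]
is the crux of the reduction: all Besov control on $\delta A$ collapses to products of moduli of continuity of $f$ and $g$. Since $1/p_2 = 1/p_0 + 1/p_1$ and $1/q_2 = 1/q_0 + 1/q_1$, H\"older's inequality (first pointwise in $s$, then in $L^{q_2}(d\tau/\tau)$) yields
\[
\nor{A}{\mbb B^{\alpha_1}_{p_2,q_1}([0,T])} \le \nor{f}{L^{p_0}}\,[g]_{B^{\alpha_1}_{p_1,q_1}}, \qquad \nor{\delta A}{\oline{\mbb B}^{\gamma}_{p_2,q_2}([0,T])} \le [f]_{B^{\alpha_0}_{p_0,q_0}}\,[g]_{B^{\alpha_1}_{p_1,q_1}},
\]
the latter because $\oline{\Omega}_{p_2}(\delta A,\tau) \le \omega_{p_0}(f,\tau)\,\omega_{p_1}(g,\tau)$.

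For part (a), the hypothesis $\gamma > 1 \vee 1/p_2$ places us in the regime of Theorem~\ref{T:Besovsewing}, applied with primary parameters $(\alpha_1, p_2, q_1)$ and secondary $(\gamma, p_2, q_2)$; noting $q_2 \le q_1$, this produces $\mathscr IA \in B^{\alpha_1}_{p_2,q_1}$ and $\mathscr RA = \delta\mathscr IA - A \in \mbb B^{\gamma}_{p_2,q_2}$. Defining $\mathscr I_t$ as the representative of $\mathscr IA$ vanishing at $t = 0$, the convergence \eqref{Younggeneralconvergence} and the error estimate \eqref{Youngerrorbound} are direct transcriptions of \eqref{generalconvergence} and \eqref{Rbound} via $\delta\mathscr I - f\,\delta g = \mathscr RA$, while \eqref{Youngintegralbound} follows from \eqref{Ibound} after noting $\gamma - \alpha_1 = \alpha_0$. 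The sharper bound \eqref{betterYoungintegralbound} calls for the continuous-sewing Theorem~\ref{T:Besovsewingcontinuous}(a) in order to raise integrability from $p_2$ to $p_1$: its hypothesis $\gamma - 1/p_2 > \alpha_1 - 1/p_1$ reduces exactly to $\alpha_0 > 1/p_0$, the H\"older embedding Proposition~\ref{P:Besovembedding} supplies $\nor{f}{L^\infty} \lesssim |f(0)| + T^{\alpha_0 - 1/p_0}[f]_{B^{\alpha_0}_{p_0,q_0}}$, and the same embedding yields
\[
|\delta A_{sut}| \le [f]_{C^{\alpha_0 - 1/p_0}}\,[g]_{C^{\alpha_1 - 1/p_1}}\,(u-s)^{\alpha_0 - 1/p_0}(t-u)^{\alpha_1 - 1/p_1}.
\]

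The one mildly delicate point is recasting this mixed H\"older bound in the symmetric $(\min,\max)$ form \eqref{sewingdeltaAHolder}. Setting $\beta_i := \alpha_i - 1/p_i$, the choice $\theta := (\beta_0 \wedge \beta_1)/(\beta_0 + \beta_1) \in (0,1/2]$ gives $\theta(\gamma - 1/p_2) = \beta_0 \wedge \beta_1$ and $(1-\theta)(\gamma - 1/p_2) = \beta_0 \vee \beta_1$; a short case split on whether $u-s \le t-u$, combined with the elementary observation $\min^{a-b} \le \max^{a-b}$ for $a \ge b$, shows $(u-s)^{\beta_0}(t-u)^{\beta_1} \le \min^{\beta_0 \wedge \beta_1}\max^{\beta_0 \vee \beta_1}$ unconditionally. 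Theorem~\ref{T:Besovsewingcontinuous}(a) then applies, with $T$-exponent $\gamma - 1/p_2 - \alpha_1 + 1/p_1 = \alpha_0 - 1/p_0$, yielding \eqref{betterYoungintegralbound}. Part (b) proceeds in complete parallel. The hypothesis $\gamma = 1 \vee 1/p_2 \le 1/q_2$ forces $q_2 \le 1 \wedge p_2$, exactly the regime of Theorem~\ref{T:Besovsewingendpoint}, whose estimates \eqref{Romegabound}--\eqref{Iomegabound} deliver \eqref{Youngendpointconvergence}--\eqref{Youngintegralboundendpoint}. For the improved bound \eqref{betterYoungintegralboundendpoint}, the additional assumptions $\alpha_i > 1/p_i$ force $\gamma = \alpha_0 + \alpha_1 > 1/p_2$, hence $\gamma = 1$ and $p_2 > 1$, placing us in the setting of Theorem~\ref{T:Besovsewingcontinuous}(b); the same mixed H\"older argument for $\delta A$ and the same arithmetic identity $\gamma - 1/p_2 - \alpha_1 + 1/p_1 = \alpha_0 - 1/p_0$ produce the stated $T^{\alpha_0 - 1/p_0}\ell_{q_2}(T)$ scaling. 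Beyond this, the proof is essentially bookkeeping of parameters.
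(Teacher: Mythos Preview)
Your proof is correct and follows essentially the same approach as the paper: define $A_{st}=f_s\,\delta g_{st}$, note $\delta A_{sut}=-\delta f_{su}\,\delta g_{ut}$, control $A$ and $\delta A$ via H\"older's inequality, and then read off all conclusions from the appropriate sewing theorems (Theorems~\ref{T:Besovsewing}, \ref{T:Besovsewingendpoint}, \ref{T:Besovsewingcontinuous}). The only addition is that you spell out explicitly how to choose $\theta$ so that the mixed H\"older bound $(u-s)^{\beta_0}(t-u)^{\beta_1}$ fits the symmetric $(\min,\max)$ form of \eqref{sewingdeltaAHolder}; the paper leaves this implicit.
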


\begin{remark}\label{R:Youngintegrals}
	The requirement throughout Theorem \ref{T:YoungBesov} that $\gamma \ge 1/p_2$ puts restrictions on the types of discontinuities that $f$ or $g$ may have. For instance, if $\alpha_0 \le 1/p_0$, which means that $f$ can be discontinuous, then $\alpha_1 > 1/p_1$, so that Proposition \ref{P:Besovembedding} gives $g \in C^{\alpha_1 - 1/p_1}$ (and vice versa). Of course, the bounds \eqref{betterYoungintegralbound} and \eqref{betterYoungintegralboundendpoint} require that both $f$ and $g$ are H\"older continuous, albeit with \emph{less} H\"older regularity than is required in the pure-H\"older Young regime.
	
	According to part (b), in the critical case, we may take $f \in B^{1/p_0}_{p_0,q_0}$ and $g \in B^{1/p_1}_{p_1,q_1}$ with
	\[
		1 \le \frac{1}{p_0} + \frac{1}{p_1} \le \frac{1}{q_0} + \frac{1}{q_1}.
	\]
	This implies that one of $q_0$ or $q_1$ is finite, and, indeed, if \emph{both} are finite, then both $f$ and $g$ are allowed to have discontinuities, or even be nowhere locally bounded; see for instance Proposition \ref{P:loglog} or Corollary \ref{C:loglog}. However, in view of Proposition \ref{P:nojumps}, they are not allowed to both have \emph{jump} discontinuities. Moreover, as soon as one of $q_0$ or $q_1$ is $\oo$, the other must be no greater than $1$, which means that, if one of $f$ or $g$ has jump discontinuities, the other must be continuous.
	
	These observations are in line with the theory of rough paths with jumps put forward in \cite{friz2018differential}. Indeed, the Heaviside function
	\[
		H_t =
		\begin{cases}
			0 & \text{if } 0 \le t < 1/2,\\
			1 & \text{if } 1/2 \le t \le 1
		\end{cases}
	\]
	satisfies $\nor{H}{B^{1/p}_{p,\oo}} = 1$ for all $0 < p < \oo$, and the integral $\int_0^1 f_t dH_t$ cannot be defined as limits of arbitrary Riemann sums if $f$ itself has a jump discontinuity at $t = 1/2$ (formally, $dH_t = \delta_{1/2}(t)$).
\end{remark}

\begin{remark}\label{R:noBrownian}
	It has been known since \cite{lyonspathintegrals} that the continuous bilinear map defining $\mathscr I$ does not extend from $C^1 \times C^1$ to spaces on which the Wiener measure is supported. This is consistent with Theorem \ref{T:YoungBesov}; indeed, if $q < \oo$, then, with probability one, Brownian paths do not belong to $B^{1/2}_{p,q}$ for any $p$.
\end{remark}

\begin{proof}[Proof of Theorem \ref{T:YoungBesov}]
The map $A: \Delta_2(0,T) \to \RR$ defined by $A_{st} := f_s(g_t - g_s)$ satisfies $\delta A_{rst} := -\delta f_{rs} \delta g_{st}$. H\"older's inequality yields
\[
	\nor{\delta A}{\oline{\mbb B}^\gamma_{p_2, q_2}} \lesssim [f]_{B^{\alpha_0}_{p_0,q_0}} [g]_{B^{\alpha_1}_{p_1,q_1}} \quad \text{and} \quad \nor{A}{\mbb B^{\alpha_1}_{p_2,q_1}} \le \nor{f}{L^{p_0}([0,T])} [g]_{B^{\alpha_1}_{p_1,q_1}}.
\]
With the notation from Section \ref{sec:BesovSew}, we have $\mathscr I^P = \mathscr I_P A$ and $\mathscr I = \mathscr IA$. The convergence statements \eqref{Younggeneralconvergence} and \eqref{Youngendpointconvergence} and the bounds \eqref{Youngerrorbound}, \eqref{Youngintegralbound}, and \eqref{Youngerrorboundendpoint} follow immediately from Theorems \ref{T:Besovsewing} and \ref{T:Besovsewingendpoint}.

If $\alpha_i> 1/p_i$ for $i = 0,1$, then Proposition \ref{P:Besovembedding} gives, for all $0 \le s \le u \le t \le T$,
\begin{align*}
	\abs{ \delta A_{sut}} &\le \abs{ \delta f_{su} } \abs{ \delta g_{ut}} 
	\le [f]_{C^{\alpha_0 - 1/p_0}} [g]_{C^{\alpha_1 - 1/p_1}} (u-s)^{\alpha_0 -1/p_0} (t-u)^{\alpha_1 - 1/p_1}\\
	&\lesssim_{\alpha_0,\alpha_1,p_0,p_1,q_0,q_1}
	[f]_{B^{\alpha_0}_{p_0,q_0}} [g]_{B^{\alpha_1}_{p_1,q_1}} (u-s)^{\alpha_0 -1/p_0} (t-u)^{\alpha_1 - 1/p_1}
\end{align*}
and
\[
	\nor{A}{\mbb B^{\alpha_1}_{p_1,q_1}} \le \nor{f}{\oo} [g]_{B^{\alpha_1}_{p_1,q_1}} \lesssim_{\alpha_0,p_0,q_0} \left(|f(0)| + T^{\alpha_0 - 1/p_0} [f]_{B^{\alpha_0}_{p_0,q_0}}\right) [g]_{B^{\alpha_1}_{p_1,q_1}}.
\]
The bounds \eqref{betterYoungintegralbound} and \eqref{betterYoungintegralboundendpoint} are then consequences of Theorem \ref{T:Besovsewingcontinuous}, respectively parts (a) and (b).
\end{proof}

\subsection{Differential equations: Young regime} 
For $f: \RR^m \to \RR^m \otimes \RR^n$, $X:[0,T] \to \RR^m$, and $y \in \RR^m$, we consider the initial value problem\begin{equation}\label{E:Young}
	dY_t = f(Y_t)\cdot dX_t \quad \text{in } [0,T] \quad \text{and} \quad Y_0 = y
\end{equation}
in the Young regime, which, in the Besov context, means
\begin{equation}\label{Youngregime}
	\left\{
	\begin{split}
	&\frac{1}{2} \le \alpha < 1, \quad \frac{1}{\alpha} < p \le \oo, \\
	&0 < q \le \oo \quad \text{if } \alpha > \frac{1}{2}, \text{ and}\\
	&0 < q \le 2 \quad \text{if } \alpha = 1/2,
	\end{split}
	\right.
\end{equation}
with the nonlinearity satisfying
\begin{equation}\label{Youngf}
	\begin{dcases}
	f \in C^{1,\delta}(\RR^m;\RR^m \otimes \RR^n) & \text{if } \alpha > \frac{1}{2}, \text{ where } (1+\delta)\alpha > 1 \text{ and } \delta \alpha > 1/p,\\
	f \in C^2(\RR^n;\RR^m \otimes \RR^n) & \text{if } \alpha = \frac{1}{2}.
	\end{dcases}
\end{equation}

\begin{theorem}\label{T:Youngeq}
	Assume \eqref{Youngregime}, and fix $X \in B^\alpha_{pq}([0,T]; \RR^n)$ and $f$ satisfying \eqref{Youngf}. Then, for every fixed $y \in \RR^m$, there exists a unique solution $Y \in B^\alpha_{pq}([0,T];\RR^m)$ of \eqref{E:Young}. Moreover, there exists a constant $M$ depending only on $\alpha$, $p$, $q$, $T$, $\nor{f}{C^1}$, and $[X]_{B^\alpha_{pq}}$ such that $[Y]_{B^\alpha_{pq}} \le M$.
\end{theorem}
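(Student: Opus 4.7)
The approach is to apply Banach's fixed-point theorem to the solution map
\[
	\Gamma(Y)_t := y + \int_0^t f(Y_s)\,dX_s, \qquad t \in [0,T'],
\]
interpreted as the Young integral of Theorem \ref{T:YoungBesov}, on a closed ball in the affine space $B^\alpha_{pq}([0,T'];\RR^m,y)$ from Remark \ref{R:Besovsup}, for a sufficiently small $T' \in (0,T]$. For $Y \in B^\alpha_{pq}$, Lemma \ref{L:Besovcomposition} with $\delta=1$ (applicable since $f$ is Lipschitz) gives $f(Y)\in B^\alpha_{pq}$ with $[f(Y)]_{B^\alpha_{pq}}\le [f]_{C^1}[Y]_{B^\alpha_{pq}}$, and Proposition \ref{P:Besovembedding} bounds $\|f(Y)\|_\oo$ since $\alpha>1/p$. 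The Young integral is then constructed by Theorem \ref{T:YoungBesov}: part (a) applies when $\alpha>1/2$ (as $\gamma=2\alpha>1\vee 1/p_2$), and part (b) applies in the endpoint case $\alpha=1/2$ (where $\gamma=1=1\vee 1/p_2$), with the assumption $q\le 2$ ensuring $1/q_2=2/q\ge\gamma$.

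Because $\alpha_0=\alpha_1=\alpha>1/p$, the refined bounds \eqref{betterYoungintegralbound}--\eqref{betterYoungintegralboundendpoint} place $\Gamma(Y)$ in $B^\alpha_{pq}$ \emph{without loss of integrability}, together with an estimate of the form
\[
	[\Gamma(Y) - y]_{B^\alpha_{pq}} \le C\bigl(|f(y)| + T'^{\alpha - 1/p}(1+\ell_{q_2}(T'))[f]_{C^1}[Y]_{B^\alpha_{pq}}\bigr)[X]_{B^\alpha_{pq}},
\]
the logarithmic factor $\ell_{q_2}(T')$ appearing only at the endpoint $\alpha=1/2$. For $T'$ small, $\Gamma$ stabilizes a closed ball of radius comparable to $|f(y)|\,[X]_{B^\alpha_{pq}}$. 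For the contraction, the second bound in Lemma \ref{L:Besovcomposition} with the $\delta$ from \eqref{Youngf} yields $[f(Y)-f(\tilde Y)]_{B^{\delta\alpha}_{p,q/\delta}}\lesssim \|f\|_{C^{1,\delta}}(1+[Y]^\delta+[\tilde Y]^\delta)[Y-\tilde Y]_{B^\alpha_{pq}}$; the boundary term vanishes since $Y_0=\tilde Y_0=y$. The conditions $(1+\delta)\alpha>1$ and $\delta\alpha>1/p$ keep the Young integration of $f(Y)-f(\tilde Y)$ against $X$ within scope of Theorem \ref{T:YoungBesov}, yielding
\[
	[\Gamma(Y)-\Gamma(\tilde Y)]_{B^\alpha_{pq}} \le C T'^{\delta\alpha - 1/p}(1+\ell_{q_2}(T'))(1+[Y]^\delta+[\tilde Y]^\delta)[X]_{B^\alpha_{pq}}[Y-\tilde Y]_{B^\alpha_{pq}},
\]
which is a strict contraction on the ball for $T'$ small.

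Banach's theorem then yields a unique local solution. To extend to $[0,T]$, I partition into finitely many sub-intervals $0=t_0<t_1<\cdots<t_N=T$ on which the above closure holds uniformly; the smallness constants depend only on $\|f\|_{C^{1,\delta}}$ and $[X]_{B^\alpha_{pq}([0,T])}$, so only finitely many pieces are needed. Concatenating the local fixed points gives a global solution whose Besov semi-norm on $[0,T]$ is controlled by a super-additivity-type estimate for $B^\alpha_{pq}$ (using that $Y$ is continuous, since $\alpha>1/p$, to handle boundary contributions). Uniqueness on $[0,T]$ follows from local uniqueness.

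The main obstacle is the critical endpoint $\alpha = 1/2$, which lies outside classical Young theory and relies on the refined sewing of Theorem \ref{T:Besovsewingendpoint} through part (b) of Theorem \ref{T:YoungBesov}. The new logarithmic factor $\ell_{q_2}(T')$ is a mild feature rather than a true obstruction, since $T'^{\delta\alpha - 1/p}\ell_{q_2}(T')\to 0$ as $T'\to 0^+$ by the Dini-type property \eqref{loglike}. A secondary subtlety is the patching step, which in Besov regularity (unlike in H\"older) is not manifestly sub-additive; however, this is routinely addressed using the continuous embedding from Proposition \ref{P:Besovembedding} and the norm equivalences of Remark \ref{R:Besovsup}.
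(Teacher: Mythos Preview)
Your proposal is correct and follows essentially the same approach as the paper: Banach fixed point on a ball in $B^\alpha_{pq}([0,T_0];\RR^m,y)$, using Lemma~\ref{L:Besovcomposition} to control $f(Y)$ and $f(Y)-f(\tilde Y)$, and the refined bounds \eqref{betterYoungintegralbound}--\eqref{betterYoungintegralboundendpoint} from Theorem~\ref{T:YoungBesov} to close the loop without loss of integrability, with the endpoint $\alpha=1/2$ handled via the logarithmic correction. The only cosmetic difference is that the paper already uses the exponent $\delta$ from \eqref{Youngf} in the invariance step (rather than $\delta=1$ as you do), but either choice works; your remarks on the patching step are, if anything, slightly more explicit than the paper's.
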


\begin{remark}\label{R:variationembedding?}
	As was mentioned in the introduction, a possible shortcut to solving \eqref{E:Young} is to use embeddings to reduce the problem to the variation setting. However, the power of the Besov sewing results of Section \ref{sec:BesovSew} is that one can read off all the estimates on the Besov scale, which not only immediately yields the contraction property for the fixed-point map in the proof of Theorem \ref{T:Youngeq}, but also leads to the local Lipschitz continuity of the solution map in the Besov metric (see Theorem \ref{T:YoungItoMap} below), something that could not be accomplished in, for example, \cite{liu2020sobolev}.
	
	Moreover, upon embedding Besov- into variation-spaces, some information may actually be lost. For example, in the borderline regime of \eqref{Youngregime}, that is, when $X \in B^{1/2}_{pq}$ with $q \le 2 < p$, the known variation embeddings give $X \in \mcl V^2$ \cite[Proposition 4.1(3)]{chong2020characterization}, which falls outside of the Young regime in the variation setting. Meanwhile, there exists $X \in B^{1/2}_{pq}$ with $q \le 2 < p$ such that $X$ does not belong to $\mcl V^r$ for any $r \in [1,2)$ (see Proposition \ref{P:strictvariation}) so Theorem \ref{T:Youngeq} does indeed strictly expand the Young regime of solvability for \eqref{E:Young}.
\end{remark}

\begin{proof}[Proof of Theorem \ref{T:Youngeq}]
	For $M > 0$ and $0 < T_0 \le 1 \wedge T$, define
	\[
		\mathscr X_M := \left\{ Y \in B^\alpha_{pq}([0,T_0]) : Y_0 =y, \; [Y]_{B^\alpha_{pq}} \le M \right\},
	\]
	which, equipped with the metric $(Y,\tilde Y) \mapsto [Y - \tilde Y]_{B^\alpha_{pq}([0,T_0])}^{1 \wedge q}$, is a complete metric space (see Remark \ref{R:Besovsup}). We will show that, for some sufficiently large $M$ and sufficiently small $T_0$, depending only on $f$ and $X$, the map
	\[
		(\mathscr M Y)_t = y + \int_0^t f(Y_s)\cdot dX_s \quad \text{for } t \in [0,T_0],
	\]
	defined in the sense of Theorem \ref{T:YoungBesov}, has a fixed point in $\mathscr X_M$.

	{\it Step 1: $\mathscr M(\mathscr X_M) \subset \mathscr X_M$.} Let $Y \in \mathscr X_M$. By Lemma \ref{L:Besovcomposition}, if $\alpha > 1/2$, then $f(Y) \in B^{\delta \alpha}_{p,q/\delta}$, and, if $\alpha = 1/2$, then $f(Y) \in B^\alpha_{p,q}$. In any case, Theorem \ref{T:YoungBesov} implies that $Z := \mathscr MY \in B^\alpha_{pq}([0,T_0])$ is well-defined. Moreover, if $\alpha > 1/2$, then \eqref{betterYoungintegralbound} gives
	\begin{align*}
		[Z]_{B^\alpha_{pq}} 
		&\lesssim_{\alpha,p,q} \left( |f(Y_0)| + T_0^{\alpha - 1/p} [f(Y)]_{B^{\delta \alpha}_{p,q/\delta}} \right)[X]_{B^\alpha_{pq}} \\
		&\lesssim_{\alpha,p,q} \left( \nor{f}{\oo} + T_0^{\alpha - 1/p} [f]_{C^\delta} [Y]^\delta_{B^\alpha_{pq}} \right) [X]_{B^\alpha_{pq}},
	\end{align*}
	and, if $\alpha = 1/2$ (and thus $q \le 2$), then \eqref{betterYoungintegralboundendpoint} yields
	\begin{align*}
		[Z]_{B^{1/2}_{pq}} 
		\lesssim_{p,q}
		\left( \nor{f}{\oo} + T_0^{1/2 - 1/p}(1+\ell_{q/2}(T_0)) [f]_{C^1} [Y]_{B^{1/2}_{pq}} \right) [X]_{B^{1/2}_{pq}}.
	\end{align*}
	In either case, choosing first $M$ sufficiently large and then $T_0$ sufficiently small makes the right-hand side no greater than $M$, as desired.
	
	{\it Step 2: $\mathscr M: \mathscr X_M \to \mathscr X_M$ is a contraction}. For $Y,\tilde Y \in \mathscr X_M$, define $Z = \mathscr M Y$ and $\tilde Z = \mathscr M \tilde Y$, so that
	\[
		Z_t - \tilde Z_t = \int_0^t \left( f(Y_s) - f(\tilde Y_s)\right)\cdot dX_s.
	\]
	Appealing once more to Theorem \ref{T:YoungBesov} and Lemma \ref{L:Besovcomposition}, we have, for $\alpha > 1/2$,
	\begin{align*}
		[Z - \tilde Z]_{B^\alpha_{pq}([0,T_0])}
		&\lesssim_{\alpha,p,q} T_0^{\delta \alpha - 1/p} [f(Y) - f(\tilde Y)]_{B^{\delta \alpha}_{p,q/\delta}} [X]_{B^\alpha_{pq}} \\
		&\lesssim_M T_0^{\delta \alpha - 1/p} \nor{f}{C^{1,\delta}} [Y - \tilde Y]_{B^{\alpha}_{p,q}} [X]_{B^\alpha_{pq}},
	\end{align*}
	and, if $\alpha = 1/2$ and $q \le 2$,
	\begin{align*}
		[Z - \tilde Z]_{B^{1/2}_{pq}([0,T_0])}
		&\lesssim_{\beta,p,q} T_0^{1/2 - 1/p}(1+\ell_{q/2}(T_0)) [f(Y) - f(\tilde Y)]_{B^{1/2}_{p,q}} [X]_{B^{1/2}_{pq}} \\
		&\lesssim_M T_0^{ 1/2 - 1/p} (1+\ell_{q/2}(T_0)) \nor{f}{C^{1,\delta}} [Y - \tilde Y]_{B^{1/2}_{p,q}} [X]_{B^{1/2}_{pq}}.
	\end{align*}
	The contraction property is seen upon taking $T_0$ sufficiently small. Because of the choice of $T_0$, this process may be iterated on $[T_0, 2T_0]$, $[2T_0, 3T_0]$, and so on, and we conclude.
	\end{proof}

We conclude this section with the local Lipschitz continuity of the It\^o-Lyons map.

\begin{theorem}\label{T:YoungItoMap}
	Assume \eqref{Youngregime}, fix $M$, and let $X^1,X^2 \in B^\alpha_{pq}([0,T])$, $f^1,f^2$ satisfying \eqref{Youngf}, and $y^1,y^2 \in \RR^m$ be such that $[X^1]_{B^\alpha_{pq}} \vee [f^1]_{C^{1,\delta}} \vee [X^2]_{B^\alpha_{pq}} \vee [f^2]_{C^{1,\delta} }\le M$ (with $C^{1,\delta}$ replaced by $C^2$ if $\alpha = 1/2$). Then the solutions $Y^1$ and $Y^2$ of \eqref{E:Young} corresponding to respectively $(X^1,f^1,y^1)$ and $(X^2,f^2,y^2)$ satisfy
	\[
		[Y^1 - Y^2]_{B^\alpha_{pq}} \lesssim_{\alpha,p,q,M,T} \left( |y^1 - y^2| + [X^1 - X^2]_{B^\alpha_{pq}} + \nor{f^1 - f^2}{C^{\delta}} \right).
	\]
\end{theorem}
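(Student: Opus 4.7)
The plan is to estimate $U := Y^1 - Y^2$ on a short interval $[0,T_0]$ via a decomposition into Young integrals, absorb a term involving $[U]_{B^\alpha_{pq}}$ by choosing $T_0$ small enough, then iterate along $[T_0, 2T_0], [2T_0, 3T_0], \ldots$, exactly as in the proof of Theorem \ref{T:Youngeq}. Throughout, Theorem \ref{T:Youngeq} supplies an a priori bound $[Y^i]_{B^\alpha_{pq}([0,T])} \le M_1 = M_1(\alpha,p,q,T,M)$.

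Write
\begin{align*}
U_t &= (y^1 - y^2) + \int_0^t (f^1 - f^2)(Y^1_s)\, dX^1_s \\
    &\qquad + \int_0^t \bigl(f^2(Y^1_s) - f^2(Y^2_s)\bigr)\, dX^1_s + \int_0^t f^2(Y^2_s)\, d(X^1 - X^2)_s.
\end{align*}
To each of the three integrals I would apply Theorem \ref{T:YoungBesov} (part (a) when $\alpha > 1/2$, part (b) when $\alpha = 1/2$) combined with Lemma \ref{L:Besovcomposition}. For the first and third, the straightforward composition estimate \eqref{fofBesov} yields the clean bounds
\[
    \left[\int_0^\cdot (f^1-f^2)(Y^1)\, dX^1\right]_{B^\alpha_{pq}([0,T_0])} \lesssim_{\alpha,p,q,M} \nor{f^1 - f^2}{C^\delta}
\]
and
\[
    \left[\int_0^\cdot f^2(Y^2)\, d(X^1 - X^2)\right]_{B^\alpha_{pq}([0,T_0])} \lesssim_{\alpha,p,q,M} [X^1-X^2]_{B^\alpha_{pq}}.
\]
For the middle integrand, the more delicate estimate \eqref{fofBesovdifference} produces
\[
    \left[\int_0^\cdot \bigl(f^2(Y^1) - f^2(Y^2)\bigr)\, dX^1\right]_{B^\alpha_{pq}([0,T_0])}
    \lesssim_{\alpha,p,q,M} |U(0)| + T_0^{\alpha - 1/p}\Lambda(T_0)\bigl(|U(0)| + [U]_{B^\alpha_{pq}([0,T_0])}\bigr),
\]
where $\Lambda \equiv 1$ when $\alpha > 1/2$ and $\Lambda(T_0) = 1 + \ell_{q/2}(T_0)$ when $\alpha = 1/2$ (compare \eqref{betterYoungintegralbound} to \eqref{betterYoungintegralboundendpoint}). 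Setting $\eta := |y^1 - y^2| + [X^1 - X^2]_{B^\alpha_{pq}} + \nor{f^1 - f^2}{C^\delta}$ and summing the three contributions gives
\[
    [U]_{B^\alpha_{pq}([0,T_0])} \le C\, \eta + C\, T_0^{\alpha - 1/p}\Lambda(T_0)\, [U]_{B^\alpha_{pq}([0,T_0])}
\]
for some $C = C(\alpha,p,q,M)$.

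Next I would choose $T_0 \in (0, 1 \wedge T]$, depending only on $\alpha, p, q, M$, so that $C\, T_0^{\alpha - 1/p}\Lambda(T_0) \le 1/2$. This absorbs the final term, leaving $[U]_{B^\alpha_{pq}([0,T_0])} \lesssim \eta$. The embedding Proposition \ref{P:Besovembedding} then controls $\nor{U}{\infty,[0,T_0]}$ by the same $\eta$, so in particular $|U(T_0)| \lesssim \eta$. Treating $U(T_0)$ as the new initial displacement, one iterates the same argument on $[T_0,2T_0]$, and so on, across $\lceil T/T_0\rceil$ subintervals, recombining the Besov seminorms into the global bound on $[0,T]$ exactly as in the proof of Theorem \ref{T:Youngeq}. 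The final bound is $[U]_{B^\alpha_{pq}([0,T])} \lesssim_{\alpha,p,q,M,T} \eta$.

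The hard part is the coupling between the pointwise value $|U(0)|$ and the seminorm $[U]_{B^\alpha_{pq}}$ forced by \eqref{fofBesovdifference}: it prevents a direct product-form estimate from \eqref{Youngerrorbound} alone and makes the Besov-H\"older embedding Proposition \ref{P:Besovembedding} essential for transporting endpoint information from one subinterval to the next, thus keeping the induction closed. A secondary bookkeeping issue is the reassembly of the subinterval Besov seminorms into a global seminorm on $[0,T]$, which follows the routine scheme of Theorem \ref{T:Youngeq}.
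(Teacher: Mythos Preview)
Your proposal is correct and follows essentially the same approach as the paper: the identical three-term decomposition of $Y^1 - Y^2$, estimation of each Young integral via Theorem \ref{T:YoungBesov} and Lemma \ref{L:Besovcomposition}, absorption of the $[U]_{B^\alpha_{pq}}$ term by choosing $T_0$ small, and iteration across subintervals. The only cosmetic discrepancy is that the paper records the small factor as $T_0^{\delta\alpha - 1/p}$ (coming from $f^2(Y^1)-f^2(Y^2)\in B^{\delta\alpha}_{p,q/\delta}$) rather than your $T_0^{\alpha - 1/p}$, but since $\delta\alpha > 1/p$ by \eqref{Youngf} this makes no difference to the argument.
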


\begin{proof}
	We write
	\[
		Y^1_t - Y^2_t = y^1 - y^2 + \underbrace{\int_0^t \left( (f^1 - f^2)(Y^1)\right) dX^1_s}_{\I} + \underbrace{\int_0^t \left[ f^2(Y^1_s) - f^2(Y^2_s) \right] dX^1_s}_{\II} + \underbrace{\int_0^t f^2(Y^2_s) d\left[ X^1_s - X^2_s \right]}_{\III}.
	\]
	We estimate $\I$, $\II$, and $\III$ with the use of Theorem \ref{T:YoungBesov} and Lemma \ref{L:Besovcomposition}, noting first that, by Theorem \ref{T:Youngeq}, $[Y^1]_{B^\alpha_{pq}} \vee [Y^2]_{B^\alpha_{pq}} \lesssim_{M,T} 1$. If $\alpha > 1/2$, then
	\[
		[\I]_{B^\alpha_{pq}}
		\lesssim_{\alpha,\delta,p,q,M,T} \nor{f^1 - f^2}{\oo} + [(f^1-f^2)(Y^1)]_{B^{\delta \alpha}_{p,q/\delta}}
		\lesssim_{M,\alpha,\delta,p,q} \nor{f^1 - f^2}{\oo} + [f^1 - f^2]_{C^\delta},
	\]
	\begin{align*}
		[\II]_{B^\alpha_{pq}} 
		&\lesssim_{\alpha,p,q,\delta,M} |f^2(y^1) - f^2(y^2)| + T^{\delta\alpha - 1/p}[f^2(Y^1) - f^2(Y^2)]_{B^{\delta\alpha}_{p,q/\delta}}\\
		&\lesssim_{M,\alpha,\delta,p,q} |y^1 - y^2| + T^{\delta\alpha - 1/p} [Y^1 - Y^2]_{B^\alpha_{p,q}}.
	\end{align*}
	and $[\III]_{B^\alpha_{pq}} \lesssim_{\alpha,p,q,M,\delta} [X^1 - \tilde X^2]_{B^\alpha_{pq}}$. Combining the estimates for $\I$, $\II$, and $\III$ and taking $T_0$ sufficiently small, depending only on $M$, we obtain the desired estimate on $[0,T_0]$. The estimate on the whole of $[0,T]$, for a constant $C$ depending additionally on $T$, can be obtained by iterating the estimate on the intervals $[T_0,2T_0]$, $[2T_0,3T_0]$, etc. The argument for $\alpha = 1/2$ and $q \le 2$ proceeds in exactly the same way, replacing $f \in C^{1,\delta}$ with $f \in C^2$, and the factors of $T_0^{\alpha -1/p}$ with $T_0^{1/2 - 1/p}(1 +\ell_{q/2}(T_0))$.
\end{proof}

\section{The rough path setting}\label{sec:roughpaths}

This section develops the theory of rough paths and rough differential equations with Besov regularity. We first introduce a general definition of Besov rough paths with arbitrarily low regularity. We give a full description of controlled paths in the level-$2$ case, and, with the use of the sewing techniques from Section \ref{sec:BesovSew}, prove well-posedness for controlled rough differential equations driven by Besov rough signals. We also outline the framework that leads to the same results for level-$N$ geometric rough paths for $N \ge 3$.

\subsection{Besov rough paths}

For $N \in \NN$, define the the truncated tensor algebra of level $N$ by
\[
	T^{(N)}(\RR^n) = \RR \oplus \bigoplus_{k=1}^N (\RR^n)^{\otimes k},
\]
with the noncommutative tensor product $\otimes$. For $a \in \RR$, define also
\[
	T^{(N)}_a(\RR^n) := \left\{ a + \sum_{k=1}^N v_j \in T^{(N)}(\RR^n), v_k \in (\RR^n)^{\otimes k} \right\}.
\]
%Then $T^{(N)}_1(\RR^n)$ is a group under $\otimes$, and it is the Lie group associated to $T^{(N)}_0(\RR^n)$, which in turn in a Lie algebra with product $[v,w] := v \otimes w - w \otimes v$, and we have $\exp (T^{(N)}_0(\RR^n)) = T^{(N)}_1(\RR^n)$. 
For $\lambda > 0$ and $k = 0,1,2,\ldots,N$, we define $\delta_\lambda : T^{(N)}(\RR^n) \to T^{(N)}(\RR^n)$ and $\tau_k : T^{(N)}(\RR^n) \to T^{(k)}(\RR^n)$ as follows: for $v = v_0 + v_1 + \cdots + v_N \in T^{(N)}(\RR^n)$ with $v_k \in (\RR^n)^{\otimes k}$,
\[
	\delta_\lambda v= v_0 + \lambda v_1 + \lambda^2 v_2 + \cdots + \lambda^N v_N \quad \text{and} \quad \tau_k v = v_0 + v_1 + \cdots + v_k.
\]
The level-$N$ Besov rough path regime shall be described with the parameters
\begin{equation}\label{levelNparameters}
	\left\{
	\begin{split}
	&\frac{1}{N+1} \le \alpha < 1, \quad \frac{1}{\alpha} < p \le \oo, \\
	&0 < q \le \oo \quad \text{if } \alpha > \frac{1}{N+1}, \text{ and}\\
	&0 < q \le N+1 \quad \text{if } \alpha = \frac{1}{N+1}.
	\end{split}
	\right.
\end{equation}

\begin{definition}\label{D:BesovRP}
For $N \in \NN$ and $\alpha$, $p$, and $q$ as in \eqref{levelNparameters}, $\mbf X: \Delta_2(0,T) \to T^{(N)}_1(\RR^n)$ is a level-$N$ $\mbf B^\alpha_{pq}([0,T])$-rough path if 
\begin{equation}\label{Chen}
	\mbf X_{st} \otimes \mbf X_{tu} = \mbf X_{su} \quad \text{for all } (s,t,u) \in \Delta_3(0,T)
\end{equation}
and
\begin{equation}\label{roughquasinorm}
	\normm{\mbf X}_{\mbf B^\alpha_{pq}([0,T])} := \sum_{k=1}^N \nor{\mbf X^{(k)}}{\mbb B^{k\alpha}_{p/k,q/k}}^{1/k} < \oo.
\end{equation}
To emphasize the level-$N$ setting, we write also $\mbf B^\alpha_{pq} = \mbf B^\alpha_{pq; N}$ and omit the ``$N$'' when it does not create confusion. The space $\mbf B^\alpha_{pq}([0,T])$ is a complete metric space equipped with
\begin{equation}\label{roughmetric}
	\rho_{\mbf B^\alpha_{pq}([0,T])}(\mbf X,\mbf Y) := \sum_{k=1}^N \mathrm d_{\mbb B^{k\alpha}_{p/k,q/k}} \left( \mbf X^{(k)}, \mbf Y^{(k)} \right),
\end{equation}
where the metric $\mathrm d_{\mbb B^{k\alpha}_{p/k,q}}$ is defined as in \eqref{BBmetric}.
\end{definition}

\begin{remark}\label{R:beyond2}
	When $N = 2$, Definition \ref{D:BesovRP} allows for a complete theory of rough differential equations, as discussed in subsections \ref{SS:ctrld} and \ref{SS:BesovRDE}. For $N \ge 3$, and for non-linear equations, additional algebraic considerations are necessary, as discussed in subsection \ref{sec:beyond}.

%
%can be useful for linear problems,  but additional structure is necessary 
%
%however for $N \ge 3$ and non-linear problems one encounters iterated integrals with branching, e.g.  
%$$
%        \int ( \delta X^i ) ( \delta X^j )  d X^k,  \qquad \text{vs. non-branching: }  \int \int \int d X^i d X^j d X^k.
%$$
%While the non-branching term is precisely the information stored in $\mbf X$, more precisely the $(\RR^m)^{\otimes 3}$-valued third level thereof, this is not the case for the first, branching, information. The situation can be resolved by imposing a chain-rule, here: $ ( \delta X^i ) ( \delta X^j ) = \int \int d X^i d X^j  + \int \int d X^j d X^i $, which leads to the
%notion of {\em geometric} rough path. Alternatively, one can work with {\em branchd} rough paths where the state space $T^{(N)}_1(\RR^m)$ by a (truncated) Hopf algebra of trees that allows to encode the full branching information. We thus start with $N=2$, but shall return to higher order consideration in Section \ref{sec:beyond} below. 
\end{remark}  

\begin{remark}\label{R:dilation}
Even if $p,q\ge N$ (and thus $p/k,q/k \ge 1$ for all $k = 1,2,\ldots, N$), the quantity \eqref{roughquasinorm} defines only a quasi-norm. We have the following useful homogeneity property: for $\lambda \in \RR$ and $\mbf X \in \mbf B^\alpha_{pq}$, $\delta_\lambda \mbf X$ is a level-$N$ $\mbf B^\alpha_{pq}$ rough path, with
\[
	\normm{\delta_\lambda \mbf X}_{\mbf B^\alpha_{pq}} = \lambda \normm{\mbf X}_{\mbf B^\alpha_{pq}}.
\]
\end{remark}

\begin{remark}\label{R:tensormetric}
With slight abuse of notation, we set $\mbf X_t := \mbf X_{0,t}$. Then \eqref{Chen} implies that $\mbf X$ is a bona fide path taking values in $\mbf T^{(N)}_1(\RR^n)$, and in fact $\mbf X_{s,t} = \mbf X_s^{-1} \otimes \mbf X_t$.

For $\mbf x,\mbf y \in T^{(N)}_1(\RR^n)$, we define a metric $ d$ by $d(\mbf x, \mbf y) = \normm{\mbf x^{-1} \otimes \mbf y}_*$, where $\normm{\mbf x}_* := \frac{1}{2} \left( N(\mbf x) + N(\mbf x^{-1})\right)$ and $N(\mbf x) = \max_{k = 1,2,\ldots,N} \left( k! |\mbf x^{(k)}| \right)^{1/k}$, and the norm $|\cdot |$ on $(\RR^n)^{\otimes k}$ is taken to be permutation-invariant; see \cite{friz2020course} for more details. It is then easy to see that
\[
	\normm{\mbf X}_{\mbf B^\alpha_{pq}} \asymp_{\alpha,p,q,N} \left[ \int_0^T \left( \frac{ \sup_{0 < h < \tau}  \nor{ d(\mbf X_\cdot,\mbf X_{\cdot + h}) }{L^p([0,T-h])} }{\tau^\alpha} \right)^q \frac{d\tau}{\tau} \right]^{1/q}
	= [\mbf X]_{B^\alpha_{pq}([0,T], T^{(N)}_1(\RR^n))};
\]
that is, $\mbf B^\alpha_{pq}([0,T]) = B^\alpha_{pq}\left([0,T], T^{(N)}_1(\RR^n)\right)$.
\end{remark}

\subsubsection{H\"older embeddings} \label{sec:HoelderEmb}
For a level-$N$ rough path $\mbf X \in \mbf B^\alpha_{pq}$, because $\alpha > \frac{1}{p}$, $\mbf X$ is H\"older continuous, and, moreover, the distance between two such rough paths in the H\"older distance is controlled by the Besov one, as demonstrated by the next result. Throughout the rest of the section, we thus may work with the continuous version of $\mbf X$.

\begin{proposition}\label{P:roughBesovembedding}
	Assume \eqref{levelNparameters} and let $\mbf X, \mbf{\tilde X}$ be level-$N$ $\mbf B^\alpha_{pq}$ rough paths. Then, for all $k = 1,2,\ldots, N$,
	\begin{equation}\label{roughHolderBesovnorms}
		\nor{\mbf X^{(k)}}{\mbb C^{k(\alpha - 1/p)}([0,T]) }^{1/k}
		\lesssim_{\alpha,p,q}  \normm{\tau_k \mbf X}_{\mbf B^\alpha_{pq}([0,T])}.
	\end{equation}
	and
	\begin{equation}\label{roughHolderBesovdistance}
	\begin{split}
		&\nor{\mbf X^{(k)} - \tilde{\mbf X}^{(k)} }{\mbb C^{k(\alpha - 1/p)}([0,T])}\\
		&\lesssim_{\alpha,p,q} \sum_{j=1}^k \left( \normm{\tau_{k} \mbf X}_{\mbf B^\alpha_{pq}([0,T])} \vee  \normm{\tau_{k}\tilde{\mbf X}}_{\mbf B^\alpha_{pq}([0,T])} \right)^{k-j} \nor{\mbf X^{(j)} - \tilde{\mbf X}^{(j)} }{\mbb B^{j\alpha}_{p/j,q/j}([0,T])}.
	\end{split}
	\end{equation}
\end{proposition}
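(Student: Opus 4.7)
The plan is to prove both \eqref{roughHolderBesovnorms} and \eqref{roughHolderBesovdistance} simultaneously by induction on the level $k$, using Chen's relation \eqref{Chen} to express $\delta \mbf X^{(k)}$ in terms of tensor products of strictly lower levels, and then invoking the two-parameter Besov--H\"older embedding Proposition~\ref{P:BBesovembedding} at each level with parameters $p_2 = p/k$ and $\omega(\tau) = \tau^{k\alpha}$. The hypothesis $\alpha > 1/p$ forces $k\alpha > k/p$, which is what guarantees that the associated $\zeta(\tau) = \tau^{(k\alpha - k/p)(1 \wedge p/k)}$ satisfies both the doubling property \eqref{omegacontrol} and the Dini-type condition \eqref{zetacompatible}.

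\textbf{Base case and inductive step for \eqref{roughHolderBesovnorms}.} At level $k = 1$, Chen's relation forces $\delta \mbf X^{(1)} \equiv 0$, so $\mbf X^{(1)}_{st}$ is the increment of a path in $B^\alpha_{pq}$ and Proposition~\ref{P:Besovembedding} delivers the bound directly. For the inductive step, assuming \eqref{roughHolderBesovnorms} at levels $1, \ldots, k-1$, expanding Chen's relation yields
\[
    \delta \mbf X^{(k)}_{sut} = \sum_{j=1}^{k-1} \mbf X^{(j)}_{su} \otimes \mbf X^{(k-j)}_{ut}.
\]
Writing $\beta = \alpha - 1/p$ and combining the inductive H\"older estimates with $\nor{\mbf X^{(j)}}{\mbb C^{j\beta}} \lesssim \normm{\tau_j \mbf X}_{\mbf B^\alpha_{pq}}^{j} \le \normm{\tau_k \mbf X}_{\mbf B^\alpha_{pq}}^{j}$, one obtains
\[
    |\delta \mbf X^{(k)}_{sut}| \lesssim_k \normm{\tau_k \mbf X}_{\mbf B^\alpha_{pq}}^{k} \sum_{j=1}^{k-1} (u-s)^{j\beta}(t-u)^{(k-j)\beta}.
\]
A short case analysis (according to whether $u-s \le t-u$ or not) shows that with $\theta = 1/k \in (0,1/2]$, every summand is bounded by $\left((u-s) \wedge (t-u)\right)^{\theta k \beta}\left((u-s) \vee (t-u)\right)^{(1-\theta)k\beta}$, which is precisely the hypothesis \eqref{deltaAHolder}. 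Applying Proposition~\ref{P:BBesovembedding} and combining with the trivial estimate $\nor{\mbf X^{(k)}}{\mbb B^{k\alpha}_{p/k,q/k}} \le \normm{\tau_k \mbf X}_{\mbf B^\alpha_{pq}}^k$ would give $\nor{\mbf X^{(k)}}{\mbb C^{k\beta}} \lesssim \normm{\tau_k \mbf X}_{\mbf B^\alpha_{pq}}^k$; extracting $k$-th roots closes the induction.

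\textbf{Inductive step for \eqref{roughHolderBesovdistance} and the main obstacle.} The same scheme would propagate the distance estimate once one replaces Chen's relation by the telescoping identity
\[
    \delta(\mbf X^{(k)} - \tilde{\mbf X}^{(k)})_{sut} = \sum_{j=1}^{k-1}\left[(\mbf X^{(j)} - \tilde{\mbf X}^{(j)})_{su} \otimes \mbf X^{(k-j)}_{ut} + \tilde{\mbf X}^{(j)}_{su} \otimes (\mbf X^{(k-j)} - \tilde{\mbf X}^{(k-j)})_{ut}\right],
\]
so that combining the inductive H\"older bound on differences with the level-$k$ norm bound from the previous step would produce pointwise control of $|\delta(\mbf X^{(k)} - \tilde{\mbf X}^{(k)})_{sut}|$ in the form \eqref{deltaAHolder}, now with constant $M$ dominated by the sum on the right-hand side of \eqref{roughHolderBesovdistance}; a second application of Proposition~\ref{P:BBesovembedding} then closes the argument. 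The main bookkeeping hurdle is checking that a single exponent $\theta = 1/k \in (0,1/2]$ simultaneously handles every term of the Chen (and telescoping) expansion, so that the mixed-regularity hypothesis \eqref{deltaAHolder} is satisfied at level $k$ with the correct polynomial factor of $\normm{\tau_k \mbf X}_{\mbf B^\alpha_{pq}}$ (respectively of $\normm{\tau_k \mbf X}_{\mbf B^\alpha_{pq}} \vee \normm{\tau_k \tilde{\mbf X}}_{\mbf B^\alpha_{pq}}$); once this elementary geometric fact is secured, the remainder is routine propagation of the inductive hypothesis through Chen's identity.
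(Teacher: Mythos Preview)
Your proposal is correct and follows essentially the same approach as the paper's own proof: induction on the level $k$, using Chen's relation to expand $\delta \mbf X^{(k)}$ (respectively the telescoping identity for $\delta(\mbf X^{(k)} - \tilde{\mbf X}^{(k)})$) in terms of tensor products of strictly lower levels, applying the inductive hypothesis to obtain the pointwise bound \eqref{deltaAHolder}, and then invoking Proposition~\ref{P:BBesovembedding}. The only cosmetic difference is that you make the choice $\theta = 1/k$ explicit, whereas the paper leaves the verification that the mixed-power bound fits \eqref{deltaAHolder} implicit.
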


Proposition \ref{P:roughBesovembedding} is proved with an application of Proposition \ref{P:BBesovembedding}. Note also that \eqref{roughHolderBesovnorms} follows from Proposition \ref{P:Besovembedding}, in view of Remark \ref{R:tensormetric}.

\begin{proof}[Proof of Proposition \ref{P:roughBesovembedding}]
	The proof of \eqref{roughHolderBesovnorms} follows by induction. First, Chen's relations imply that $\delta \mbf X^{(1)} \equiv 0$, so Proposition \ref{P:BBesovembedding} gives the estimate for $\mbf X^{(1)}$ (alternatively, we have $\mbf X^{(1)}_{st} = X_t - X_s$ for some $X \in B^\alpha_{pq}$, and so the result is classical).
	
	Now let $n \ge 2$ be fixed and assume the estimate holds for each $\mbf X^{(m)}$ with $m < n$. Chen's relations then imply that, for $0 \le s \le u \le t \le T$,
	\[
		\delta \mbf X^{(n)}_{sut} = \sum_{m=1}^{n-1} \mbf X^{(m)}_{su} \otimes \mbf X^{(n-m)}_{ut}.
	\] 
	The inductive hypothesis then yields
	\begin{align*}
		\abs{ \delta \mbf X^{(n)}_{sut}} 
		&\lesssim \sum_{m=1}^{n-1} \normm{\tau_m \mbf X}_{\mbf B^\alpha_{pq}}^m \normm{\tau_{n-m} \mbf X}_{\mbf B^\alpha_{pq}}^{n-m} (u-s)^{m(\alpha - 1/p)} (t-u)^{(n-m)(\alpha - 1/p)}\\
		&\lesssim \normm{\tau_n \mbf X}_{\mbf B^\alpha_{pq}}^n \left[ (u-s) \vee (t-u) \right]^{\alpha - 1/p} \left[ (u-s) \wedge (t-u) \right]^{(n-1)(\alpha - 1/p)},
	\end{align*}
	and we conclude by appealing to Proposition \ref{P:BBesovembedding}.
	
	To prove \eqref{roughHolderBesovdistance}, we note first that $\delta(\mbf X^{(1)} - \tilde{\mbf X}^{(1)}) = 0$, so that the estimate for $\mbf X^{(1)} - \tilde{\mbf X}^{(1)}$ is a consequence of Proposition \ref{P:BBesovembedding}.
	
	Fix $n \ge 2$ and assume that the result holds for all $m < n$. We then have, for $0 \le s \le u \le t \le T$,
	\begin{align*}
		\delta(\mbf X^{(n)} - \tilde{\mbf X}^{(n)})_{sut}
		&= \sum_{m=1}^{n-1} \left( \mbf X^{(m)}_{su} \otimes \mbf X^{(n-m)}_{ut} - \tilde{\mbf X}^{(m)}_{su} \otimes \tilde{\mbf X}^{(n-m)}_{ut} \right)\\
		&= \sum_{m=1}^{n-1} \left[ \left( \mbf X^{(m)}_{su} - \tilde{\mbf X}^{(m)}_{su} \right) \otimes \mbf X^{(n-m)}_{ut} + \tilde{\mbf X}^{(m)}_{su} \otimes \left( \mbf X^{(n-m)}_{ut} - \tilde{\mbf X}^{(n-m)}_{ut} \right) \right].
	\end{align*}
	Then both the inductive hypothesis and \eqref{roughHolderBesovnorms} imply that
	\begin{align*}
		&\abs{\delta(\mbf X^{(n)} - \tilde{\mbf X}^{(n)})_{sut}}\\
		&\lesssim \sum_{m=1}^{n-1} \left[ \normm{\tau_{n-m} \mbf X}_{\mbf B^\alpha_{pq}}^{n-m} \sum_{k=1}^m \left( \normm{\tau_{m} \mbf X}_{\mbf B^\alpha_{pq}}\vee \normm{\tau_{m} \tilde{\mbf X}}_{\mbf B^\alpha_{pq}} \right)^{m-k} \nor{\mbf X^{(k)} - \tilde{\mbf X}^{(k)}}{\mbb B^{k\alpha}_{p/k,q} } \right. \\
		&\qquad+ \left. \normm{\tau_m \tilde{\mbf X}}_{\mbf B^\alpha_{pq}}^m \sum_{\ell=1}^{n-m} \left( \normm{\tau_{n-m} \mbf X}_{\mbf B^\alpha_{pq}} \vee \normm{\tau_{n-m} \tilde{\mbf X}}_{\mbf B^\alpha_{pq}} \right)^{n-m-\ell} \nor{\mbf X^{(l)} - \tilde{\mbf X}^{(l)} }{\mbb B^{\ell \alpha}_{p/\ell,q} } \right]\\
		&\qquad \cdot (s-u)^{m(\alpha - 1/p)} (t-u)^{(n-m)(\alpha - 1/p)}\\
		&\lesssim \sum_{m=1}^{n-1} \left( \normm{\tau_n \mbf X}_{\mbf B^\alpha_{pq}} \vee \normm{\tau_n \tilde{\mbf X}}_{\mbf B^\alpha_{pq}} \right)^{n-m} \nor{\mbf X^{(m)} - \tilde{\mbf X}^{(m)} }{\mbb B^{m\alpha}_{p/m,q}} \\
		&\qquad \cdot \left[ (u-s) \vee (t-u) \right]^{\alpha - 1/p} \left[ (u-s) \wedge (t-u) \right]^{(n-1)(\alpha - 1/p)}. 
	\end{align*}
	The result now follows from Proposition \ref{P:BBesovembedding}.
\end{proof}

Inspired by the Campanato characterization of H\"older continuity, we record here the following result for Besov rough paths, which follows from Proposition \ref{P:generalCampanato}.

\begin{corollary}
Assume that $\mbf X: [0,T] \to T^{(N)}(\RR^m)$ satisfies Chen's relation \eqref{Chen} and, for some $C > 0$ and $\beta \in (0,1)$,
	\begin{equation}\label{Campanato}
		\frac{1}{b-a} \int_a^b \abs{ \frac{1}{b-a} \int_a^b \mbf X^{(n)}_{st}ds}dt \le C(b-a)^{\beta n} \quad \text{for all } a < b \text{ and } n=1,..,N.
	\end{equation}
	Then there exists $M = M_\beta > 0$ such that
	\[
		\abs{ \mbf X^{(n)}_{st}}^{1/n} \le MC|t-s|^\beta \quad \text{for all }(s,t) \in [0,T]^2.
	\]
\end{corollary}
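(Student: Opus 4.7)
The proof plan is by induction on $n$, with Proposition \ref{P:twoparameterCampanato} providing the key mechanism at every inductive step, applied to $\rho_{st} := |\mbf X^{(n)}_{st}|$ with $\zeta(r) = \Lambda_n r^{\beta n}$ for a suitable constant $\Lambda_n$. The base case $n = 1$ is essentially classical: Chen's relation forces $\delta \mbf X^{(1)} \equiv 0$, so Lemma \ref{L:additive} produces a path $X$ with $\mbf X^{(1)}_{st} = X_t - X_s$ (in the a.e.\ sense), and the hypothesis \eqref{Campanato} becomes the classical Campanato condition $\fint_a^b |X_t - \bar X|\,dt \le C(b-a)^\beta$ with $\bar X = \fint_a^b X_s\,ds$. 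Proposition \ref{P:generalCampanato} (or a direct application of Proposition \ref{P:twoparameterCampanato} via $\fint\fint |X_t - X_s|\,ds\,dt \le 2\fint |X_t - \bar X|\,dt$) then delivers a $\beta$-H\"older representative with $[X]_{C^\beta} \le M_1 C$.

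For the inductive step, fix $2 \le n \le N$ and assume the bound has been established for all lower levels. Verifying the approximate triangle inequality \eqref{deltaAHolderCampanato} is the easy part: Chen's relation yields $\delta \mbf X^{(n)}_{sut} = \sum_{m=1}^{n-1} \mbf X^{(m)}_{su} \otimes \mbf X^{(n-m)}_{ut}$, and combining the inductive bounds with the elementary inequality $a^{\beta m} b^{\beta(n-m)} \le (a \wedge b)^\beta (a \vee b)^{\beta(n-1)}$ (valid for $a,b \ge 0$, $1 \le m \le n-1$) gives
\[
|\delta \mbf X^{(n)}_{sut}| \lesssim_n C^n [(u-s) \wedge (t-u)]^\beta [(u-s) \vee (t-u)]^{\beta(n-1)},
\]
which is exactly \eqref{deltaAHolderCampanato} with $\theta = 1/n \in (0, 1/2]$; note that this is precisely the range of $\theta$ permitted by Proposition \ref{P:twoparameterCampanato}, and matching this constraint is the reason the base case must be isolated.

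The integral Campanato condition \eqref{ACampanato} requires slightly more care, since the given hypothesis \eqref{Campanato} controls only the modulus of an iterated average, not the double average of the modulus. The trick is to apply Chen in the form
\[
\mbf X^{(n)}_{st} = \mbf X^{(n)}_{s't} - \mbf X^{(n)}_{s's} - \sum_{m=1}^{n-1} \mbf X^{(m)}_{s's} \otimes \mbf X^{(n-m)}_{st},
\]
average in $s' \in [a,b]$, and then take the modulus and integrate in $(s,t) \in [a,b]^2$. The first two resulting terms reproduce exactly the quantity bounded by \eqref{Campanato}, while the cross terms are handled by the inductive H\"older estimates applied on the interval $[a,b]$, yielding $\fint_a^b \fint_a^b |\mbf X^{(n)}_{st}|\,ds\,dt \lesssim_n C^n (b-a)^{\beta n}$. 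Proposition \ref{P:twoparameterCampanato} with $\theta = 1/n$ and $\zeta(r) \asymp_n C^n r^{\beta n}$ then produces the bound $|\mbf X^{(n)}_{st}| \lesssim_n (C|t-s|^\beta)^n$ on a full-measure subset of $[0,T]^2$; the approximate triangle inequality established above supplies the continuity needed to extend this estimate to all $(s,t)$ after passing to a continuous version.

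The main obstacle I anticipate is bookkeeping of homogeneity: the hypothesis is linear in $C$ at every level $n$, whereas the natural scale for $\mbf X^{(n)}$ is $C^n$; this forces one to absorb the linear-in-$C$ contribution $2C(b-a)^{\beta n}$ coming from \eqref{Campanato} into the $C^n(b-a)^{\beta n}$ arising from the cross terms, which is possible once $M$ is chosen large enough (depending on $\beta$ and $N$) in the inductive hypothesis. Apart from this, only standard verifications of the Dini-type conditions \eqref{zetacontrol} and \eqref{zetaDini} for $\zeta(r) = \Lambda_n r^{\beta n}$ are needed, and these are trivially satisfied.
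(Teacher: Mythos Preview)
Your induction on the level $n$, applying Proposition~\ref{P:twoparameterCampanato} with $\rho_{st}=|\mbf X^{(n)}_{st}|$ and $\zeta(r)\propto r^{\beta n}$, is correct and is in the same spirit as the paper's indicated approach. The paper gives no proof beyond the remark that the corollary ``follows from Proposition~\ref{P:generalCampanato}''; since that proposition is itself an immediate specialization of Proposition~\ref{P:twoparameterCampanato}, the two routes run on the same engine. The metric-space reading suggested by the citation---treat $\mbf X$ as a path in $(T^{(N)}_1,d)$ via Remark~\ref{R:tensormetric} and apply the one-parameter Campanato characterization in a single stroke---looks like a shortcut, but verifying its hypothesis $\fint\fint d(\mbf X_s,\mbf X_t)\,ds\,dt\lesssim Cr^\beta$ from \eqref{Campanato} still requires your Chen-expansion trick (inserting an auxiliary base point $s'$ and averaging) together with inductive control on the lower levels, so nothing is actually saved.

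Your resolution of the homogeneity issue, however, does not work. You propose to absorb the linear-in-$C$ term $2C(b-a)^{\beta n}$ coming directly from \eqref{Campanato} into $(MC)^n(b-a)^{\beta n}$ by taking $M$ large; this succeeds for $C\ge 1$ but fails for $C<1$, where $C>C^n$ and no $M$ independent of $C$ closes the inequality. In fact the corollary as stated is false for small $C$: with $N=2$, $\beta<1/2$, $\mbf X^{(1)}\equiv 0$ and $\mbf X^{(2)}_{st}=\eps(t^{2\beta}-s^{2\beta})$, Chen's relation holds and \eqref{Campanato} is satisfied with $C=\eps$, yet $|\mbf X^{(2)}_{0,1}|^{1/2}=\eps^{1/2}$ cannot be bounded by $M\eps$ with $M$ independent of $\eps$. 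The clean fix is to put $C^n$ rather than $C$ on the right of \eqref{Campanato} (equivalently, to phrase the hypothesis as a Campanato bound on $t\mapsto\mbf X_t$ in the homogeneous metric); with that correction your induction closes with $M$ depending only on $\beta$ and $N$.
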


Along with Lemma \ref{L:interpolate}, Proposition \ref{P:roughBesovembedding} allows for the following estimates that trade regularity for integrability.

\begin{lemma}\label{L:roughinterpolate}
	Assume \eqref{levelNparameters} and let $\mbf X, \tilde{\mbf X} \in \mbf B^\alpha_{pq}([0,T])$. Then, for all $j,k \in \{1,2,\ldots,N\}$ with $j < k$,
	\[
		\nor{\mbf X^{(k)}}{\mbb B^{j\alpha}_{p/j,q/j}([0,T])} \lesssim_{\alpha,p,q} T^{(k-j)(\alpha - 1/p)} \normm{\tau_k \mbf X}_{\mbf B^\alpha_{pq}([0,T])}^k
	\]
	and
	\begin{align*}
		\nor{\mbf X^{(k)} - \tilde{\mbf X}^{(k)} }{\mbb B^{j\alpha}_{p/j,q/j}([0,T])} \lesssim_{\alpha,p,q} T^{(k-j)(\alpha - 1/p)} \sum_{i=1}^k \left( \normm{\tau_{k} \mbf X}_{\mbf B^\alpha_{pq}([0,T])} \vee  \normm{\tau_{k}\tilde{\mbf X}}_{\mbf B^\alpha_{pq}([0,T])} \right)^{k-i} \\
		\cdot \nor{\mbf X^{(i)} - \tilde{\mbf X}^{(i)} }{\mbb B^{i\alpha}_{p/i,q/i}([0,T])}
	\end{align*}
\end{lemma}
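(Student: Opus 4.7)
The plan is to deduce both inequalities from the interpolation Lemma~\ref{L:interpolate}, feeding in the H\"older endpoint supplied by Proposition~\ref{P:roughBesovembedding}. The basic idea is to use interpolation to trade integrability upward from $p/k$ to $p/j$ at the cost of reducing regularity from $k\alpha$ down to $j\alpha$. I would apply the interpolation to $\mbf X^{(k)}$ for the first bound, and to $\mbf X^{(k)}-\tilde{\mbf X}^{(k)}$ for the second.

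Concretely, I would invoke Lemma~\ref{L:interpolate} with the parameter identifications $\alpha_{\mathrm{Lem}}=j\alpha$, $\gamma_{\mathrm{Lem}}=k\alpha$, $p_{\mathrm{Lem}}=p/k$, $r_{\mathrm{Lem}}=p/j$, $q_{\mathrm{Lem}}=q/j$, and $\delta_{\mathrm{Lem}}=k(\alpha-1/p)$. All structural conditions are immediate (in particular $\delta_{\mathrm{Lem}}>0$ since $\alpha>1/p$, which follows from $1/\alpha<p$), and the strict inequality \eqref{A:interpolate} becomes
\[
	j\alpha \;<\; \frac{k\alpha\cdot j}{k} + k\!\left(\alpha-\tfrac{1}{p}\right)\!\cdot\!\frac{k-j}{k} \;=\; j\alpha + (k-j)\!\left(\alpha-\tfrac{1}{p}\right),
\]
which holds since $k>j$ and $\alpha>1/p$. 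A direct computation gives the $T$-exponent $\delta_{\mathrm{Lem}}(1-p_{\mathrm{Lem}}/r_{\mathrm{Lem}})+\gamma_{\mathrm{Lem}}p_{\mathrm{Lem}}/r_{\mathrm{Lem}}-\alpha_{\mathrm{Lem}}=(k-j)(\alpha-1/p)$, which matches the claim. For the first bound I would then control the H\"older factor by \eqref{roughHolderBesovnorms}, yielding $\normm{\tau_k\mbf X}_{\mbf B^\alpha_{pq}}^k$, and the Besov factor directly by $\normm{\tau_k\mbf X}_{\mbf B^\alpha_{pq}}^k$ after the standard embedding $\mbb B^{k\alpha}_{p/k,q/k}\hookrightarrow\mbb B^{k\alpha}_{p/k,q/j}$ (admissible since $q/k\le q/j$) followed by the definition of $\normm{\cdot}_{\mbf B^\alpha_{pq}}$. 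The exponents $(k-j)/k$ and $j/k$ of the two factors sum to $1$, giving the stated inequality.

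For the difference estimate, the same interpolation applied to $A:=\mbf X^{(k)}-\tilde{\mbf X}^{(k)}$ produces the same $T$-exponent automatically. Writing $M:=\normm{\tau_k\mbf X}_{\mbf B^\alpha_{pq}}\vee\normm{\tau_k\tilde{\mbf X}}_{\mbf B^\alpha_{pq}}$, $V_i:=\nor{\mbf X^{(i)}-\tilde{\mbf X}^{(i)}}{\mbb B^{i\alpha}_{p/i,q/i}}$, and $\Sigma:=\sum_{i=1}^k M^{k-i}V_i$, estimate \eqref{roughHolderBesovdistance} bounds the H\"older factor by $\Sigma$, while the Besov factor is controlled by $V_k$. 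Since $V_k=M^{0}V_k$ is the $i=k$ summand of $\Sigma$ one has $V_k\le\Sigma$, and therefore $\Sigma^{(k-j)/k}V_k^{j/k}\le\Sigma^{(k-j)/k}\Sigma^{j/k}=\Sigma$, which yields the desired bound. The only place that requires any attention is the parameter verification in Lemma~\ref{L:interpolate}; otherwise the proof is algebraic bookkeeping and presents no genuine analytic obstacle.
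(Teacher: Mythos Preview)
Your proposal is correct and follows essentially the same route as the paper: apply Lemma~\ref{L:interpolate} with exactly the parameter choices you list, then bound the H\"older factor via Proposition~\ref{P:roughBesovembedding} and the Besov factor via the embedding $\mbb B^{k\alpha}_{p/k,q/k}\hookrightarrow\mbb B^{k\alpha}_{p/k,q/j}$ together with the definition of $\normm{\cdot}_{\mbf B^\alpha_{pq}}$. The difference estimate is handled identically in the paper, and your observation that $V_k\le\Sigma$ is precisely the step that closes it.
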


\begin{proof}
	Lemma \ref{L:interpolate} and Proposition \ref{P:roughBesovembedding} immediately yield
	\begin{align*}
		\nor{\mbf X^{(k)}}{\mbb B^{j\alpha}_{p/j,q/j}([0,T])}
		&\lesssim_{\alpha,p,q} T^{(k-j)(\alpha - 1/p)} \nor{\mbf X^{(k)}}{\mbb C^{k(\alpha - 1/p)}}^{\frac{k-j}{k}} \nor{\mbf X^{(k)}}{\mbb B^{k\alpha}_{p/k,q/j}}^{\frac{j}{k}}\\
		&\lesssim_{\alpha,p,q} T^{(k-j)(\alpha - 1/p)} \normm{\tau_k \mbf X}_{\mbf B^\alpha_{pq}([0,T])}^{k-j} \nor{\mbf X^{(k)}}{\mbb B^{k\alpha}_{p/k,q/k}}^{\frac{j}{k}}
		\le T^{(k-j)(\alpha - 1/p)} \normm{\tau_k \mbf X}_{\mbf B^\alpha_{pq}([0,T])}^k.
	\end{align*}
	The estimate for $\mbf X^{(k)} - \tilde{\mbf X}^{(k)}$ follows in the same way, using the second estiamte in Proposition \ref{P:roughBesovembedding}.
\end{proof}

\subsubsection{Lyons extension theorem}
We next present our first application of Besov sewing in the rough path context, which is a version of the Lyons extension theorem on the Besov scale. For the reader interested in solving Besov rough differential equations, the subsequent result can be skipped over at no loss.

\begin{theorem}\label{T:Lyonsextension}
	Assume that $\frac{1}{M+1} < \alpha < 1$, $1/\alpha < p \le \oo$, and $0 < q \le \oo$. Then, for any $N \ge M$, there exists a continuous map $\mathscr E : \mbf B^\alpha_{pq;M} \to \mbf B^\alpha_{pq;N}$ such that, given $\mbf X, \tilde{\mbf X} \in \mbf B^\alpha_{pq,M}$, for all $k = 1,2,\ldots,M$,
	\[
		(\mathscr E\mbf X)^{(k)} = \mbf X^{(k)}, 
	\]
	and, for all $k = M+1,M+2,\ldots, N$,
	\begin{equation}\label{lyonsmap}
		\nor{(\mathscr E\mbf X)^{(k)}}{\mbb B^{k\alpha}_{p/k,q/k}} \lesssim_{M,N,\alpha,p,q} \normm{\mbf X}_{\mbf B^\alpha_{pq;M}}^k
	\end{equation}
	and
	\begin{equation}\label{lyonscty}
		\nor{(\mathscr E\mbf X)^{(k)} - (\mathscr E \tilde{\mbf X})^{(k)} }{\mbb B^{k\alpha}_{p/k,q/k}} \lesssim_{M,N\alpha,p,q} \sum_{j=1}^M \nor{\mbf X^j - \tilde{\mbf X}^j}{\mbb B^{j\alpha}_{p/j,q/j}} \left( \normm{\mbf X}_{\mbf B^\alpha_{pq;M}} \vee \normm{\tilde{\mbf X}}_{\mbf B^\alpha_{pq;M}} \right)^{k-j}.
	\end{equation}
\end{theorem}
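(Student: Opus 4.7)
The plan is to proceed by induction on $k = M+1, M+2, \ldots, N$, at each step using the Besov sewing lemma (Theorem \ref{T:Besovsewing}) to construct $(\mathscr E \mbf X)^{(k)}$ from the previously built levels. Throughout, set $(\mathscr E \mbf X)^{(j)} = \mbf X^{(j)}$ for $1 \le j \le M$ and, by the inductive assumption, at the start of step $k$ the tuple $(\mbf X^{(1)}, \ldots, \mbf X^{(k-1)})$ satisfies Chen's relation \eqref{Chen} truncated at level $k-1$, together with the bound \eqref{lyonsmap} at every level strictly below $k$.

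For the inductive step, define the candidate two-parameter map
\[
	A^{(k)}_{st} := \sum_{j=1}^{k-1} \mbf X^{(j)}_{0,s} \otimes \mbf X^{(k-j)}_{s,t}, \qquad (s,t) \in \Delta_2(0,T),
\]
and the Chen defect
\[
	C^{(k)}_{sut} := \sum_{j=1}^{k-1} \mbf X^{(j)}_{s,u} \otimes \mbf X^{(k-j)}_{u,t}, \qquad (s,u,t) \in \Delta_3(0,T).
\]
A direct algebraic computation, invoking Chen's relation at the previously constructed levels (first to expand $\mbf X^{(k-j)}_{s,t}$ across the intermediate point $u$, and then to re-expand $\mbf X^{(j)}_{0,u}$ across $s$), shows that the telescoping triple-sums cancel, leaving $\delta A^{(k)}_{sut} = -C^{(k)}_{sut}$. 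The key analytic input is now a Besov estimate on $C^{(k)}$: by Hölder's inequality in $ds$ with exponents $p/j$ and $p/(k-j)$ (which are conjugate with respect to $p/k$), followed by Hölder's inequality in $d\tau/\tau$ with exponents $k/j$ and $k/(k-j)$ (conjugate with respect to $q/k$), one obtains
\[
	\nor{C^{(k)}}{\oline{\mbb B}^{k\alpha}_{p/k,q/k}} \lesssim \sum_{j=1}^{k-1} \nor{\mbf X^{(j)}}{\mbb B^{j\alpha}_{p/j,q/j}} \nor{\mbf X^{(k-j)}}{\mbb B^{(k-j)\alpha}_{p/(k-j),q/(k-j)}} \lesssim_k \normm{\mbf X}_{\mbf B^\alpha_{pq;M}}^{k},
\]
where the final step uses the definition \eqref{roughquasinorm} and the inductive hypothesis. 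Since $k \ge M+1 > 1/\alpha$ and $\alpha > 1/p$, the sewing threshold $k\alpha > 1 \vee \frac{k}{p}$ is satisfied, so Theorem \ref{T:Besovsewing} applies to $A^{(k)}$ (with $\nor{A^{(k)}}{\mbb B^\alpha_{p,q}}$ being trivially finite by the Besov-Hölder embedding of Proposition \ref{P:roughBesovembedding} applied to the product structure). The bound \eqref{Rbound} then yields an $\mbb B^{k\alpha}_{p/k,q/k}$ object $\mathscr R A^{(k)}$ with $\delta \mathscr R A^{(k)} = -\delta A^{(k)} = C^{(k)}$, and the choice
\[
	(\mathscr E \mbf X)^{(k)} := \mathscr R A^{(k)}
\]
simultaneously verifies Chen's relation at level $k$ and the estimate \eqref{lyonsmap}.

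For the local Lipschitz continuity \eqref{lyonscty}, given a second rough path $\tilde{\mbf X}$, one forms $\tilde A^{(k)}$ and $\tilde C^{(k)}$ analogously and decomposes
\[
	C^{(k)} - \tilde C^{(k)} = \sum_{j=1}^{k-1} \left[(\mbf X^{(j)} - \tilde{\mbf X}^{(j)}) \otimes \mbf X^{(k-j)} + \tilde{\mbf X}^{(j)} \otimes (\mbf X^{(k-j)} - \tilde{\mbf X}^{(k-j)})\right].
\]
The same H\"older argument, combined with the inductive form of \eqref{lyonscty} at lower levels, bounds $\nor{C^{(k)} - \tilde C^{(k)}}{\oline{\mbb B}^{k\alpha}_{p/k,q/k}}$ by the right-hand side of \eqref{lyonscty}. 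Sewing applied to $A^{(k)} - \tilde A^{(k)}$ transfers this to $\nor{(\mathscr E \mbf X)^{(k)} - (\mathscr E \tilde{\mbf X})^{(k)}}{\mbb B^{k\alpha}_{p/k,q/k}}$, completing the induction. The step I expect to require the most care is the algebraic verification that $\delta A^{(k)} = -C^{(k)}$ (needed so that the sewing output matches the Chen defect exactly rather than up to a correction), and, on the analytic side, ensuring the Hölder bookkeeping in both $s$ and $\tau$ is carried out so that the loss of integrability ($p/j \to p/k$) and of secondary integrability ($q/j \to q/k$) are simultaneously absorbed into the quasi-norm $\normm{\cdot}_{\mbf B^\alpha_{pq}}$.
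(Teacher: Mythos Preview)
Your proposal is correct and follows essentially the same route as the paper: both construct $(\mathscr E\mbf X)^{(k)}$ as the sewing remainder $\mathscr R A^{(k)}$ of the germ $A^{(k)}_{st}=\sum_{j=1}^{k-1}\mbf X^{(j)}_{0,s}\otimes\mbf X^{(k-j)}_{st}$, verify $\delta A^{(k)}=-C^{(k)}$ via Chen's relations, estimate $\nor{C^{(k)}}{\oline{\mbb B}^{k\alpha}_{p/k,q/k}}$ by H\"older in $s$ and in $\tau$, and then invoke \eqref{Rbound}. The only cosmetic difference is that the paper first reduces to $N=M+1$ and inducts on $M$, whereas you induct directly on $k$; also, your remark about $\nor{A^{(k)}}{\mbb B^\alpha_{p,q}}$ being finite is unnecessary since the construction and bound for $\mathscr R A$ in Theorem~\ref{T:Besovsewing} depend only on $\nor{\delta A}{\oline{\mbb B}^\gamma_{p_2,q_2}}$.
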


\begin{proof}
	It suffices to consider the case where $N = M+1$, and the general statement then follows by induction. For $(s,t) \in \Delta_2(0,T)$, define
	\[
		A_{st} := \sum_{k=1}^M \mbf X^{M-k+1}_{0,s} \otimes \mbf X^{(k)}_{s,t}.
	\]
	Then, for $(s,u,t) \in \Delta_3(0,T)$,
	\begin{align*}
		\delta A_{sut} &= \sum_{k=1}^M \left( - \mbf X^{(M-k+1)}_{su} \otimes \mbf X^{(k)}_{ut} - \delta \mbf X^{(M-k+1)}_{0su} \otimes \mbf X^{(k)}_{ut} + \mbf X^{(M-k+1)}_{0s} \otimes \delta \mbf X^{(k)}_{sut} \right) \\
		&= - \sum_{k=1}^M  \mbf X^{(M-k+1)}_{su} \otimes \mbf X^{(k)}_{ut} - \sum_{k=1}^M \sum_{j=1}^{M-k+1} \mbf X^{(j)}_{0s} \otimes \mbf X^{(M-k+1-j)}_{su}\otimes  \mbf X^{(k)}_{ut}
		+ \sum_{k=1}^M \mbf X^{(M-k+1)}_{0s} \otimes \sum_{j=1}^k \mbf X^{(j)}_{su} \otimes \mbf X^{(k-j)}_{ut} \\
		&=- \sum_{k=1}^M  \mbf X^{(M-k+1)}_{su} \otimes \mbf X^{(k)}_{ut},
	\end{align*}
	and so
	\[
		\nor{\delta A}{\oline{\mbb B}^{(M+1)\alpha}_{p/(M+1), q/(M+1)}} \lesssim_q \sum_{k=1}^M \nor{\mbf X^{(M-k+1)}}{\mbb B^{(M-k+1)\alpha}_{p/(M-k+1),q/(M-k+1)}} \nor{\mbf X^{(k)}}{\mbb B^{k\alpha}_{p/k,q/k}}
		\lesssim \normm{\mbf X}_{\mbf B^\alpha_{pq}}^{M+1}.
	\]
	The estimate \eqref{Rbound} from Theorem \ref{T:Besovsewing} then yields
	\[
		\nor{\mathscr RA}{\oline{\mbb B}^{(M+1)\alpha}_{p/(M+1), q/(M+1)}}  \lesssim_{\alpha,p,q} \normm{\mbf X}_{\mbf B^\alpha_{pq}}^{M+1}.
	\]
	Setting $(\mathscr E \mbf X)^{(M+1)} := \mathscr RA$, it is easy to check that Chen's relations \eqref{Chen} hold. This establishes the existence of the map and the bound \eqref{lyonsmap}. To prove \eqref{lyonscty}, we take $\tilde{\mbf X} \in \mbf B^\alpha_{pq}$ and set
	\[
		\tilde A_{st} := \sum_{k=1}^M \tilde {\mbf X}^{M-k+1}_{0,s} \otimes \tilde {\mbf X}^{(k)}_{s,t}.
	\]
	Then
	\[
		\delta A_{st} - \delta \tilde A_{st} = - \sum_{k=1}^M  \mbf X^{(M-k+1)}_{su} \otimes \mbf X^{(k)}_{ut} +  \sum_{k=1}^M \tilde{ \mbf X}^{(M-k+1)}_{su} \otimes \tilde{\mbf X}^{(k)}_{ut},
	\]
	whence
	\[
		\nor{\delta A - \delta \tilde A}{\oline{\mbb B}^{(M+1)\alpha}_{p/(M+1),q/(M+1)}}
		\lesssim_{p,q} \sum_{k=1}^M \nor{X^{(k)} - \tilde X^{(k)}}{\mbb B^{k\alpha}_{p/k,q/k}} \left( \normm{\mbf X}_{\mbf B^\alpha_{pq;M}} \vee \normm{\tilde{\mbf X}}_{\mbf B^\alpha_{pq;M}} \right)^{M+1-k}.
	\]
	The bound \eqref{lyonscty} then follows from another application of the sewing lemma estimate \eqref{Rbound}.
\end{proof}

\begin{remark}\label{R:lyons}
	Theorem \ref{T:Lyonsextension} is proved in the regime of \eqref{levelNparameters} where $\alpha > \frac{1}{M+1}$. When $\alpha = \frac{1}{M+1}$ and $0 < q \le M+1$, there is the subtlety that the component $(\mathscr E \mbf X)^{(M+1)}$ constructued with Theorem \ref{T:Besovsewingendpoint} belongs, not to $\mbb B^{(M+1)\alpha}_{p/(M+1),q/(M+1)}$, but to $\mbb B^{\omega}_{p/(M+1),q/(M+1)}$ for some modulus $\omega$ that is a bit worse (by, say, a logarithmic correction) than $\tau \mapsto \tau^{(M+1)\alpha}$. One can conceive of a generalization of Definition \ref{D:BesovRP} in which the regularity of the various components is given by more general moduli than the powers $\tau^\alpha$, $\tau^{2\alpha}$, $\tau^{3\alpha}$, etc., for which an extension result in the endpoint case could be proved, but we do not pursue this here.
	
	These remarks are consistent with Theorem \ref{T:YoungBesov}, which can be seen as a generalization of Theorem \ref{T:Lyonsextension} when $M =1$. Indeed, Theorem \ref{T:YoungBesov} implies that, for $X \in B^{1/2}_{p,q}([0,T],\RR^m)$ with $q \le 2 < p$, the well-defined iterated integrals
	\[
		\Delta_2(0,T) \ni (s,t) \mapsto \mbb X_{st} := \int_s^t \delta X_{s,r} \otimes dX_r \in \RR^n \otimes \RR^n
	\]
	belong, not to $\mbb B^1_{p/2,q/2}$, but rather $\mbb B^\omega_{p/2,q/2}$ for some modulus $\omega$ involving a logarithmic correction of $\tau \mapsto \tau$.
\end{remark}

\subsubsection{Multidimensional stochastic processes as Besov rough paths} 

{\bf Brownian and fractional Brownian motion.}  A classical result \cite{MR1277166} asserts that for standard $n$-dimensional Brownian motion $W (\omega) \in B^{1/2}_{p\infty}([0,T])$, almost surely.
Let $\mbf W(\omega)$ be It\^o- or Stratonovich Brownian rough paths over $\RR^n$, see eg. \cite[Ch.3]{friz2020course}.
The following regularity results appear to be new, even in the Brownian rough path case. 

\begin{theorem} \label{thm:BMbesovrough}
Almost surely, $\mbf W (\omega) \in \mbf B^{1/2}_{p\infty}([0,T])$ for every $p< \infty$, and $\mbf W (\omega) \notin \mbf B^{1/2}_{\infty\infty}([0,T])$.
For fractional Brownian rough paths with Hurst parameter $H \in (1/4,1/2]$, the analogous statement holds with $\mbf B^{1/2}_{p\infty}([0,T])$ replaced by $\mbf B^{H}_{p\infty}([0,T])$.
\end{theorem}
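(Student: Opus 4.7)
Plan of proof. By Remark \ref{R:tensormetric}, the Besov rough path quasi-norm decouples level by level, so it suffices to show, almost surely and for every $p<\oo$, that $W\in B^{1/2}_{p,\infty}([0,T];\RR^n)$ and that the iterated integral $\mathbb{W}\in \mathbb{B}^{1}_{p/2,\infty}([0,T];\RR^n\otimes\RR^n)$. The plan is to derive moment bounds at the critical Brownian scaling for each level and then upgrade them, via a concentration plus Borel--Cantelli argument over dyadic scales, to uniform-in-$h$ bounds at the \emph{sharp} exponent $\alpha=1/2$.

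For the first level, I would reduce, via the dyadic characterization of Lemma \ref{L:discreteBesovnorm}, to the a.s.\ finiteness of $\sup_n 2^{n/2}\|W_{\cdot+2^{-n}}-W_\cdot\|_{L^p([0,T-2^{-n}])}$. Brownian scaling and Fubini give $\EE\|W_{\cdot+h}-W_\cdot\|_{L^p}^{r}\lesssim_{p,r} h^{r/2}$ for every $r\ge p$, and, since $\omega\mapsto\|W_{\cdot+h}-W_\cdot\|_{L^p}$ is a Lipschitz functional on Wiener space, Borell's inequality supplies Gaussian concentration around its mean; summing tails across dyadic scales yields the desired a.s.\ bound (this reproduces Roynette's result \cite{MR1277166} without recourse to Haar wavelets). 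For the second level, the crucial structural fact is that $t\mapsto \mathbb{W}_{s,t}$ is a continuous martingale for each fixed $s$, with quadratic variation controlled by $\int_s^t |W_r-W_s|^2\,dr$; Burkholder--Davis--Gundy combined with Brownian scaling yields $\EE|\mathbb{W}_{s,t}|^r\lesssim_r|t-s|^r$ for every $r$. To promote these pointwise-in-$h$ moment bounds to uniform control of $\sup_{0<h\le T}h^{-1}\bigl(\int_0^{T-h}|\mathbb{W}_{t,t+h}|^{p/2}dt\bigr)^{2/p}$, I would apply the harmonic-analytic Besov-martingale estimates from Appendix \ref{appendix}, i.e.\ Theorem \ref{thm:Martbesovrough} specialized to $\mbf W$. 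The Stratonovich case reduces to the It\^o case, since their difference is the smooth path $\tfrac12(t-s)\mathrm{Id}$, which belongs to every Besov space.

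The negative assertion $\mbf W\notin\mbf B^{1/2}_{\infty,\infty}$ reduces, via Remark \ref{R:tensormetric} and the identity $B^{1/2}_{\infty,\infty}=C^{1/2}$, to L\'evy's classical lower bound on the modulus of continuity of Brownian motion. For fractional Brownian motion with Hurst index $H\in(1/4,1/2]$ the first-level argument is identical, with Gaussian moments $\EE|B^H_t-B^H_s|^r\asymp_r|t-s|^{rH}$ in place of Brownian scaling; for the second level, iterated integrals exist as soon as $H>1/4$ (Coutin--Qian), with second-chaos moment bounds $\EE|\mathbb{B}^H_{s,t}|^r\lesssim_r|t-s|^{2rH}$, and the martingale step is replaced by Gaussian hypercontractivity on the second Wiener chaos in the spirit of \cite{friz2016jain, kerkyacharian2018regularity}. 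The principal obstacle is reaching the \emph{critical} Besov exponent with $q=\infty$: standard Kolmogorov--Chentsov arguments only attain $\alpha$ strictly below $1/2$ (resp.\ $H$), and the Haar-wavelet approach of \cite{MR1277166} does not directly see the iterated integral. It is precisely the harmonic-analytic machinery of Appendix \ref{appendix} that bridges this gap at level two, which is why the result, although natural, appears to be new.
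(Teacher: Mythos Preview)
Your level-two step has a gap. Theorem~\ref{thm:Martbesovrough} is stated only for $q\in(2,\infty)$, and at the critical exponent $\alpha=1/2$ with any finite $q$ its right-hand side is infinite for Brownian motion: since $SW_{s,t}=(t-s)^{1/2}$ is deterministic, $\Omega_p(SW,\tau)\asymp\tau^{1/2}$ and hence $\nor{SW}{\mbb B^{1/2}_{p,q}}^q\asymp\int_0^T d\tau/\tau=\infty$. The Appendix therefore cannot deliver the endpoint $q=\infty$, and your closing sentence, that ``it is precisely the harmonic-analytic machinery of Appendix~\ref{appendix} that bridges this gap at level two'', is incorrect. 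Second-chaos hypercontractivity, which you invoke only for the fractional case, would rescue the Brownian second level as well and should replace the Appendix reference throughout.

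The paper's argument is both simpler and does not separate levels. It works directly with the homogeneous metric $d$ on $T^{(2)}_1(\RR^n)$ from Remark~\ref{R:tensormetric}, reducing via Lemma~\ref{L:discreteBesovnorm} to showing that $Y_{np}^p:=2^{np/2}\int_0^1 d(\mbf W_t,\mbf W_{t+2^{-n}})^p\,dt$ is almost surely bounded in $n$. Brownian rough-path scaling and stationarity make $2^{np/2}d(\mbf W_t,\mbf W_{t+2^{-n}})^p$ equal in law to $d(\mbf W_0,\mbf W_1)^p$ for every $(n,t)$; in particular $\EE Y_{np}^p\equiv c_p^p<\infty$. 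Independence of Brownian rough-path increments on disjoint intervals then gives $\operatorname{Var}(Y_{np}^p)\lesssim 2^{-n}$, and Chebyshev plus Borel--Cantelli yields $Y_{np}^p\to c_p^p$ almost surely. No Gaussian concentration and no martingale paraproduct estimates are needed; the fractional case is handled by the same scaling-plus-variance argument.
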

\begin{proof} The negative inclusion is immediate from the corresponding (well-known) statement for (fractional) Brownian motion, together with the fact that the rough Besov norm of any rough path over some path $X$ dominates the classical Besov norm of some $X$. We show the first statement, in the Brownian case $H=1/2$. By Lemma \ref{L:discreteBesovnorm}, it suffices to check that, with probability one, $(Y_{np})_{n=0}^\oo$ is bounded in $n$, where
$$
       Y_{np}^p := 2^{np/2} \int_0^1  d (\mbf W_t,\mbf W_{t+2^{-n}})^p dt 
%       Y_{np}^p := 2^{n/2} \int_0^{1-2^{-n}} d (\mbf W (t+2^{-n}), \mbf W (t)^p dt \sim 
$$
and $d$ is the metric introduced in Remark \ref{R:tensormetric}.
%\todo[inline]{It needs to be checked that this is enough, but I think it's the same argument as in \cite{MR2445509} who in turn cite a result in a book by K\"onig; the technique there can be generalized (I think) to all of our Besov-type norms; essentially we can replace $\sup_{h \le 2^{-n}}$ with $h = 2^{-n}$ in all the places. Such a Remark/Lemma can go in Section 2.
%In a similar vein, Ciesielski, Kerkyacharian, and Roynette (\url{https://eudml.org/doc/216028}) prove equivalence of Besov spaces with certain spaces of sequences (in the regime $\alpha > 1/p$), and I think this probably generalizes easily to metric-space valued functions. 
%}
By basic properties of Brownian (rough) paths, and homogeneity of the metric, it is clear that $2^{np/2} d (\mbf W_t,\mbf W_{t+2^{-n}})^p $ has the same law as $ d (\mbf W (0), \mbf W (1))^p$, with (finite) mean $c_p^p$. This is also the mean of $Y_{np}^p$, and one estimates without problem that the variance of $Y_{np}^p$ goes to zero, with rate $2^{-n}$. A Borel-Cantelli argument then shows a.s. convergence $Y_{np}^p \to c_p^p$. This implies the desired boundedness, and the proof is finished. (See \cite{MR2445509} for a similar argument, applied to Banach valued Brownian motion.)
Using basic facts about fractional Brownian rough paths, in particular existence for $H> 1/4$ and their natural scaling properties (see e.g. \cite{friz2020course} in case $H > 1/3$, and  \cite{friz2010multidimensional} for the general case), the above argument extends immediately to the fractional setting. 
\end{proof} 

\begin{remark} The case $H > 1/2$ is not excluded, but (as level-$1$ rough path) is not particularly challenging.
On the other hand, the construction of higher order iterated integrals in Theorem \ref{T:Lyonsextension} with the correct Besov (rough path) regularity, is non-trivial even in case $H>1/2$.
%The proof adapts immediately to $n$-dimensional fractional Brownian motion with Hurst parameter $H > 1/4$, lifted to a geometric rough path, see e.g. \cite{friz2010multidimensional}: a.s. $\mbf W^H (\omega) \in \mbf B^{H}_{p\infty}([0,T])$, any $p < \infty$.%
%\end{remark} 
%\begin{remark}
%	(In view of Theorem \ref{T:Lyonsextension}, (fractional) Brownian motion actually lifts to a level-$N$ Besov rough path for any $N$.)
\end{remark}

{\bf Local martingales.}  We first recall the BDG inequality in the Besov scale. For a (continuous) local martingale $(M_t:0 \le t \le T)$ with $(\gamma_0,p_0,q_0)$-Besov regularity, $\gamma_0 - 1/p_0 > 0$, and $r \in [1,\infty)$, one has
\begin{equation} \label{eq:Besov-BDG:M<S}
\norm{ \norm{M}_{B^{\gamma_{0}}_{p_{0},q_{0}}} }_{L^{r}(\Omega)}
\lesssim
\norm{ \norm{SM}_{\mbb B^{\gamma_{0}}_{p_{0},q_{0}}} }_{L^{r}(\Omega)},
\end{equation}
where
\begin{equation} \label{eq:S:cont}
SM_{s,t} := (\langle M \rangle_t - \langle M \rangle_s)^{1/2},
\end{equation}
the square-root of the quadratic varation of $M$, is conveniently measured in $2$-parameter Besov sense. (This is essentially found in \cite{MR1277166}, and also follows from taking $F \equiv 1$ in Appendix~\ref{app:besov}.)
Note that the employed left-hand Besov norm on $[0,T]$ dominates the uniform norm of $M$, so that the classical BDG inequality immediately shows that one can replace $  \lesssim$ above by a two-sided sandwich estimate.

A multidimensional local martingale $M = (M^1,\dotsc,M^n)$ can be enhanced to a rough process via the $2$-parameter process
\[
\mathbb{M}_{s,t} := \int_{u=s}^{t} \delta M_{s,u-} \otimes \dif M_{u},
\]
where the integration is taken in the It\^o sense.
For this enhanced process, an estimate analogous to \eqref{eq:Besov-BDG:M<S} holds with the homogenous Besov rough path norm defined in \eqref{roughquasinorm}.
\begin{theorem}[Rough BDG in Besov scale] \label{thm:Martbesovrough}
Let $r\in (1,\infty)$, $p,q \in (2,\infty)$, and $\alpha \in (1/3,1)$ with $\alpha - 1/p > 0$.
Let $M$ be an $n$-dimensional local martingale, and $\mathbf{M} = (M, \mathbb{M})$ the resulting It\^o local martingale rough path over $\RR^n$.
Then the following BDG type estimate holds,
\begin{equation} \label{eq:Martbesovrough}
\norm{ \normm{\mbf M}_{\mbf B^\alpha_{p,q}} }_{L^{r}(\Omega)}
\lesssim_{\alpha,p,q,r}  
\norm{ \norm{S M}_{\mbb B^{\alpha}_{p,q}} }_{L^{r}(\Omega)},
\end{equation}
where the square-root of the quadratic variation $SM$ is defined as in \eqref{eq:S:cont}.
\end{theorem}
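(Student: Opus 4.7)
The plan is to decompose the rough path (quasi-)norm according to its two levels and treat each separately, invoking at a crucial step the generalized Besov BDG inequality of Appendix \ref{appendix}. Recalling \eqref{roughquasinorm}, it suffices to produce the two estimates
\begin{equation*}
\norm{ \norm{M}_{B^{\alpha}_{p,q}} }_{L^{r}(\Omega)} \lesssim \norm{ \norm{SM}_{\mbb B^{\alpha}_{p,q}} }_{L^{r}(\Omega)}
\quad \text{and} \quad
\norm{ \norm{\mbb M}_{\mbb B^{2\alpha}_{p/2,q/2}}^{1/2} }_{L^{r}(\Omega)} \lesssim \norm{ \norm{SM}_{\mbb B^{\alpha}_{p,q}} }_{L^{r}(\Omega)}.
\end{equation*}
The first is nothing but \eqref{eq:Besov-BDG:M<S}, so the entire difficulty lies in bounding the iterated integral $\mbb M$.

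For the level-2 bound the plan is, first, to exploit the fact that, for each fixed $s$, the process $t \mapsto \mbb M_{s,t} = \int_s^t \delta M_{s,u-} \otimes \dif M_u$ is an It\^o local martingale whose quadratic variation equals $V_{s,t} := \int_s^t \delta M_{s,u} \otimes \delta M_{s,u}^\top \, \dif \langle M \rangle_u$, and, second, to combine this with the two-parameter Besov BDG furnished by Appendix \ref{appendix}. Applied to the family $\{\mbb M_{s,\cdot}\}_s$ indexed by the base variable $s$, this yields
\begin{equation*}
\norm{ \norm{\mbb M}_{\mbb B^{2\alpha}_{p/2,q/2}} }_{L^{r/2}(\Omega)} \lesssim \norm{ \norm{V^{1/2}}_{\mbb B^{2\alpha}_{p/2,q/2}} }_{L^{r/2}(\Omega)},
\end{equation*}
which is the analogue in the Besov scale of the harmonic-analytic estimates developed in \cite{MR3909973,MR4003122,arxiv:2008.08897} for the $p$-variation case.

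The next step is a deterministic domination of $V_{s,t}^{1/2}$ by the first-level data: using $V_{s,t} \le (\sup_{u \in [s,t]} |\delta M_{s,u}|)^2 \cdot (SM_{s,t})^2$ together with the Campanato/H\"older embedding of Proposition \ref{P:BBesovembedding} (or directly Proposition \ref{P:Besovembedding}) applied to $M \in B^\alpha_{p,q}$, one gets pointwise
\begin{equation*}
V_{s,t}^{1/2} \lesssim (t-s)^{\alpha - 1/p} \, [M]_{B^\alpha_{p,q}} \, SM_{s,t}.
\end{equation*}
Feeding this product bound into the $\mbb B^{2\alpha}_{p/2,q/2}$ seminorm and splitting the regularity budget $2\alpha$ as $(\alpha - 1/p) + (\alpha + 1/p)$, H\"older's inequality in the integrability parameters (here one uses $q \in (2,\infty)$ so that $q/2 \in (1,\infty)$) lets one separate the two factors at the cost of an $L^{p/2}$-in-time reshuffling, producing a bound of the form $[M]_{B^\alpha_{p,q}} \cdot \norm{SM}_{\mbb B^\alpha_{p,q}}$ up to a power of $T$. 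A final Cauchy--Schwarz step in $L^r(\Omega)$, combined with the already established level-1 estimate, converts this product into $\norm{ \norm{SM}_{\mbb B^{\alpha}_{p,q}} }_{L^{r}(\Omega)}^{2}$, which, after taking the square root demanded by \eqref{roughquasinorm}, yields the desired inequality.

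The main obstacle is the two-parameter Besov BDG used in the first display above: classical BDG controls moments of a single martingale by moments of its quadratic variation, whereas here the Besov norm integrates an $L^{p/2}$ expression over a continuum of base points $s$, so the classical statement cannot be invoked fiberwise in $s$ and then resummed without losing the correct scaling. This is precisely the content of Appendix \ref{appendix}; once it is available, the remaining steps are a matter of organising the H\"older embedding and the standard interpolation between the two levels.
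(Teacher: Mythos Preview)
Your overall architecture (level-1 via \eqref{eq:Besov-BDG:M<S}, level-2 via a vector-valued BDG plus a deterministic product bound, then Cauchy--Schwarz in $\Omega$) is the right shape, but the deterministic step contains a genuine gap.

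After the vector-valued BDG you have $V_{s,t}^{1/2}\le \sup_{u\in[s,t]}|\delta M_{s,u}|\cdot SM_{s,t}$. You then replace the supremum by the global H\"older bound $(t-s)^{\alpha-1/p}[M]_{B^\alpha_{p,q}}$ and propose to split the $\mbb B^{2\alpha}_{p/2,q/2}$-regularity as $(\alpha-1/p)+(\alpha+1/p)$. This cannot close: the factor $(t-s)^{\alpha-1/p}$ indeed contributes $\alpha-1/p$, but then $SM$ would have to supply $\alpha+1/p$ of regularity in the $\mbb B^{\cdot}_{p,q}$ scale, and it only has $\alpha$. Concretely, with your pointwise bound one finds $\Omega_{p/2}(V^{1/2},\tau)\lesssim T^{1/p}\tau^{\alpha-1/p}\Omega_p(SM,\tau)$, so $\Omega_{p/2}(V^{1/2},\tau)/\tau^{2\alpha}\lesssim T^{1/p}\tau^{-1/p}\cdot\Omega_p(SM,\tau)/\tau^{\alpha}$, and the extra $\tau^{-1/p}$ makes the $L^{q/2}(d\tau/\tau)$-integral diverge at $0$. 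Going from $L^p$ to $L^{p/2}$ on a bounded interval does not buy back this $1/p$ of regularity.

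The paper's route avoids this loss by \emph{not} pulling the supremum out pointwise. It applies Theorem~\ref{thm:besov-pprod} directly to $A_{s,t}=\int_s^t \delta M^i_{s,u-}\,dM^j_u$, whose proof combines the vector-valued BDG with the key inequality \eqref{eq:Besov-sup-inside},
\[
\Bigl[\int_0^T\bigl(\tau^{-\alpha}\nor{\sup_{0<h\le\tau}|\delta M_{\cdot,\cdot+h}|}{L^p}\bigr)^q\frac{d\tau}{\tau}\Bigr]^{1/q}\lesssim [M]_{B^\alpha_{p,q}},
\]
which keeps the supremum \emph{inside} the $L^p$-norm and is proved by a dyadic telescoping argument. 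With this in hand the regularity splits correctly as $\alpha+\alpha$, and H\"older in $(s,\tau,\Omega)$ gives the product $\norm{\norm{M}_{B^\alpha_{p,q}}}_{L^r}\cdot\norm{\norm{SM}_{\mbb B^\alpha_{p,q}}}_{L^r}$; the first factor is then converted via \eqref{eq:Besov-BDG:M<S}. Your scheme can be repaired by inserting exactly this estimate in place of the pointwise H\"older embedding, at which point it collapses into the appendix's argument. (A secondary point: your first display invokes the vector-valued BDG with outer exponent $r/2$, which for $r\in(1,2]$ falls outside the range of Theorem~\ref{thm:vv-BDG}; the paper sidesteps this by using Proposition~\ref{prop:vv-pprod}, whose outer exponent is allowed to be any $r>0$.)
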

We do not claim that the ranges of exponents in Theorem~\ref{thm:Martbesovrough} are optimal.

As can be seen from analogous results in $p$-variation scale \cite{friz2008burkholder,MR3909973,MR4003122,arxiv:2008.08897}, it is illuminating to consider results such as Theorem~\ref{thm:Martbesovrough} in an anisotropic setting, in which different components of the martingale are measured in different norms.
This is the content of Appendix~\ref{appendix}.
\begin{proof}[Proof of Theorem~\ref{thm:Martbesovrough}]
Apply Theorem~\ref{thm:besov-pprod} pairwise to the components of $M$, and use \eqref{eq:Besov-BDG:M<S} to replace Besov norms of $M$ that appear by $2$-parameter Besov norms of $SM$.
\end{proof}

Finally, note that $\normm{\mbf M}_{\mbf B^\alpha_{p,q}}$ on the left-hand side of \eqref{eq:Martbesovrough} can be strengthened to the homogenous Besov rough path norm of the
canoncial level-$N$ lift of $\mbf M$, which is supplied by Theorem \ref{T:Lyonsextension}.

\subsection{Controlled rough paths: the level-$2$ case} \label{SS:ctrld}

For the remainder of subsections \ref{SS:ctrld} and \ref{SS:BesovRDE}, we consider the level-$2$ case, that is,
\begin{equation}\label{level2parameters}
	\left\{
	\begin{split}
	&\frac{1}{3} \le \alpha < 1, \quad \frac{1}{\alpha} < p \le \oo, \\
	&0 < q \le \oo \quad \text{if } \alpha > \frac{1}{3}, \text{ and}\\
	&0 < q \le 3 \quad \text{if } \alpha = \frac{1}{3}.
	\end{split}
	\right.
\end{equation}
We also follow notation similar to, say, \cite{friz2020course,gubinelli2004controlling}, and write $\mbf X = (\mbf X^{(1)}, \mbf X^{(2)}) = (\delta X, \mbb X)$, with the convention that $X_0 \equiv 0$.

\begin{definition}\label{D:controlledBesovRP}
Assume \eqref{level2parameters} and let $\mbf X \in \mbf B^\alpha_{pq}([0,T],\RR^n)$. Then $(Y,Y'):[0,T] \to \RR^m \times \RR^n \otimes (\RR^m)$ is called an $\mbf X$-controlled rough path over $\RR^m$, and $(Y,Y') \in \mathscr B^{\alpha}_{pq, \mbf X}([0,T],\RR^m)$, if
\begin{equation}\label{Yreg}
	Y \in B^\alpha_{pq}([0,T],\RR^m), \quad Y' \in B^\alpha_{pq}([0,T], \RR^n \otimes \RR^m), \quad \text{and} \quad R^Y := \delta Y - Y' \delta X \in \mbb B^{2\alpha}_{p/2,q/2}([0,T],\RR^m).
\end{equation}
We define
\[
	[(Y,Y')]_{\mathscr B^\alpha_{pq,\mbf X}} := [Y']_{B^\alpha_{pq}} + \nor{R^Y}{\mbb B^{2\alpha}_{p/2,q/2}}.
\]
and, given $\mbf X, \tilde{\mbf X} \in \mbf B^\alpha_{pq}$, $(Y,Y') \in \mathscr B^\alpha_{pq, \mbf X}$, and $(\tilde Y, \tilde Y') \in \mathscr B^\alpha_{pq,\tilde{\mbf X}}$,
\[
	d_{\mbf X, \tilde{\mbf X}, \mathscr B^\alpha_{pq}}( Y,Y'; \tilde Y, \tilde Y') = \mathrm d_{B^\alpha_{pq}}(Y' , \tilde Y') + \mathrm d_{\mbb B^{2\alpha}_{p/2,q/2}} (R^Y , R^{\tilde Y}).
\]
If $\mbf X = \tilde{\mbf X}$, we write $d_{\mbf X, \mathscr B^\alpha_{pq}} := d_{\mbf X, \mbf X, \mathscr B^\alpha_{pq}}$.
\end{definition}

\begin{lemma}\label{L:controllednorms}
	Assume \eqref{level2parameters} and let $\mbf X \in \mbf B^\alpha_{pq}$ and $(Y,Y') \in \mathscr B^\alpha_{pq, \mbf X}$. Then, for all $\beta$ satisfying $\alpha + \frac{1}{p} < \beta \le 2\alpha$,
	\begin{equation}\label{RYHolder}
		\nor{R^Y}{\mbb C^{\beta - 2/p}([0,T])} \lesssim_{\alpha,\beta,p,q} \nor{R^Y}{\mbb B^{\beta}_{p/2,q/2} } + [Y']_{B^\alpha_{pq}} [X]_{B^\alpha_{pq}}
	\end{equation}
	and
	\begin{equation}\label{Ycontrol}
		[Y]_{B^\alpha_{pq}} \lesssim_{\alpha,p,q} |Y'_0|[X]_{B^\alpha_{pq}} + (T^{\beta - \alpha - 1/p} \vee T^{\alpha - 1/p}) \left( [Y']_{B^\alpha_{pq} }[X]_{B^\alpha_{pq}} + \nor{R^Y}{\mbb B^{\beta}_{p/2,q/2}}\right).
	\end{equation}
	If also $\tilde{\mbf X} \in \mbf B^\alpha_{pq}$ and $(\tilde Y, \tilde Y') \in \mathscr B^\alpha_{pq,\tilde{\mbf X}}$, then
	\begin{equation}\label{RYRtildeYHolder}
		\nor{R^Y - R^{\tilde Y}}{\mbb C^{\beta - 2/p}([0,T])} \lesssim_{\alpha,\beta,p,q} \left( \nor{R^Y - R^{\tilde Y}}{\mbb B^{\beta}_{p/2,q/2}} + [Y' - \tilde Y']_{B^\alpha_{pq}}[X]_{B^\alpha_{pq}} + [\tilde Y']_{B^\alpha_{pq}} [X - \tilde X]_{B^\alpha_{pq}} \right)
	\end{equation}
	and
	\begin{equation}\label{Ydistancecontrol}
	\begin{split}
		[Y - \tilde Y]_{B^\alpha_{pq}} 
		&\lesssim_{\alpha,p,q} |Y'_0 - \tilde Y'_0|[X]_{B^\alpha_{pq}} + |\tilde Y'_0|[X - \tilde X]_{B^\alpha_{pq}}\\
		&+ T^{\beta - \alpha - 1/p} \left( [Y' - \tilde Y']_{B^\alpha_{pq} }[X]_{B^\alpha_{pq}} + [\tilde Y']_{B^\alpha_{pq}} [X - \tilde X]_{B^\alpha_{pq}} + \nor{R^Y - R^{\tilde Y}}{\mbb B^{\beta}_{p/2,q/2}}\right).
	\end{split}
	\end{equation}
\end{lemma}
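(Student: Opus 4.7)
The plan is to combine a Chen-type identity for $\delta R^Y$, the one- and two-parameter Besov--H\"older embeddings (Propositions \ref{P:Besovembedding} and \ref{P:BBesovembedding}), and an interpolation that trades regularity for integrability.

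For the H\"older bounds \eqref{RYHolder} and \eqref{RYRtildeYHolder}, the starting point is the elementary identity
\[
\delta R^Y_{sut} = \delta Y'_{su} \cdot \delta X_{ut}, \qquad (s,u,t) \in \Delta_3(0,T),
\]
which follows from $R^Y = \delta Y - Y'\delta X$ and $\delta \circ \delta \equiv 0$. Proposition \ref{P:Besovembedding} controls both $Y'$ and $X$ in $C^{\alpha-1/p}$ by their $B^\alpha_{pq}$-seminorms, so that
\[
|\delta R^Y_{sut}| \lesssim_{\alpha,p,q} [Y']_{B^\alpha_{pq}}\,[X]_{B^\alpha_{pq}}\,(u-s)^{\alpha-1/p}(t-u)^{\alpha-1/p}.
\]
Since $\beta \le 2\alpha$ and $t-u \le t-s$, this fits the mixed form of hypothesis \eqref{deltaAHolder} of Proposition \ref{P:BBesovembedding} with $\omega(\tau)=\tau^\beta$ and $\theta=1/2$, up to a residual factor $[(u-s)(t-s)]^{\alpha-\beta/2} \le T^{2\alpha-\beta}$ absorbed into the constant. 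The proposition then yields \eqref{RYHolder}, and the same scheme applied to
\[
\delta(R^Y - R^{\tilde Y})_{sut} = (\delta Y' - \delta \tilde Y')_{su}\,\delta X_{ut} + \delta \tilde Y'_{su}\,(\delta X - \delta \tilde X)_{ut}
\]
yields \eqref{RYRtildeYHolder}.

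For \eqref{Ycontrol}, the decomposition $\delta Y = Y'\delta X + R^Y$ and subadditivity of $\omega_p$ reduce the task to bounding $\nor{Y'}{L^\infty}$ (handled by Remark \ref{R:Besovsup}, which gives $\nor{Y'}{L^\infty} \lesssim |Y'_0| + T^{\alpha-1/p}[Y']_{B^\alpha_{pq}}$) and $[R^Y]_{\mbb B^\alpha_{p,q}}$. For the latter, the interpolation $\Omega_p(R^Y,\tau) \le \Omega_\infty(R^Y,\tau)^{1/2}\,\Omega_{p/2}(R^Y,\tau)^{1/2}$ together with the pointwise bound $\Omega_\infty(R^Y,\tau) \le K\,\tau^{\beta-2/p}$ coming from \eqref{RYHolder} (where $K := \nor{R^Y}{\mbb C^{\beta-2/p}}$) produces
\[
\frac{\Omega_p(R^Y,\tau)}{\tau^{\alpha}} \le K^{1/2}\,\tau^{\beta-\alpha-1/p}\,\Bigl(\frac{\Omega_{p/2}(R^Y,\tau)}{\tau^{\beta}}\Bigr)^{1/2}.
\]
The strict hypothesis $\beta > \alpha + 1/p$ makes the exponent on $\tau$ positive, so $\tau^{\beta-\alpha-1/p} \le T^{\beta-\alpha-1/p}$; taking $L^q(d\tau/\tau)$-quasi-norms and applying AM--GM to the two resulting square roots, combined with \eqref{RYHolder} to bound $K$, closes the loop. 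Estimate \eqref{Ydistancecontrol} follows by the same scheme applied to
\[
\delta(Y - \tilde Y) = (Y' - \tilde Y')\delta X + \tilde Y'(\delta X - \delta \tilde X) + (R^Y - R^{\tilde Y}),
\]
with the first two terms controlled via $\nor{\cdot}{L^\infty}$-bounds on $Y' - \tilde Y'$ and $\tilde Y'$, and the third handled by the interpolation procedure using \eqref{RYRtildeYHolder} in place of \eqref{RYHolder}.

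The main obstacle is this $p/2 \to p$ interpolation: the remainder $R^Y$ naturally lives at integrability $p/2$, but must reappear at integrability $p$ in order to match $[Y]_{B^\alpha_{pq}}$. The condition $\beta > \alpha + 1/p$ is precisely the threshold at which giving up regularity from $\beta$ down to $\alpha$ still produces a positive power of $T$; this power is not merely cosmetic but is what drives the contraction in the Picard iteration for rough differential equations treated later in the paper.
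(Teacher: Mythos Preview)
Your proposal is correct and follows essentially the same route as the paper's proof: the identity $\delta R^Y_{sut}=\delta Y'_{su}\,\delta X_{ut}$ combined with Proposition~\ref{P:Besovembedding} gives the pointwise bound on $\delta R^Y$, Proposition~\ref{P:BBesovembedding} then yields \eqref{RYHolder} and \eqref{RYRtildeYHolder}, and the decomposition $\delta Y = Y'\delta X + R^Y$ together with an interpolation from $p/2$ to $p$ gives \eqref{Ycontrol} and \eqref{Ydistancecontrol}. The only difference is presentational: the paper invokes Lemma~\ref{L:interpolate} directly for the $\mbb B^\beta_{p/2,q/2}\to\mbb B^\alpha_{p,q}$ step, whereas you reprove that lemma inline via $\Omega_p\le\Omega_\infty^{1/2}\Omega_{p/2}^{1/2}$ and AM--GM.
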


\begin{proof}
	For $(s,u,t) \in \Delta_3(0,T)$, we have $\delta R^Y_{sut} = \delta Y'_{s,u} \delta X_{u,t}$, and so Proposition \ref{P:Besovembedding} gives
	\[
		|\delta R^Y_{sut}| \lesssim_{\alpha,p,q} [Y']_{B^\alpha_{pq}} [X]_{B^\alpha_{pq}} (u-s)^{\alpha - 1/p} (t-u)^{\alpha - 1/p}.
	\]
	Because $R^Y \in \mbb B^{\beta}_{p/2,q/2}$ and $\beta - \frac{2}{p} > \alpha - 1/p > 0$, \eqref{RYHolder} is now a consequence of Proposition \ref{P:BBesovembedding}. The bound \eqref{RYRtildeYHolder} is proved analogously, from the fact that
	\begin{align*}
		\abs{ \delta(R^Y - R^{\tilde Y})_{sut}}
		&\le |\delta Y'_{su} - \delta \tilde Y'_{su}||\delta X_{ut}| + |\delta \tilde Y'_{su}| |\delta X_{ut} - \delta \tilde X_{ut}|\\
		&\lesssim_{\alpha,p,q} \left( [Y' - \tilde Y']_{B^\alpha_{pq}} [X]_{B^\alpha_{pq}} + [\tilde Y']_{B^\alpha_{pq}} [X - \tilde X]_{B^\alpha_{pq}} \right) (u-s)^{\alpha-1/p} (t-u)^{\alpha - 1/p}.
	\end{align*}
	
	Lemma \ref{L:interpolate} and \eqref{RYHolder} now give
	\[
		\nor{R^Y}{\mbb B^\alpha_{p,q}} \lesssim_{\alpha,\beta,p,q} T^{\beta - \alpha - \frac{1}{p}}\nor{R^Y}{\mbb C^{\beta - 1/p}}^{1/2} \nor{R^Y}{\mbb B^\beta_{p/2,q}} 
		\lesssim_{\alpha,\beta,p,q} T^{\beta - \alpha - \frac{1}{p} } \left( \nor{R^Y}{\mbb B^{\beta}_{p/2,q/2} } + [Y']_{B^\alpha_{pq}} [X]_{B^\alpha_{pq}}\right),
	\]
	and so \eqref{Ycontrol} follows from the estimate (using Proposition \ref{P:Besovembedding})
	\[
		[Y]_{B^\alpha_{pq}} \lesssim_{q} \nor{Y'}{\oo} [X]_{B^\alpha_{pq}} + \nor{R^Y}{\mbb B^{\alpha}_{pq}}
		\lesssim_{\alpha,p,q} |Y_0| [X]_{B^\alpha_{pq}} + T^{\alpha - 1/p} [X]_{B^\alpha_{pq}} +\nor{R^Y}{\mbb B^{\alpha}_{pq}}.
	\]
	The bound \eqref{Ydistancecontrol} is obtained through virtually identical arguments, with the use of \eqref{RYRtildeYHolder}.
\end{proof}

For a rough path $\mbf X$, a controlled rough path $(Y,Y')$, and a partition $P = \{0 = \tau_0 < \tau_1 < \cdots < \tau_N = 1\}$ of $[0,1]$, we define
\[
	\mathscr I^P_{st} = \sum_{i=1}^N \left( Y_{s + \tau_{i-1}(t-s)} \delta X_{s + \tau_{i-1}(t-s), s + \tau_i (t-s)}
	+ Y'_{s +\tau_{i-1}(t-s)} \mbb X_{s + \tau_{i-1}(t-s), s + \tau_i (t-s)} \right),
\]
with the aim to construct the integral of $Y$ against $\mbf X$:
\[
	Z_t - Z_s = \int_s^t Y_r d \mbf X_r := \lim_{\norm{P} \to 0} \mathscr I^P_{s,t} \quad \text{for } (s,t) \in \Delta_2(0,T).
\]
Through the remainder of the section, we set
\begin{equation}\label{level2omega}
	\omega(\tau) := 
	\begin{dcases}
		\tau^{3\alpha} & \text{if } \alpha > 1/3 \text{ and}\\
		\tau \ell_{q/3}(\tau) & \text{if } \alpha = 1/3,
	\end{dcases}
\end{equation}
where $\ell_{q/3}$ satisfies \eqref{loglike}.

\begin{theorem}\label{T:Besovrough}
	Assume \eqref{level2parameters}, $\mbf X \in \mbf B^\alpha_{pq}([0,T],\RR^n)$, and $(Y,Y') \in \mathscr B^\alpha_{pq,\mbf X}([0,T],\RR^m \otimes \RR^n)$. Then there exists $(Z,Z') \in \mathscr B^\alpha_{pq,\mbf X}([0,T],\RR^m)$ such that $Z' = Y$,
	\begin{equation}\label{roughint}
		\begin{dcases}
		\lim_{\norm{P} \to 0} \nor{\mathscr I^P - \delta Z}{\mbb B^{\omega}_{p/3,r}} = 0 & \text{if } \alpha > \frac{1}{3}, \text{ for all } r > \frac{1}{3\alpha} \text{ and } r \ge \frac{q}{3}, \text{ and}\\
		\lim_{\norm{P} \to 0} \nor{\mathscr I^P - \delta Z}{\mbb B^{1}_{p/3,\oo}} = 0 & \text{if } \alpha = \frac{1}{3},
		\end{dcases}
	\end{equation}
	\begin{equation}\label{roughintegralremainder}
		\nor{ \delta Z - Y \delta X - Y' \mbb X}{\mbb B^\omega_{p/3,q/3}([0,T])}
		\lesssim_{\alpha,p,q} \nor{R^Y}{\mbb B^{2\alpha}_{p/2,q/2}} [X]_{B^\alpha_{pq}} + [Y']_{B^\alpha_{pq}} \nor{\mbb X}{\mbb B^{2\alpha}_{p/2,q/2}},
	\end{equation}
	\begin{equation}\label{roughintegralendpoint}
		\left\{
		\begin{split}
		&\text{if } \alpha = 1/3, \text{ then }  \delta Z - Y \delta X - Y' \mbb X \in \mbb B^1_{p/3,\oo;\circ}([0,T]) \text{ and}\\
		&\nor{ \delta Z - Y \delta X - Y' \mbb X}{\mbb B^1_{p/3,\oo}([0,T])}
		\lesssim_{\alpha,p,q} \nor{R^Y}{\mbb B^{2\alpha}_{p/2,q/2}} [X]_{B^\alpha_{pq}} + [Y']_{B^\alpha_{pq}} \nor{\mbb X}{\mbb B^{2\alpha}_{p/2,q/2}},
		\end{split}
		\right.
	\end{equation}
	and
	\begin{equation}\label{controlledintegralpathremainder}
		\nor{R^Z}{\mbb B^{2\alpha}_{p/2,q/2}} \lesssim_{\alpha,p,q} |Y'_0| \nor{\mbb X}{\mbb B^{2\alpha}_{p/2,q/2}} +  \frac{\omega(T)}{T^{2\alpha + 1/p}}  [(Y,Y')]_{\mathscr B^\alpha_{pq,\mbf X}} \left( \normm{\mbf X}_{\mbf B^\alpha_{pq}} \vee  \normm{\mbf X}_{\mbf B^\alpha_{pq}}^2\right).
	\end{equation}
\end{theorem}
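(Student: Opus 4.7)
The plan is to reduce everything to an application of the Besov sewing theorems. The natural germ for the rough integral is
\[
A_{st} := Y_s \delta X_{st} + Y'_s \mbb X_{st}, \qquad (s,t) \in \Delta_2(0,T),
\]
and a short computation using Chen's relation $\mbb X_{st} = \mbb X_{su} + \mbb X_{ut} + \delta X_{su} \otimes \delta X_{ut}$ and the identity $\delta Y_{su} = Y'_s\delta X_{su} + R^Y_{su}$ gives
\[
\delta A_{sut} = -R^Y_{su}\,\delta X_{ut} - \delta Y'_{su}\,\mbb X_{ut}.
\]
H\"older's inequality in both the $L^p$-modulus and the $L^q(d\tau/\tau)$-scale, combined with the defining regularities of $R^Y$, $Y'$, $\delta X$, and $\mbb X$, then yields
\[
\nor{\delta A}{\oline{\mbb B}^{3\alpha}_{p/3,q/3}} \lesssim \nor{R^Y}{\mbb B^{2\alpha}_{p/2,q/2}}[X]_{B^\alpha_{pq}} + [Y']_{B^\alpha_{pq}}\nor{\mbb X}{\mbb B^{2\alpha}_{p/2,q/2}},
\]
while splitting $A = Y\delta X + Y'\mbb X$ shows $A \in \mbb B^\alpha_{p/3,q/3}([0,T])$ with an analogous bound involving $\nor{Y}{\infty}$ and $\nor{Y'}{\infty}$.

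With $p_1=p_2=p/3$, $q_1=q_2=q/3$ and $\gamma = 3\alpha$, the condition $\gamma > 1/p_2 = 3/p$ is equivalent to $\alpha > 1/p$, which holds by \eqref{level2parameters}; when $\alpha > 1/3$ we also have $\gamma>1$, so Theorem \ref{T:Besovsewing} applies, while when $\alpha = 1/3$ we have $\gamma = 1\vee(3/p)$ and $q/3\le 1\le p/3$, placing us in the endpoint Theorem \ref{T:Besovsewingendpoint}. In either case we obtain $\mathscr IA$ and $\mathscr RA$ with $\delta\mathscr IA = A + \mathscr RA$; we then set $Z := \mathscr IA$ and $Z' := Y$, whence
\[
R^Z = \delta Z - Y\delta X = A + \mathscr RA - Y\delta X = Y'\mbb X + \mathscr RA.
\]
The convergence statement \eqref{roughint} is precisely \eqref{generalconvergence} from Theorem \ref{T:Besovsewing} (with the modulus $\omega(\tau)=\tau^{3\alpha}$) in the non-endpoint case, and the corresponding statement in Theorem \ref{T:Besovsewingendpoint} at $\alpha=1/3$. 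The remainder bound \eqref{roughintegralremainder} is just \eqref{Rbound}/\eqref{Romegabound} applied to $\mathscr RA = \delta Z - Y\delta X - Y'\mbb X$; the refinement \eqref{roughintegralendpoint}, that at $\alpha = 1/3$ one has $\mathscr RA \in \mbb B^1_{p/3,\infty;\circ}$, is Remark \ref{R:RAsmallT}.

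The main obstacle is the controlled-remainder bound \eqref{controlledintegralpathremainder}, since it requires $R^Z$ in $\mbb B^{2\alpha}_{p/2,q/2}$, a space of strictly better integrability than the $\mbb B^{3\alpha}_{p/3,q/3}$ in which $\mathscr RA$ naturally lives. For the $Y'\mbb X$ piece, we use $\nor{Y'}{\infty} \lesssim_{\alpha,p,q} |Y'_0| + T^{\alpha-1/p}[Y']_{B^\alpha_{pq}}$ from Proposition \ref{P:Besovembedding}. For $\mathscr RA$, we invoke Theorem \ref{T:Besovsewingcontinuous} with $p_1=p/2$, $p_2=p/3$, target regularity $2\alpha$, and $\gamma = 3\alpha$ (respectively $\gamma = 1$ at the endpoint); the condition $\gamma - 1/p_2 > 2\alpha - 1/p_1$ reduces to $\alpha > 1/p$. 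To apply that theorem we must verify the pointwise bound \eqref{sewingdeltaAHolder} on $\delta A$: writing
\[
|\delta A_{sut}| \le |R^Y_{su}||\delta X_{ut}| + |\delta Y'_{su}||\mbb X_{ut}|
\]
and using the H\"older estimates $|R^Y_{su}| \lesssim (u-s)^{2\alpha - 2/p}$ from Proposition \ref{P:BBesovembedding} (valid since $R^Y \in \mbb B^{2\alpha}_{p/2,q/2}$ with $\delta R^Y_{sut} = \delta Y'_{su}\delta X_{ut}$ controlled as in Lemma \ref{L:controllednorms}), $|\mbb X_{ut}| \lesssim (t-u)^{2\alpha - 2/p}$ from Proposition \ref{P:roughBesovembedding}, together with $|\delta X|,|\delta Y'| \lesssim |\cdot|^{\alpha - 1/p}$ from Proposition \ref{P:Besovembedding}, gives \eqref{sewingdeltaAHolder} with $\theta = 1/3$, $\gamma - 1/p_2 = 3\alpha - 3/p$, and constant $M \lesssim [(Y,Y')]_{\mathscr B^\alpha_{pq,\mbf X}}(\normm{\mbf X}_{\mbf B^\alpha_{pq}}\vee\normm{\mbf X}_{\mbf B^\alpha_{pq}}^2)$. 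Reading off the $T$-exponent from \eqref{Iboundalt} (respectively \eqref{Iboundaltendpoint}) gives $T^{3\alpha - 3/p - 2\alpha + 2/p} = T^{\alpha - 1/p}$ (respectively $T^{1/3 - 1/p}\ell_{q/3}(T)$), which is exactly $\omega(T)/T^{2\alpha + 1/p}$ in both regimes, completing \eqref{controlledintegralpathremainder}.
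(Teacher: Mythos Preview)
Your argument follows the paper's proof essentially step for step: set $A_{st}=Y_s\delta X_{st}+Y'_s\mbb X_{st}$, compute $\delta A_{sut}=-R^Y_{su}\delta X_{ut}-\delta Y'_{su}\mbb X_{ut}$, feed this into the Besov sewing theorems, and then upgrade $\mathscr RA$ from $\mbb B^{3\alpha}_{p/3,q/3}$ (or $\mbb B^\omega_{p/3,q/3}$) to $\mbb B^{2\alpha}_{p/2,q/2}$ via the pointwise H\"older bound plus interpolation.

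One citation needs care. You invoke Theorem~\ref{T:Besovsewingcontinuous} with ``target regularity $2\alpha$'' and $p_1=p/2$, reading off \eqref{Iboundalt}. Taken literally this fails: the theorem's hypotheses (inherited from Theorem~\ref{T:Besovsewing}) would require $\nor{A}{\mbb B^{2\alpha}_{p/2,q_1}}<\infty$, which is false since the $Y\delta X$ piece of $A$ has only $\alpha$-regularity; and \eqref{Iboundalt} is a bound on $\mathscr IA$, not on $\mathscr RA$. What you actually need---and what the paper does---is to extract only \eqref{RHolder} (which depends on $\delta A$ and $M$ but not on $\nor{A}{\mbb B^{2\alpha}_{p/2,q_1}}$) and then apply Lemma~\ref{L:interpolatemodulus} directly to $\mathscr RA$ with $p=p/3$, $r=p/2$. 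Your verification of the pointwise bound \eqref{sewingdeltaAHolder} is exactly the input required for \eqref{RHolder}, and the $T$-exponent you compute, $\alpha-1/p=\omega(T)/T^{2\alpha+1/p}$, is correct. A minor second point: $A$ lies in $\mbb B^\alpha_{p/2,q}$ (or, as the paper shows via Lemma~\ref{L:roughinterpolate}, even $\mbb B^\alpha_{p,q}$), not $\mbb B^\alpha_{p/3,q/3}$ as you write---decreasing $q$ makes the space smaller---but this is harmless since the sewing theorem allows $q_1\ne q_2$.
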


\begin{proof}
For $(s,t) \in \Delta_2(0,T)$, set $A_{st} := Y_s(X_t - X_s) + Y'_s \mbb X_{st}$. Then, for all $r < s < t$,
\[
	\delta A_{rst} = -R^Y_{rs} (X_t - X_s) + (Y'_r - Y'_s) \mbb X_{st},
\]
and so
\[
	\nor{\delta A}{\oline{\mbb B}^{3\alpha}_{p/3,q/3}} \le \nor{R^Y}{\mbb B^{2\alpha}_{p/2,q/2}} [X]_{B^\alpha_{p,q}} + [Y']_{B^\alpha_{pq}} \nor{\mbb X}{B^{2\alpha}_{p/2,q/2}}.
\]
Lemma \ref{L:roughinterpolate} yields
\begin{align*}
	\nor{A}{\mbb B^\alpha_{pq}} \lesssim_{p,q} \nor{Y}{\oo} [X]_{B^\alpha_{pq}} + \nor{Y'}{\oo} \nor{\mbb X}{\mbb B^\alpha_{pq}}
	\lesssim_{\alpha,p,q}\nor{Y}{\oo} [X]_{B^\alpha_{pq}} +T^{\alpha-1/p} \nor{Y'}{\oo}\normm{\mbf X}_{\mbf B^\alpha_{pq}}^2.
\end{align*}
Finally, we note that, by Proposition \ref{P:roughBesovembedding} and Lemma \ref{L:controllednorms},
\begin{align*}
	|\delta A_{rst}|
	&\le |R^Y_{rs}| |X_t - X_s| + |Y'_r - Y'_s| |\mbb X_{st}|\\
	&\le \left(\nor{R^Y}{\mbb C^{2\alpha - 1/p}} [X]_{C^{\alpha - 1/p}} + [Y']_{C^{\alpha - 1/p}} \nor{\mbb X}{\mbb C^{2(\alpha - 1/p)}} \right)  \left[ (s-r) \vee (t-s) \right]^{2(\alpha-1/p)} \left[ (s-r) \wedge (t-s) \right]^{\alpha - 1/p}\\
	&\lesssim_{\alpha,p,q} M \left[ (s-r) \vee (t-s) \right]^{2(\alpha-1/p)} \left[ (s-r) \wedge (t-s) \right]^{\alpha - 1/p}
\end{align*}
where
\[
	M := \nor{R^Y}{\mbb B^{2\alpha}_{p/2,q/2}}[X]_{B^\alpha_{pq}} + [Y']_{B^\alpha_{pq}} \normm{\mbf X}_{\mbf B^\alpha_{pq}}^2.
\]
The convergence statement \eqref{roughint} and the bounds \eqref{roughintegralremainder} and \eqref{roughintegralendpoint} then follow from Theorems \ref{T:Besovsewing} and \ref{T:Besovsewingendpoint}, (as well as Remark \ref{R:RAsmallT}) because, with the notation of those theorems, 
\[
	\mathscr RA_{st} = \delta Z_{st} - Y_s \delta X_{st} - Y'_s \mbb X_{st} \quad \text{for } (s,t) \in \Delta_2(0,T).
\]
Moreover, because $3\alpha - \frac{3}{p} > \alpha - \frac{1}{p}$, Theorem \ref{T:Besovsewingcontinuous} implies that $Z \in B^\alpha_{pq}$, as well as
\begin{align*}
	\sup_{(s,t) \in \Delta_2(0,T)} \frac{|\mathscr RA_{st}|}{\omega(t-s)(t-s)^{-3/p}}
	&\lesssim_{\alpha,p,q} \nor{\delta A}{\oline{\mbb B}^{3\alpha}_{p/3,q/3}} + M 
	\lesssim_{\alpha,p,q} \nor{R^Y}{\mbb B^{2\alpha}_{p/2,q}} [X]_{B^\alpha_{pq}} + [Y']_{B^\alpha_{pq}}  \normm{\mbf X}_{\mbf B^\alpha_{pq}}^2\\
	&\lesssim [(Y,Y')]_{\mathscr B^\alpha_{pq,\mbf X}} \left( \normm{\mbf X}_{\mbf B^\alpha_{pq}} \vee  \normm{\mbf X}_{\mbf B^\alpha_{pq}}^2\right).
\end{align*}
Lemma \ref{L:interpolatemodulus} (with $\rho(\tau) = \omega(\tau)/\tau^{p/3}$) yields
\[
	\nor{\mathscr RA}{\mbb B^{2\alpha}_{p/2,q/2}} \lesssim_{\alpha,p,q} \frac{\omega(T)}{T^{2\alpha + 1/p}} \nor{\mathscr RA}{\mbb C^{\rho}}^{1/3} \nor{\mathscr RA}{\mbb B^{\omega}_{p/3,q/3}}^{2/3}
	\lesssim_{\alpha,p,q} \frac{\omega(T)}{T^{2\alpha + 1/p}}   [(Y,Y')]_{\mathscr B^\alpha_{pq,\mbf X}} \left( \normm{\mbf X}_{\mbf B^\alpha_{pq}} \vee  \normm{\mbf X}_{\mbf B^\alpha_{pq}}^2\right).
\]
The bound \eqref{controlledintegralpathremainder} now follows from writing $R^Z = \mathscr RA + Y' \mbb X$, and the fact that, by Proposition \ref{P:Besovembedding},
\[
	\nor{Y' \mbb X}{\mbb B^{2\alpha}_{p/2,q/2}} \le \nor{Y'}{\oo} \nor{\mbb X}{\mbb B^{2\alpha}_{p/2,q/2}}
	\lesssim_{\alpha,p,q} |Y'_0| \nor{\mbb X}{\mbb B^{2\alpha}_{p/2,q/2}} + T^{\alpha - 1/p} [Y']_{B^\alpha_{pq}} \nor{\mbb X}{\mbb B^{2\alpha}_{p/2,q/2}}.
\]
\end{proof}

\begin{theorem}\label{T:stability}
	Assume \eqref{level2parameters}, let $\mbf X, \tilde{\mbf X} \in \mbf B^\alpha_{pq}$, $(Y,Y') \in \mathscr B^\alpha_{pq, \mbf X}$, and $(\tilde Y, \tilde Y') \in \mathscr B^\alpha_{pq, \tilde{\mbf X}}$, fix $M > 0$, assume
	\[
		\left( |Y_0| + |Y'_0| + \nor{(Y,Y')}{\mathscr B^\alpha_{pq,\mbf X}} \right) \vee \left( |\tilde Y_0| + |\tilde Y'_0| + \nor{(\tilde Y,\tilde Y')}{\mathscr B^\alpha_{pq,\tilde{\mbf X}}}\right) 
		\vee \normm{\mbf X}_{\mbf B^\alpha_{pq}} \vee \normm{\tilde{\mbf X}}_{\mbf B^\alpha_{pq}}\le M,
	\]
	and define
	\[
		Z_t = \int_0^t Y_s d\mbf X_s \quad \text{and} \quad \tilde Z_t = \int_0^t \tilde Y_s d \tilde{\mbf X}_s.
	\]
	Then, if $\alpha > 1/3$ and $\left( \alpha + \frac{1}{p} \right) \vee (1-\alpha) < \beta \le 2\alpha$,
	\begin{align*}
		d_{\mbf X, \tilde{\mbf X}, \mathscr B^\alpha_{pq}}((Z,Z'), (\tilde Z, \tilde Z'))
		&\lesssim_{M,T} \rho_{\mbf B^\alpha_{pq}}(\mbf X, \tilde{\mbf X}) + |Y_0 - \tilde Y_0| + |Y'_0 - \tilde Y'_0|\\
		&+ T^{\beta - \alpha - 1/p} \left( [Y' - \tilde Y']_{B^\alpha_{pq}} + \nor{R^Y - R^{\tilde Y}}{\mbb B^\beta_{p/2,q/2}} \right),
	\end{align*}
	and, if $\alpha = 1/3$, 
	\begin{align*}
		d_{\mbf X, \tilde{\mbf X}, \mathscr B^{1/3}_{pq}}((Z,Z'), (\tilde Z, \tilde Z'))
		&\lesssim_{p,q,M,T} \rho_{\mbf B^{1/3}_{pq}}(\mbf X, \tilde{\mbf X}) + |Y_0 - \tilde Y_0| + |Y'_0 - \tilde Y'_0|\\
		&+T^{1/3 - 1/p}\ell_{q/3}(T) d_{\mbf X, \tilde{\mbf X}, \mathscr B^{1/3}_{pq} }((Y,Y'), (\tilde Y, \tilde Y')).
	\end{align*}
\end{theorem}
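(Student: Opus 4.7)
The strategy is to mirror the proof of Theorem~\ref{T:Besovrough}, now applied to the \emph{difference} $(Y,Y',\mbf X)-(\tilde Y,\tilde Y',\tilde{\mbf X})$. Since Theorem~\ref{T:Besovrough} gives $Z'=Y$ and $\tilde Z'=\tilde Y$, the bound for $\mathrm d_{B^\alpha_{pq}}(Z',\tilde Z')=\mathrm d_{B^\alpha_{pq}}(Y,\tilde Y)$ is immediate from \eqref{Ydistancecontrol} in Lemma~\ref{L:controllednorms}. The main work lies in estimating $R^Z-R^{\tilde Z}$ in $\mbb B^{2\alpha}_{p/2,q/2}$.

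To this end, set $A_{st}:=Y_s\,\delta X_{st}+Y'_s\,\mbb X_{st}$ and $\tilde A_{st}:=\tilde Y_s\,\delta\tilde X_{st}+\tilde Y'_s\,\tilde{\mbb X}_{st}$, so that $R^Z=\mathscr R A+Y'\mbb X$ and similarly for $\tilde Z$. Using Chen's relations and the identity $\delta A_{rst}=-R^Y_{rs}\delta X_{st}+\delta Y'_{rs}\mbb X_{st}$ from the proof of Theorem~\ref{T:Besovrough}, one writes
\begin{align*}
\delta(A-\tilde A)_{rst}
&= -(R^Y-R^{\tilde Y})_{rs}\,\delta X_{st}-R^{\tilde Y}_{rs}\,\delta(X-\tilde X)_{st}\\
&\quad +\delta(Y'-\tilde Y')_{rs}\,\mbb X_{st}+\delta\tilde Y'_{rs}\,(\mbb X-\tilde{\mbb X})_{st}.
\end{align*}
H\"older's inequality in $\oline{\mbb B}^{3\alpha}_{p/3,q/3}$ (resp.\ $\oline{\mbb B}^{1}_{p/3,q/3}$ when $\alpha=1/3$) bounds this by a constant times
\[
\nor{R^Y-R^{\tilde Y}}{\mbb B^{2\alpha}_{p/2,q/2}}+\rho_{\mbf B^\alpha_{pq}}(\mbf X,\tilde{\mbf X})+[Y'-\tilde Y']_{B^\alpha_{pq}},
\]
with proportionality depending only on $M$.

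Next, the pointwise H\"older hypothesis \eqref{sewingdeltaAHolder} (respectively \eqref{sewingdeltaAHolderendpoint}) for $\delta(A-\tilde A)$ must be verified. Using Lemma~\ref{L:controllednorms}, \eqref{RYRtildeYHolder}, and Proposition~\ref{P:roughBesovembedding} applied to $\mbb X-\tilde{\mbb X}$, each of the four summands of $\delta(A-\tilde A)_{rst}$ is controlled by the product $[(s-r)\vee(t-s)]^{2(\alpha-1/p)}[(s-r)\wedge(t-s)]^{\alpha-1/p}$ times the above quantity. One may then apply Theorem~\ref{T:Besovsewingcontinuous}(a) with $(p_1,p_2,q_1,q_2)=(p,p/3,q,q/3)$ and $\gamma=3\alpha$ (respectively part (b) with $\gamma=1$ in the endpoint case), whose hypothesis $\gamma-\tfrac{1}{p_2}>\alpha-\tfrac{1}{p_1}$ reduces to $2(\alpha-1/p)>0$ and is therefore satisfied. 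Bound \eqref{Iboundalt} (resp.\ \eqref{Iboundaltendpoint}) yields an estimate of the form
\[
\nor{\mathscr R(A-\tilde A)}{\mbb B^{\beta}_{p/2,q/2}}\lesssim_M T^{\beta-\alpha-1/p}\bigl(\nor{R^Y-R^{\tilde Y}}{\mbb B^{\beta}_{p/2,q/2}}+[Y'-\tilde Y']_{B^\alpha_{pq}}+\rho_{\mbf B^\alpha_{pq}}(\mbf X,\tilde{\mbf X})\bigr),
\]
for any admissible $\beta\in((\alpha+1/p)\vee(1-\alpha),2\alpha]$ (or its endpoint analogue with $\ell_{q/3}(T)$).

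To close, decompose $R^Z-R^{\tilde Z}=\mathscr R(A-\tilde A)+\bigl(Y'\mbb X-\tilde Y'\tilde{\mbb X}\bigr)$. The second term splits as $(Y'-\tilde Y')\mbb X+\tilde Y'(\mbb X-\tilde{\mbb X})$, which is estimated in $\mbb B^{2\alpha}_{p/2,q/2}$ by absorbing $\nor{Y'}{\infty}$ and $\nor{Y'-\tilde Y'}{\infty}$ via Proposition~\ref{P:Besovembedding} (applied to $Y'-\tilde Y'$ to produce $|Y'_0-\tilde Y'_0|+T^{\alpha-1/p}[Y'-\tilde Y']_{B^\alpha_{pq}}$) and Lemma~\ref{L:controllednorms} for the $\mbb X$ increments. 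Combining with \eqref{Ydistancecontrol} yields the stated inequalities. The principal obstacle is the interpolation bookkeeping in Theorem~\ref{T:Besovsewingcontinuous}: the factor $T^{\beta-\alpha-1/p}$ (respectively $T^{1/3-1/p}\ell_{q/3}(T)$), which is crucial for obtaining contractivity in downstream fixed-point arguments, emerges only after trading Besov regularity for H\"older regularity in the remainders via \eqref{RYRtildeYHolder} and Proposition~\ref{P:roughBesovembedding}, and the constraint $\beta>(\alpha+1/p)\vee(1-\alpha)$ is dictated precisely by the requirements of this interpolation.
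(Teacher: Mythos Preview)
Your approach is essentially the paper's: define $A,\tilde A$, expand $\delta(A-\tilde A)$ into four terms, bound it in $\oline{\mbb B}$ and pointwise via Lemma~\ref{L:controllednorms} and Proposition~\ref{P:roughBesovembedding}, sew, then interpolate back to $\mbb B^{2\alpha}_{p/2,q/2}$ and combine with $Y'\mbb X-\tilde Y'\tilde{\mbb X}$.

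Two technical slips to flag. First, the bound you cite, \eqref{Iboundalt}, controls $\mathscr I A$, not $\mathscr R A$; what you actually need is \eqref{Rbound}/\eqref{RHolder} (or their endpoint analogues) to place $\mathscr R(A-\tilde A)$ in $\mbb B^{\alpha+\beta}_{p/3,q/3}\cap\mbb C^{\alpha+\beta-3/p}$, followed by Lemma~\ref{L:interpolate} (or Lemma~\ref{L:interpolatemodulus}) to land in $\mbb B^{2\alpha}_{p/2,q/2}$ with the prefactor $T^{\beta-\alpha-1/p}$. Second, to obtain the statement with a general $\beta\in((\alpha+1/p)\vee(1-\alpha),2\alpha]$ you must sew at regularity $\gamma=\alpha+\beta$ (using $\|R^Y-R^{\tilde Y}\|_{\mbb B^\beta_{p/2,q/2}}$), not $\gamma=3\alpha$; and the target space for $\mathscr R(A-\tilde A)$ is $\mbb B^{2\alpha}_{p/2,q/2}$ (where the controlled metric lives), not $\mbb B^{\beta}_{p/2,q/2}$. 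With these corrections your outline matches the paper's proof.
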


\begin{proof}
	For ease of presentation, define $\beta = 2/3$ whenever $\alpha = 1/3$. Proposition \ref{P:Besovembedding} and Lemma \ref{L:controllednorms} give
	\[
		\nor{Y}{\oo} \vee \nor{\tilde Y}{\oo} \vee \nor{Y'}{\oo} \vee \nor{\tilde Y'}{\oo} \lesssim_{\alpha,p,q,M} 1,
	\]
	\[
		\nor{Y' - \tilde Y'}{\oo} \lesssim_{\alpha,p,q} |Y'_0 - \tilde Y'_0| + T^{\alpha - 1/p} [Y' - \tilde Y']_{B^\alpha_{pq}},
	\]
	and
	\begin{align*}
		\nor{Y - \tilde Y}{\oo} 
		&\lesssim_{\alpha,p,q} |Y_0 - \tilde Y_0| + T^{\alpha - 1/p} [Y - \tilde Y]_{B^\alpha_{pq}}\\
		&\lesssim_{\alpha,p,q,M,T} |Y_0 - \tilde Y_0|  + |Y_0' - \tilde Y_0'| + [X - \tilde X]_{B^\alpha_{pq}} + T^{ \alpha - 1/p} \left( [Y' - \tilde Y']_{B^\alpha_{pq}} + \nor{R^Y - R^{\tilde Y}}{\mbb B^\beta_{p/2,q/2}} \right),
	\end{align*}
	and Lemma \ref{L:roughinterpolate} yields $\nor{\mbb X}{\mbb B^\alpha_{pq}([0,T])} \lesssim_{\alpha,p,q,M,T} 1$ and
	\[
		\nor{\mbb X - \tilde{\mbb X}}{\mbb B^\alpha_{pq}([0,T])} \lesssim_{\alpha,p,q,M,T} \rho_{\mbf B^\alpha_{pq}}(\mbf X, \tilde{\mbf X}).
	\]
	For $(s,t) \in \Delta_2(0,T)$, we set $A_{st} = Y_s (X_t - X_s) + Y'_s \mbb X_{st}$ and $\tilde A_{st} = \tilde Y_s (\tilde X_t - \tilde X_s) + \tilde Y'_s \tilde{\mbb X}_{st}$, and write
	\[
		A_{st} - \tilde A_{st}
		= (Y_s - \tilde Y_s) \delta X_{st} + \tilde Y_s (\delta X_{st} - \delta \tilde X_{st}) + (Y'_s - \tilde Y'_s) \mbb X_{st} + \tilde Y'_s (\mbb X_{st} - \tilde{\mbb X}_{st}).
	\]
	Then
	\begin{align*}
		\nor{A - \tilde A}{\mbb B^\alpha_{pq}}
		&\lesssim_{q} \nor{Y - \tilde Y}{\oo} [X]_{B^\alpha_{pq}} + \nor{\tilde Y}{\oo} [X - \tilde X]_{B^\alpha_{pq}} + \nor{Y' - \tilde Y'}{\oo} \nor{\mbb X}{\mbb B^\alpha_{pq}} + \nor{\tilde Y'}{\oo} \nor{\mbb X - \tilde{\mbb X}}{\mbb B^\alpha_{pq}} \\
		&\lesssim_{\alpha,p,q,M,T} |Y_0 - \tilde Y_0| + |Y'_0 - \tilde Y'_0| + \rho_{\mbf B^\alpha_{pq}}(\mbf X, \tilde{\mbf X}) + T^{\alpha - 1/p} \left( [Y' - \tilde Y']_{B^\alpha_{pq}} + \nor{R^Y - R^{\tilde Y}}{\mbb B^\beta_{p/2,q/2} } \right).
	\end{align*}
	For $(r,s,t) \in \Delta_3(0,T)$,
	\begin{align*}
		\delta (A - \tilde A)_{rst} &= -R^Y_{rs}(X_t - X_s) + R^{\tilde Y}_{rs}(\tilde X_t - \tilde X_s) + (Y'_r - Y'_s)\mbb X_{st} - (\tilde Y'_r - \tilde Y'_s) \tilde{\mbb X}_{st},
	\end{align*}
	so
	\[
		\nor{\delta A - \delta \tilde A}{\oline{\mbb B}^{\alpha + \beta}_{p/3,q/3} }
		\lesssim_{\alpha,p,q,M} [X - \tilde X]_{B^\alpha_{pq}} + \nor{\mbb X - \tilde{\mbb X}}{\mbb B^{2\alpha}_{p/2,q/2}} + [Y' - \tilde Y']_{B^\alpha_{pq}} + \nor{R^Y - R^{\tilde Y}}{\mbb B^\beta_{p/2,q/2}},
	\]
	and, by Lemma \ref{L:controllednorms},
	\begin{align*}
		\abs{ \delta (A - \tilde A)_{rst} }
		&\lesssim_{M,\alpha,p,q} \left( [X - \tilde X]_{B^\alpha_{pq}} + [Y' - \tilde Y']_{B^\alpha_{pq}} + \nor{R^Y - R^{\tilde Y}}{\mbb B^\beta_{p/2,q/2}} \right)(s-r)^{\beta - 2/p} (t-s)^{\alpha - 1/p} \\
		&+ \left( [Y' - \tilde Y']_{B^\alpha_{pq}} + \nor{\mbb X - \tilde{\mbb X} }{\mbb B^{2\alpha}_{p/2,q/2}} + [X - \tilde X]_{B^\alpha_{pq}} \right)(s-r)^{\alpha - 1/p} (t-s)^{2(\alpha - 1/p)}.
	\end{align*}
	When $\alpha > 1/3$, we have $\alpha + \beta - \frac{3}{p} > \alpha - 1/p$, and thus, by Theorems \ref{T:Besovsewing} and \ref{T:Besovsewingcontinuous},
	\begin{align*}
		\nor{\mathscr R(A - \tilde A)}{\mbb B^{\alpha + \beta}_{p/3,q/3}}
		&+ \nor{\mathscr RA - \mathscr R\tilde A}{\mbb C^{\alpha + \beta - 3/p}} 
		\lesssim_{\alpha,p,q,M} \rho_{\mbf B^\alpha_{pq} }(\mbf X, \tilde{\mbf X}) + [Y' - \tilde Y']_{B^\alpha_{pq}} + \nor{R^Y - R^{\tilde Y}}{\mbb B^\beta_{p/2,q/2}},
	\end{align*}
	and then Lemma \ref{L:interpolate} gives
	\begin{align*}
		\nor{\mathscr R(A - \tilde A)}{\mbb B^{2\alpha}_{p/2,q/2}([0,T])} 
		&\lesssim_{\alpha,p,q}
		T^{\beta - \alpha - 1/p} \nor{\mathscr R(A - \tilde A)}{\mbb B^{\alpha + \beta}_{p/3,q/3}}^{2/3} \nor{\mathscr RA - \mathscr R\tilde A}{\mbb C^{\alpha + \beta - 3/p}}^{1/3}\\
		&\lesssim_{\alpha,p,q,M,T}  \rho_{\mbf B^\alpha_{pq} }(\mbf X, \tilde{\mbf X}) + T^{\beta - \alpha - 1/p} \left( [Y' - \tilde Y']_{B^\alpha_{pq}} + \nor{R^Y - R^{\tilde Y}}{\mbb B^\beta_{p/2,q/2}} \right).
	\end{align*}
	When $\alpha = 1/3$, Theorems \ref{T:Besovsewingendpoint} and \ref{T:Besovsewingcontinuous} yield
	\begin{align*}
		\nor{\mathscr R(A - \tilde A)}{\mbb B^{\omega}_{p/3,q/3}}
		&+ \sup_{(s,t) \in \Delta_2(0,T)} \frac{ |(\mathscr RA - \mathscr R\tilde A)_{st}|}{\omega(t-s)(t-s)^{-3/p}} \\
		&\lesssim_{p,q,M} [X - \tilde X]_{B^{1/3}_{pq}} + \nor{\mbb X - \tilde{\mbb X}}{\mbb B^{2/3}_{p/2,q/2}} + [Y' - \tilde Y']_{B^{1/3}_{pq}} + \nor{R^Y - R^{\tilde Y}}{\mbb B^{2/3}_{p/2,q/2}}
	\end{align*}
	so that Lemma \ref{L:interpolatemodulus} gives
	\begin{align*}
		\nor{\mathscr R(A - \tilde A)}{\mbb B^{2/3}_{p/2,q/2}([0,T])} 
		\lesssim_{p,q,M,T} \rho_{\mbf B^{1/3}_{pq} }(\mbf X, \tilde{\mbf X}) + T^{\frac{1}{3} - \frac{1}{p}} \ell_{q/3}(T) d_{\mbf X, \tilde{\mbf X}, \mathscr B^{1/3}_{pq}}((Y,Y'), (\tilde Y, \tilde Y')).
	\end{align*}
	The desired estimate for $d_{\mbf X, \tilde{\mbf X},\mathscr B^\alpha_{pq}}((Z,Z'), (\tilde Z, \tilde Z'))$ now follows from the fact that $Z' = Y$ and $\tilde Z' = \tilde Y$, which, by Lemma \ref{L:controllednorms}, gives
	\[
		[Z' - \tilde Z']_{B^\alpha_{pq}} = [Y - \tilde Y]_{B^\alpha_{pq}} \lesssim_{M,T,\alpha,p,q} |Y'_0 - \tilde Y'_0| + [X - \tilde X]_{B^\alpha_{pq}} + T^{\beta - \alpha - 1/p} \left( [Y' - \tilde Y']_{B^\alpha_{pq}} + \nor{R^Y - R^{\tilde Y}}{\mbb B^\beta_{p/2,q/2}} \right),
	\]
	as well as the equality $R^Z - R^{\tilde Z} = Y' \mbb X - \tilde Y' \tilde{\mbb X} + \mathscr R(A - \tilde A)$ and the estimate obtained above for $\mathscr R(A - \tilde A)$ in $\mbb B^{2\alpha}_{p/2,q/2}$.
\end{proof}

\begin{proposition}\label{P:composecontrolled}
	Assume \eqref{level2parameters} and let $\mbf X \in \mbf B^\alpha_{pq}([0,T],\RR^n)$, $Y \in \mathscr B^\alpha_{pq,\mbf X}([0,T],\RR^m)$, and $f \in C^2(\RR^m)$. Then $(f(Y), f(Y)') \in \mathscr B^\alpha_{pq,\mbf X}$, where $f(Y)' = Df(Y)Y'$, and
	\[
		[(f(Y), f(Y)')]_{\mathscr B^\alpha_{pq,\mbf X}}
		\lesssim_{\alpha,p,q,T} \nor{f}{C^2}\left(1 + [X]_{B^\alpha_{pq}}\right) \left[\left( |Y'_0| + [(Y,Y')]_{\mathscr B^\alpha_{pq,\mbf X}} \right) \vee \left( |Y'_0| + [(Y,Y')]_{\mathscr B^\alpha_{pq,\mbf X}} \right)^2 \right].
	\]
	If $\tilde{\mbf X} \in \mbf B^\alpha_{pq}$, $\tilde Y \in \mathscr B^\alpha_{pq,\tilde{\mbf X}}$, $\delta \in (0,1]$, $f \in C^{2,\delta}$, and
	\[
		\left( |Y'_0| + [(Y,Y')]_{\mathscr B^\alpha_{pq,\mbf X}} \right)\vee \left( |\tilde Y'_0| + [(\tilde Y,\tilde Y')]_{\mathscr B^\alpha_{pq,\tilde{\mbf X}}} \right)\le M \quad \text{and} \quad [X]_{B^\alpha_{pq}} \vee [\tilde X]_{B^\alpha_{pq}} \le M,
	\]
	then
	\begin{align*}
		[f(Y)' - f(\tilde Y)']_{B^\alpha_{pq}} \lesssim_{\alpha,p,q,M,T} 
		\nor{f}{C^2} \left( [X - \tilde X]_{B^\alpha_{pq}} +  |Y'_0 - \tilde Y'_0| + d_{\mbf X, \tilde{\mbf X}, \mathscr B^\alpha_{pq}}(Y,Y',\tilde Y, \tilde Y')\right)
	\end{align*}
	and
	\begin{align*}
		\nor{R^{f(Y)} - R^{f(\tilde Y)} }{\mbb B^{(1+\delta)\alpha}_{p/2,q/2}}
		\lesssim_{\alpha,p,q,\delta,M,T} \nor{f}{C^{2,\delta}}\left( |Y_0 - \tilde Y_0| + |Y'_0 - \tilde Y'_0| + d_{\mbf X, \tilde{\mbf X}, \mathscr B^\alpha_{pq}}(Y,Y',\tilde Y, \tilde Y') + [X - \tilde X]_{B^\alpha_{pq}}\right).
	\end{align*}
\end{proposition}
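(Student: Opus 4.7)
\medskip

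\noindent\textbf{Proof plan for Proposition \ref{P:composecontrolled}.}

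I would begin from the second-order Taylor expansion
\[
R^{f(Y)}_{st} = f(Y_t) - f(Y_s) - Df(Y_s) Y'_s \delta X_{st} = Df(Y_s) R^Y_{st} + \int_0^1 (1-\tau) D^2 f\bigl(Y_s + \tau \delta Y_{st}\bigr) d\tau\,[\delta Y_{st}, \delta Y_{st}],
\]
which automatically produces the Gubinelli derivative $f(Y)' = Df(Y) Y'$. To establish $[(f(Y),f(Y)')]_{\mathscr B^\alpha_{pq,\mbf X}}$: for $f(Y)'$, combine the elementary product estimate $[\varphi\psi]_{B^\alpha_{pq}} \le \|\varphi\|_\infty [\psi]_{B^\alpha_{pq}} + \|\psi\|_\infty[\varphi]_{B^\alpha_{pq}}$ with Lemma \ref{L:Besovcomposition} applied to $Df \in C^{1,\delta}$, and control the $L^\infty$ factors via Lemma \ref{L:controllednorms} (which also bounds $[Y]_{B^\alpha_{pq}}$ by $(|Y_0|+|Y'_0|+[(Y,Y')]_{\mathscr B^\alpha_{pq,\mbf X}})(1+[X]_{B^\alpha_{pq}})$). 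For the $\mbb B^{2\alpha}_{p/2,q/2}$-bound on $R^{f(Y)}$, estimate the linear piece pointwise by $\|Df\|_\infty |R^Y_{st}|$ and apply Cauchy--Schwarz to the quadratic piece, which gives
\[
\Omega_{p/2}\bigl(\text{Taylor quad.},h\bigr) \le \tfrac{1}{2}\|D^2f\|_\infty \omega_p(Y,h)^2,
\]
whose $L^{q/2}(dh/h)$-norm against $h^{2\alpha}$ is exactly $[Y]_{B^\alpha_{pq}}^2$.

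For the distance estimate on $f(Y)' - f(\tilde Y)'$, I would split
\[
Df(Y)Y' - Df(\tilde Y)\tilde Y' = Df(\tilde Y)(Y'-\tilde Y') + \bigl(Df(Y) - Df(\tilde Y)\bigr) Y',
\]
then apply the Besov product rule together with Lemma \ref{L:Besovcomposition} to $Df(Y) - Df(\tilde Y)$ (noting $Df \in C^{1,\delta}$), which yields a linear-in-distance bound in $B^{\delta\alpha}_{p,q/\delta}$; Besov embedding and the $M$-bounds absorb the resulting loss so as to produce a linear estimate in $B^\alpha_{pq}$.

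For the key estimate on $R^{f(Y)} - R^{f(\tilde Y)}$ in $\mbb B^{(1+\delta)\alpha}_{p/2,q/2}$, decompose
\[
R^{f(Y)} - R^{f(\tilde Y)} = \bigl(Df(Y) - Df(\tilde Y)\bigr)R^Y + Df(\tilde Y)\bigl(R^Y - R^{\tilde Y}\bigr) + (T_Y - T_{\tilde Y}),
\]
where $T_Y$ denotes the Taylor quadratic remainder. The first two summands land in $\mbb B^{2\alpha}_{p/2,q/2}$ (a \emph{stronger} space than the target) with linear dependence on distances, via the pointwise mean-value estimate $|Df(Y) - Df(\tilde Y)| \le \|D^2 f\|_\infty \|Y - \tilde Y\|_\infty$ and Lemma \ref{L:controllednorms}. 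For $T_Y - T_{\tilde Y}$, further decompose the integrand by writing
\[
D^2 f(a_s)[\delta Y]^{\otimes 2} - D^2 f(\tilde a_s)[\delta\tilde Y]^{\otimes 2} = D^2 f(\tilde a_s)\bigl([\delta Y]^{\otimes 2} - [\delta\tilde Y]^{\otimes 2}\bigr) + \bigl(D^2 f(a_s) - D^2 f(\tilde a_s)\bigr)[\delta Y]^{\otimes 2},
\]
with $a_s = Y_s + \tau\delta Y_{st}$, $\tilde a_s = \tilde Y_s + \tau\delta\tilde Y_{st}$. Expanding $[\delta Y]^{\otimes 2} - [\delta\tilde Y]^{\otimes 2} = \delta(Y-\tilde Y) \otimes \delta Y + \delta\tilde Y \otimes \delta(Y-\tilde Y)$ and applying Cauchy--Schwarz at the level of $\Omega_{p/2}$ gives a linear $\mbb B^{2\alpha}$-bound.

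The main obstacle is the final sub-term. Using $D^2 f \in C^\delta$,
\[
|D^2 f(a_s) - D^2 f(\tilde a_s)||\delta Y_{st}|^2 \le [D^2 f]_{C^\delta}\bigl(|Y_s - \tilde Y_s| + |\delta(Y-\tilde Y)_{st}|\bigr)^\delta |\delta Y_{st}|^2,
\]
and the $\delta$-power obstructs a direct linear-in-distance bound in $\mbb B^{2\alpha}$. The remedy, which forces exactly the loss to $\mbb B^{(1+\delta)\alpha}$, is to split $|\delta Y|^2 = |\delta Y|^{1+\delta}\cdot|\delta Y|^{1-\delta}$, bound $|\delta Y|^{1-\delta}$ pointwise by $[Y]_{C^{\alpha-1/p}}^{1-\delta}|t-s|^{(1-\delta)(\alpha-1/p)}$ via Proposition \ref{P:Besovembedding}, and then apply H\"older inside $\Omega_{p/2}$ with conjugate exponents $2/\delta$ and $2/(2-\delta)$ so that the distance factor $|Y_s - \tilde Y_s| + |\delta(Y-\tilde Y)_{st}|$ enters with power $\delta$ inside an integral that is itself bounded by $[Y-\tilde Y]_{B^\alpha_{pq}}^\delta$; a final H\"older step in the $dh/h$-integral with the same exponents $\delta,1-\delta$ transforms this into a linear $[Y-\tilde Y]_{B^\alpha_{pq}}$ factor, at the price of reducing the regularity exponent from $2\alpha$ to $(1+\delta)\alpha$ and producing a power $T^{(1-\delta)\alpha}$ of the time horizon. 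Applying Lemma \ref{L:controllednorms} to translate $[Y-\tilde Y]_{B^\alpha_{pq}}$ into the controlled distance concludes the proof.
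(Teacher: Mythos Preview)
Your treatment of the a priori bound is fine and essentially matches the paper. The gap is in the Lipschitz estimate for $R^{f(Y)}-R^{f(\tilde Y)}$. Starting from the second-order Taylor form $R^{f(Y)}_{st}=Df(Y_s)R^Y_{st}+\int_0^1(1-\tau)D^2f(Y_s+\tau\delta Y_{st})\,d\tau[\delta Y_{st}]^{\otimes 2}$ is the wrong choice for the difference. When you isolate $(D^2f(a)-D^2f(\tilde a))[\delta Y]^{\otimes 2}$, the only available bound is $[D^2f]_{C^\delta}|a-\tilde a|^\delta\le[D^2f]_{C^\delta}\|Y-\tilde Y\|_\infty^\delta$, so the distance enters with power $\delta<1$. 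Your proposed H\"older manipulations cannot repair this: once the scalar prefactor $\|Y-\tilde Y\|_\infty^\delta$ is pulled out, it carries no $h$-dependence, and no further H\"older step in the $dh/h$-integral can upgrade the exponent from $\delta$ to $1$. You therefore do not obtain the stated \emph{linear} dependence on $d_{\mbf X,\tilde{\mbf X},\mathscr B^\alpha_{pq}}(Y,Y';\tilde Y,\tilde Y')$, which is precisely what is needed for the contraction in Theorem~\ref{T:RDE}.

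The paper sidesteps this by using instead the first-order mean-value form
\[
R^{f(Y)}_{st}=\int_0^1 Df(\tau Y_t+(1-\tau)Y_s)\,d\tau\,R^Y_{st}+\Bigl[\int_0^1 Df(\tau Y_t+(1-\tau)Y_s)\,d\tau-Df(Y_s)\Bigr]Y'_s\,\delta X_{st},
\]
and, for the difference, applying the mean-value theorem once more at the level of $Df$ (which is $C^1$). This produces a \emph{linear} factor $(Y-\tilde Y)$; only afterwards is $C^\delta$-regularity of $D^2f$ invoked, and it now lands on the \emph{increments} $|\delta Y|^\delta,|\delta\tilde Y|^\delta$. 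Paired with the factor $|\delta X|$ coming from $Y'\delta X$, this yields exactly the $(1+\delta)\alpha$ regularity with linear distance dependence (the paper's $\mathrm{III}=(\mathrm{III}^a+\mathrm{III}^b)Y'\delta X$). A separate small issue: in your $[f(Y)'-f(\tilde Y)']_{B^\alpha_{pq}}$ step, there is no embedding from $B^{\delta\alpha}_{p,q/\delta}$ back into $B^\alpha_{pq}$, so that sentence does not do what you claim.
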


\begin{proof}
	Set $Z := f(Y)$ and $Z' = Df(Y) Y'$. Then
	\begin{equation}\label{fofYremainder}
		\begin{split}
		R^Z_{st} &:= \delta Z_{st} - Z'_s \delta X_{st}\\
		&= \int_0^1 Df(\tau Y_t + (1-\tau) Y_s)d\tau R^Y_{st} 
		+ \int_0^1 \left[ Df(\tau Y_t + (1-\tau)Y_s) - Df(Y_s) \right] d\tau Y'_s (X_t - X_s) \\
		&= \int_0^1 Df(\tau Y_t + (1-\tau) Y_s)d\tau R^Y_{st} 
		+ \int_0^1(1-\tau) D^2f(\tau Y_t + (1-\tau)Y_s) d\tau (Y_t - Y_s) Y'_s (X_t - X_s).
		\end{split}
	\end{equation}
	For $\tau \in [0,T]$, we have
	\[
		\omega_p(Df(Y)Y', \tau) \le \nor{Df}{\oo} \omega_p(Y',\tau) + \nor{Y'}{\oo} \nor{D^2 f}{\oo} \omega_p(Y,\tau),
	\]
	from which it follows from Lemma \ref{L:controllednorms} that
	\begin{align*}
		[Df(Y)Y']_{B^\alpha_{pq}} &\lesssim_q \nor{Df}{\oo} [Y']_{B^\alpha_{pq}} + \nor{D^2f}{\oo} \nor{Y'}{\oo} [Y]_{B^\alpha_{pq}}\\
		&\lesssim_{\alpha,p,q,T} \nor{f}{C^2} \left( 1 + [X]_{B^\alpha_{pq}}\right)\left[\left( |Y'_0| + [(Y,Y')]_{\mathscr B^\alpha_{pq,\mbf X}} \right) \vee \left( |Y'_0| + [(Y,Y')]_{\mathscr B^\alpha_{pq,\mbf X}} \right)^2 \right]
	\end{align*}
	From the last line of \eqref{fofYremainder}, we obtain, using H\"older's inequality and Lemma \ref{L:controllednorms},
	\begin{align*}
		\nor{R^Z}{\mbb B^{2\alpha}_{p/2,q/2}}
		&\lesssim_q \nor{Df}{\oo} \nor{R^Y}{\mbb B^{2\alpha}_{p/2,q/2}} + \nor{D^2f}{\oo} \nor{Y'}{\oo} [Y]_{B^\alpha_{pq}} [X]_{B^\alpha_{pq}} \\
		&\lesssim_{\alpha,p,q,T} \nor{f}{C^2}\left(1 + [X]_{B^\alpha_{pq}}\right)\left[ \left(|Y'_0| + [(Y,Y')]_{\mathscr B^\alpha_{pq,\mbf X}} \right) \vee \left( |Y'_0| + [(Y,Y')]_{\mathscr B^\alpha_{pq,\mbf X}}\right)^2\right].
	\end{align*}
	We now set $\tilde Z := f(\tilde Y)$ and $\tilde Z' := Df(\tilde Y) \tilde Y'$, and note that, by Lemma \ref{L:controllednorms},
	\[
		[Y - \tilde Y]_{B^\alpha_{pq}} \lesssim_{\alpha,p,q,M,T} [X - \tilde X]_{B^\alpha_{pq}} + |Y'_0 - \tilde Y'_0| + d_{\mbf X, \tilde{\mbf X}, \mathscr B^\alpha_{pq}}(Y,Y',\tilde Y, \tilde Y'),
	\]
	so that
	\begin{align*}
		[f(Y)' - f(\tilde Y)']_{B^\alpha_{pq}}
		&= [Df(Y)Y' - Df(\tilde Y) \tilde Y']_{B^\alpha_{pq}}\\
		&\lesssim_q \nor{Df}{\oo} [Y' - \tilde Y']_{B^\alpha_{pq}} + [Df(Y) - Df(\tilde Y)]_{B^\alpha_{pq}}\nor{Y'}{\oo} \\
		&\lesssim_{\alpha,p,q,M} \nor{Df}{\oo} [Y' - \tilde Y']_{B^\alpha_{pq}} + \nor{D^2 f}{\oo}[Y - \tilde Y]_{B^\alpha_{pq}} \\
		&\lesssim_{\alpha,p,q,M} \nor{f}{C^2} [X - \tilde X]_{B^\alpha_{pq}} + \nor{f}{C^2}\left( |Y'_0 - \tilde Y'_0| + d_{\mbf X, \tilde{\mbf X}, \mathscr B^\alpha_{pq}}(Y,Y',\tilde Y, \tilde Y')\right).
		\end{align*}
	Next, defining $R^{\tilde Z}$ as in \eqref{fofYremainder}, with $Y$ and $X$ replaced with $\tilde Y$ and $\tilde X$, we write $R^Z - R^{\tilde Z} = \I + \II + \III$, where
	\[
		\I_{st} := \int_0^1 Df(\tau Y_t + (1-\tau)Y_s)d\tau R^Y_{st} - \int_0^1 Df(\tau \tilde Y_t + (1-\tau)\tilde Y_s)d\tau R^{\tilde Y}_{st},
	\]
	\begin{align*}
		\mathrm{II}_{st}
		&:= \int_0^1 \left[ Df(\tau \tilde Y_t + (1-\tau)\tilde Y_s) - Df(\tilde Y_s) \right] d\tau \left( Y'_s \delta X_{st} - \tilde Y'_s \delta \tilde X_{st} \right)\\
		&= \int_0^1 (1-\tau) D^2 f(\tau \tilde Y_t + (1-\tau)\tilde Y_s) d\tau \delta \tilde Y_{st} \left( Y'_s \delta X_{st} - \tilde Y'_s \delta \tilde X_{st} \right),
	\end{align*}
	and
	\begin{align*}
		\mathrm{III}_{st}
		&:= \left[ \int_0^1 \int_0^1 D^2 f(\sigma [\tau Y_t + (1-\tau) Y_s] + (1-\sigma) [\tau \tilde Y_t + (1-\tau) \tilde Y_s] ) d\sigma \left( \tau (Y_t - \tilde Y_t) + (1-\tau) (Y_s - \tilde Y_s)\right)d\tau \right. \\
		&-  \left. \int_0^1 D^2 f(\sigma Y_s + (1-\sigma) \tilde Y_s) d \sigma (Y_s - \tilde Y_s) \right] Y'_s \delta X_{st}.
	\end{align*}
%	\begin{align*}
%		\II_{st} &:= \int_0^1 \left[ Df(\tau Y_t + (1-\tau)Y_s) - Df(Y_s) \right] d\tau Y'_s (X_t - X_s)  \\
%		&- \int_0^1 \left[ Df(\tau \tilde Y_t + (1-\tau)\tilde Y_s) - Df(\tilde Y_s) \right] d\tau \tilde Y'_s (\tilde X_t - \tilde X_s) .
%	\end{align*}
	Lemma \ref{L:controllednorms} immediately yields
	\begin{align*}
		\nor{\I}{\mbb B^{2\alpha}_{p/2,q/2}} 
		&\lesssim_q \nor{Df}{\oo} \nor{R^Y - R^{\tilde Y}}{\mbb B^{2\alpha}_{p/2,q/2}} + \nor{D^2f}{\oo} \nor{Y - \tilde Y}{\oo}\nor{R^Y}{\mbb B^{2\alpha}_{p/2,q/2}} \\
		&\lesssim_{\alpha,p,q,M,T} \nor{f}{C^2} \left( |Y_0 - \tilde Y_0| + |Y'_0 - \tilde Y'_0| + d_{\mbf X, \tilde{\mbf X}, \mathscr B^\alpha_{pq}}((Y,Y'), (\tilde Y, \tilde Y'))\right) + \nor{D^2f}{\oo} [X - \tilde X]_{B^\alpha_{pq}}
	\end{align*}
	and
	\[
		\nor{\mathrm{II}}{\mbb B^{2\alpha}_{p/2,q/2}}
		\lesssim_{\alpha,p,q,M,T} \nor{D^2f}{\oo} \left( [Y' - \tilde Y']_{B^\alpha_{pq}} + [X - \tilde X]_{B^\alpha_{pq}}\right).
	\]
	We next write $\mathrm{III} := (\mathrm{III}^a + \mathrm{III}^b)Y' \delta X$, where, for $(s,t) \in \Delta_2(0,T)$,
	\[
		\mathrm{III}^a_{st} := \int_0^1\left( \int_0^1 D^2 f\left(\sigma [\tau Y_t + (1-\tau) Y_s] + (1-\sigma) [\tau \tilde Y_t + (1-\tau) \tilde Y_s]  \right)d\tau - D^2 f(\sigma Y_s + (1-\sigma) \tilde Y_s) \right)d\sigma (Y_s - \tilde Y_s)
	\]
	and
	\[
		\mathrm{III}^b_{st} := \int_0^1 \tau \int_0^1 D^2 f\left(\sigma [\tau Y_t + (1-\tau) Y_s] + (1-\sigma) [\tau \tilde Y_t + (1-\tau) \tilde Y_s] \right) d\sigma d\tau \left( \delta Y_{st} - \delta \tilde Y_{st}\right).
	\]
	We then have
	\[
		\abs{ \mathrm{III}^a_{st}} \lesssim [D^2 f]_{C^\delta} \left( |\delta Y_{st}|^\delta + |\delta \tilde Y_{st}|^\delta\right) \nor{Y - \tilde Y}{\oo}
	\quad \text{and} \quad
		\abs{ \mathrm{III}^b_{st}} \lesssim \nor{D^2f}{\oo} \abs{ \delta Y_{st} - \delta \tilde Y_{st}},
	\]
	and so, by H\"older's inequality,
	\[
		\Omega_{p/2}(\mathrm{III},\tau) \lesssim \nor{Y'}{\oo} \omega_p(X,\tau) \left[ [D^2 f]_{C^\delta} \left( \omega_p(Y,\tau)^\delta + \omega_p(\tilde Y, \tau)^\delta\right) \nor{Y - \tilde Y}{\oo} + \nor{D^2f}{\oo} \omega_p(Y - \tilde Y,\tau) \right].
	\]
	Thus, by Lemma \ref{L:controllednorms},
	\begin{align*}
		\nor{\mathrm{III}}{\mbb B^{(1+\delta)\alpha}_{p/2,q/2}}
		&\lesssim \nor{Y'}{\oo} [X]_{B^\alpha_{pq}} \left[ [D^2 f]_{C^\delta} \left( [Y]_{B^\alpha_{pq}}^\delta + [\tilde Y]_{B^\alpha_{pq}}^\delta\right) \nor{Y - \tilde Y}{\oo} + \nor{D^2f}{\oo} [Y - \tilde Y]_{B^\alpha_{pq}}\right]\\
		&\lesssim_{\alpha,p,q,M,T} \nor{f}{C^{2,\delta}} \left( |Y_0 - \tilde Y_0| + [Y - \tilde Y]_{B^\alpha_{pq}} \right) \\
		&\lesssim_{\alpha,p,q,M,T} \nor{f}{C^{2,\delta}}\left( |Y_0 - \tilde Y_0| + |Y'_0 - \tilde Y'_0| + d_{\mbf X, \tilde{\mbf X}, \mathscr B^\alpha_{pq}}(Y,Y',\tilde Y, \tilde Y') + [X - \tilde X]_{B^\alpha_{pq}}\right).
	\end{align*}
	Combining all estimates and using the fact that $\mbb B^{2\alpha}_{p/2,q/2} \subset \mbb B^{(1+\delta)\alpha}_{p/2,q/2}$ gives the result.
\end{proof}

\subsection{Rough differential equations with Besov signals: the level-$2$ case} \label{SS:BesovRDE}

We now consider the the initial value problem
\begin{equation}\label{E:RDE}
	dY_t = f(Y_t)\cdot d \mbf X_t \quad \text{in } [0,T] \quad \text{and} \quad Y_0 = y,
\end{equation}
where $y \in \RR^m$, $\mbf X \in \mbf B^\alpha_{pq}$ with $\alpha$, $p$, and $q$ satisfying the level-$2$ conditions \eqref{level2parameters}, and the nonlinearity satisfies
\begin{equation}\label{level2f}
	\begin{dcases}
	f \in C^{2,\delta}(\RR^m;\RR^m \otimes \RR^n) & \text{if } \alpha > 1/3, \text{ where } (2+\delta)\alpha > 1 \text{ and } \delta \alpha > 1/p,\\
	f \in C^3(\RR^n;\RR^m \otimes \RR^n) & \text{if } \alpha = \frac{1}{3}.
	\end{dcases}
\end{equation}
More precisely, we seek a unique $\mbf X$-controlled path $(Y,Y') \in \mathscr B^\alpha_{pq,\mbf X}([0,T],\RR^m)$ satisfying the integral relation
\begin{equation}\label{E:RDEintegral}
	Y_t = y + \int_0^t f(Y_s) \cdot d \mbf X_s.
\end{equation}

\begin{theorem}\label{T:RDE}
	Assume \eqref{level2parameters}, and fix $\mbf X \in \mbf B^\alpha_{pq}([0,T]; \RR^n)$ and $f$ satisfying \eqref{level2f}. Then, for every fixed $y \in \RR^m$, there exists a unique solution $(Y,Y') \in \mathscr B^\alpha_{pq,\mbf X}([0,T];\RR^m)$ of \eqref{E:RDE}. Moreover, there exists a constant $M$ depending only on $\alpha$, $p$, $q$, $T$, $\nor{f}{C^2}$, and $\normm{\mbf X}_{\mbf B^\alpha_{pq}}$ such that $[(Y,Y')]_{\mathscr B^\alpha_{pq,\mbf X}} \le M$.
\end{theorem}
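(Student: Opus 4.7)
The plan is a Picard iteration on a closed ball of $\mbf X$-controlled paths on a short interval, followed by concatenation. Combining Proposition \ref{P:composecontrolled} (to form the composed controlled path $(f(Y), Df(Y) Y')$) with Theorem \ref{T:Besovrough} (to integrate it against $\mbf X$) yields a map
\[
    \mathscr M(Y, Y') := (Z, Z'), \qquad Z_t := y + \int_0^t f(Y_s) \cdot d\mbf X_s, \quad Z' := f(Y),
\]
on the space of $\mbf X$-controlled paths with $Y_0 = y$ and $Y'_0 = f(y)$. Any fixed point automatically satisfies $Y' = f(Y)$ and solves \eqref{E:RDEintegral}, so it suffices to produce one.

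First I would establish invariance. Fix $T_0 \in (0, T \wedge 1]$ and consider
\[
    \mathscr X_M := \bigl\{ (Y, Y') \in \mathscr B^\alpha_{pq, \mbf X}([0, T_0]) : Y_0 = y,\ Y'_0 = f(y),\ [(Y, Y')]_{\mathscr B^\alpha_{pq, \mbf X}} \le M \bigr\},
\]
which, via Remark \ref{R:Besovsup} and Definition \ref{D:BBesovspacemetric}, is a complete metric space under $d_{\mbf X, \mathscr B^\alpha_{pq}}$ (raised to the $(1 \wedge q)$-th power in the quasi-norm regime). Proposition \ref{P:composecontrolled} bounds $[(f(Y), Df(Y) Y')]_{\mathscr B^\alpha_{pq, \mbf X}}$ by $C_{\nor{f}{C^2}, T}(1 + \normm{\mbf X}_{\mbf B^\alpha_{pq}})(M \vee M^2)$, and feeding this into \eqref{controlledintegralpathremainder} gives
\[
    [(Z, Z')]_{\mathscr B^\alpha_{pq, \mbf X}} \lesssim |f(y)| \, \normm{\mbf X}_{\mbf B^\alpha_{pq}} + \frac{\omega(T_0)}{T_0^{2\alpha + 1/p}} (1 + M^2) \bigl( \normm{\mbf X}_{\mbf B^\alpha_{pq}} \vee \normm{\mbf X}_{\mbf B^\alpha_{pq}}^2 \bigr),
\]
with $\omega$ as in \eqref{level2omega}. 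In both regimes $\alpha > 1/3$ and $\alpha = 1/3$ the factor $\omega(T_0)/T_0^{2\alpha + 1/p}$ tends to $0$ as $T_0 \to 0^+$ — crucially using $\alpha > 1/p$ and the slow growth of $\ell_{q/3}$. Choosing $M$ large compared to $|f(y)| \normm{\mbf X}_{\mbf B^\alpha_{pq}}$ and then $T_0$ small enough therefore yields $\mathscr M(\mathscr X_M) \subset \mathscr X_M$.

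For the contraction property, given $(Y, Y'), (\tilde Y, \tilde Y') \in \mathscr X_M$, the second half of Proposition \ref{P:composecontrolled} controls the $\mathscr B^\alpha_{pq, \mbf X}$-distance between the composed paths (with remainder landing in $\mbb B^{(1+\delta)\alpha}_{p/2, q/2}$) by $d_{\mbf X, \mathscr B^\alpha_{pq}}((Y, Y'), (\tilde Y, \tilde Y'))$. The conditions $(2+\delta)\alpha > 1$ and $\delta\alpha > 1/p$ in \eqref{level2f} are precisely those that place $\beta := (1+\delta)\alpha$ in the admissible range $((\alpha + 1/p) \vee (1 - \alpha), 2\alpha]$ of Theorem \ref{T:stability}, and the endpoint case $\alpha = 1/3$ with $f \in C^3$ (so $\delta = 1$) falls into the logarithmic version of the same theorem. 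Applying Theorem \ref{T:stability} with $\tilde{\mbf X} = \mbf X$ and matched initial data yields
\[
    d_{\mbf X, \mathscr B^\alpha_{pq}}(\mathscr M(Y, Y'), \mathscr M(\tilde Y, \tilde Y')) \lesssim_{M, T} \kappa(T_0) \cdot d_{\mbf X, \mathscr B^\alpha_{pq}}((Y, Y'), (\tilde Y, \tilde Y')),
\]
with $\kappa(T_0) = T_0^{\delta\alpha - 1/p}$ for $\alpha > 1/3$ and $\kappa(T_0) = T_0^{1/3 - 1/p} \ell_{q/3}(T_0)$ for $\alpha = 1/3$; both vanish as $T_0 \to 0^+$. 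A further reduction of $T_0$ (depending only on $\normm{\mbf X}_{\mbf B^\alpha_{pq}}$ and $\nor{f}{C^{2, \delta}}$) secures strict contraction and Banach's theorem furnishes a unique local solution. Since this $T_0$ depends only on $\normm{\mbf X}_{\mbf B^\alpha_{pq}}$, $\nor{f}{C^{2,\delta}}$, and $|y|$, and since the rough-path norm of $\mbf X$ on any sub-interval is bounded by its norm on $[0, T]$, the procedure iterates verbatim on $[T_0, 2T_0], [2T_0, 3T_0], \ldots$ — each step starting from the endpoint of the previous solution, with Gubinelli derivative $f$ evaluated there — and the concatenation belongs to $\mathscr B^\alpha_{pq, \mbf X}([0, T])$ by subadditivity of the (appropriate powers of the) Besov moduli. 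The main technical point, beyond bookkeeping across the exponent ranges, is arranging the short-time decay of $\kappa(T_0)$ to survive both the endpoint logarithmic correction and the quasi-norm regime $p$ or $q < 1$; this is already encoded in the precise form of $\omega$ in \eqref{level2omega} and in the statements of Theorems \ref{T:Besovrough} and \ref{T:stability}.
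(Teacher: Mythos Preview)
Your proposal is correct and follows essentially the same approach as the paper: a Picard fixed-point argument on a closed ball in $\mathscr B^\alpha_{pq,\mbf X}$, using Proposition \ref{P:composecontrolled} for composition and Theorems \ref{T:Besovrough} and \ref{T:stability} (with $\beta = (1+\delta)\alpha$) for integration and contraction, followed by iteration on sub-intervals. The only minor imprecision is that $T_0$ does not actually depend on $|y|$---since $f$ is globally bounded the relevant constants involve $\nor{f}{C^1}$ rather than $|f(y)|$---and this uniformity is precisely what makes the iteration to all of $[0,T]$ work.
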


\begin{proof}
	Define $\mathscr X := \left\{ (Y,Y') \in \mathscr B^\alpha_{pq,\mbf X}([0,T],\RR^m) : Y_0 = y , \; Y'_0 = f(y) \right\}$ and, for $(Y,Y') \in \mathscr X$,
	\[
		\mathscr T(Y,Y') := \left(  y + \int_0^\cdot f(Y_s) \cdot d\mbf X_s, f(Y) \right).
	\]
	Theorem \ref{T:Besovrough} and Proposition \ref{P:composecontrolled} imply that $\mathscr T$ is well-defined from $\mathscr X$ to $\mathscr X$. As per Remark \ref{R:Besovsup}, it is easy to see that $\mathscr X$ is a complete metric space under the metric $d_{\mbf X, \mathscr B^\alpha_{pq}}$.
	
	{\it Step 1.} Define $(\oline{Y} , \oline{Y}') \in \mathscr X$ by $\oline{Y}_t := y + f(y)X_t$ and $\oline{Y}'_t := f(y)$ for $t \in [0,T]$, and, for $M > 0$, define
	\[
		\mathscr X_M := \left\{ (Y,Y') \in \mathscr X : d_{\mathscr B^\alpha_{pq},\mbf X}((Y,Y'), (\oline{Y},\oline{Y}')) \le M \right\},
	\]
	which is again a complete metric space. We first show that, for $M > 0$ sufficiently large and $0 < T_0 \le 1$ sufficiently small, depending only on $\alpha$, $p$, $q$, $\nor{f}{C^2}$, and $\normm{\mbf X}_{\mbf B^\alpha_{pq}}$, $\mathscr T$ maps $\mathscr X_M$ into $\mathscr X_M$.
	
	Assume that $(Y,Y') \in \mathscr X_M$ and set $(Z,Z') := \mathscr T(Y,Y') \in \mathscr X$. By Lemmas \ref{L:Besovcomposition} and \ref{L:controllednorms},
	\begin{align*}
		[Z' - f(y)]_{B^\alpha_{pq}} 
		&= [f(Y)]_{B^\alpha_{pq}} \le \nor{Df}{\oo} [Y]_{B^\alpha_{pq}}\\
		&\lesssim_{\alpha,p,q} \nor{Df}{\oo} \left( |f(Y_0)| [X]_{B^\alpha_{pq}} + T^{\alpha - 1/p} \left( [Y']_{B^\alpha_{pq}}[X]_{B^\alpha_{pq}} + \nor{R^Y}{\mbb B^{2\alpha}_{p/2,q/2}} \right) \right) \\
		&\le \nor{Df}{\oo} \nor{f}{\oo} [X]_{B^\alpha_{pq}} + (1 + [X]_{B^\alpha_{pq}} ) M T^{\alpha - 1/p},
	\end{align*}
	and Theorem \ref{T:Besovrough} gives (recall the definition of $\omega$ from \eqref{level2omega})
	\begin{align*}
		\nor{R^Z}{\mbb B^{2\alpha}_{p/2,q/2}}
		&\lesssim_{\alpha,p,q} \nor{f}{\oo} \nor{\mbb X}{\mbb B^{2\alpha}_{p/2,q/2}} + [ (f(Y), f(Y)') ]_{\mathscr B^\alpha_{pq,\mbf X} } \left( \normm{\mbf X}_{\mbf B^\alpha_{pq}} \vee \normm{\mbf X}_{\mbf B^\alpha_{pq}}^2\right) \frac{\omega(T)}{T^{2\alpha + 1/p}} .
	\end{align*}
	By Proposition \ref{P:composecontrolled}, we find that
	\[
		[ (f(Y), f(Y)')]_{\mathscr B^\alpha_{pq,\mbf X}} \lesssim_{\alpha,p,q,T} \nor{f}{C^2}\left(1 + [X]_{B^\alpha_{pq}}\right)\left[ \left( |Y'_0| + [(Y,Y')]_{\mathscr B^\alpha_{pq,\mbf X}}\right) \vee \left( |Y'_0| + [(Y,Y')]_{\mathscr B^\alpha_{pq,\mbf X}}\right)^2\right].
	\]
	Combining these estimates gives, for some constant $C_1 = C_1\left(\alpha,p,q,\nor{f}{C^2}, \normm{\mbf X}_{\mbf B^\alpha_{pq}}\right) > 0$,
	\begin{equation}\label{aprioribound}
		[Z' - f(y)]_{B^\alpha_{pq}} + \nor{R^Z}{\mbb B^{2\alpha}_{p/2,q}}
		\le C_1\left( 1+ \frac{\omega(T_0)}{T_0^{2\alpha + 1/p}} (M \vee M^2)\right).
	\end{equation}
	We then set $M := 2C_1$, in which case \eqref{aprioribound} becomes
	\[
		[Z' - f(y)]_{B^\alpha_{pq}} + \nor{R^Z}{\mbb B^{2\alpha}_{p/2,q/2}}
		\le \frac{M}{2} + C_1(1 \vee 2C_1)\frac{\omega(T_0)}{T_0^{2\alpha + 1/p}} M.
	\]
	We then conclude by choosing $T_0 > 0$ sufficiently small that
	\[
		\frac{\omega(T_0)}{T_0^{2\alpha + 1/p}} \le \frac{1}{2C_1(1 \vee 2C_1)}.
	\]
	
	{\it Step 2.} We next show that, shrinking $T_0$ if necessary, $\mathscr T$ is a contraction on $\mathscr X_M$. Let $(Y,Y'), (\tilde Y,\tilde Y') \in \mathscr X_M$, and set $(Z,Z') = \mathscr T(Y,Y')$ and $(\tilde Z, \tilde Z') = \mathscr T(\tilde Y, \tilde Y')$. If $\alpha > 1/3$, then $\beta := (1+\delta)\alpha$ satisfies $\alpha + \beta > 1$ and $\beta > \alpha + 1/p$, and so, by Theorem \ref{T:stability}, 
	\[
		d_{\mbf X, \mathscr B^\alpha_{pq}}(Z,Z', \tilde Z, \tilde Z') \lesssim_{\alpha,\delta,p,q,\nor{f}{C^2}, \normm{\mbf X}_{\mbf B^\alpha_{pq}}} T_0^{\delta \alpha - 1/p}\left( [f(Y)' - f(\tilde Y)']_{B^\alpha_{pq}} + \nor{R^{f(Y)} - R^{f(\tilde Y)} }{\mbb B^{(1+\delta)\alpha}_{p/2,q/2}} \right).
	\]
	Proposition \ref{P:composecontrolled} then yields
	\[
		d_{\mbf X, \mathscr B^\alpha_{pq}}(Z,Z', \tilde Z, \tilde Z') \lesssim_{\alpha,\delta,p,q,\nor{f}{C^{2,\delta}}, \normm{\mbf X}_{\mbf B^\alpha_{pq}}}  T_0^{\delta \alpha - 1/p} d_{\mbf X, \mathscr B^\alpha_{pq}}(Y,Y', \tilde Y, \tilde Y'),
	\]
	and we conclude upon shrinking $T_0$ as needed. A similar argument holds for when $\alpha = 1/3$, in which case Theorem \ref{T:stability} and Proposition \ref{P:composecontrolled} instead give
	\[
		d_{\mbf X, \mathscr B^{1/3}_{pq}}(Z,Z', \tilde Z, \tilde Z') \lesssim_{p,q,\nor{f}{C^{3}}, \normm{\mbf X}_{\mbf B^{1/3}_{pq}}} T_0^{\alpha - 1/p}\ell_{q/3}(T_0) d_{\mbf X, \mathscr B^{1/3}_{pq}}(Y,Y', \tilde Y, \tilde Y').
	\]
	The fixed-point construction can then be iterated to build the unique solution on all of $[0,T]$.
\end{proof}

The notion of controlled-rough path solution of \eqref{E:RDE} can equivalently be characterized in the Davie sense. More precisely, the increment $Y_t - Y_s$, minus an appropriate level-$2$ expansion, belongs to a Besov space of appropriately higher-order regularity. 

\begin{proposition}\label{P:Davies}
	Assume \eqref{level2parameters}, $y \in \RR^m$, $f$ satisfies \eqref{level2f}, and $Y \in B^\alpha_{pq}([0,T],\RR^m)$. Then $(Y,Y')$ belongs to $\mathscr B^\alpha_{pq,\mbf X}$ and is a solution of \eqref{E:RDE} if and only if $Y' = f(Y)$ and 
	\[
		\mathscr D_{st} := Y_t - Y_s - f(Y_s)(X_t - X_s) - Df(Y_s)f(Y_s) \mbb X_{st} \quad \text{for } (s,t) \in \Delta_2(0,T)
	\]
	satisfies
	\begin{equation}\label{Davies}
		\mathscr D \in
		\begin{dcases}
		\mbb B^{3\alpha}_{p/3,q/3}([0,T],\RR^m) & \text{if } \alpha > \frac{1}{3} \text{ and}\\
		\mbb B^{1}_{p/3,\oo,\circ}([0,T], \RR^m) & \text{if } \alpha = \frac{1}{3}.
		\end{dcases}
	\end{equation}	
\end{proposition}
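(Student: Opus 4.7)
\emph{Forward direction.} Suppose $(Y,Y') \in \mathscr B^\alpha_{pq, \mbf X}$ solves \eqref{E:RDE}. By definition of the controlled-solution notion, $Y' = f(Y)$ and $Y_t - y = \int_0^t f(Y_s) \cdot d\mbf X_s$. Proposition \ref{P:composecontrolled} ensures $(f(Y), Df(Y)f(Y)) \in \mathscr B^\alpha_{pq, \mbf X}$, and Theorem \ref{T:Besovrough} applied to this controlled integrand identifies $\mathscr D_{st}$ with the rough-integral remainder $\delta Z_{st} - f(Y_s) \delta X_{st} - Df(Y_s)f(Y_s) \mbb X_{st}$, which lies in $\mbb B^{3\alpha}_{p/3,q/3}$ by \eqref{roughintegralremainder} (resp.\ $\mbb B^{1}_{p/3,\infty;\circ}$ by \eqref{roughintegralendpoint} and Remark \ref{R:RAsmallT} when $\alpha = 1/3$).

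\emph{Reverse direction: controlled-rough-path property.} Conversely, assume $Y' = f(Y)$ and $\mathscr D$ satisfies \eqref{Davies}. Proposition \ref{P:Besovembedding} gives $Y \in C^{\alpha - 1/p}$, so Lemma \ref{L:Besovcomposition} (with $\delta = 1$) yields $f(Y) \in B^\alpha_{pq}$. To check $R^Y \in \mbb B^{2\alpha}_{p/2, q/2}$, decompose
\[
R^Y_{st} = \delta Y_{st} - f(Y_s) \delta X_{st} = \mathscr D_{st} + Df(Y_s) f(Y_s) \mbb X_{st}.
\]
The second summand belongs to $\mbb B^{2\alpha}_{p/2, q/2}$ since $Df(Y)f(Y)$ is bounded and $\mbb X \in \mbb B^{2\alpha}_{p/2, q/2}$. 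For the first, the triangle inequality together with the H\"older bounds from Propositions \ref{P:Besovembedding} and \ref{P:roughBesovembedding} produces the crude pointwise estimate $|\mathscr D_{st}| \lesssim (t-s)^{\alpha - 1/p}$, so $\mathscr D \in \mbb C^{\alpha - 1/p}$. When $\alpha > 1/3$, Lemma \ref{L:interpolate} with target $(2\alpha, p/2, q/3)$, source $(3\alpha, p/3, q/3)$, and interpolation exponent $\delta = \alpha - 1/p$ (the required condition $2\alpha < 2\alpha + \delta/3$ reduces to $\delta > 0$) lifts $\mathscr D$ into $\mbb B^{2\alpha}_{p/2, q/3} \subset \mbb B^{2\alpha}_{p/2, q/2}$. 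When $\alpha = 1/3$, the embedding $\mbb B^1_{p/3, \infty; \circ} \subset \mbb B^{\tau \ell_{q/3}(\tau)}_{p/3, q/3}$ (which holds by \eqref{loglike}) allows Lemma \ref{L:interpolatemodulus} to play the analogous role and yields $\mathscr D \in \mbb B^{2/3}_{p/2, q/2}$.

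\emph{Reverse direction: integral equation.} Now that $(Y, f(Y)) \in \mathscr B^\alpha_{pq, \mbf X}$, Theorem \ref{T:Besovrough} defines $\tilde Y_t := y + \int_0^t f(Y_s) \cdot d\mbf X_s$, and by \eqref{roughintegralremainder}/\eqref{roughintegralendpoint} its Davie-type remainder
\[
\tilde{\mathscr D}_{st} := \delta \tilde Y_{st} - f(Y_s) \delta X_{st} - Df(Y_s) f(Y_s) \mbb X_{st}
\]
lies in the same Besov space as $\mathscr D$. Hence $\delta(\tilde Y - Y) = \tilde{\mathscr D} - \mathscr D$ does too. For $\alpha > 1/3$, the parameters satisfy $3\alpha > 1 \vee (3/p)$ (since $\alpha > 1/p$ and $\alpha > 1/3$), so Remark \ref{R:gammalarger1} and Lemma \ref{L:gammalarger1} give that $\tilde Y - Y$ admits a constant representative. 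For $\alpha = 1/3$, the little-oh condition built into $\mbb B^1_{p/3, \infty; \circ}$ translates directly to the hypothesis $\omega_{p/3}(\tilde Y - Y, \tau)/\tau \to 0$ required by Lemma \ref{L:gammalarger1} (using $p > 3$ so that $1 \vee 3/p = 1$). In either case, matching $\tilde Y_0 = Y_0 = y$ forces $Y = \tilde Y$, so $Y$ satisfies the integral equation.

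\emph{Main obstacle.} The substantive step is the reverse controlled-path check: passing from the Davie-type bound on $\mathscr D$ in $\mbb B^{3\alpha}_{p/3, q/3}$ (a higher regularity but worse integrability than the target $\mbb B^{2\alpha}_{p/2, q/2}$) to the Gubinelli-remainder bound needed by Definition \ref{D:controlledBesovRP}. The trade succeeds because the definition of $\mathscr D$ already grants free pointwise H\"older control $\mathscr D \in \mbb C^{\alpha - 1/p}$, which is exactly the data the interpolation Lemmas \ref{L:interpolate} and \ref{L:interpolatemodulus} consume to gain integrability at the cost of only a third of the regularity gap. The endpoint $\alpha = 1/3$ is the most delicate point: one must track log moduli consistently and observe that the little-oh condition built into $\mbb B^1_{p/3, \infty; \circ}$ is precisely calibrated to make both the interpolation and the uniqueness-via-Lemma \ref{L:gammalarger1} argument go through.
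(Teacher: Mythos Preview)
Your proof is correct and follows the same route as the paper: forward direction via Theorem \ref{T:Besovrough}, reverse direction by decomposing $R^Y = \mathscr D + Df(Y)f(Y)\mbb X$, using the crude H\"older bound $\mathscr D \in \mbb C^{\alpha-1/p}$ together with interpolation to land $\mathscr D$ in $\mbb B^{2\alpha}_{p/2,q/2}$, then defining $\tilde Y$ via Theorem \ref{T:Besovrough} and concluding $Y=\tilde Y$ from Lemma \ref{L:gammalarger1}. The only substantive difference is that you treat the endpoint $\alpha=1/3$ more carefully than the paper: where the paper simply writes ``by Lemma \ref{L:interpolate}, $\mathscr D\in\mbb B^{2\alpha}_{p/2,q/2}$'' without distinguishing cases, you correctly observe that in the endpoint case the source space is $\mbb B^1_{p/3,\infty;\circ}$, so a direct application of Lemma \ref{L:interpolate} would only yield $\mbb B^{2/3}_{p/2,\infty}$, which is too weak; your detour through the embedding $\mbb B^1_{p/3,\infty}\subset\mbb B^{\omega_{q/3}}_{p/3,q/3}$ (immediate from \eqref{loglike}) followed by Lemma \ref{L:interpolatemodulus} repairs this and delivers $\mbb B^{2/3}_{p/2,q/3}\subset\mbb B^{2/3}_{p/2,q/2}$ as needed.
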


\begin{proof}
	Given a solution of \eqref{E:RDE} in the sense of rough integrals, the conclusion is an immediate consequence of \eqref{roughintegralremainder} and \eqref{roughintegralendpoint} from Theorem \ref{T:Besovrough}. Conversely, assume that $(Y,Y') = (Y, f(Y))$ and $\mathscr D$ satisfies \eqref{Davies}. Lemma \ref{L:Besovcomposition} gives $f(Y) \in B^\alpha_{pq}([0,T],\RR^m \otimes \RR^n)$. We then set
	\[
		R^Y_{st} := Y_t - Y_s - f(Y_s) (X_t - X_s) = Df(Y_s) f(Y_s) \mbb X_{st} + \mathscr D_{st} \quad \text{for } (s,t) \in \Delta_2(0,T).
	\]
	Because $\alpha > 1/p$, Propositions \ref{P:Besovembedding} and \ref{P:roughBesovembedding} imply that $\mathscr D \in \mbb C^{\alpha - 1/p}([0,T],\RR^m)$, and so, by Lemma \ref{L:interpolate}, $\mathscr D \in \mbb B^{2\alpha}_{p/2,q/2}$. The properties of $f$ and $Y$ give $(Df(Y_s) f(Y_s) \mbb X_{st})_{(s,t) \in \Delta_2(0,T)} \in \mbb B^{2\alpha}_{p/2,q/2}$, and we conclude that $R^Y \in B^{2\alpha}_{p/2,q/2}$, and, therefore, $(Y,Y') \in \mathscr B^\alpha_{pq,\mbf X}$.
	
	As in Theorem \ref{T:Besovrough}, we define 
	\[
		\tilde Y_t := y + \int_0^t f(Y_s) d \mbf X_s,
	\]
	and \eqref{roughintegralremainder} and \eqref{roughintegralendpoint} immediately give
	\[
		\delta \tilde Y - f(Y)\delta X- Df(Y)f(Y) \mbb X \in
		\begin{dcases}
		\mbb B^{3\alpha}_{p/3,q/3}([0,T],\RR^m) & \text{if } \alpha > \frac{1}{3} \text{ and}\\
		\mbb B^{1}_{p/3,\oo,\circ}([0,T], \RR^m) & \text{if } \alpha = \frac{1}{3}.
		\end{dcases}
	\]
	 It follows that 
	 \[
	 	Y - \tilde Y
		\in
		\begin{dcases}
		B^{3\alpha}_{p/3,q/3}([0,T],\RR^m) & \text{if } \alpha > \frac{1}{3} \text{ and}\\
		B^{1}_{p/3,\oo,\circ}([0,T], \RR^m) & \text{if } \alpha = \frac{1}{3},
		\end{dcases}
	\]
	and so Lemma \ref{L:gammalarger1} and the fact that $Y_0 = \tilde Y_0 = y$ give $Y = \tilde Y$, as desired.
\end{proof}

\begin{remark}\label{R:DaviesHolder}
	In the proof of Proposition \ref{P:Davies}, the $(\alpha -1/p)$-H\"older regularity of the Davie's remainder $\mathscr D$ was used in order to regain integrability (from $p/3$ to $p/2$), but the exact value of the H\"older exponent was not important. After the fact, we actually see that $\mathscr D \in \mbb C^{\rho}$, where $\rho(\tau) = \omega(\tau) \tau^{-3/p}$, which is a consequence of \eqref{RHolder} and \eqref{Rctsendpoint} from Theorem \ref{T:Besovsewingcontinuous}. 
\end{remark}

We finish this section by proving that the It\^o-Lyons map is locally Lipschitz continuous in the data.

\begin{theorem}\label{T:IL}
	Assume \eqref{level2parameters}, fix $M > 0$. and let $y,\tilde y \in \RR^m$, $f^1,f^2$ satisfying \eqref{level2f}, and $\mbf X^1,\mbf X^2 \in \mbf B^\alpha_{pq}$ be such that
	\[
		\bigvee_{i=1}^2 \nor{f^i}{C^{2,\delta}} \vee \normm{\mbf X^i}_{\mbf B^\alpha_{pq}} \le M,
	\]
	with $C^{2,\delta}$ replaced by $C^3$ when $\alpha = 1/3$. Then, for $i = 1,2$, the solutions $(Y^i,f(Y^i)) \in \mathscr B^\alpha_{pq,\mbf X^i}$ of \eqref{E:RDE} corresponding to respectively $y^i$, $f^i$, and $\mbf X^i$ satisfy 
	\[
		d_{\mbf X, \tilde{\mbf X}, \mathscr B^\alpha_{pq}}\left( (Y^1,f(Y^1)), (Y^2, f(Y^2))\right)
		\lesssim_{\alpha,p,q,M,T} \left( |y^1 - y^2| + \nor{f^1 - f^2}{C^{2,\delta} }  +  \rho_{\mbf B^\alpha_{pq}}(\mbf X, \tilde{\mbf X}) \right).
	\]
\end{theorem}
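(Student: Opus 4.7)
The plan is to follow the structure of the Young case Theorem \ref{T:YoungItoMap}, leveraging the stability estimate of Theorem \ref{T:stability} and the composition estimate of Proposition \ref{P:composecontrolled}, with Theorem \ref{T:RDE} providing uniform a priori bounds on both solutions. First I would invoke Theorem \ref{T:RDE} to produce a constant $M' = M'(\alpha, p, q, T, M)$ such that $[(Y^i, f^i(Y^i))]_{\mathscr B^\alpha_{pq,\mbf X^i}} \le M'$ for $i = 1, 2$; all implicit constants in the rest of the argument may depend on $M'$. I would then work on a short interval $[0, T_0]$ to be determined and use the integral formulation $Y^i_t = y^i + \int_0^t f^i(Y^i_s) \, d\mbf X^i_s$.

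Next, I would apply Theorem \ref{T:stability} to the pair of rough integrals $(Z^1, Z^{1\prime}) = (\int_0^\cdot f^1(Y^1) d\mbf X^1, f^1(Y^1))$ and $(Z^2, Z^{2\prime}) = (\int_0^\cdot f^2(Y^2) d\mbf X^2, f^2(Y^2))$, treating $(f^i(Y^i), Df^i(Y^i)f^i(Y^i))$ as $\mbf X^i$-controlled paths via Proposition \ref{P:composecontrolled}. With $\beta = (1+\delta)\alpha$ when $\alpha > 1/3$ (respectively with the factor $T_0^{1/3-1/p}\ell_{q/3}(T_0)$ when $\alpha = 1/3$), this yields on $[0, T_0]$
\[
d_{\mbf X^1,\mbf X^2,\mathscr B^\alpha_{pq}}\bigl((Y^1, f^1(Y^1)), (Y^2, f^2(Y^2))\bigr) \lesssim |y^1-y^2| + |f^1(y^1) - f^2(y^2)| + \rho_{\mbf B^\alpha_{pq}}(\mbf X^1, \mbf X^2) + T_0^{\beta-\alpha-1/p} \Theta,
\]
where $\Theta$ denotes the $\mathscr B^\alpha_{pq}$-distance between the controlled paths $(f^1(Y^1), f^1(Y^1)')$ and $(f^2(Y^2), f^2(Y^2)')$. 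The key structural step is then to decompose
\[
f^1(Y^1) - f^2(Y^2) = (f^1 - f^2)(Y^1) + \bigl( f^2(Y^1) - f^2(Y^2) \bigr),
\]
viewing the first summand as a $\mbf X^1$-controlled path (whose $\mathscr B^\alpha_{pq,\mbf X^1}$-norm is controlled by $\nor{f^1-f^2}{C^{2,\delta}}$ times a polynomial in $M'$ via Proposition \ref{P:composecontrolled}) and applying the second half of Proposition \ref{P:composecontrolled} to the second summand, yielding
\[
\Theta \lesssim \nor{f^1 - f^2}{C^{2,\delta}} + |y^1 - y^2| + |f^1(y^1) - f^2(y^2)| + \rho_{\mbf B^\alpha_{pq}}(\mbf X^1, \mbf X^2) + d_{\mbf X^1,\mbf X^2,\mathscr B^\alpha_{pq}}\bigl((Y^1, f^1(Y^1)), (Y^2, f^2(Y^2))\bigr).
\]

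Combining the two displays, the Lipschitz distance appears on both sides multiplied on the right by $T_0^{\beta-\alpha-1/p}$, which vanishes as $T_0 \to 0^+$. Choosing $T_0$ small enough (depending only on $\alpha, p, q, M, T$) allows absorption onto the left, producing the desired bound on $[0, T_0]$ with $|f^1(y^1) - f^2(y^2)| \le \nor{f^1 - f^2}{C^0} + \nor{Df^2}{C^0}|y^1-y^2|$. The argument then iterates on $[kT_0, (k+1)T_0]$ for $k = 1, 2, \ldots, \lceil T/T_0 \rceil$, where at each restart the role of $|y^1-y^2|$ and $|f^1(y^1)-f^2(y^2)|$ is played by $|Y^1_{kT_0} - Y^2_{kT_0}|$ and $|f^1(Y^1_{kT_0}) - f^2(Y^2_{kT_0})|$, both already controlled by the cumulative estimate from previous intervals (using Proposition \ref{P:Besovembedding} to pass from Besov to sup-norm bounds for $Y^i$). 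Since only finitely many such iterations are needed, the constants accumulate to something depending on $T$.

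The main obstacle I anticipate is the bookkeeping in the concatenation step: carrying the a priori bound $M'$ uniformly across all subintervals and verifying that $T_0$ can be chosen independent of the starting time (which is clear since both $\mbf X^i$ are controlled on the whole $[0, T]$ and $M'$ is uniform). A secondary subtlety is the endpoint case $\alpha = 1/3$, where one must track the logarithmic loss $\ell_{q/3}(T_0)$ and verify that $T_0^{1/3 - 1/p}\ell_{q/3}(T_0) \to 0$ still holds as $T_0 \to 0^+$, which is guaranteed by \eqref{loglike} since $1/3 > 1/p$.
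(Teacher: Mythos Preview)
Your proposal is correct and follows essentially the same approach as the paper: invoke the a priori bound from Theorem \ref{T:RDE}, apply the stability estimate Theorem \ref{T:stability}, decompose the composed controlled paths via an intermediate term, control each piece with Proposition \ref{P:composecontrolled}, absorb the small-time factor, and iterate. The only cosmetic difference is that the paper interpolates through $f^1(Y^2)$ whereas you interpolate through $f^2(Y^1)$; these are symmetric and equally valid. One small omission: when applying Theorem \ref{T:stability} you must also track $|Y'_0 - \tilde Y'_0| = |Df^1(y^1)f^1(y^1) - Df^2(y^2)f^2(y^2)|$, but this is bounded by $|y^1-y^2| + \nor{f^1-f^2}{C^1}$ in the same way you handled $|f^1(y^1)-f^2(y^2)|$.
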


\begin{proof}
	By Theorem \ref{T:RDE}, we have $[(Y,Y')]_{\mathscr B^\alpha_{pq,\mbf X}}\vee [(\tilde Y, \tilde Y')]_{\mathscr B^\alpha_{pq,\tilde{\mbf X}}} \lesssim_{\alpha,p,q,M,T} 1$. Define now
	\[
		(W^i,(W^i)') := (f^i(Y^i), Df^i(Y^i)(Y^i)') \quad \text{for } i = 1,2 \quad \text{and} \quad (\tilde W, \tilde W') := (f^1(Y^2), Df^1(Y^2) (Y^2)').
	\]
	Theorem \ref{T:stability} now gives, for some non-increasing function $\sigma: [0,\oo) \to [0,\oo)$ satisfying $\lim_{\tau \to 0^+} \sigma(\tau) = 0$,
	\begin{align*}
		d_{\mbf X^1, \mbf X^2, \mathscr B^\alpha_{pq}}&(Y^1, f(Y^1), Y^2, f(Y^2))\\
		&\lesssim \rho_{\mbf B^\alpha_{pq}}(\mbf X^1,\mbf X^2) + |f^1(y^1) - f^2(y^2)| + |Df^1(y^1)f^1(y^1) - Df(y^2)f(y^2)|\\
		&+ \sigma (T)\left( [(W^1)' - (W^2)']_{B^\alpha_{pq}} + \nor{R^{W^1} - R^{W^2} }{\mbb B^{(1+\delta)\alpha}_{p/2,q}} \right) \\
		&\lesssim \rho_{\mbf B^\alpha_{pq}}(\mbf X^1,\mbf X^2) + |y^1 - y^2| \\
		&+\sigma(T)\left( [(W^1)' - \tilde W']_{B^\alpha_{pq}} +[\tilde W' - (W^2)']_{B^\alpha_{pq}} + \nor{R^{W^1} - R^{\tilde W}}{\mbb B^{(1+\delta)\alpha}_{p/2,q/2} } + \nor{R^{\tilde W} - R^{W^2} }{\mbb B^{(1+\delta)\alpha}_{p/2,q/2}} \right),
	\end{align*}
	with $\delta = 1$ when $\alpha = 1/3$. Proposition \ref{P:composecontrolled} yields
	\begin{align*}
		[(W^1)' - \tilde W']_{B^\alpha_{pq}}
		&= [f^1(Y^1)' - f^1(Y^2)']_{B^\alpha_{pq}}\\
		&\lesssim [X^1 - X^2]_{B^\alpha_{pq}} + |y^1-y^2| + d_{\mbf X, \tilde{\mbf X}, \mathscr B^\alpha_{pq}}(Y^1,f^1(Y^1), Y^2, f^2(Y^2)),
	\end{align*}
	\begin{align*}
		\nor{R^{W^1} - R^{\tilde W}}{\mbb B^{(1+\delta)\alpha}_{p/2,q/2}}
		&= \nor{R^{f^1(Y^1)} - R^{f^1(Y^2)}}{\mbb B^{(1+\delta)\alpha}_{p/2,q/2}}\\
		&\lesssim |y^1 - y^2| + [X^1 - X^2]_{B^\alpha_{pq}} + d_{\mbf X, \tilde{\mbf X}, \mathscr B^\alpha_{pq}}(Y^1,f^1(Y^1),Y^2,f^2(Y^2)),
	\end{align*}
	and
	\[
		[\tilde W' - (W^2)']_{B^\alpha_{pq}} + \nor{R^{\tilde W} - R^{W^2}}{\mbb B^{(1+\delta)\alpha}_{p/2,q/2}}
		= [(f^1-f^2)(Y^2)]_{B^\alpha_{pq}} + \nor{R^{(f^1 - f^2)(Y^2)}}{\mbb B^{(1+\delta)\alpha}_{p/2,q/2}}
		\lesssim \nor{f^1 - f^2}{C^2}.
	\]
	Combining all terms, we conclude that
	\begin{align*}
		d_{\mbf X^1, \mbf X^2, \mathscr B^\alpha_{pq}}(Y^1, f(Y^1), Y^2, f(Y^2))
		&\lesssim |y^1 - y^2| + \nor{f^1 - f^2}{C^2} + \rho_{\mbf B^\alpha_{pq}}(\mbf X^1,\mbf X^2)\\
		&+ \sigma(T_0) d_{\mbf X^1, \mbf X^2,\mathscr B^\alpha_{pq}}(Y^1, f^1(Y^1), Y^2, f^2(Y^2)).
	\end{align*}
	It follows that, if $T_0$ is sufficiently small, then, upon rearranging terms, we have the desired result on $[0,T_0]$. The estimate can be extended iteratively to the rest of $[0,T]$ as before.
\end{proof}

\subsection{Beyond level-$2$ Besov rough paths} \label{sec:beyond}

When $N \ge 3$, solving nonlinear differential equations driven by level-$N$ rough paths inevitably leads to iterated integrals with branching, as, for example,
\begin{equation}\label{branchnobranch}
        \int ( \delta X^i ) ( \delta X^j )  d X^k,  \qquad \text{vs. the non-branching }  \iiint d X^i d X^j d X^k,
\end{equation}
which are not meant to be well-defined, but rather are part of the augmented information supplied by some rough path. The second, non-branching term in \eqref{branchnobranch} is precisely contained in the $(\RR^n)^{\otimes 3}$-valued third level tier of $\mbf X$, as introduced in Definition \ref{D:BesovRP}.
On the other hand, 
%
%Non-linear differential equations driven by Level-$N$ rough paths as given in Definition \ref{D:BesovRP}, $N \ge 3$ 
%
%Level-$N$ rough paths as given in Definition \ref{D:BesovRP} allow for a complete theory when $N=2$, which is the content of the next section(s). 
%For $N \ge 3$, and non-linear problems, additional algebraic considerations are necessary, discussed in Section \ref{sec:beyond}.
%
%
%can be useful for linear problems,  but additional structure is necessary 
%
%For $N \ge 3$ and non-linear problems one encounters iterated integrals with branching, e.g.  
%$$
%        \int ( \delta X^i ) ( \delta X^j )  d X^k,  \qquad \text{vs. non-branching: }  \int \int \int d X^i d X^j d X^k.
%$$
 this is not the case for the first, branching, information. The situation can be resolved by imposing a chain-rule, in this specific example,
 \[
 	( \delta X^i ) ( \delta X^j ) = \iint d X^i d X^j  + \iint d X^j d X^i,
\]
which leads to the notion of {\em geometric} rough path. Alternatively, one can work with {\em branched} rough paths, where the state space $T^{(N)}_1(\RR^n)$ is replaced by a (truncated) Hopf algebra of trees that allows to encode the full branching information. A complete theory of branched rough paths and differential equations in the H\"older setting is found in \cite{gubinelli2010ramification,hairer2015geometric}, and in a c\`adl\`ag $p$-variation setting in \cite{friz2018differential}. Although the branched setting is at first sight more involved (one needs to introduce the Connes--Kreimer and Grossmann-Larson Hopf algebras), the absence of algebraic constraints ultimately leads to simplifications, notably when it comes to establishing the stability of controlled rough paths under composition with regular functions. Perhaps for this reason, a detailed discussion of level-$N$ geometric rough differential equations in the controlled rough paths perspective is fairly recent; see for instance \cite{BDFT} and \cite{BoeGen}. (For a development of geometric rough paths theory without controlled structures, following Davie's direct approach, we refer to \cite{friz2010multidimensional}.) 

For the sake of brevity, we discuss in what follows only the geometric case. %level-$N$ theory in the Besov scale. 
Adaptions to a branched Besov rough paths setting are then straightforward (and easier in a sense).\footnote{As in \cite{hairer2015geometric} one could also go on to embed the branched case into a suitably extended geometric setting, though this will not (easily) lead to optimal estimates.} As in Definition \ref{D:BesovRP}, throughout this sub-section, we fix $\alpha$, $p$, and $q$ satisfying \eqref{levelNparameters}.

\begin{definition}\label{D:BesovRPgeo}
A level-$N$ Besov rough path $\mbf X \in \mbf B^\alpha_{pq}([0,T])$, in the sense of Definition \ref{D:BesovRP}, is called {\em geometric} if, for all $(s,t) \in \Delta_2(0,T)$, 
$$
           \mbf X_{s,t} \in G^{(N)}(\RR^n)    \subset T^{(N)}_1(\RR^n),
$$
where $G^{(N)}(\RR^n)$ is the free step-$N$ nilpotent group, a.k.a (truncated) character group of the shuffle Hopf-algebra. (This precisely encodes the chain rule.)
\end{definition}
%as given by Definition 
%Let $N \in \NN$, $\alpha \in \left( \frac{1}{N+1}, \frac{1}{N} \right]$, $p \ge N+1$, and $q \ge 1$. We say $\mbf X: \Delta(0,T) \to T^{(N)}_1(\RR^m)$ is a level-$N$ $\mbf B^\alpha_{pq}([0,T])$-rough path if $\mbf X$ satisfies Chen's relations, that is,
%\begin{equation}\label{Chen}
%	\mbf X_{st} \otimes \mbf X_{tu} = \mbf X_{su} \quad \text{for all } 0 \le s < t < u \le T,
%\end{equation}
%and
%\[
%	\normm{\mbf X}_{\mbf B^\alpha_{pq}([0,T])} := \sum_{n=1}^N \nor{\mbf X^{(n)}}{\mbb B^{n\alpha}_{p/n,q}}^{1/n} < \oo.
%\]
%We make this into a complete metric space with the (inhomogenous) metric-like quantity
%\[
%	\rho_{\mbf B^\alpha_{pq}([0,T])}(\mbf X,\mbf Y) := \sum_{n=1}^N \nor{\mbf X^{(n)} - \mbf Y^{(n)}}{\mbb B^{n\alpha}_{p/n,q}}.
%\]

\begin{definition}\label{D:levelNcontrolled}
	A path
	\[
		\mbf Y = \left(\mbf Y^{(0)}, \mbf Y^{(1)}, \ldots, \mbf Y^{(N-1)} \right): [0,T] \to \bigoplus_{k=0}^{N-1} \mcl L((\RR^n)^{\otimes k}; \RR^m)
	\]
	is called a \textit{controlled Besov rough path} over $\RR^m$ with respect to ${\bf X}$, and we write $\mbf \in \mathscr B^\alpha_{pq,\mbf X}([0,T])$, if the ``remainders'' defined by, for $0\le i\le N-1$,
	\[
		\Delta_2(0,T) \ni (s,t) \mapsto R_{s,t}^{i} :=
		\begin{cases}
			\mbf Y_{t}^{(i)} - \mbf Y_{s}^{(i)} - \sum_{j=1}^{N-1-i}\mbf Y_{s}^{(i+j)}\mbf X_{s,t}^{(j)}, & \text{if }0\le i\le N-2,\\
			\mbf Y_{t}^{(N-1)} - \mbf Y_{s}^{(N-1)}, & \text{if }i=N-1,
		\end{cases}
	\]
	satisfy
	\[
		[{\mbf Y}]_{\mathscr B^\alpha_{pq,\mbf X}} := \sum_{i=0}^{N-1} \nor{R^i}{\mbb B^{ (N-i) \alpha}_{{p}/{(N-i)},q/(N-i)}} < \infty.
     % [Y']_{B^\alpha_{pq}} + \nor{R^Y}{\mbb B^{2\beta}_{p/2,q}}.
	\]
	For geometric $\mbf X, \tilde{\mbf X} \in \mbf B^\alpha_{pq}$, $\mbf Y \in \mathscr B^\alpha_{pq,\mbf X}$ and $\tilde{\mbf Y} \in \mathscr B^\alpha_{pq,\tilde{\mbf X}}$, if $(\tilde R^i)_{i=0}^{N-1}$ denote the corresponding remainder terms for $\tilde{\mbf Y}$, we define
	\[
		d_{\mathscr B^\alpha_{pq}, \mbf X, \tilde{\mbf X}} \left( \mbf Y, \tilde{\mbf Y} \right) := \sum_{i=0}^{N-1} \nor{R^i - \tilde R^i}{\mbb B^{ (N-i) \alpha}_{{p}/{(N-i)},q/(N-i)}}.
	\]
\end{definition}

\begin{remark}\label{R:levelNlevel2}
	Definition \ref{D:levelNcontrolled} is consistent with Definition \ref{D:controlledBesovRP} when $N = 2$, and, if $(Y,Y')$ is a level-$2$ controlled rough path in the sense of Definition \ref{D:controlledBesovRP}, then, with the notation of Definition \ref{D:levelNcontrolled}, $Y = \mbf Y^{(0)}$, $Y' = \mbf Y^{(1)}$, $\delta Y = R^1$, and $R^Y = R^0$.
\end{remark}

We now outline the steps required to generalize the results from the level-$2$ case.

First, just as in Theorem \ref{T:Besovrough}, one checks that
\[
	\Delta_2(0,T) \ni (s,t) \mapsto A_{st} := \mbf Y^{(0)}_s \mbf X^{(1)}_{st} + ... + \mbf Y^{(N-1)}_s \mbf X^{(N)}_{st}
\]
satisfies the assumptions of the Besov sewing results, Theorems \ref{T:Besovsewing}, \ref{T:Besovsewingendpoint}, and \ref{T:Besovsewingcontinuous}. This gives the rough integral of $\mbf Y$ against $\mbf X$, and the estimates from the sewing theorems yield control of the difference between the increments of the integral and $A$ in $\mbb B^{(N+1)\alpha}_{p/(N+1), q/(N+1)}$, as in \eqref{roughintegralremainder}, or with $\tau^{(N+1)\alpha}$ modified by a logarithmic correction as in \eqref{roughintegralendpoint} when $\alpha = 1/(N+1)$. By proving a result similar to Lemma \ref{L:controllednorms}, and with repeated use of Lemma \ref{L:roughinterpolate}, one then shows that the rough integral is a controlled Besov rough path in the sense of Definition \ref{D:levelNcontrolled}, which we may denote by $\int {\mbf Y}d \mbf X$.  Yet another application of the sewing lemma shows that the Besov rough integral is a bounded linear map on the space of controlled Besov rough paths, and locally Lipschitz continuous as a function of the integrating Besov rough paths; cf. Theorem \ref{T:stability}. (This step does not rely on the geometric property, and one would proceed identically in a branched setting, still using Theorems \ref{T:Besovsewing}, \ref{T:Besovsewingendpoint}, and \ref{T:Besovsewingcontinuous}, but now with $A$ of a different form.)
 %(This step would be similar in a branched setting, with notationally more involved $A=A_{st}$, still using Theorem \ref{T:Besovsewing} for integration.) 
 
Next, the stability of controlled Besov rough paths under composition with a regular map $f$ must be established. The correct image $f({\mbf Y})$, as a {\em higher order controlled rough path}, involves $N-1$ derivatives of $f$. In this (geometric) context, the precise form can be found in \cite[Theorem 2.11]{BDFT}, or in formula (4.2) of \cite{BoeGen}. Checking the required controlled Besov regularity then relies on the same analytic ideas as were presented in Proposition \ref{P:composecontrolled}. %\todo{Too short?} 

%$$Z_{s}^{r}\triangleq\sum_{j=1}^{N-1}\frac{1}{j!}F^{j}(Y_{s}^{0})\big(\sum_{i_{1}+\cdots+i_{j}=r}(Y_{s}^{i_{1}}\boxtimes\cdots\boxtimes Y_{s}^{i_{j}})\circ\delta_{j}|_{V^{\otimes j}}\big). 
%$$ *** needs to be spelled out more ***

Finally, using the above results, a Picard fixed point argument yields a unique $\mbf X$-controlled path ${\mbf Y} \in \mathscr B^\alpha_{pq,\mbf X}$ satisfying the integral relation
\begin{equation}\label{E:RDEintegral2}
	Y_t = y + \int_0^t f(Y_s) \cdot d \mbf X_s,
\end{equation}
with
\begin{equation}\label{levelNf}
	\left\{
	\begin{split}
	&f \in C^{N,\delta}, \text{ where } \delta \in (0,1] \text{ satisfies}\\
	&(N+\delta)\alpha > 1 \quad \text{and} \quad \delta \alpha > \frac{1}{p} \quad \text{if } \alpha > \frac{1}{N+1} \text{ and}\\
	&\delta = 1 \quad \text{if } \alpha = \frac{1}{N+1} \text{ and } q \le N+1.
	\end{split}
	\right.
\end{equation}

\begin{theorem}\label{T:RDElevelN}
	For $N \in \NN$, assume \eqref{levelNparameters}, and fix a geometric rough path $\mbf X \in \mbf B^\alpha_{pq}$ and $f$ satisfying 	\eqref{levelNf}. Then, for every fixed $y \in \RR^m$, there exists a unique solution $\mbf Y \in \mathscr B^\alpha_{pq,\mbf X}([0,T],\RR^m)$ of \eqref{E:RDEintegral2}. Moreover, there exists a constant $M$ depending only on $N$, $\alpha$, $p$, $q$, $T$, $\nor{f}{C^N}$, and $\normm{\mbf X}_{\mbf B^\alpha_{pq}}$ such that $[\mbf Y]_{\mathscr B^\alpha_{pq,\mbf X}} \le M$. Finally, the solution map is locally Lipschitz continuous in $(y,f,\mbf X) \in \RR^m \times C^{N,\delta} \times \mbf B^\alpha_{pq}$. 
\end{theorem}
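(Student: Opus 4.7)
The plan is to mirror the level-$2$ argument (Theorems \ref{T:RDE} and \ref{T:IL}) by setting up a Picard iteration on a ball in the controlled Besov rough path space $\mathscr B^\alpha_{pq,\mbf X}$, with the three pillars (rough integration, composition with smooth functions, and a fixed-point contraction) each generalized from level $2$ to level $N$. The geometric hypothesis on $\mbf X$ is what lets us lift $f(\mbf Y)$ to a level-$(N-1)$ controlled rough path by ordinary Taylor expansion; in the branched case one would substitute the tree-indexed Butcher expansion, but the analytic ingredients are identical.

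First, I would establish the Besov rough integral at level $N$. Given $\mbf Y \in \mathscr B^\alpha_{pq,\mbf X}$, set
\[
A_{st} := \sum_{k=0}^{N-1} \mbf Y^{(k)}_s \, \mbf X^{(k+1)}_{st}.
\]
Using Chen's relation \eqref{Chen} and the definition of the remainders $R^i$, a direct algebraic computation (as in the proof of Theorem \ref{T:Besovrough}) gives
\[
\delta A_{sut} = -\sum_{k=0}^{N-1} R^k_{su} \, \mbf X^{(k+1)}_{ut} + \sum_{k=1}^{N-1}\sum_{j=1}^{k} (\text{cross terms}),
\]
where each term pairs a factor of Besov-regularity $(N-k)\alpha$ with one of regularity $k\alpha$, yielding $\nor{\delta A}{\oline{\mbb B}^{(N+1)\alpha}_{p/(N+1), q/(N+1)}} < \oo$. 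Applying Theorems \ref{T:Besovsewing}, \ref{T:Besovsewingendpoint}, and especially \ref{T:Besovsewingcontinuous} (for the gain in integrability), together with the interpolation inequalities of Lemma \ref{L:roughinterpolate} and the H\"older embedding Proposition \ref{P:roughBesovembedding}, produces an integrated path $\int \mbf Y d\mbf X$. Its canonical lift, whose $i$-th level is $\sum_{j=i}^{N-1} \mbf Y^{(j)} \mbf X^{(j-i)}$ for $i \ge 1$, is checked to lie in $\mathscr B^\alpha_{pq,\mbf X}$ with the desired quantitative bounds, and the map $\mbf Y \mapsto \int \mbf Y d\mbf X$ is locally Lipschitz in the controlled-path metric, analogous to Theorem \ref{T:stability}.

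Second, the composition $\mbf Y \mapsto f(\mbf Y)$: using the formula from \cite{BDFT,BoeGen}, the $k$-th level of $f(\mbf Y)$ is a polynomial in $\mbf Y^{(0)}, \ldots, \mbf Y^{(N-1-k)}$ with coefficients in derivatives of $f$ up to order $N-1-k$. Via Taylor's theorem with remainder the corresponding $R^k$-terms decompose into a linear piece in the original $R^j$ and higher-order polynomial corrections bounded through the generalization of Lemma \ref{L:Besovcomposition} and H\"older's inequality; the $C^{N,\delta}$ hypothesis on $f$ ensures the worst term has regularity $(N+\delta)\alpha > 1$ (or $= 1$ with logarithmic loss at the endpoint $\alpha = 1/(N+1)$), which is exactly what the sewing theorems require at the next step of Picard iteration. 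The local Lipschitz dependence of $f(\mbf Y)$ on $(\mbf Y,f,\mbf X)$ is extracted by the same Taylor-with-remainder bookkeeping used in Proposition \ref{P:composecontrolled}.

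Third, I run the Picard map $\mathscr T(\mbf Y) := y + \int f(\mbf Y)d\mbf X$ on the affine ball of radius $M$ around the trivial lift $\mbf Y^{(0)} = y + f(y)X$, $\mbf Y^{(k)} = D^{k-1}f(y)\cdot f(y)^{\otimes k}$ for $k \ge 1$. The two preceding steps provide, on an interval $[0,T_0]$, quantitative estimates whose small-time factor is either $T_0^{\delta\alpha - 1/p}$ or (at the endpoint) $T_0^{\alpha - 1/p}\ell_{q/(N+1)}(T_0)$, so choosing $M$ large in terms of $\nor{f}{C^N}$ and $\normm{\mbf X}_{\mbf B^\alpha_{pq}}$ and then $T_0$ small yields self-mapping and contraction in the complete metric $d_{\mbf X,\mathscr B^\alpha_{pq}}$. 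The choice of $T_0$ depends only on the Besov rough path norm of $\mbf X$ (not on the initial condition), so the local solution extends by iteration on adjacent intervals to a unique global solution on $[0,T]$. Local Lipschitz continuity in $(y,f,\mbf X)$ then follows exactly as in Theorem \ref{T:IL}, by writing the difference of two Picard maps as a telescoping sum $(W^1,\tilde W, W^2)$ and estimating each piece via the stability of the rough integral and the Lipschitz property of $f \mapsto f(\mbf Y)$.

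The main obstacle is the sheer algebraic bookkeeping at step one: verifying that the particular combinations of $\mbf Y^{(k)}$ and $\mbf X^{(j)}$ which arise in $\delta A_{sut}$ and in $\mathscr R A$ pair up so that every cross-term has total Besov regularity exceeding $1 \vee (N+1)/p$, and that the integrability losses from Lemma \ref{L:roughinterpolate} are exactly compensated by Theorem \ref{T:Besovsewingcontinuous}. This is conceptually routine but error-prone, especially at the critical scale $\alpha = 1/(N+1)$ where one must pay close attention to the logarithmic moduli $\omega_r$ from \eqref{omegar} and invoke Lemma \ref{L:interpolatemodulus} rather than Lemma \ref{L:interpolate}.
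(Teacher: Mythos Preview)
Your proposal is correct and follows essentially the same outline as the paper, which itself only sketches the argument for Theorem \ref{T:RDElevelN}: construct the level-$N$ rough integral via Besov sewing applied to $A_{st} = \sum_{k=0}^{N-1} \mbf Y^{(k)}_s \mbf X^{(k+1)}_{st}$, establish stability of $\mathscr B^\alpha_{pq,\mbf X}$ under composition with $f$ using the geometric lift from \cite{BDFT,BoeGen}, and close with a Picard iteration and the level-$N$ analogue of Theorem \ref{T:IL}. One small slip: in your composition step, the $k$-th level of $f(\mbf Y)$ involves derivatives of $f$ up to order $k$ (not $N-1-k$) and components $\mbf Y^{(1)},\ldots,\mbf Y^{(k)}$ alongside $\mbf Y^{(0)}$, via a Fa\`a di Bruno--type expansion.
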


\begin{remark}
	Both here and in the general level-$2$ case, under a corresponding local condition on $f$, existence of a unique solution may only hold on $[0,T^*)$ for some explosion time $T^*$.
\end{remark}

\begin{remark}
	The solution is equivalently characterized by a Davie expansion. More precisely, as in Proposition \ref{P:Davies}, an $\mbf X$-controlled rough path $\mbf Y$ is a solution of \eqref{E:RDEintegral2} if and only if the difference of $\delta Y$ with an appropriate expansion belongs to $\mbb B^{(N+1)\alpha}_{p/(N+1), q/(N+1)}$, if $\alpha > \frac{1}{N+1}$, or $\mbb B^{1}_{p/(N+1),\oo;\circ}$ if $\alpha = \frac{1}{N+1}$.
\end{remark}

\appendix

\section{by P. Zorin-Kranich: Pathwise Besov estimate for stochastic integrals}  \label{appendix} 
As a proof of concept, we show here a pathwise Besov space estimate for It\^{o} integrals in the spirit of \cite{arxiv:2008.08897}.
In order to keep things simple, we stay at the level of complexity of the rough path BDG inequality \cite{MR3909973,MR4003122}.

\subsection{Vector-valued BDG inequality}
In this section, we quickly recall how vector-valued inequalities can be deduced from weighted inequalities.
The basic tool for that purpose is the Rubio de Francia extrapolation theorem, the sharp form of which on $\RR^{n}$ was proved in \cite{MR2754896}.
The probabilisitic version below is taken from \cite[Theorem 8.1]{MR3667592}.

A \emph{weight} on a probability space is an integrable function with values in $(0,\infty)$.
To a weight $w$ on a filtered probability space, for any $p\in (1,\infty)$, is associated the \emph{$A_{p}$ characteristic}
\[
Q_{p}(w) = [w]_{A_{p}} :=
\sup_{\tau \text{ stopping time}} \norm{ \EE(w|\mcl F_{\tau}) \EE(w^{1/(1-p)}|\mcl F_{\tau})^{p-1} }_{L^{\infty}}
\in [1,\infty].
\]
A weight is said to be an $A_{p}$ weight if its $A_{p}$ characteristic is finite.
We will mostly use the theory of weights as a black box, with the exception of the facts that $[w]_{A_{2}} = [w^{-1}]_{A_{2}}$ for any weight $w$, and $[1]_{A_{p}} = 1$ for any $p\in (1,\infty)$.

The extrapolation theorem for random variables reads as follows.
\begin{theorem}[Rubio de Francia extrapolation {\cite[Theorem 8.1]{MR3667592}}]
\label{thm:extrapolation}
Let $r\in (1,\infty)$ and $N_{r} : (0,\infty) \to (0,\infty)$ an increasing function.
Then, for every $p \in (1,\infty)$, there exists an increasing function $N_{p} : (0,\infty) \to (0,\infty)$ such that the following holds.

Let $X,Y$ be positive random variables defined on the same filtered probability space $\Omega$.
If for every weight $w \in A_{r}$ on $\Omega$, we have
\begin{equation}
\label{eq:RdF-weighted-hyp}
\norm{X}_{L^{r}(w)} := \EE( \abs{X}^{r} w )^{1/r} \leq N_{r}([w]_{A_{r}}) \norm{Y}_{L^{r}(w)},
\end{equation}
then, for every weight $w \in A_{p}$ on $\Omega$, we have
\[
\norm{X}_{L^{p}(w)} \leq N_{p}([w]_{A_{p}}) \norm{Y}_{L^{p}(w)}.
\]
\end{theorem}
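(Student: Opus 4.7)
The plan is to follow the classical Rubio de Francia extrapolation argument, adapted from Euclidean cubes to filtered probability spaces exactly as in \cite{MR3667592}. The fundamental tool is the weighted Doob maximal inequality: for every $s \in (1,\infty)$ and every $A_{s}$-weight $w$ on $\Omega$, the filtration maximal function $M f := \esssup_{\tau} \abs{\EE(f|\mcl F_{\tau})}$ is bounded on $L^{s}(w)$ with norm $\norm{M}_{L^{s}(w) \to L^{s}(w)}$ controlled by an increasing function of $[w]_{A_{s}}$. Without loss of generality I would treat separately the cases $p > r$ and $p < r$ (the equality case is the hypothesis).

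In the case $p > r$, the first step is duality: by definition of the $L^{p/r}(w)$ norm,
\[
\norm{X}_{L^{p}(w)}^{r} = \sup_{h} \EE(X^{r} h \, w),
\]
where the supremum runs over $h \geq 0$ with $\norm{h}_{L^{(p/r)'}(w)} = 1$. For each such $h$, I would apply the Rubio de Francia iteration
\[
Rh := \sum_{k=0}^{\infty} \frac{M^{k} h}{(2 \norm{M}_{L^{(p/r)'}(w) \to L^{(p/r)'}(w)})^{k}},
\]
which satisfies $h \leq Rh$ pointwise, $\norm{Rh}_{L^{(p/r)'}(w)} \leq 2$, and, most importantly, $M(Rh) \leq 2\norm{M} \cdot Rh$, so that $Rh$ is a filtration $A_{1}$-weight with constant depending only on $[w]_{A_{p}}$. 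A short computation (using the definition of $A_{r}$ together with Hölder's inequality in the exponent $(p/r)'$) shows that $\tilde w := Rh \cdot w$ lies in $A_{r}$, with $[\tilde w]_{A_{r}}$ bounded by an increasing function of $[w]_{A_{p}}$. Applying the hypothesis \eqref{eq:RdF-weighted-hyp} to $\tilde w$ and using Hölder in $L^{p/r}(w) \times L^{(p/r)'}(w)$ then gives
\[
\EE(X^{r} h w) \leq \EE(X^{r} \, Rh \cdot w) \leq N_{r}([\tilde w]_{A_{r}})^{r} \, \EE(Y^{r} \, Rh \cdot w) \leq 2 \, N_{r}([\tilde w]_{A_{r}})^{r} \norm{Y}_{L^{p}(w)}^{r};
\]
taking the supremum in $h$ closes the case $p > r$.

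For the case $p < r$, I would pass to the companion weight $\sigma := w^{1 - p'}$, which lies in $A_{p'}$ with $[\sigma]_{A_{p'}} = [w]_{A_{p}}^{p'-1}$. Rewriting the $L^{p}(w)$ norm as a weighted $L^{p'}(\sigma)$ norm (via the identity $w = \sigma^{1-p}$), one dualizes $X$ and runs the same Rubio de Francia algorithm now based on boundedness of $M$ on $L^{(r'/p')'}(\sigma)$; the analogous manipulations produce an $A_{r}$ weight built from $\sigma$, to which the hypothesis applies, yielding the claimed inequality after undoing the duality.

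The main obstacle will be the bookkeeping underlying ``$[\tilde w]_{A_{r}}$ is controlled by an increasing function of $[w]_{A_{p}}$'': this uses both the quantitative weighted boundedness of the maximal function and the $A_{1} \cdot A_{1}^{1-r}$-type factorization of $A_{r}$. Once these are in place, the function $N_{p}(\cdot)$ is obtained explicitly by composing $N_{r}$ with the (increasing) bound on $[\tilde w]_{A_{r}}$ in terms of $[w]_{A_{p}}$ and collecting numerical factors.
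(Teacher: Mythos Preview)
The paper does not prove this theorem; it is stated with a direct citation to \cite[Theorem 8.1]{MR3667592} and used as a black box, so there is no ``paper's own proof'' to compare against.

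Your outline for the case $p>r$ is the standard Rubio de Francia argument and is correct as written: dualize in $L^{p/r}(w)$, run the iteration $Rh=\sum_{k\ge 0}(2\norm{M})^{-k}M^{k}h$ in $L^{(p/r)'}(w)$ to manufacture an $A_{1}$ weight, and check that $Rh\cdot w\in A_{r}$ with characteristic controlled by $[w]_{A_{p}}$. The sketch for $p<r$ is a bit garbled, however. The phrase ``rewriting the $L^{p}(w)$ norm as a weighted $L^{p'}(\sigma)$ norm'' is not literally true; what you presumably mean is the unweighted duality $\norm{X}_{L^{p}(w)}=\sup\{\int Xg:\norm{g}_{L^{p'}(\sigma)}\le 1\}$. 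But dualizing $X$ itself does not directly produce an $L^{r}$ integral to which the hypothesis applies. The clean route for $p<r$ is instead to run the Rubio de Francia algorithm on a power of $X$ (or on the function $h=X/\norm{X}_{L^{p}(w)}$ in the right Lebesgue space) so as to build a weight $W$ with $W^{1-r'}\in A_{1}$, hence $W\in A_{r}$, satisfying both $\int X^{p}w\lesssim(\int X^{r}W)^{p/r}$ and $\int Y^{r}W\lesssim\norm{Y}_{L^{p}(w)}^{r}$ by H\"older. This is the argument carried out in \cite{MR3667592}; once you straighten out which space the iteration lives in, the bookkeeping you flag (controlling $[\tilde w]_{A_{r}}$ via the weighted maximal bound and the $A_{1}$ factorization) goes through exactly as you say.
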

In order to discourage an overly optimistic interpretation of the notation, we note that the function $N_{p}$ above depends (in a known, explicit way) on $r,p,N_{r}$.
In particular, if we apply this theorem twice to go from $L^{r}$ to $L^{p}$, and then back to $L^{r}$, we will get a worse (larger) function.

Suppose that we have measurable families of random vaiables $X^{(\tau,s)}$ and $Y^{(\tau,s)}$ such that \eqref{eq:RdF-weighted-hyp} holds uniformly in $\tau,s$, which vary over some further measure space.
Applying Theorem~\ref{thm:extrapolation}, we obtain a similar inequality with $p \in (1,\infty)$ in place of $r$.
By Fubini's theorem, it follows that
\[
\norm{ \norm{X^{(\tau,s)}}_{L^{p}_{s}} }_{L^{p}(w)}
\leq N_{p}([w]_{A_{p}}) \norm{ \norm{Y^{(\tau,s)} }_{L^{p}_{s}}}_{L^{p}(w)},
\]
uniformly in $\tau$.
Applying Theorem~\ref{thm:extrapolation} again with some $q\in (1,\infty)$, we obtain
\[
\norm{ \norm{X^{(\tau,s)}}_{L^{p}_{s}} }_{L^{q}(w)}
\leq N_{q}([w]_{A_{q}}) \norm{ \norm{Y^{(\tau,s)} }_{L^{p}_{s}}}_{L^{q}(w)}.
\]
By Fubini's theorem, it follows that
\[
\norm{ \norm{ \norm{X^{(\tau,s)}}_{L^{p}_{s}} }_{L^{q}_{\tau}} }_{L^{q}(w)}
\leq N_{q}([w]_{A_{q}}) \norm{ \norm{ \norm{Y^{(\tau,s)} }_{L^{p}_{s}} }_{L^{q}_{\tau}} }_{L^{q}(w)}.
\]
Applying Theorem~\ref{thm:extrapolation} one more with some $r\in (1,\infty)$, we obtain
\begin{equation}
\label{eq:extrapolation-LpLqLr}
\norm{ \norm{ \norm{X^{(\tau,s)}}_{L^{p}_{s}} }_{L^{q}_{\tau}} }_{L^{r}(w)}
\leq N_{r}([w]_{A_{q}}) \norm{ \norm{ \norm{Y^{(\tau,s)} }_{L^{p}_{s}} }_{L^{q}_{\tau}} }_{L^{r}(w)}.
\end{equation}
One can iterate this indefinitely, obtaining vector-valued inequalities with ever more nested norms, but we will stop here and specialize to $w=1$.

Now we state the weighted martingale inequalities that we will use.
The sharp $A_{2}$ weighted martingale maximal inequality \cite{MR3916937} says that, for any real-valued martingale $f$ and any weight $w\in A_{2}$, we have
\begin{equation}
\label{eq:mart-max-A2}
\norm{Mf}_{L^{2}(w)} \lesssim [w]_{A_{2}} \norm{f}_{L^{2}(w)}.
\end{equation}

The endpoint weighted versions of the BDG inequalities were proved by Osekowski in \cite{MR3567926,MR3688518}.
In particular, in \cite{MR3688518} it was proved that $\EE Mf \cdot w \lesssim \EE Sf \cdot Mw$ for any real-valued martingale $f$ and any weight $w$, where $M$ denotes the martingale maximal and $S$ the martingale square function.
It is well-known that such estimates imply $A_{p}$ weighted estimates of the form \eqref{eq:RdF-weighted-hyp}.
Indeed, by the result of \cite{MR3688518}, for any real-valued martingale $f$ and weights $w,u$, we have
\[
\EE Mf \cdot u \cdot w
\lesssim
\EE Sf \cdot M(uw)
\leq
(\EE (Sf)^{2} w)^{1/2} (\EE (M(uw))^{2} w^{-1})^{1/2}.
\]
By \eqref{eq:mart-max-A2}, we have
\[
(\EE (M(uw))^{2} w^{-1})^{1/2}
\lesssim
[w^{-1}]_{A_{2}} (\EE (uw)^{2} w^{-1})^{1/2}
=
[w]_{A_{2}} (\EE u^{2} w)^{1/2}.
\]
By duality, it follows that
\[
L^{2}_{w} Mf \lesssim [w]_{A_{2}} L^{2}_{w} Sf.
\]
By extrapolation \eqref{eq:extrapolation-LpLqLr}, we obtain the following vector-valued BDG inequality.
\begin{theorem}[Vector-valued BDG]
\label{thm:vv-BDG}
Let $p,q,r \in (1,\infty)$ and let $X^{(\tau,s)}$ be a measurable family of real-valued martingales defined on the same probability space $\Omega$.
Then, we have
\[
\norm{ \norm{ \norm{M X^{(\tau,s)}}_{L^{p}_{s}} }_{L^{q}_{\tau}} }_{L^{r}(\Omega)}
\lesssim_{p,q,r}
\norm{ \norm{ \norm{S Y^{(\tau,s)} }_{L^{p}_{s}} }_{L^{q}_{\tau}} }_{L^{r}(\Omega)}.
\]
\end{theorem}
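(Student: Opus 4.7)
The plan is to proceed exactly along the extrapolation scheme sketched in the paragraph preceding the theorem statement, bootstrapping the scalar weighted endpoint inequality
\[
\norm{Mf}_{L^{2}(w)} \lesssim [w]_{A_{2}} \norm{Sf}_{L^{2}(w)}
\]
(which was just obtained via Osekowski's bound together with \eqref{eq:mart-max-A2}) into the triply iterated vector-valued estimate. First I would note that, because the constant in the scalar inequality is absolute, the hypothesis \eqref{eq:RdF-weighted-hyp} of Theorem~\ref{thm:extrapolation} is satisfied uniformly in $(\tau,s)$ by the family $(X^{(\tau,s)})$ with $r=2$ and $N_{2}(c)=Cc$. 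Hence Theorem~\ref{thm:extrapolation} yields, for any $p_{0}\in(1,\infty)$ and any weight $w\in A_{p_{0}}$,
\[
\norm{MX^{(\tau,s)}}_{L^{p_{0}}(w)}\leq N_{p_{0}}([w]_{A_{p_{0}}})\,\norm{SX^{(\tau,s)}}_{L^{p_{0}}(w)},
\]
uniformly in $(\tau,s)$. Choosing $p_{0}=p$ and using Fubini on the product of $\Omega$ with the parameter space of $s$ (at the level of $L^{p}$-norms raised to the $p$), this promotes to
\[
\bigl\|\norm{MX^{(\tau,s)}}_{L^{p}_{s}}\bigr\|_{L^{p}(w)}\lesssim N_{p}([w]_{A_{p}})\,\bigl\|\norm{SX^{(\tau,s)}}_{L^{p}_{s}}\bigr\|_{L^{p}(w)},
\]
uniformly in $\tau$, for every $w\in A_{p}$.

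Next I would apply Theorem~\ref{thm:extrapolation} a second time to the family of pairs
\[
\bigl(\,\norm{MX^{(\tau,\cdot)}}_{L^{p}_{s}},\ \norm{SX^{(\tau,\cdot)}}_{L^{p}_{s}}\bigr)_{\tau},
\]
indexed by $\tau$ and viewed as measurable random variables on $\Omega$, with starting exponent $p$ and target exponent $q$. This yields, for every $w\in A_{q}$,
\[
\bigl\|\norm{MX^{(\tau,s)}}_{L^{p}_{s}}\bigr\|_{L^{q}(w)}\lesssim N_{q}([w]_{A_{q}})\,\bigl\|\norm{SX^{(\tau,s)}}_{L^{p}_{s}}\bigr\|_{L^{q}(w)},
\]
uniformly in $\tau$. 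A second application of Fubini (now on the $L^{q}$ norm in $\tau$, after raising to the $q$-th power) converts this into the doubly iterated inequality
\[
\bigl\|\,\bigl\|\norm{MX^{(\tau,s)}}_{L^{p}_{s}}\bigr\|_{L^{q}_{\tau}}\bigr\|_{L^{q}(w)}
\lesssim N_{q}([w]_{A_{q}})\,\bigl\|\,\bigl\|\norm{SX^{(\tau,s)}}_{L^{p}_{s}}\bigr\|_{L^{q}_{\tau}}\bigr\|_{L^{q}(w)}.
\]

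Finally, one more invocation of Theorem~\ref{thm:extrapolation}, starting from exponent $q$ and targeting $r$, gives the weighted version of the claimed inequality for any $w\in A_{r}$; specializing to the constant weight $w=1$ (so $[w]_{A_{r}}=1$) produces the unweighted bound
\[
\bigl\|\,\bigl\|\norm{MX^{(\tau,s)}}_{L^{p}_{s}}\bigr\|_{L^{q}_{\tau}}\bigr\|_{L^{r}(\Omega)}
\lesssim_{p,q,r}
\bigl\|\,\bigl\|\norm{SX^{(\tau,s)}}_{L^{p}_{s}}\bigr\|_{L^{q}_{\tau}}\bigr\|_{L^{r}(\Omega)},
\]
which is the assertion of the theorem (modulo the apparent typo $Y\leftrightarrow X$ on the right-hand side).

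I do not foresee any substantial obstacle: all the analytic content sits in the scalar weighted endpoint bound and in the Rubio de Francia extrapolation theorem, both of which are either already established in the text or quoted as a black box. The only points that require a little care are (i) checking that the constants in each weighted inequality are genuinely uniform in the external parameters that are about to be integrated out (which holds because each appeal to Theorem~\ref{thm:extrapolation} produces a constant depending only on the weight characteristic, not on the specific martingale), and (ii) keeping the order of Fubini and extrapolation correct so that one always extrapolates with respect to the random variable on $\Omega$ obtained after the inner norms have been absorbed into the integrand.
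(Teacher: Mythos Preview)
Your proposal is correct and follows exactly the extrapolation scheme the paper itself lays out in the paragraphs leading up to the theorem: start from the scalar $A_2$-weighted bound $\norm{Mf}_{L^2(w)}\lesssim [w]_{A_2}\norm{Sf}_{L^2(w)}$, then iterate Theorem~\ref{thm:extrapolation} and Fubini three times as in \eqref{eq:extrapolation-LpLqLr}, finally specializing to $w=1$. You also correctly flag the $Y\leftrightarrow X$ typo in the statement.
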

It is likely possible to allow the exponents to be $1$, but this would require a different proof.

\subsection{Martingale paraproduct estimate} \label{app:besov}

For a discrete time adapted process $(F_{s,t})$ and a discrete time martingale $(g_{n})$, the martingale paraproduct is defined by
\begin{equation}
\label{eq:paraprod}
\Pi_{s,t}(F,g) := \sum_{s < j \leq t} F_{s,j-1} dg_{j}
=
\sum_{s \leq j < t} F_{s,j} (g_{j+1}-g_{j}).
\end{equation}

Using Theorem~\ref{thm:vv-BDG} in place of \cite[Lemma 2.4]{arxiv:2008.08897}, we easily obtain the following analog of \cite[Proposition 2.5]{arxiv:2008.08897}.
\begin{proposition}
\label{prop:vv-pprod}
Let $0 < r,r_{1} \leq \infty$, $1 < r_{0},p,q,p_{0},q_{0} < \infty$, $1 < p_{1},q_{1} \leq \infty$.
Let $\gamma_{0},\gamma_{1}\in\RR$.
Assume
\[
1/p = 1/p_{0} + 1/p_{1},
\quad
1/q = 1/q_{0} + 1/q_{1},
\quad
1/r = 1/r_{0} + 1/r_{1}.
\]
Then, for any measurable families of discrete time martingales $g^{(x,y)}$, two-parameter adapted process $F^{(x,y)}$, and stopping times $\tau'_{x,y}\leq \tau_{x,y}$ on a filtered probability space $\Omega$, we have
\begin{equation}
\label{eq:vv-pprod}
\begin{split}
\MoveEqLeft
\norm{ L^{q}_{x} x^{-\gamma_{0}-\gamma_{1}} L^{p}_{y} \sup_{\tau'_{x,y} < t \leq \tau_{x,y}} \abs{\Pi(F^{(x,y)},g^{(x,y)})_{\tau'_{x,y},t}} }_{L^{r}(\Omega)}
\\ &\lesssim
\norm{ L^{q_{1}}_{x} x^{-\gamma_{1}} L^{p_{1}}_{y} \sup_{\tau'_{x,y} < t \leq \tau_{x,y}} \abs{F^{(x,y)}_{\tau'_{x,y},t}} }_{L^{r_{1}}(\Omega)}
\norm{ L^{q_{0}}_{x} x^{-\gamma_{0}} L^{p_{0}}_{y} Sg^{(x,y)}_{\tau'_{x,y},\tau_{x,y}} }_{L^{r_{0}}(\Omega)}.
\end{split}
\end{equation}
\end{proposition}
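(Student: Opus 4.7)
The plan is to follow the strategy of \cite[Proposition 2.5]{arxiv:2008.08897}, replacing the scalar BDG ingredient there by the vector-valued version Theorem~\ref{thm:vv-BDG}. The key observation is that, for each fixed $(x,y)$ (and each $\omega$), the discrete-time process
\[
t \mapsto \Pi_{\tau'_{x,y}, t}(F^{(x,y)}, g^{(x,y)}) = \sum_{\tau'_{x,y} < j \le t} F^{(x,y)}_{\tau'_{x,y}, j-1}\, dg^{(x,y)}_j
\]
is a martingale started at $0$ at time $\tau'_{x,y}$, with martingale square function on $(\tau'_{x,y},\tau_{x,y}]$ equal to
\[
S\Pi(F^{(x,y)},g^{(x,y)})_{\tau'_{x,y},\tau_{x,y}} = \Bigl(\sum_{\tau'_{x,y} < j \le \tau_{x,y}} (F^{(x,y)}_{\tau'_{x,y},j-1})^2 (dg^{(x,y)}_j)^2\Bigr)^{1/2},
\]
which is pointwise dominated by the ``paraproduct bound''
\[
S\Pi(F^{(x,y)},g^{(x,y)})_{\tau'_{x,y},\tau_{x,y}} \le \Bigl(\sup_{\tau'_{x,y} < t \le \tau_{x,y}} \abs{F^{(x,y)}_{\tau'_{x,y},t}}\Bigr) \cdot Sg^{(x,y)}_{\tau'_{x,y},\tau_{x,y}}.
\]
This is precisely the decoupling that will drive the product structure on the right-hand side of \eqref{eq:vv-pprod}.

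Next, I would apply Theorem~\ref{thm:vv-BDG} to the family $\{\Pi_{\tau'_{x,y},\cdot}(F^{(x,y)},g^{(x,y)})\}_{(x,y)}$, with $s \leftrightarrow y$ and $\tau \leftrightarrow x$, absorbing the weight $x^{-\gamma_0-\gamma_1}$ into the measure on $x$. This is legitimate because the weighted BDG estimates feeding into Theorem~\ref{thm:vv-BDG} and the extrapolation Theorem~\ref{thm:extrapolation} are both formulated pointwise in the non-probabilistic variables and carry over to arbitrary $\sigma$-finite measures there. Combining the resulting bound on the martingale maximum by the square function with the paraproduct bound above, and then applying H\"older's inequality three times --- in $y$ using $1/p = 1/p_0 + 1/p_1$; in $x$, splitting $x^{-\gamma_0-\gamma_1} = x^{-\gamma_1}\cdot x^{-\gamma_0}$ and using $1/q = 1/q_0 + 1/q_1$; and in $\Omega$ with $1/r = 1/r_0 + 1/r_1$ --- yields \eqref{eq:vv-pprod}.

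The main obstacle will be reconciling the exponent ranges: Theorem~\ref{thm:vv-BDG} requires its three exponents to lie in $(1,\infty)$, while the proposition permits $p_1,q_1 \in (1,\infty]$ and $r,r_1 \in (0,\infty]$. For finite $r \in (1,\infty)$ the argument above goes through verbatim. When $p_1$, $q_1$, or $r_1$ equals $\infty$, one should pull the corresponding $L^\infty$ factor of the $F$-piece out before invoking vv-BDG, reducing to a BDG estimate at exponent $p = p_0$, $q = q_0$, or $r = r_0$ in that norm --- the remaining exponents are then all finite and $>1$, so Theorem~\ref{thm:vv-BDG} applies. For $r \le 1$, one instead chooses an auxiliary $r' \in (1,r_0)$, runs the argument at $L^{r'}(\Omega)$, and then returns to $L^r(\Omega)$ by a further H\"older split in $\Omega$ combined with the classical scalar BDG inequality for $g$, which is available for all exponents in $(0,\infty]$.
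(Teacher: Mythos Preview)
Your proposal is correct and follows exactly the approach indicated by the paper: the paper's proof consists of the single sentence that one should reproduce the argument of \cite[Proposition~2.5]{arxiv:2008.08897} with Theorem~\ref{thm:vv-BDG} in place of the scalar BDG lemma there, and you have correctly unpacked this---the pointwise square-function bound $S\Pi \le (\sup|F|)\cdot Sg$, the vector-valued BDG step, and the threefold H\"older splitting. Your discussion of the endpoint exponent cases ($p_1,q_1,r_1=\infty$ and $r\le 1$) goes beyond what the paper spells out, but is in the right spirit.
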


Proposition~\ref{prop:vv-pprod} readily extends to continuous time processes upon replacing $\Pi_{s,t}(F,g)$ by the It\^{o} integral $\int_{s}^{t} F_{s,u-} \dif g_{u}$, if $F_{s,\cdot}$ is \cadlag{}.
Indeed, as explained in \cite[Section 4]{arxiv:2008.08897}, Proposition~\ref{prop:vv-pprod}, together with a stopping time construction, provides a construction of the It\^{o} integral that comes directly with the claimed estimate.

Recall Definition~\ref{D:Besovtwoparameter}:
\[
\norm{A}_{\mbb B_{pq}^{\gamma}} := \Bigl[ \int_{0}^{T} \bigl(\tau^{-\gamma} \sup_{0<h\leq\tau} \norm{A_{\cdot,\cdot+h}}_{L^{p}[0,T-h]}\bigr)^{q} \frac{\dif\tau}{\tau} \Bigr]^{1/q}.
\]

For a martingale $g$ and an adapted two-parameter process $F$ on a time interval $[0,T]$, denote
\[
A_{s,t} := \int_{u=s}^{t} F_{s,u-} \dif g_{u}.
\]
In order to simplify notation, we extend $g_{t}=g_{T}$ and $F_{s,t}=F_{s,T}$ for $t>T$.

Let $\gamma=\gamma_{0}+\gamma_{1}$ and let $p,q,r$, etc.\ be as in Proposition~\ref{prop:vv-pprod}.
By the continuous time version of Proposition~\ref{prop:vv-pprod}, we obtain
\begin{align}
\notag
\norm{ \norm{ A }_{\mbb B^{\gamma}_{p,q}} }_{L^{r}(\Omega)}
&\leq
\norm{ \left[ \int_{0}^{T} \left( \tau^{-\gamma} \norm{\sup_{0<h\leq\tau} \abs{A_{\cdot,\cdot+h}}}_{L^{p}[0,T]}\right)^{q} \frac{\dif\tau}{\tau} \right]^{1/q} }_{L^{r}(\Omega)}
\\ &\lesssim \label{eq:vv-pprod:cont:F}
\norm{ \left[ \int_{0}^{T} \left(\tau^{-\gamma_{1}} \norm{\sup_{0<h\leq\tau} \abs{F_{\cdot,\cdot+h}} }_{L^{p_{1}}[0,T]}\right)^{q_{1}} \frac{\dif\tau}{\tau} \right]^{1/q_{1}} }_{L^{r_{1}}(\Omega)}
\\ &\quad \cdot \label{eq:vv-pprod:cont:g}
\norm{ \left[ \int_{0}^{T} \left(\tau^{-\gamma_{0}} \norm{Sg_{\cdot,\cdot+\tau}}_{L^{p_{0}}[0,T]}\right)^{q_{0}} \frac{\dif\tau}{\tau} \right]^{1/q_{0}} }_{L^{r_{0}}(\Omega)}.
\end{align}
Setting $F=1$, $\gamma_{1}=0$, and $p_{1}=q_{1}=r_{1}=\infty$, this recovers the non-endpoint Besov norm BDG inequality \eqref{eq:Besov-BDG:M<S}; we refer to \cite[Thm.\ 1]{MR1277166} for the endpoints.
When $g$ is the Brownian motion, the norm \eqref{eq:vv-pprod:cont:g} is finite for any $\gamma_{0}<1/2$ and $q_{0} \in (1,\infty)$.

Now we can show the anisotropic version of Theorem~\ref{thm:Martbesovrough}.
\begin{theorem} \label{thm:besov-pprod}
Let $p,q,r,p_{0},q_{0},r_{0},p_{1},q_{1},r_{1}$ be as in Proposition~\ref{prop:vv-pprod}.
Let $\gamma_{0},\gamma_{1} \in [0,\infty)$ with $\gamma_{1}>1/p_{1}$ and $\gamma=\gamma_{0}+\gamma_{1}$.
Let $f$ be a \cadlag{} adapted process, $g$ a \cadlag{} martingale, and
\[
A_{s,t} := \int_{u=s}^{t} \delta f_{s,u-} \dif g_{u}.
\]
Then, with $S$ defined as in \eqref{eq:S:cont}, we have
\begin{equation} \label{eq:Besov-paraproduct-est}
\norm{ \norm{ A }_{\mbb B^{\gamma}_{p,q}} }_{L^{r}(\Omega)}
\lesssim
\norm{ 
\norm{f}_{B^{\gamma_{1}}_{p_{1},q_{1}}} 
}_{L^{r_{1}}(\Omega)}
\times 
\norm{ 
\norm{S g}_{\mbb B^{\gamma_{0}}_{p_{0},q_{0}}} 
}_{L^{r_{0}}(\Omega)}.
%\norm{S M}_{B^{\gamma_{0}}_{p_{0},q_{0}}} 
% }_{L^{r_{0}}(\Omega)}
\end{equation}
\end{theorem}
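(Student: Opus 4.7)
My plan is to combine the continuous-time analog of the paraproduct estimate Proposition~\ref{prop:vv-pprod} (explained in the paragraph following its proof) with a pathwise Besov-internal bound that moves the supremum over $h$ inside the $L^{p_{1}}$-norm. Setting $F_{s,t} := \delta f_{s,t} = f_{t} - f_{s}$, so that $A_{s,t} = \int_{s}^{t} F_{s,u-}\,\dif g_{u}$ matches the paraproduct template, a first application yields
$$\nor{\nor{A}{\mbb B^{\gamma}_{p,q}}}{L^{r}(\Omega)} \lesssim \nor{\mathscr N(f)}{L^{r_{1}}(\Omega)} \cdot \nor{\nor{Sg}{\mbb B^{\gamma_{0}}_{p_{0},q_{0}}}}{L^{r_{0}}(\Omega)},$$
where
$$\mathscr N(f) := \left[\int_{0}^{T}\tau^{-\gamma_{1}q_{1}}\nor{\sup_{0<h\le\tau}|f_{\cdot+h} - f_{\cdot}|}{L^{p_{1}}[0,T]}^{q_{1}}\frac{\dif\tau}{\tau}\right]^{1/q_{1}}.$$
The second factor on the right is already precisely what the theorem requires, so the entire proof reduces to the pathwise deterministic estimate $\mathscr N(f) \lesssim \nor{f}{B^{\gamma_{1}}_{p_{1},q_{1}}}$, a Besov analog of the familiar technique for absorbing a maximal function inside an $L^{p}$ integral.

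To carry this out I would exploit the gap $\gamma_{1} > 1/p_{1}$ (which by Proposition~\ref{P:Besovembedding} also furnishes a continuous version of $f$) via dyadic telescoping. For $h \in (0,\tau]$ with base-$2$ expansion $h = \sum_{j\ge 1}\epsilon_{j}2^{-j}\tau$, writing $h_{j} := \sum_{k \le j}\epsilon_{k}2^{-k}\tau$, continuity gives $|f_{t+h} - f_{t}| \le \sum_{j\ge 1}|f_{t+h_{j}} - f_{t+h_{j-1}}|$, and each nonzero summand is a displacement of size exactly $2^{-j}\tau$ starting from one of the at most $2^{j-1}$ grid-points of the form $2k\cdot 2^{-j}\tau$ with $0 \le k < 2^{j-1}$. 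Defining
$$M^{(j)}_{\tau}f(t) := \max_{0 \le k < 2^{j-1}}\left|f_{t + (2k+1)2^{-j}\tau} - f_{t + 2k\cdot 2^{-j}\tau}\right|,$$
a union bound at the level of $L^{p_{1}}$-norms yields $\nor{M^{(j)}_{\tau}f}{L^{p_{1}}[0,T]} \lesssim 2^{j/p_{1}}\omega_{p_{1}}(f,2^{-j}\tau)$. Inserting the resulting pointwise bound $\sup_{0<h\le\tau}|f_{t+h}-f_{t}| \le \sum_{j\ge 1}M^{(j)}_{\tau}f(t)$ into $\mathscr N(f)$, applying Minkowski's inequality in $L^{q_{1}}(\dif\tau/\tau)$ (valid for $p_{1},q_{1}\in(1,\infty]$), and substituting $\sigma = 2^{-j}\tau$ produces
$$\mathscr N(f) \lesssim \sum_{j\ge 1} 2^{j(1/p_{1}-\gamma_{1})}\nor{f}{B^{\gamma_{1}}_{p_{1},q_{1}}} \lesssim \nor{f}{B^{\gamma_{1}}_{p_{1},q_{1}}},$$
with the geometric series converging precisely because $\gamma_{1} > 1/p_{1}$. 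Taking $L^{r_{1}}(\Omega)$-norms and combining with the paraproduct bound then gives \eqref{eq:Besov-paraproduct-est}.

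The step I anticipate to be the main obstacle is not the deterministic Besov estimate above, which is clean and self-contained, but rather the careful passage from the discrete paraproduct inequality Proposition~\ref{prop:vv-pprod} to its continuous-time counterpart with \cadlag{} two-parameter integrand $F = \delta f$. The remark after Proposition~\ref{prop:vv-pprod} alludes to a stopping-time approximation, but verifying that this construction is compatible with the two-parameter $(s,t)$-structure and with the quantifier $\sup_{0<h\le\tau}$ sitting inside $\mathscr N(f)$, uniformly in all of the parameters involved, will require the most delicate bookkeeping in the argument.
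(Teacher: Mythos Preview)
Your proposal is correct and follows essentially the same approach as the paper. Both reduce, via the continuous-time version of Proposition~\ref{prop:vv-pprod} with $F=\delta f$, to the deterministic pathwise estimate $\mathscr N(f)\lesssim \nor{f}{B^{\gamma_1}_{p_1,q_1}}$, and both prove it by dyadic decomposition using $\gamma_1>1/p_1$: the paper phrases this as a recursive inequality $\nor{\sup_{h\in\tau D_{N+1}}|\delta f|}{L^{p_1}}\le 2^{1/p_1}\nor{\sup_{h\in(\tau/2)D_N}|\delta f|}{L^{p_1}}+\omega_{p_1}(f,\tau/2)$, which when iterated yields exactly your sum $\sum_j 2^{j/p_1}\omega_{p_1}(f,2^{-j}\tau)$ and the same geometric-series convergence.
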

If $f$ is also a martingale, then, by \eqref{eq:Besov-BDG:M<S}, $f$ can be replaced by $Sf$ on the right-hand side of \eqref{eq:Besov-paraproduct-est}.
It is instructive to note that, in this case, the product $(s,t) \mapsto \delta f_{s,t}  \delta g_{s,t} $, related to the ``paraproduct'' $A$ by It\^o's product rule, has the same $2$-parameter regularity.

\begin{proof}[Proof of Theorem~\ref{thm:besov-pprod}]
Specializing the previous discussion to $F=\delta f$, it suffices to estimate \eqref{eq:vv-pprod:cont:F}.
Assuming that $\gamma_{1}>1/p_{1}$, we will show that
\begin{equation}
\label{eq:Besov-sup-inside}
\left[ \int_{0}^{T} \left(\tau^{-\gamma_{1}} \norm{\sup_{0<h\leq\tau} \abs{\delta f_{\cdot,\cdot+h}} }_{L^{p_{1}}[0,T]}\right)^{q_{1}} \frac{\dif\tau}{\tau} \right]^{1/q_{1}}
\lesssim
\norm{f}_{B^{\gamma_{1}}_{p_{1},q_{1}}}.
\end{equation}
Indeed, the hypothesis ensures that $f$ is (a.e.\ equal to) a $(\gamma_{1}-1/p_{1})$-H\"older function (see e.g.\ Proposition~\ref{P:Besovembedding}).
Therefore, the supremum can be replaced by the supremum over $h\in \tau \cdot ((0,1] \cap \ZZ[1/2])$, where $\ZZ[1/2]$ is the set of rational numbers with denomenator that is a power of $2$.
By the monotone convergence theorem, it suffices to consider $h\in \tau \cdot D_{N}$, where $D_{N}=\{1,\dotsc,2^{N}\}/2^{N}$, as long as we obtain a bound independent of $N$.
Note that
\begin{align*}
\sup_{h\in \tau D_{N+1}} \abs{\delta f_{s,s+h}}
&=
\sup_{h\in (\tau/2) D_{N}  \cup ((\tau/2) D_{N} + \tau/2) } \abs{\delta f_{s,s+h}}
\\ &\leq
\sup_{h\in (\tau/2) D_{N}} (\abs{\delta f_{s,s+h}}^{p_{1}} + \abs{\delta f_{s,s+\tau/2+h}}^{p_{1}})^{1/p_{1}}
\\ &\leq
\sup_{h \in (\tau/2) D_{N}} (\abs{\delta f_{s,s+h}}^{p_{1}} + \abs{\delta f_{s+\tau/2,s+\tau/2+h}}^{p_{1}})^{1/p_{1}} + \abs{\delta f_{s,s+\tau/2}}.
\end{align*}
Taking $L^{p_{1}}$ norm in $s$, we obtain
\[
\norm{\sup_{h \in \tau D_{N+1}} \abs{\delta f_{\cdot,\cdot+h}} }_{L^{p_{1}}}
\leq
2^{1/p_{1}} \norm{\sup_{h \in (\tau/2)D_{N}} \abs{\delta f_{\cdot,\cdot+h}} }_{L^{p_{1}}}
+
\norm{\delta f_{\cdot,\cdot+\tau/2} }_{L^{p_{1}}}.
\]
Iterating this, we obtain a uniform estimate in $N$ provided that $\gamma_{1}>1/p_{1}$.
This shows \eqref{eq:Besov-sup-inside}, which in turn implies \eqref{eq:Besov-paraproduct-est}.
\end{proof}

\bibliography{besovrough}{}
\bibliographystyle{alpha}

\end{document}